\documentclass{amsart}

\usepackage{amssymb,amsmath,amsthm}
\usepackage{enumerate}
\usepackage{eucal}
\usepackage{mathabx}
\usepackage{xcolor}
\usepackage{hyperref}
\usepackage{orcidlink}

\newtheorem{Theorem}{Theorem}[section]
\newtheorem{Proposition}[Theorem]{Proposition}
\newtheorem{Lemma}[Theorem]{Lemma}
\newtheorem{Corollary}[Theorem]{Corollary}
\newtheorem{TheoremI}{Theorem} 

\newtheorem{Claim}{Claim}

\theoremstyle{definition}
\newtheorem{Definition}[Theorem]{Definition}
\newtheorem{Fact}[Theorem]{Fact}

\newtheorem{Example}[Theorem]{Example}
\newtheorem{Remark}[Theorem]{Remark}

\usepackage{etoolbox}
\AtBeginEnvironment{proof}{\setcounter{Claim}{0}}

\newcommand{\cupdot}{\mathbin{\mathaccent\cdot\cup}}



\def\Ind#1#2{#1\setbox0=\hbox{$#1x$}\kern\wd0\hbox to 0pt{\hss$#1\mid$\hss}
\lower.9\ht0\hbox to 0pt{\hss$#1\smile$\hss}\kern\wd0}

\def\ind{\mathop{\mathpalette\Ind{}}}

\def\notind#1#2{#1\setbox0=\hbox{$#1x$}\kern\wd0
\hbox to 0pt{\mathchardef\nn=12854\hss$#1\nn$\kern1.4\wd0\hss}
\hbox to 0pt{\hss$#1\mid$\hss}\lower.9\ht0 \hbox to 0pt{\hss$#1\smile$\hss}\kern\wd0}

\def\dep{\mathop{\mathpalette\notind{}}}

\DeclareMathOperator{\wor}{\perp\hspace*{-0.4em}^{\mathit w}}
\DeclareMathOperator{\nwor}{\not\perp\hspace*{-0.4em}^{\mathit w}}
\DeclareMathOperator{\fwor}{\perp\hspace*{-0.4em}^{\mathit f}}
\DeclareMathOperator{\nfor}{\not\perp\hspace*{-0.4em}^{\mathit f}}
\DeclareMathOperator{\Mon}{\mathfrak C}
\DeclareMathOperator{\Aut}{Aut}
\DeclareMathOperator{\tp}{tp}

\DeclareMathOperator{\Ker}{Ker}
\DeclareMathOperator{\Th}{Th}
\DeclareMathOperator{\id}{id}

\usepackage[margin=30mm]{geometry}

\title{Weakly o-minimal types}
\author[S.\ Moconja]{Slavko Moconja\ \orcidlink{0000-0003-4095-8830}}
\address[S.\ Moconja]{University of Belgrade, Faculty of mathematics, Belgrade, Serbia}
\email{slavko@matf.bg.ac.rs}

\author[P.\ Tanovi\'c]{Predrag Tanovi\'c\ \orcidlink{0000-0003-0307-7508}}
\address[P.\ Tanovi\'c]{Mathematical Institute of the Serbian Academy of Sciences and Arts, Belgrade, Serbia}
\email{tane@mi.sanu.ac.rs}
\thanks{The authors are supported by the Science Fund of the Republic of Serbia,
grant 7750027--SMART}

\begin{document} 

\begin{abstract} 
We introduce and study weak o-minimality in the context of complete types in an arbitrary first-order theory. A type $p\in S(A)$ is weakly o-minimal if for some relatively $A$-definable linear order, $<$, on $p(\Mon)$   every relatively $L_{\Mon}$-definable subset of $p(\Mon)$ has finitely many convex components in $(p(\Mon),<)$. We establish many nice properties of weakly o-minimal types. For example, we prove that weakly o-minimal types are dp-minimal and share several properties of weight-one types in stable theories, and that a version of monotonicity theorem holds for relatively definable functions on the locus of a weakly o-minimal type. 
\end{abstract}

\maketitle

In seminal papers \cite{PS0,PS} by Pillay and Steinhorn and \cite{KPS} by Knight, Pillay, and Steinhorn, the notions of o-minimal structures and theories were introduced, and a substantial theory of definable sets within the o-minimal framework was developed. In particular, they proved the Monotonicity Theorem and the Cell Decomposition Theorem, which are fundamental tools used in the analysis of definable sets in o-minimal structures. 
Extensive research has been conducted on o-minimal structures in the decades that followed,
and applications
in various areas outside logic, even outside mathematics, have been discovered. One direction of the research included generalization of the concept of o-minimality;
this involved relaxing the o-minimality assumption and developing a theory of definable sets that resembles, as closely as possible, that for o-minimal structures. 

Let $\mathcal M=(M,<,\dots)$ be an infinite linearly ordered first-order structure. $\mathcal M$ is {\em o-minimal} if every definable\footnote{Throughout the paper ``definable'' means ``definable with parameters''.} subset of $M$ is a finite union of points and open intervals (with endpoints in $M\cup\{\pm\infty\}$).  There are several generalizations of o-minimality, many of them can be found in Fujita's article \cite{Fujita}. Here, we mention only some of them:
\begin{itemize}
\item (Dickmann \cite{Dick}) $\mathcal M$ is {\em weakly o-minimal} if every definable subset of $M$ is a finite union of convex sets;
\item (Belegradek, Stolboushkin and Taitslin \cite{BST}) $\mathcal M$ is {\em quasi-o-minimal} if every definable subset of $M$ is a Boolean combination of $\emptyset$-definable sets, points, and open intervals;
\item (Kuda\u\i bergenov \cite{Kud}) $\mathcal M$ is {\em weakly quasi-o-minimal} if every definable subset of $M$ is a Boolean combination of $\emptyset$-definable sets and convex sets.
\end{itemize}
All of the above definitions refer to the distinguished linear order $<$; in fact, they depend on $<$ in the sense that a structure can be (weakly, quasi- or weakly quasi-) o-minimal with respect to $<$, but not such with respect to some other $\emptyset$-definable linear order. 
A complete first-order theory $T$ is o-minimal (weakly o-minimal, quasi-o-minimal, weakly quasi-o-minimal) with respect to a distinguished $\emptyset$-definable linear order if all models (equivalently, some $\aleph_0$-saturated model) of $T$ are such. 
It is well known that the o-minimality of the structure $\mathcal M$ (with respect to $<$) is always carried over to the theory $\Th(\mathcal M)$ (\cite{PS3}); however, this fails for weak, quasi-, and weak quasi-o-minimality.  
Let us also mention that the weak quasi-o-minimality of the theory does not depend on the choice of order (\cite[Theorem 1]{MTwmon}); in general, this fails for o-minimal, weakly o-minimal and quasi-o-minimal theories.

The Monotonicity Theorem for an o-minimal structure $(M,<,\dots)$, \cite[Theorem 4.2]{PS} states that for every definable function $f:M\to M$ there is a finite definable partition of $M$ into points and open intervals such that $f$ is either constant of strictly monotone on each member of the partition. This fails if the structure is not o-minimal; an explanation for this and a more detailed discussion of analogues of monotonicity and cell decomposition outside the o-minimal context can be found in Goodrick's article \cite{Goodrick2}.
Among the weak analogues is a ``local" version of monotonicity proved in the weakly o-minimal context by Macpherson, Marker, and Steinhorn in \cite{Mac}. Roughly speaking, it states that for every definable function $f:M\to\overline M$ there is a finite convex partition of $M$ such that $f$ is locally constant or locally strictly monotone on each of the convex parts of the partition.

\smallskip
In this paper, we start a systematic study of weak o-minimality transferred to the locus of a complete type in an arbitrary complete (possibly multi sorted) first-order theory $T$; we introduce weakly o-minimal types. The motivation for that comes from our recent work on geometric properties of complete 1-types in weakly quasi-o-minimal theories (\cite{MTwmon}) (all of which are weakly o-minimal) and on general model theoretic properties of (stationarily ordered, or) so-types (\cite{MT}); the latter include all weakly o-minimal types.
Our intention here is to present a self-contained comprehensive exposition of all the fundamental properties of weakly o-minimal types, encompassing both their geometric and general model-theoretical features. 
All relevant results from \cite{MT} and \cite{MTwmon} are adapted to the context of weakly o-minimal types, and novel results are established. For example, Theorem 1(i) is proved in \cite{MTwmon} for the case of complete 1-types in a weakly quasi-o-minimal theory; nevertheless, Theorems 1(ii), 2, and 3 are novel even in that context. Several results from Sections 4 and 5 related to forking independence and nonorthogonality of so-types appear in some form in \cite{MT}, yet the proofs presented here are conceptually new; the main novelty is the notion of the left and right genericity of a realization of an so-type over a set of parameters, which we introduce in Section 4.    
 
  This paper will serve as an introduction to our
forthcoming papers on Vaught's and Martin's conjectures for weakly quasi-o-minimal theories. However, we believe that the results presented here will prove useful elsewhere.

Let $\Mon$ be a monster model of $T$, let $A$ be a small subset of $\Mon$, and let $p\in S(A)$. We will consider orders $(p(\Mon),<)$, where $<$ is a relatively $A$-definable linear order on $p(\Mon)$ and say that $(p,<)$ is a weakly o-minimal pair over $A$ if every relatively definable subset of $p(\Mon)$ is the union of a finite number of convex sets; a type $p$ is weakly o-minimal if there exists such a pair.

Weakly o-minimal types have several nice properties. For example, the weak o-minimality of a type does not depend on the choice of order, and a theory $T$ is weakly quasi-o-minimal (with respect to some $L$-definable linear order) if and only if every complete type $p\in S_1(T)$ is weakly o-minimal.  
Among the geometric properties of weakly o-minimal types, the most interesting one is a version of the Monotonicity Theorem formulated in Theorem \ref{Theorem1} below which, roughly speaking, says that every relatively definable function on the locus of a weakly o-minimal type is ``weakly monotone"; this is explained as follows: Given a linear order $(D,<)$ and an equivalence relation with convex classes $E$, we can define another linear order $<_E$ by reversing the order $<$ within each class, but leaving the classes originally ordered:
  $$x<_Ey \Leftrightarrow (E(x,y)\land y<x)\vee (\lnot  E(x,y)\land x<y).$$
 For an $\subseteq$-increasing sequence of convex equivalence relations $\vec E=(E_1,\dots, E_n)$ we can iterate this construction and define $<_{\vec E}:=(\ldots (<_{E_1})_{E_2}\dots)_{E_n}$. If $(D',<')$ is another linear order and $f:D\to D'$ is an increasing function, then we say that $f$ is $(<,<')$-increasing; similarly for $(<,<')$-decreasing and $(<,<')$-monotone. If $f$ is $(<_{\vec E},<')$-monotone for some sequence of convex equivalence relations $\vec E$, then we may think of $f$ as a weakly $(<,<')$-monotone function.
 
\begin{TheoremI}\label{Theorem1}(Weak monotonicity).
Suppose that $(p,<_p)$ is a weakly o-minimal pair over $A$, $(D,<)$ an $A$-definable linear order, and $f:p(\Mon)\to D$ a relatively $A$-definable non-constant function.  
\begin{enumerate}[(i)]
\item  There exists a strictly increasing sequence of relatively $A$-definable convex equivalence relations $\vec E=(E_0,\dots,E_n)$ on $p(\Mon)$ such that $f$ is $((<_p)_{\vec E},<)$-increasing. 
\item  There exists an increasing sequence of $A$-definable  convex equivalence relations $\vec F=(F_0,\dots,F_n)$ on $(D,<)$ such that $f$ is $(<_p,<_{\vec F})$-increasing. 
 \end{enumerate}
\end{TheoremI}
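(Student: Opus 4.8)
My plan is to reduce, by an automorphism argument, to the case of an injective $f$, to exploit that $p(\Mon)$ has no proper nonempty relatively $A$-definable subset (as $p$ is complete over $A$), and then to peel off the relations required for (i) by induction on a complexity parameter coming from weak o-minimality; (ii) will be deduced from (i). For the reduction, put $E_0(x,y):\iff f(x)=f(y)$. Its classes are $<_p$-convex: if $f(x)=f(y)$ with $x\neq y$ and $\sigma\in\Aut(\Mon/A)$ sends $x$ to $y$, then $\sigma$ fixes $f$ and preserves $<_p$ (both being relatively $A$-definable), hence preserves $f^{-1}(f(x))\cap p(\Mon)$ together with its finitely many $<_p$-convex components, so it fixes each component setwise and forces $x,y$ into one component, whence $f$ is constant on $[x,y]\cap p(\Mon)$. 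Thus $E_0$ is a relatively $A$-definable convex equivalence relation, and, working modulo $E_0$ and prepending it to the sequence we produce, we may assume $f$ injective (the quotient still having the property that every relatively definable subset has finitely many convex components).

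For injective $f$, let $E^{+}(x,y)$ (resp.\ $E^{-}(x,y)$) say that $x=y$ or that $f\restriction([x,y]\cap p(\Mon))$ is $(<_p,<)$-increasing (resp.\ decreasing). These are relatively $A$-definable convex equivalence relations; the only nonformal point, transitivity, follows from injectivity (from increasingness on $[x,c]$ and on $[c,y]$ one gets $f(x')\le f(c)\le f(y')$ for $x'\le_p c\le_p y'$ in $[x,y]$, strictly since $x'\neq y'$). A point lying simultaneously in a nontrivial $E^{+}$-class and a nontrivial $E^{-}$-class — a ``turning point'' — would make the set of such points a nonempty relatively $A$-definable subset of $p(\Mon)$, hence all of $p(\Mon)$; but that is incompatible with weak o-minimality, since with a turning point everywhere every set $\{y\in p(\Mon):f(y)<f(x)\}$ has infinitely many convex components. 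So $f$ has no turning point, the classes of $E:=E^{+}\vee E^{-}$ are exactly the maximal monotone blocks of $f$, and on each of them $f$ is monotone and so maps it onto a $<$-convex subset of $D$; consequently the order $x\prec_f y:\iff f(x)<f(y)$ respects $E$ (it induces an order $\bar\prec_f$ on $\bar p:=p(\Mon)/E$, on which $<_p$ likewise induces an order $\bar{<}_p$) and, inside each $E$-class, equals $<_p$ or its reverse according to the block's direction. Moreover $E$, being relatively $A$-definable, cannot have finitely many classes numbering $\ge 2$, for then ``being in the $i$-th $E$-class'' would be a proper nonempty relatively $A$-definable subset of $p(\Mon)$; and this persists after passing to quotients. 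So $E$ has either one class (meaning $f$ is $(<_p,<)$-monotone) or infinitely many.

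I would then prove (i) by induction on $N$, the number of $<_p$-convex components of $\{y\in p(\Mon):f(y)<f(x)\}$ — finite by weak o-minimality and independent of $x$ by homogeneity. If $f$ is $(<_p,<)$-monotone (the one-block case, which covers $N=1$) then $\vec E$ of length $\le 2$ suffices: take $E_0$, together with the one-block relation if $f$ is decreasing. Otherwise $E$ has infinitely many classes; passing to $\bar p$ with $\bar{<}_p$ (which again has the finite-convex-components property, the same structural dichotomy, and strictly smaller $N$ — the within-block part of $f$ contributes a convex component for suitable $x$), the induction hypothesis, in the form of a lemma comparing two relatively $A$-definable linear orders on a relatively definable set with the finite-convex-components property, provides a strictly increasing sequence $\vec G$ of relatively $A$-definable convex equivalence relations on $\bar p$ with $\bar\prec_f=(\bar{<}_p)_{\vec G}$. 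Pulling $\vec G$ back to relatively $A$-definable convex equivalence relations $\pi^{-1}(\vec G)$ on $p(\Mon)$ (all containing $E$), these reverse the interior of each $E$-class a number of times depending only on that block; I insert, as the finest relation just above $E_0$, the convex equivalence relation whose nontrivial classes are exactly the $E$-classes where this parity and the block's direction disagree, so that the net effect inside each class is right and $f$ becomes $((<_p)_{\vec E},<)$-increasing with $\vec E=(E_0,\ E_{\mathrm{fine}},\ \pi^{-1}(\vec G))$ (deleting repetitions to make the sequence strictly increasing). The main obstacle I expect is exactly this step — fixing a complexity measure that provably drops on the quotient, and handling the parity bookkeeping — and it is cleanest to run the whole argument as the lemma just mentioned, so that passing to a subset or a quotient stays inside the statement.

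For (ii), set $q:=\tp(f(a)/A)$ for $a\models p$; every realization of $q$ equals $f(\sigma(a))$ for some $\sigma\in\Aut(\Mon/A)$ fixing $f$, so after the reduction to injective $f$ the map $f$ is a relatively $A$-definable bijection $p(\Mon)\to q(\Mon)$. Transporting $<_p$ through $f$ gives a relatively $A$-definable linear order $<^{\ast}$ on $q(\Mon)$ with $f\colon(p(\Mon),<_p)\to(q(\Mon),<^{\ast})$ an isomorphism, so $(q,<^{\ast})$ is again a weakly o-minimal pair over $A$. Applying (i) to the inclusion $q(\Mon)\hookrightarrow(D,<)$ with source order $<^{\ast}$ produces a strictly increasing sequence $\vec G=(G_0,\dots,G_n)$ of relatively $A$-definable convex equivalence relations on $q(\Mon)$ with $(<^{\ast})_{\vec G}=\,<\restriction q(\Mon)$; since $\vec G$ is nested the operation $(\,\cdot\,)_{\vec G}$ is an involution, hence $<^{\ast}=(<\restriction q(\Mon))_{\vec G}$. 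It remains to extend each $G_i$ to an $A$-definable convex equivalence relation $F_i$ on all of $D$ — for instance by taking $<$-convex hulls of the $G_i$-classes, which turn out to be pairwise disjoint — which yields $\vec F=(F_0,\dots,F_n)$ with $f$ being $(<_p,<_{\vec F})$-increasing.
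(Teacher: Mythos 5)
Your plan for (i) rests on two steps that are asserted but not established, and together they are the heart of the matter. First, the relative $A$-definability of $E^{+}$ and $E^{-}$: as you define them (``the restriction of $f$ to $[x,y]\cap p(\Mon)$ is increasing''), membership is a universal condition ranging over realizations of $p$; inside $p(\Mon)^2$ the negation is relatively type-definable (an existential projection over the type-definable set $p(\Mon)$), but the relation itself is only the complement of such a set, and nothing in your argument produces a single formula cutting it out. Every later move (the turning-point set, forming the quotient $\bar p$, pulling back $\vec G$, inserting $E_{\mathrm{fine}}$) leans on this definability, and securing it is exactly where the work lies: the paper sidesteps the problem by building its convex equivalence relations out of the finitely many convex components of the genuinely relatively definable sets obtained by comparing $f$-values with a fixed $a$ on $\{x\in p(\Mon)\mid a<_p x\}$ (these components are relatively $Aa$-definable by Lemma \ref{Lemma_reldef_of_convex_components}), and then proves definability of the resulting relation via the explicit characterization in Claim 3 of Theorem \ref{Theorem_characterize_wom_orders}. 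Second, the inductive step you yourself flag as ``the main obstacle'' --- that the complexity parameter $N$ provably drops on the quotient, together with the parity bookkeeping --- is precisely the content of the claims inside the paper's proof of Theorem \ref{Theorem_characterize_wom_orders} (the coarsest relation is relatively definable and convex for the new order, and the decomposition with respect to the modified order has one fewer component), and it is not carried out here. So (i) is a plausible plan with its two decisive steps missing, not a proof. (A small additional point: by homogeneity all maximal monotone blocks have the same direction, so your $E_{\mathrm{fine}}$ is either $E$ itself or trivial; as literally described, its relative definability would again need justification.)

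In (ii) there is a further gap at the end. Extending the relations $G_i$ from $q(\Mon)$ to $D$ by taking $<$-convex hulls of their classes yields relations that are only $A$-invariant (indeed type-definable over $A$, since the hull condition quantifies existentially over realizations of $q$), whereas the statement demands honestly $A$-definable convex equivalence relations on $D$. One still needs a compactness argument converting relatively definable data on the type-definable set $q(\Mon)$ into definable data on a definable superset; this is how the paper concludes, pushing $\vec E$ forward to $f(p)$ via Proposition \ref{Prop_pwom_implies_dclq_wom}(ii) and Lemma \ref{Cor_f_of_convex_is_convex}(i) and then applying routine compactness. Without that step the final sentence of your (ii) does not follow. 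Also, your reduction to injective $f$ passes to the quotient $p(\Mon)/E_0$, which is no longer the locus of a type in the home sort, so speaking of a ``weakly o-minimal pair'' there requires the definable-extension/$T^{eq}$ bookkeeping the paper sets up; this is fixable but not free.
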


Under the assumptions of Theorem \ref{Theorem1}, 
as a consequence
of the $((<_p)_{\vec E},<)$-monotonicity of $f$, 
we obtain the following:

\begin{TheoremI}\phantomsection\label{Theorem2}Suppose that $(p,<_p)$ is a weakly o-minimal pair over $A$, $(D,<)$ an $A$-definable linear order, and $f:p(\Mon)\to D$ a relatively $A$-definable non-constant function. 
\begin{enumerate}[(i)]
    \item (Local monotonicity).
There exists a convex relatively $A$-definable equivalence $E$ on $p(\Mon)$, such that $E\neq \id_{p(\Mon)}$ and the restriction of $f$ to each $E$-class is constant or strictly $(<_p,<)$-monotone.  

\item (Upper monotonicity). 
There exists a convex relatively $A$-definable equivalence $E$ on $p(\Mon)$, such that $E\neq p(\Mon)^2$ and one of the following two conditions holds for all $x_1,x_2$ realizing $p$:
\begin{center}$ [x_1]_E<_p[x_2]_E\Rightarrow  f(x_1)< f(x_2)$ \ \ \ or \ \ \  $[x_1]_E<_p[x_2]_E\Rightarrow  f(x_1)> f(x_2)$.
\end{center}
\end{enumerate}
\end{TheoremI}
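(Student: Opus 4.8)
The plan is to deduce both statements from Theorem~\ref{Theorem1}. Write $F':=\{(x_1,x_2)\in p(\Mon)^2 : f(x_1)=f(x_2)\}$ for the kernel of $f$, and first extract two facts. From Theorem~\ref{Theorem1}(ii): if $f(x_1)=f(x_2)=d$ and $x_1<_px<_px_2$, then $(<_p,<_{\vec F})$-monotonicity gives $d\le_{\vec F}f(x)\le_{\vec F}d$, so $f(x)=d$; thus every fibre $f^{-1}(d)$ is $<_p$-convex, and hence $F'$ is a relatively $A$-definable $<_p$-convex equivalence relation, with $F'\neq p(\Mon)^2$ since $f$ is non-constant. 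From Theorem~\ref{Theorem1}(i): fix $\vec E=(E_0,\dots,E_n)$ with $f$ being $((<_p)_{\vec E},<)$-increasing. On a single $E_0$-class $C$ the $E_i$-class containing $C$ contains $C$ for every $i$, so each of the $n+1$ reversals defining $(<_p)_{\vec E}$ flips the order on the convex block $C$; hence $(<_p)_{\vec E}$ agrees on $C$ with $<_p$ or with its reverse. On a pair of points lying in distinct $E_n$-classes no reversal applies, so $(<_p)_{\vec E}$ agrees with $<_p$ there; and when $E_n=p(\Mon)^2$, on a pair from distinct $E_{n-1}$-classes only the last reversal applies, so there $(<_p)_{\vec E}$ is the reverse of $<_p$.

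For (i): if $f$ is not injective, take $E:=F'$; then $E\neq\id_{p(\Mon)}$ and $f$ is constant on each $E$-class. If $f$ is injective, use the second fact: deleting any leading copies of $\id_{p(\Mon)}$ from $\vec E$ does not change $(<_p)_{\vec E}$, so either the sequence becomes empty, in which case $f$ is $(<_p,<)$-increasing, hence strictly increasing by injectivity, and $E:=p(\Mon)^2$ works; or $E_0\neq\id_{p(\Mon)}$, in which case $f$ is strictly $(<_p)_{\vec E}$-increasing, so by the above its restriction to each $E_0$-class is strictly $(<_p,<)$-monotone and $E:=E_0$ works.

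For (ii): the idea is to pass to the quotient by $F'$, where injectivity converts the weak monotonicity of Theorem~\ref{Theorem1}(i) into strict monotonicity. Since $F'$ is relatively $A$-definable with $<_p$-convex classes, the quotient $Q:=p(\Mon)/F'$ with the order $<_q$ induced by $<_p$ is again a weakly o-minimal pair over $A$: a relatively definable subset of $Q$ pulls back to an $F'$-invariant one, each $<_p$-convex component of which is a union of fibres and so descends to a $<_q$-convex subset of $Q$. Moreover $Q$ is the locus of a complete type over $A$ in $\Mon^{\mathrm{eq}}$, and $f$ factors as $\bar f\circ\pi$ with $\pi\colon p(\Mon)\to Q$ the projection and $\bar f\colon Q\to D$ an injective relatively $A$-definable non-constant function. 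Apply Theorem~\ref{Theorem1}(i) to $\bar f$ to get $\vec{\mathcal E}=(\mathcal E_0,\dots,\mathcal E_m)$ on $Q$ with $\bar f$ being $((<_q)_{\vec{\mathcal E}},<)$-increasing. If $\vec{\mathcal E}=(Q^2)$ then $\bar f$ is $(<_q,<)$-decreasing and $E:=F'$ witnesses the second alternative. Otherwise put $\mathcal E:=\mathcal E_m$ when $\mathcal E_m\neq Q^2$ and $\mathcal E:=\mathcal E_{m-1}$ when $\mathcal E_m=Q^2$; then $\mathcal E\neq Q^2$ and, by the last observation above applied to $\bar f$ and $\vec{\mathcal E}$, $\bar f$ is weakly monotone across distinct $\mathcal E$-classes in one fixed direction. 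Since $\bar f$ is injective, distinct $\mathcal E$-classes carry disjoint sets of $\bar f$-values, so $[x_1]_{\mathcal E}<_q[x_2]_{\mathcal E}$ forces $\bar f(x_1)\neq\bar f(x_2)$ together with $\bar f(x_1)\le\bar f(x_2)$ (resp.\ $\ge$), hence $\bar f(x_1)<\bar f(x_2)$ (resp.\ $>$). Pulling $\mathcal E$ back along $\pi$ gives the desired relatively $A$-definable $<_p$-convex equivalence $E\neq p(\Mon)^2$.

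\textbf{The main obstacle} I expect is organizational rather than conceptual: dealing with the degenerate shapes of the sequences produced by Theorem~\ref{Theorem1} --- a leading $\id_{p(\Mon)}$, a trailing $p(\Mon)^2$, or $Q^2$ --- and, for (ii), verifying carefully that $p(\Mon)/F'$ with the induced order is a weakly o-minimal pair over $A$, so that Theorem~\ref{Theorem1}(i) may legitimately be applied to $\bar f$. Granting these, the rest is a direct unwinding of the reversal operation $<\,\mapsto\,<_{\vec E}$.
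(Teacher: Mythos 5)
Your proposal is correct. Part (i) is essentially the paper's own argument (Theorem \ref{Theorem Local Mono}): split on whether $\Ker f$ is trivial, and when it is, take the first non-identity relation in the sequence supplied by Theorem \ref{Theorem1}(i), using the flip-parity observation that $(<_p)_{\vec E}$ restricted to a class of the finest relation is $<_p$ or its reverse. Part (ii) reaches the same conclusion by a mildly different route: the paper (Theorem \ref{Theorem Upper Mono}) works directly with the sequence $\vec E=(E_0,\dots,E_n)$ for $f$ itself, in the form proved in the body (Theorem \ref{Theorem Weak Mono}, where $E_0=\Ker f$), takes $E=E_n$ if $E_n\neq \mathbf 1_p$ and $E=E_{n-1}$ otherwise, and gets strictness across distinct $E$-classes from $\Ker f=E_0\subseteq E$; you instead quotient by $\Ker f$ first, apply Theorem \ref{Theorem1}(i) in $T^{eq}$ to the induced injective map $\bar f$, and pull the last (or second-to-last) relation back, with strictness coming from injectivity. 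The key observation about how $<_{\vec E}$ behaves across classes of the final relation is the same in both; your version buys a transparent strictness argument at the cost of extra machinery, namely that the quotient must be formed via a definable convex extension $\hat F$ of the kernel (a merely relatively definable equivalence does not give an imaginary sort) and that $p/\hat F$ with the induced order is again a weakly o-minimal pair over $A$ so that Theorem \ref{Theorem1} applies in $T^{eq}$ --- points you flagged and which the paper itself supplies (Remark \ref{Remark_weak_ominimal_firstrmk}(d), Proposition \ref{Prop_pwom_implies_dclq_wom}(i) and the remark following it), so this is a legitimate, if slightly longer, path. Your preliminary derivation of the convexity of the fibres from Theorem \ref{Theorem1}(ii) is fine but unnecessary: Corollary \ref{Cor_reldef_functions_have_convex_kernel} gives it directly.
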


Assume for a moment that the underlying theory is weakly quasi-o-minimal and $f:\Mon\to D$. Then all complete 1-types are weakly o-minimal, and Theorem \ref{Theorem1} applies to each of them. Using a routine compactness argument, one obtains a finite definable partition of $\Mon$ such that the restriction of $f$ to each member of the partition is weakly monotone; that is exactly the content of our aforementioned result \cite[Theorem 2]{MTwmon}. Similarly, we can derive an interesting definable form of local monotonicity (and a less interesting form of upper monotonicity, which we omit). In the following theorem, we state specific versions adjusted to the context of weakly o-minimal theories; note that part (ii) is a version of \cite[Theorem 3.3]{Mac}.

\begin{TheoremI}\label{Theorem3}
Suppose that $\Th(\Mon,<,\dots)$ is weakly o-minimal, $(D,\triangleleft)$ is an $A$-definable linear order, and $f:\Mon\to D$ is an $A$-definable function. 
\begin{enumerate}[(i)]
\item There exists a finite convex $A$-definable partition $\mathcal C$ of $\Mon$ and an increasing sequence of $A$-definable convex equivalence relations $\vec E$ on $\Mon$ such that $f$ is $(<_{\vec E},\triangleleft)$-increasing on each member of the partition. 

\item There exists a finite convex $A$-definable partition $\mathcal C$ of $\Mon$ and a convex $A$-definable equivalence relation $E$ with at most finitely many singleton classes on $\Mon$ such that $E=\bigcup _{C\in\mathcal C}E_{\restriction C}$ and for all $C\in\mathcal C$ the restriction $f_{\restriction[a]_E}$ is constant or strictly $(<,\triangleleft)$-monotone uniformly for all $a\in C$.
\end{enumerate}
\end{TheoremI}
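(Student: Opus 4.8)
The plan is to derive both parts from Theorems~\ref{Theorem1} and~\ref{Theorem2} by applying them to each complete $1$-type over $A$ and then gluing the resulting type-local data via a compactness argument. Since $\Th(\Mon,<,\dots)$ is weakly o-minimal it is in particular weakly quasi-o-minimal, so every $p\in S_1(A)$ is weakly o-minimal; more precisely, $(p,<)$ --- with $<$ the distinguished order restricted to $p(\Mon)$ --- is a weakly o-minimal pair over $A$, because a relatively definable subset of $p(\Mon)$ is the trace on $p(\Mon)$ of a definable subset of $\Mon$, which by weak o-minimality of the theory is a finite union of convex sets. Thus, whenever $f_{\restriction p(\Mon)}$ is non-constant, Theorem~\ref{Theorem1}(i) (for part~(i)), resp.\ Theorem~\ref{Theorem2}(i) (for part~(ii)), applies to $(p,<)$, $(D,\triangleleft)$ and $f_{\restriction p(\Mon)}$; when $f_{\restriction p(\Mon)}$ is constant I would use trivial local data.

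First I would attach to each $p\in S_1(A)$ a formula $\varphi_p\in p$ carrying honestly $A$-definable local data on $\varphi_p(\Mon)$. When $f$ is constant on $p(\Mon)$, a compactness argument yields $\varphi_p\in p$ with $f$ constant on $\varphi_p(\Mon)$, and I take $\vec E_p:=(\id)$, resp.\ $E_p:=\varphi_p(\Mon)^2$. Otherwise I start from the relatively $A$-definable datum given by Theorem~\ref{Theorem1}(i), resp.\ Theorem~\ref{Theorem2}(i), and push it out to some $\varphi_p(\Mon)$ by the usual compactness argument for relatively definable relations: the relevant clauses --- ``$\vec E_p$ is an increasing tuple of convex equivalence relations'' and ``$f$ is $(<_{\vec E_p},\triangleleft)$-increasing'' for part~(i), and ``$E_p$ is a convex equivalence relation on each class of which $f$ is constant, or strictly $(<,\triangleleft)$-increasing, or strictly $(<,\triangleleft)$-decreasing'' for part~(ii) --- are $\forall$-statements, hence forced by a single member of $p$. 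For part~(ii) two further points are needed: since the three alternatives for the behaviour of $f$ on $E^p$-classes partition $p(\Mon)$ into relatively definable subsets, exactly one of which can be non-empty, one of the alternatives $\sigma_p$ already holds uniformly over $p(\Mon)$; and ``$[x]_{E^p}$ is a singleton'' defines a relatively definable subset of $p(\Mon)$ that must be empty, since otherwise it would belong to the complete type $p$ and force $E^p=\id_{p(\Mon)}$, contradicting Theorem~\ref{Theorem2}(i). After one more shrinking of $\varphi_p$ I may therefore assume that on $\varphi_p(\Mon)$ the relation $E_p$ is a convex equivalence relation, $f$ is $\sigma_p$ on each $E_p$-class, and $E_p$ has at most finitely many singleton classes.

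Next I pass from types to a definable partition. The basic open sets $[\varphi_p]$ cover the compact space $S_1(A)$, so $\Mon=\bigcup_{i=1}^{k}\varphi_{p_i}(\Mon)$ for finitely many of them. Let $g\colon\Mon\to\{0,1\}^k$ be the $A$-definable map recording membership in the $\varphi_{p_i}(\Mon)$; its finitely many fibers are $A$-definable, and --- using the standard fact that in a weakly o-minimal theory the convex components of a definable set are definable --- the family $\mathcal C$ of all convex components of all fibers of $g$ is a finite convex $A$-definable partition of $\Mon$, each member $C$ of which lies in some $\varphi_{p_{i(C)}}(\Mon)$ because the $\varphi_{p_i}(\Mon)$ cover. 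Now I glue. For part~(i) I pad all the tuples $\vec E_{p_i}$ to a common length by prepending copies of $\id$ (which does not change the induced reordering), and at each level $j$ set $E_j:=\bigcup_{C\in\mathcal C}(E_{p_{i(C)},j})_{\restriction C}$, where $E_{p,j}$ is the $j$-th term of the padded tuple $\vec E_p$. Since $\mathcal C$ is a partition with convex members, each $E_j$ is an $A$-definable convex equivalence relation on $\Mon$, the tuple $\vec E=(E_j)_j$ is increasing, and each $C\in\mathcal C$ is a union of $E_j$-classes, so the reordering is computed within each $C$ and $<_{\vec E}$ agrees with $<_{\vec E_{p_{i(C)}}}$ on $C^2$; hence $f$ is $(<_{\vec E},\triangleleft)$-increasing on every $C$. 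For part~(ii) I set $E:=\bigcup_{C\in\mathcal C}(E_{p_{i(C)}})_{\restriction C}$; this is a convex $A$-definable equivalence relation with $E=\bigcup_{C\in\mathcal C}E_{\restriction C}$, for $a\in C$ one has $[a]_E=[a]_{E_{p_{i(C)}}}\cap C$ so that $f_{\restriction[a]_E}$ is constant or strictly $(<,\triangleleft)$-monotone with the uniform behaviour $\sigma_{p_{i(C)}}$, and the only singleton $E$-classes are the finitely many singleton $E_{p_i}$-classes together with at most two per member of $\mathcal C$ coming from cutting an $E_{p_{i(C)}}$-class at the boundary of $C$ --- hence finitely many altogether.

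The part I expect to be delicate is the compactness bookkeeping in the second step: squeezing the entire type-local package --- an increasing tuple of convex equivalence relations carrying the prescribed weak monotonicity of $f$, resp.\ a single convex equivalence relation carrying the uniform local monotonicity of $f$ and having no (or only finitely many) singleton classes --- into one first-order formula, uniformly over $S_1(A)$, and arranging that neither this step nor the later restriction to a member of $\mathcal C$ re-creates infinitely many singleton classes. Everything else --- the covering argument, the construction of $\mathcal C$, the verification that the glued relations do what is claimed, and the (standard) definability of convex components in weakly o-minimal theories --- is then routine.
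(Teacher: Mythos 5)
Your proposal is correct and follows essentially the same route as the paper: apply Theorem \ref{Theorem1}(i) (resp.\ Theorem \ref{Theorem2}(i)) type by type, transfer the relatively $A$-definable data to a definable neighbourhood via the tp-universal compactness fact, take a finite subcover of $S_1(A)$, pass to a finite convex $A$-definable partition, and glue the equivalence relations piecewise. The only divergences are bookkeeping: the paper arranges the neighbourhoods to be convex and mutually contradictory directly rather than via fibers of the membership map, and in part (ii) it forces all classes to be infinite (using that ``$[a]_F$ is infinite'' is definable in a weakly o-minimal theory) and then splits the finitely many boundary-created finite classes into singletons, whereas you delete singleton classes on each neighbourhood (a step best justified by $A$-invariance and homogeneity rather than literal relative definability of ``$[x]_{E^p}$ is a singleton'') and count the at most two boundary singletons per piece --- both variants yield the stated conclusion.
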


Among the general model theoretical properties of so-types and weakly o-minimal types that we will study are the forking independence and the weak and forking nonorthogonality ($\nwor$ and $\nfor$) of types with the same domain. We will reprove (from \cite{MT}) that both $\nwor$ and $\nfor$ are equivalence relation, and that 
$x\dep_A y$ is an equivalence relation on the set of realizations of so-types over $A$. The proofs presented here are significantly simpler and more intuitive than the original ones.
They are based on the concept of the left and right $\mathbf p$-genericity of a tuple $a \models p$ relative to a set of parameters $B$, where $\mathbf p = (p, <_p)$ is an so-pair over $A$: $a\models p$ is {\em right (left) $\mathbf p$-generic over $B$} if and only if the locus of $\tp(a/AB)$ is a final (initial) part of $(p(\Mon),<_p)$; by $B\triangleleft^{\mathbf p}a$ we denote that $a$ is right $\mathbf p$-generic over $B$.  
We prove that left and right $\mathbf p$-generic elements exist (over any small parameter set $B$), and that $a\ind_A B$ holds (meaning $\tp(a/AB)$ does not fork over $A$) if and only if $a$ is left or right $\mathbf p$-generic over $B$. 
The relation $\triangleleft^{\mathbf p}$, viewed as a binary relation on $p(\Mon)$, is particularly well behaved. For example, $a\models p$ is left $\mathbf p$-generic over $b\models p$ if and only if $b$ is right $\mathbf p$-generic over $a$.

\begin{TheoremI}\label{Theorem4}
Let $\mathbf p=(p,<_p)$ be an so-pair over $A$. Assume that $p$ is non-algebraic and let $\mathcal D_p=\{(x,y)\in p(\Mon)^2\mid x\dep_Ay\}$.  The structure 
$(p(\Mon), \triangleleft^{\mathbf p},<_p,\mathcal D_p)$ has the following properties:
\begin{enumerate}[(i)] 
    \item  $(p(\Mon), \triangleleft^{\mathbf{p}})$ is a strict partial order  and $(p(\Mon), <_p)$ its linear extension; 
    \item Relations $\mathcal D_{p}$ and  the $\triangleleft^{\mathbf p}$-incomparability are the same, $<_p$-convex equivalence relation on $p(\Mon)$;  The quotient $(p(\Mon) / \mathcal{D}_p, <_p)$ is a dense linear order.
\end{enumerate}   
\end{TheoremI}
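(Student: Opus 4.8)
The plan is to read $\triangleleft^{\mathbf p}$ entirely through one-sided genericity, using the facts already in place: left and right $\mathbf p$-generic elements exist over every small set; $a\ind_A B$ holds iff $a$ is left or right $\mathbf p$-generic over $B$; $a$ is left $\mathbf p$-generic over $b$ iff $b$ is right $\mathbf p$-generic over $a$; and $\dep_A$ is an equivalence relation on the realizations of so-types over $A$. I would begin with two easy remarks. Since $p$ is non-algebraic, $(p(\Mon),<_p)$ has no endpoints (an endpoint is fixed by $\Aut(\Mon/A)$, hence in $\dcl(A)$, contradicting non-algebraicity), so every non-empty final part of $(p(\Mon),<_p)$ is cofinal and every non-empty initial part is coinitial. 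Secondly, over any small $B\supseteq A$ there is a \emph{unique} complete type $p^{+}_{B}$ extending $p$ whose locus is a final part: two such types would have disjoint yet nested loci, forcing one to be empty; existence is given. Likewise there is a unique $p^{-}_{B}$ with initial locus. Thus $a\triangleleft^{\mathbf p}b$ means $\tp(b/Aa)=p^{+}_{Aa}$, equivalently (by the symmetry fact) $\tp(a/Ab)=p^{-}_{Ab}$.

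\emph{Part (i).} The inclusion $\triangleleft^{\mathbf p}\subseteq{<_p}$ is immediate: if $a\triangleleft^{\mathbf p}b$ and $a\neq b$ then $a$ does not realize $p^{+}_{Aa}=\tp(b/Aa)$, so $a$ lies below the final part $p^{+}_{Aa}(\Mon)$, i.e.\ $a<_pb$. This gives irreflexivity and asymmetry, and (once transitivity is known) that $<_p$ linearly extends $\triangleleft^{\mathbf p}$. For transitivity take $a\triangleleft^{\mathbf p}b\triangleleft^{\mathbf p}c$, so $a<_pb<_pc$; it suffices to prove $a\ind_A c$, since then $a$ is left or right $\mathbf p$-generic over $c$, and ``right'' (which means $c\triangleleft^{\mathbf p}a$) is excluded by $a<_pc$, so $a$ is left $\mathbf p$-generic over $c$, i.e.\ $a\triangleleft^{\mathbf p}c$. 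Now $a\triangleleft^{\mathbf p}b$ makes $a$ left $\mathbf p$-generic over $Ab$ and $b\triangleleft^{\mathbf p}c$ makes $c$ right $\mathbf p$-generic over $Ab$; the crucial lemma (coming out of the forking analysis of the preceding section) is that two opposite one-sided generics over a common base are forking-independent over that base, so $a\ind_{Ab}c$. Together with $a\ind_A b$, transitivity of forking-independence gives $a\ind_A bc$, hence $a\ind_A c$. (A clean auxiliary, surely used along the way, is that deleting parameters preserves one-sided genericity: if $d\models p$ is right $\mathbf p$-generic over $C'\supseteq C\supseteq A$, then for each $x$ in the locus $L$ of $\tp(d/C)$, conjugating over $C$ makes $x$ right $\mathbf p$-generic over a conjugate of $C'$, so every $C$-formula in $\tp(x/C)=\tp(d/C)$ contains $\{y\models p:y\geq_p x\}$; intersecting, $L$ is a final part.)

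\emph{Part (ii).} That $\mathcal D_p$ is exactly $\triangleleft^{\mathbf p}$-incomparability is direct: for $a,b\models p$, $\triangleleft^{\mathbf p}$-comparability means $a\triangleleft^{\mathbf p}b$ or $b\triangleleft^{\mathbf p}a$, i.e.\ $a$ is left or right $\mathbf p$-generic over $b$, i.e.\ $a\ind_A b$, whose negation is $a\dep_A b$. Hence $\triangleleft^{\mathbf p}$-incomparability is an equivalence relation. It is $<_p$-convex: if $a\dep_A b$ and $a<_px<_pb$ with $x\models p$, then $x$ cannot be $\triangleleft^{\mathbf p}$-comparable to both $a$ and $b$, for that would force $a\triangleleft^{\mathbf p}x\triangleleft^{\mathbf p}b$, hence $a\triangleleft^{\mathbf p}b$ by (i); so $x\dep_A a$ or $x\dep_A b$, and transitivity of $\mathcal D_p$ then gives both. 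Therefore $<_p$ descends to a well-defined linear order on $p(\Mon)/\mathcal D_p$. For density, given $[a]_{\mathcal D_p}<_p[b]_{\mathcal D_p}$, i.e.\ $a\triangleleft^{\mathbf p}b$, I would show the partial type $p^{+}_{Aa}\cup p^{-}_{Ab}$ is consistent; any realization $c$ then satisfies $\tp(c/Aa)=p^{+}_{Aa}$ and $\tp(c/Ab)=p^{-}_{Ab}$, so $a\triangleleft^{\mathbf p}c\triangleleft^{\mathbf p}b$, and $c\notin\{a,b\}$ because the loci of $p^{+}_{Aa}$ and $p^{-}_{Ab}$ are not singletons (no endpoints); thus $[c]_{\mathcal D_p}$ lies strictly between $[a]_{\mathcal D_p}$ and $[b]_{\mathcal D_p}$.

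The step I expect to be the real obstacle is the lemma that opposite one-sided generics over a common base are forking-independent over it, together with the consistency of $p^{+}_{Aa}\cup p^{-}_{Ab}$ used for density: both say that a ``far-left'' and a ``far-right'' realization of $p$ over a common base behave independently of each other, and proving them is exactly where the detailed interplay of $\triangleleft^{\mathbf p}$ with $\ind_A$ established in the preceding section is needed. The remaining ingredients --- the deletion lemma and all the order-theoretic bookkeeping --- are routine.
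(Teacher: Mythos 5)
Your framework is sound in several places (the uniqueness of the complete extension with final-part locus, the inclusion $\triangleleft^{\mathbf p}\subseteq{<_p}$, the identification of $\triangleleft^{\mathbf p}$-incomparability with $\mathcal D_p$, and the convexity argument modulo transitivity all match the paper), but the three load-bearing steps are not proved, and two of them lean on claims that are not available. For transitivity of $\triangleleft^{\mathbf p}$ you invoke a ``crucial lemma'' that a left generic and a right generic over the common base $Ab$ satisfy $a\ind_{Ab}c$, and then transitivity of nonforking to pass from $a\ind_A b$ and $a\ind_{Ab}c$ to $a\ind_A c$. Neither is established anywhere: over the enlarged base $Ab$ the type $\tp(a/Ab)$ need not be an so-type (this hereditary property is exactly what distinguishes weakly o-minimal types from general so-types, cf.\ Remark \ref{Remark_weak_ominimal_firstrmk}(g)), so the Section \ref{Section4} forking calculus (forking $=$ bounded, exactly two nonforking extensions, symmetry) is unavailable over $Ab$; and transitivity of nonforking independence is not a general fact in an arbitrary theory. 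The detour is also unnecessary: transitivity is immediate from the fact you yourself record, namely that $\mathcal R_{\mathbf p}(a)=(\mathbf p_r)_{\restriction Aa}(\Mon)$ is a final part of $(p(\Mon),<_p)$ (Fact \ref{Fact_so_basic}(iii)); if $a\triangleleft^{\mathbf p}b\triangleleft^{\mathbf p}c$ then $b\in\mathcal R_{\mathbf p}(a)$ and $b<_pc$, hence $c\in\mathcal R_{\mathbf p}(a)$, i.e.\ $a\triangleleft^{\mathbf p}c$; this is exactly Lemma \ref{Lemma_R_p_basic2}(i), and it is how the paper proves it.

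There are two further gaps. First, you cite ``$\dep_A$ is an equivalence relation on realizations of so-types'' as a fact already in place and use its transitivity twice (to conclude that incomparability is an equivalence relation, and to finish convexity). But that $\mathcal D_p$ is a ($<_p$-convex) equivalence relation is precisely part of the conclusion of Theorem \ref{Theorem4}; in the paper transitivity of $\mathcal D_p$ is proved at this very point via Lemma \ref{Lema_Dpa_subset_Dqa}(ii) ($a\ind_A B\land a'\dep_A a$ implies $a\equiv a'\,(B)$, which rests on the density lemma), and the global statement for all so-types only appears later as Theorem \ref{Theorem_nonorthogonality}(iii). So, as written, this step is circular inside the paper's development and must be replaced by an actual proof of transitivity of $\mathcal D_p$ on $p(\Mon)$. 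Second, for density of the quotient you reduce to the consistency of $(\mathbf p_r)_{\restriction Aa}\cup(\mathbf p_l)_{\restriction Ab}$ and explicitly leave it open; this is the content of Lemma \ref{Lemma_existence_anddensity_ofrightgenerics}, and it needs no independence claim at all: choose $c'$ with $a\triangleleft^{\mathbf p}c'$ and then $b'$ with $ac'\triangleleft^{\mathbf p}b'$; since $b\equiv b'\,(Aa)$ (both realize $(\mathbf p_r)_{\restriction Aa}$), an automorphism over $Aa$ sending $b'$ to $b$ carries $c'$ to the required $c$ with $a\triangleleft^{\mathbf p}c\triangleleft^{\mathbf p}b$. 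With these repairs your outline would coincide with the paper's proof; without them the proposal leaves exactly the substantive content of the theorem unproved.
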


Two so-pairs over $A$,  $\mathbf p=(p,<_p)$ and $\mathbf q=(q,<_q)$, are weakly nonorthogonal if $p\nwor q$; they are {\em directly nonorthogonal}, denoted by $\delta_A(\mathbf p,\mathbf q)$, if $p\nwor q$ and for all $a\models p$ and $b\models q$: $a$ is left $\mathbf p$-generic over $b$ if and only if $b$ is right $\mathbf q$-generic over $a$. Intuitively, $\delta_A(p,q)$ means that orders $<_p$ and $<_q$ have ``the same direction''; this will be justified by proving that $\delta_A$ is an equivalence relation that divides each $\nwor$ class into two classes such that pairs $\mathbf p=(p,<)$ and $\mathbf p^*=(p,>)$ are in distinct $\delta_A$ classes. Then the $\mathbf p$-genericity transfers to
$\mathcal F$-genericity, where $\mathcal F$ is a $\delta_A$-class in the following way: Let $\mathcal{F}(\Mon)$ be the set of realizations of all types that appear in $\mathcal F$. Then $a\in \mathcal{F}(\Mon)$ is right $\mathcal F$-generic over $B$, denoted by $B\triangleleft^{\mathcal F} a$, if $B\triangleleft^{\mathbf p} a$ holds for some (we show: equivalently, all) $\mathbf p=(\tp(a/A),<)\in\mathcal F$. The relation $\triangleleft^{\mathcal F}$ has many nice properties, collected in Theorem \ref{Thm_triangle_mathcal F}; for example, the transitivity holds: $B\triangleleft^{\mathcal F}a\land a\triangleleft^{\mathcal F}b$ implies $B\triangleleft^{\mathcal F} b$. The following generalizes Theorem \ref{Theorem4}.  

\begin{TheoremI}\label{Theorem5}
Let $\mathcal F$ be a $\delta_A$-class of non-algebraic so-pairs over $A$. Let $\mathcal{F}(\Mon)$ be the set of realizations of all types that appear in $\mathcal F$ and let $\mathcal D_{\mathcal F}=\{(x,y)\in \mathcal{F}(\Mon)^2\mid x\dep_A y\}$. Then there is an $A$-invariant linear order $<^{\mathcal F}$ on $\mathcal F(\Mon)$ such that the structure
$(\mathcal F(\Mon),\triangleleft^{\mathcal F}, <^{\mathcal F},\mathcal D_{\mathcal F})$ has the following properties:  
\begin{enumerate}[(i)]
\item $(\mathcal{F}(\Mon), \triangleleft^{\mathcal F})$ is a strict partial order and $(\mathcal{F}(\Mon), <^{\mathcal F})$ its linear extension; 

\item Relations $\mathcal D_{\mathcal F}$ and  the $\triangleleft^{\mathcal F}$-incomparability are the same, $<^{\mathcal F}$-convex equivalence relation on $\mathcal{F}(\Mon)$.  The quotient $(\mathcal{F}(\Mon)/\mathcal D_{\mathcal F},<^{\mathcal F})$ is a dense linear order.
\item For each $p\in S(A)$ represented in $\mathcal F$, the order $<_p=(<^{\mathcal F})_{\restriction p(\Mon)}$ is relatively $A$-definable and the pair $(p,<_p)$ is directly non-orthogonal to pairs from $\mathcal F$.    $(p(\Mon), \triangleleft^{\mathbf p},<_p,\mathcal D_p)$ is a substructure of $(\mathcal F(\Mon),\triangleleft^{\mathcal F}, <^{\mathcal F},\mathcal D_{\mathcal F})$.  
\end{enumerate}
Moreover, if $(\mathbf p_i=(p_i,<_i)\mid i\in I)$ is a family of pairs from $\mathcal F$, with the types $(p_i\mid i\in I)$ mutually distinct, then the order $<_{\mathcal F}$ can be chosen to extend each $<_i$ for $i\in I$. 
\end{TheoremI}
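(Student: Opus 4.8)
The plan is to build $<^{\mathcal F}$ by gluing the (partial) order $\triangleleft^{\mathcal F}$ across $\mathcal D_{\mathcal F}$-classes to a relatively $A$-definable order inside each type, and then to extend the resulting $A$-invariant partial order to a linear one in an $A$-invariant way.

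I would start by recording the combinatorial content of (ii), most of which is the content of Theorem~\ref{Thm_triangle_mathcal F}: $\triangleleft^{\mathcal F}$ is a strict partial order on $\mathcal F(\Mon)$ and its incomparability relation is exactly $\mathcal D_{\mathcal F}$. The latter uses that $\dep_A$ is an equivalence relation on realizations of so-types, the characterization ``$a\ind_A b$ iff $a$ is left or right $\mathbf p$-generic over $b$'', and the defining property of the $\delta_A$-class $\mathcal F$ (``$a$ is left $\mathbf p$-generic over $b$ iff $b$ is right $\mathbf q$-generic over $a$''). Consequently $\triangleleft^{\mathcal F}$ descends to a strict linear order $\prec$ on $Q:=\mathcal F(\Mon)/\mathcal D_{\mathcal F}$; density of $(Q,\prec)$ reduces to the density statement of Theorem~\ref{Theorem4} for pairs of realizations of the same type, and for realizations of distinct types it follows by slotting in an appropriate generic realization between the two given cuts. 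Convexity of $\mathcal D_{\mathcal F}$ will be automatic once we have $<^{\mathcal F}\supseteq\triangleleft^{\mathcal F}$.

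Next, for each type $p$ represented in $\mathcal F$ I would fix a relatively $A$-definable linear order $<_p$ on $p(\Mon)$ with $(p,<_p)\in\mathcal F$, choosing $<_p=<_i$ when $p=p_i$ (legitimate because the $p_i$ are pairwise distinct, so their loci are disjoint). By Theorem~\ref{Theorem4}, $\triangleleft^{\mathbf p}=\triangleleft^{\mathcal F}\restriction p(\Mon)\subseteq{<_p}$, and the $<_p$-convex equivalence $\mathcal D_p$ equals $\mathcal D_{\mathcal F}\restriction p(\Mon)$; moreover on the quotient $p(\Mon)/\mathcal D_p$ the order $<_p$ coincides with the quotient of $\triangleleft^{\mathbf p}$. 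Put $\bar S:=\triangleleft^{\mathcal F}\cup\bigcup_p{<_p}$. A short case analysis shows $\bar S$ is already transitive: inside a $\mathcal D_{\mathcal F}$-class there are no $\triangleleft^{\mathcal F}$-edges so each $\bar S$-step there is a $<_p$-step within a single type, while a step $a<_p b$ with $a\not\mathcal D_{\mathcal F}b$ is in fact a $\triangleleft^{\mathcal F}$-step, and along any composition the $\prec$-class is nondecreasing. Hence $\bar S$ is an $A$-invariant strict partial order with $\bar S\restriction p(\Mon)={<_p}$, the $\mathcal D_{\mathcal F}$-classes are $\bar S$-convex, and any two $\bar S$-incomparable elements realize distinct types over $A$ (same type would give $<_p$-comparability, distinct $\mathcal D_{\mathcal F}$-classes would give $\triangleleft^{\mathcal F}$-comparability).

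Finally I would extend $\bar S$ to an $A$-invariant linear order $<^{\mathcal F}$ on $\mathcal F(\Mon)$. This is the delicate point: one processes, by a transfinite recursion, one $\Aut(\Mon/A)$-orbit of $\bar S$-incomparable pairs at a time, at each stage adjoining a chosen direction for the whole orbit together with everything forced by transitivity and $A$-invariance, and one must verify that no cycle is ever created. Any such cycle would lie inside a single $\mathcal D_{\mathcal F}$-class (since $\triangleleft^{\mathcal F}$-steps strictly raise the $\prec$-class) and would force two realizations of distinct types into a relation of the current order that is absent from $\bar S$; as every automorphism over $A$ fixes each complete type over $A$ setwise, the invariance constraints cannot by themselves reverse a distinct-type pair, and a minimal-counterexample bookkeeping rules out that an earlier stage produced it. The resulting $<^{\mathcal F}$ is an $A$-invariant linear order containing $\bar S\supseteq\triangleleft^{\mathcal F}$; it adds no new pair inside any $p(\Mon)$ (where $\bar S$ is already total, equal to $<_p$), and the $\mathcal D_{\mathcal F}$-classes stay $<^{\mathcal F}$-convex. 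Then (i) follows (using that $\triangleleft^{\mathcal F}$ is a strict partial order by Theorem~\ref{Thm_triangle_mathcal F}); (ii) is complete; (iii) holds since $<^{\mathcal F}\restriction p(\Mon)={<_p}$ is relatively $A$-definable with $(p,<_p)\in\mathcal F$, and $(p(\Mon),\triangleleft^{\mathbf p},<_p,\mathcal D_p)$ embeds as a substructure via the identifications above; the ``moreover'' clause is immediate from the choice $<_{p_i}=<_i$. Thus the genuine obstacle is exactly the $A$-invariant Szpilrajn-type extension in the last step — checking that invariance never conflicts with transitivity — while everything else is bookkeeping on top of Theorems~\ref{Theorem4} and~\ref{Thm_triangle_mathcal F}.
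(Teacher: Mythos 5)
Your reductions up to the last step are sound and match the paper's: Theorem \ref{Thm_triangle_mathcal F} gives that $\triangleleft^{\mathcal F}$ is a strict partial order whose incomparability is exactly $\mathcal D_{\mathcal F}$, the union $\bar S=\triangleleft^{\mathcal F}\cup\bigcup_p<_p$ is a partial order whose only incomparable pairs are $\dep_A$-related pairs realizing distinct types, and parts (i)--(iii) plus the ``moreover'' clause are bookkeeping once an $A$-invariant linearization is in hand. The gap is precisely at the step you yourself flag as the genuine obstacle: the $A$-invariant Szpilrajn-type extension is asserted, not proved. The orbit-by-orbit procedure with a freely ``chosen direction'' per orbit is not obviously cycle-free, and your two sentences of justification do not rule out the real danger: two \emph{different} orbits of $p$--$q$ pairs inside one $\mathcal D_{\mathcal F}$-class given opposite directions at earlier stages can, through $<_p$- and $<_q$-steps, force both $b_1<a_2$ and $b_2<a_1$ for two pairs $(a_1,b_1)\equiv_A(a_2,b_2)$ that are still incomparable; declaring the direction $a<b$ on that orbit then closes a cycle. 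Nothing in the general theory of so-pairs excludes such configurations, so correctness requires coordinating the choices across orbits --- and that coordination is exactly what is missing from your sketch. (Invariance alone only tells you that within a single orbit all already-decided pairs point the same way; it says nothing about the interaction of distinct orbits.)

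The paper avoids all of this because Theorem \ref{Thm_triangle_mathcal F}(viii) says that $\triangleleft^{\mathcal F}$ is determined on $\mathcal D_{\mathcal F}$-classes, so it suffices to linearly order each class in an $A$-invariant way, and this can be done explicitly: fix an enumeration $(p_i\mid i<\alpha)$ of the types occurring in $\mathcal F$ together with the chosen orders $<_i$, and set $x<^{\mathcal F}y$ iff $x\triangleleft^{\mathcal F}y$, or $x\dep_A y$ and either $x,y\models p_i$ with $x<_iy$, or $x\models p_i$, $y\models p_j$ with $i<j$. Invariance is immediate since automorphisms over $A$ fix each $p_i(\Mon)$ setwise, and transitivity and totality follow directly from parts (v)--(viii) of Theorem \ref{Thm_triangle_mathcal F}; no transfinite recursion or cycle analysis is needed. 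In short: your argument is recoverable, but only after replacing the free per-orbit choices by a single global choice of ordering of the types --- at which point the recursion disappears and you have reproduced the paper's construction.
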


In general, the order $<^{\mathcal F}\supseteq \triangleleft^{\mathcal F}$ in Theorem \ref{Theorem5} cannot be chosen as the restriction of a definable one, even if the underlying theory is o-minimal. However, for the purpose of studying forking dependence as a binary relation, the order $<^{\mathcal F}$ is more adequate then definable ones.

As weakly o-minimal types are so-types, the conclusion of Theorem \ref{Theorem5} applies to $\mathcal F$, a $\delta_A$-class of weakly o-minimal pairs.
In that context, we prove several additional nice properties of $\triangleleft^{\mathcal F}$; for example, the convexity property: \ $B\triangleleft^{\mathcal F}a<^{\mathcal F}b<^{\mathcal F}c\land a\dep_{B}c$ implies  $b\dep_{B} a \land b\dep_{B}c$. Since complete extensions of weakly o-minimal types are also weakly o-minimal, it makes sense to consider nonorthogonality of nonforking extensions of pairs from $\mathcal F$ over a larger domain $B\supseteq A$. In particular, if the extensions are taken in the same direction, we will prove that each of $\nwor$, $\nfor $, and $\delta$, is maintained.

\smallskip
The paper is organized as follows.  Section \ref{Section1} contains the preliminaries. We set the terminology related to model theory and linear orders and outline the fundamental properties of orders $<_{\vec E}$. Relatively definable subsets are studied in more detail as they play an important role in the subsequent sections. Additionally, we revisit the definition and elementary properties of so-types from \cite{MT}. 
In Section \ref{Section2}, weakly o-minimal types are introduced and some basic properties are established.
Section \ref{Section3} provides a characterization of all relatively definable linear orders on the locus of a weakly o-minimal type, followed by several results, including Theorems \ref{Theorem1}--\ref{Theorem3}. Among other facts, we prove that the weak o-minimality of $p\in S(A)$ does not depend on the choice of order: If $p\in S(A)$ is weakly o-minimal, then $(p,<)$ is a weakly o-minimal pair for all relatively $A$-definable orders $<$. We also prove that a complete 1-sorted theory $T$ is weakly quasi o-minimal (with respect to some  $\emptyset$-definable linear order) if and only if every type $p\in S_1(T)$ is weakly o-minimal.  
In Section \ref{Section4}, we introduce and study $\mathbf{p}$-genericity and prove Theorem \ref{Theorem4}. In Section \ref{Section5},
we study direct nonorthogonality of so-pairs and prove Theorem \ref{Theorem5}. For the case of weakly o-minimal pairs, we prove several additional nice properties of $\triangleleft^{\mathcal F}$. 
In Section \ref{Section6}, we prove that weakly o-minimal types are dp-minimal. We also provide some analysis of indiscernible sequences of realizations of weakly o-minimal types; in particular, 
we show that they have a bit stronger property than the
distality (as defined by Simon in \cite{Simon2}).

\section{Preliminaries}\label{Section1}

We use standard concepts and notation from model theory. 
We work in $\Mon$, a large, saturated ({\em monster}) model of a complete, first-order (possibly multi-sorted) theory $T$ in a first-order language $L$. 
The singletons and tuples of elements from $\Mon$ are denoted by $a,b,c,\dots$; $|a|$ denotes the length of the tuple $a$.  The letters $A,B,A', B',\dots$ are reserved for small subsets (of cardinality $<|\Mon|$) of the monster, while $C,D,C',D',\dots$ are used to denote arbitrary sets of tuples. 
By an $L_C$-formula $\phi(x)$ we mean a formula whose parameters are from $C$; by a formula we mean an $L_{\Mon}$-formula.
The set of all the realizations of $\phi(x)$ in $\Mon$ is usually denoted by $\phi(\Mon)$, but sometimes, when we want to emphasize $|x|=n$, we also denote it by $\phi(\Mon^{n})$. 
Sets of this form are said to be {\em $C$-definable}; a set is {\em definable} if it is $C$-definable for some parameter set $C$. 
A {\em partial type} $p(x)$ is any small finitely consistent set of $L_{\Mon}$-formulae that is closed under conjunctions; $p(\Mon)$ (or $p(\Mon^{|x|})$) denotes the set of all realizations of $p(x)$. A subset $D\subset \Mon^n$ is {\em type-definable} (over $A$) if $D=p(\Mon)$ for some partial type $p(x)$ (over $A$).

The space of all {\em complete} $n$-types over the parameters $C$  is denoted by $S_n(C)$; the basic clopen subsets are of the form $[\phi]=\{p\in S_n(C)\mid \phi(x)\in p\}$, where $\phi(x)$ is a $L_C$-formula and $|x|=n$. $S_x(C)$ denotes the set of types in variable $x$. $S(C):=\bigcup _{n\in\mathbb N}S_n(C)$. In particular, $S(\Mon)$ is the set of all {\em global} finitary types. 
A global type $\mathfrak p(x)$ is {\em $A$-invariant} if $(\phi(x,b_1)\leftrightarrow \phi(x,b_2))\in\mathfrak p$ for all $L_A$-formulae $\phi(x,y)$ and all tuples $b_1,b_2$ of length $|y|$ that satisfy $b_1\equiv b_2\,(A)$; the type $\mathfrak p$ is {\em invariant} if it is $A$-invariant for some small set of parameters $A$.  For an $A$-invariant global type $\mathfrak p$ and a linear order $(I,<)$, a sequence of tuples $(a_i\mid i\in I)$ is a {\em Morley sequence} in $\mathfrak p$ over $A$ if $a_i\models \mathfrak p_{\restriction A\,a_{<i}}$ holds for all $i\in I$. Note that we allow Morley sequences to have an arbitrary (even finite) order-type.
Dividing and forking have the usual meaning, and by $a\ind_A B$ we denote that $\tp(a/AB)$ does not fork over $A$. 
The types $p,q\in S(A)$ are {\em weakly orthogonal}, denoted by $p\wor q$, if $p(x)\cup q(y)$ determines a complete type over $A$; they are {\em forking orthogonal}, denoted by $p\fwor q$, if $a\ind_A b$ holds for all $a\models p$ and $b\models q$. 

\subsection{Linear orders} Notation related to linear orders is mainly standard. Let $(X,<)$ be a linear order, and let $D\subseteq X$.

\begin{enumerate}[(1)]
\item[$\bullet$] $D$ is {\em convex}  if $a,b\in D$ and $a<c<b$ imply $c\in D$.
\item[$\bullet$] $D$ is an {\em initial part} if $a\in D$ and $b<a$ imply $b\in D$;  $D$ is a {\em left-eventual part} if it contains a non-empty initial part. {\em The final parts} and {\em right-eventual} parts are defined dually.
\item[$\bullet$] A subset $C\subseteq D$ is a {\em convex component} of $D$ if $C$ is a maximal convex subset of $D$.
\item[$\bullet$] The set of all convex components forms a partition of $D$; therefore, the meaning of {\em $D$ has a finite number of convex components} is clear. 
\item[$\bullet$] For non-empty $Y,Y'\subseteq X$ we write $Y<Y'$ if $y<y'$ holds for all $y\in Y$ and $y'\in Y'$; we write $x<Y$ and $Y<x$ instead of $\{x\}<Y$ and $Y<\{x\}$. 
\item[$\bullet$] $x\in X$ is an {\em upper (lower) bound} of $D$ if $D<x$ ($x<D$). 
\item[$\bullet$] $D$ is {\em upper (lower) bounded} if an upper (lower) bound exists; $D$ is {\em bounded} if it is both upper and lower bounded. Note that the set of all upper (lower) bounds of $D$ is a final (initial) part. 
\item[$\bullet$] $\sup D_1\leqslant \sup D_2$ \ (where $D_1,D_2\subseteq X$) denotes that any upper bound of $D_2$ is an upper bounds of $D_1$ too; $\sup D_1<\sup D_2$, $\inf D_1\leqslant \inf D_2$ and $\inf D_1<\inf D_2$ have analogous meanings. 
\item[$\bullet$] An equivalence relation $E\subseteq X\times X$ is {\em convex} if all $E$-classes $[x]_E$ ($x\in X$) are convex subsets of $X$; in that case, the quotient set $X/E$ is naturally linearly ordered by $<$.
\end{enumerate}
Note that all of the above definitions should be read as {\em with respect to $(X,<)$}. Further in the text, whenever the meaning of the order is not clear from the context, we will emphasize it in some way; for example, we say that {\em $D$ is convex in $(X,<)$} or that {\em $D$ is a $<$-convex subset of $X$}, etc.

If $(X,<_X)$ and $(Y,<_Y)$ are linear orders and $f:X\to Y$, we say that $f$ is {\em $(<_X,<_Y)$-increasing} if $x<_Xx'$ implies $f(x)\leqslant_Yf(x')$ for all $x,x'\in X$. 
In that case, the kernel relation $\Ker(f)$, defined by $f(x)=f(x')$, is a convex equivalence relation on $(X,\leqslant_X)$, and the mapping defined by $[x]_{\Ker(f)}\mapsto f(x)$ is an order isomorphism between $(X/\Ker(f),<_X)$ and $(f(X),<_Y)$. Also, $f$ is {\em strictly $(<_X,<_Y)$-increasing} if $x<_Xx'$ implies $f(x)<_Yf(x')$ for all $x,x'\in X$.

In the remainder of the subsection, we recall the construction of orders $<_{\vec E}$ from \cite{MTwmon} and state their basic properties. 

\begin{Definition} 
Let $(X,<)$ be a linear order and $E$ a convex equivalence relation on $X$. Define:
$$x<_Ey\ \mbox{ iff }\ (E(x,y)\land y<x) \vee(\neg E(x,y)\land x<y).$$
\end{Definition}

It is easy to see that $(X,<_E)$ is a linear order; the order $<_E$ reverses the order $<$ within each $E$-class, but the classes in the quotient order remain originally ordered. In particular, $E$ is a $<_E$-convex equivalence relation, and $(X/E,<)=(X/E,<_E)$ holds. Furthermore, let $E'$ be another convex equivalence relation on $(X,<)$ that is $\subseteq$-comparable (either finer or coarser) to $E$. It is easy to see that $E'$ is a $<_E$-convex equivalence, so the order $(<_E)_{E'}$ is well defined. Similarly, the order $(<_{E'})_E$ is well-defined; it is not hard to see that $(<_E)_{E'}=(<_{E'})_E$ holds.

\begin{Definition}\label{Definition <E} 
Let $(X,<)$ be a linear order and $\vec E=(E_1,\dots,E_{n})$ a sequence of convex equivalence relations on $X$, such that any two of them are $\subseteq$-comparable. Define:
     $$<_{\vec E}:=(\dots(<_{E_1})_{E_2}\dots)_{E_{n}}.$$ 
\end{Definition}

\begin{Remark}\phantomsection\label{Remark_order_<_vecE}
\begin{enumerate}[(a)]
\item If the order $(X,<)$ and the sequence $\vec E$ in the previous definition are definable, then the resulting order $<_{\vec E}$ is definable with the same parameters. Similarly, if $X$ is type-definable over $A$ and $<$ and $\vec E$ are relatively $A$-definable, then $<_{\vec E}$ is relatively $A$-definable.

\item We have already remarked that $<_{(E_1,E_2)}=<_{(E_2,E_1)}$ holds for any pair of $\subseteq$-comparable convex equivalences.  By induction, if $\vec E$ is a sequence of convex equivalences such that any two of them are $\subseteq$-comparable, it is easy to prove that the order $<_{\vec E}$ does not depend on the order of elements of $\vec E$: $<_{\vec E}=<_{\pi(\vec E)}$ holds for any permutation $\pi(\vec E)$ of $\vec E$. 

\item It is very easy to see that $(<_E)_E=<$ always holds and is only slightly harder to verify $(<_{\vec E})_{\vec E}=<$. Indeed, we have  $(<_{\vec E})_{\vec E}=<_{(E_1,E_2\dots, E_n,E_1,\dots,E_n)}=  
<_{(E_1,E_1,\dots, E_n,E_n)}=<$;  here, the second equality holds by (b).
As a consequence, we have the following. If $\triangleleft$ is another linear order on $X$,  then  $<=\triangleleft_{\vec E}$ and  $<_{\vec E}=\triangleleft$ are equivalent. 

\item It is rather straightforward to verify that $<_{\vec E}\neq <_{\vec E'}$ holds for any pair of distinct strictly increasing sequences of convex equivalence relations $\vec E$ and $\vec E'$ that do not contain the identity relation.

\item Let $D\subseteq X$ be $<$-convex. Then $D$ properly intersects at most two $E_1$-classes (the endpoints of $D/E_1$ in the quotient order $X/E_1$). Thus, $D$ has at most three $<_{E_1}$-convex components. Each of these components has at most three $(<_{E_1})_{E_2}$-components, etc. Thus, $D$ can have a maximum of $3^n$ $<_{\vec E}$-convex components. Taking into account (c), it follows that a subset $D\subseteq X$ has finitely many $<$-convex components if and only if $D$ has finitely many $<_{\vec E}$-convex components. 
\end{enumerate}
\end{Remark}

\subsection{Relative definability}

\begin{Definition}
Let $p(x)$ be a partial type over $A$.  
A set $X\subseteq p(\Mon)$ is {\em relatively $B$-definable within $p(\Mon)$} if $X=D\cap p(\Mon)$ holds for some $B$-definable set $D\subseteq \Mon^{|x|}$. In that case, any formula $\phi(x)$ that defines $D$ is called a {\em relative definition of $X$ within $p(\Mon)$}, and we also say that $X$ is relatively defined by $\phi(x)$  within $p(\Mon)$.
\end{Definition}

Clearly, the family of relatively $B$-definable subsets of a type-definable set is closed for finite Boolean combinations. Also,
if the subset $P\subseteq \Mon^n$ is type-definable over $A$, then so is any finite power of $P$. Therefore, the relative definability of the relations on $P$ is well defined. For example, if a relation $R\subseteq P^2$ is relatively defined by a formula $\phi(x,y)$ and if $(P,R)$ is a linear order, then we say that {\em $\phi$ relatively defines a linear order on $P$}. Similarly, if $P\subseteq \Mon^n$  and $Q\subseteq \Mon^m$ are type-definable sets, then so is the set $P\times Q$ and the relative definability of (graphs of) functions $f:P\to Q$ is well defined.

Several interesting properties of relatively definable relations on $p(\Mon)$ can be transferred to a definable neighborhood $\theta(\Mon) \supseteq p(\Mon)$ (where $\theta(x)\in p$).  For example, assume that $\phi(x,y)$ relatively defines a pre-order on $p(\Mon)$. Let \ $\psi(x,y,z):= \phi(x,x)\land (\phi(x,y)\land\phi(y,z)\rightarrow \phi(x,z))$. Then $p(x)\cup p(y)\cup p(z)\vdash \psi(x,y,z)$, so by compactness there exists a $\theta(x)\in p(x)$ such that $\{\theta(x),\theta(y),\theta(z)\}\vdash \psi(x,y,z)$.  
Therefore, $\phi(x,y)$ relatively defines a pre-order on $\theta(\Mon)$ and $(p(\Mon),\phi(p(\Mon)))$ is a suborder. 

The key point in the above argument is that the theory of pre-orders is universally axiomatizable, so that the property ``$\phi(x,y)$ relatively defines a pre-order on $p(\Mon)$'' can be expressed by a sentence  saying that ``$\psi(x,y,z)$ holds for all $x,y,z$ realizing $p$''. 
Formally, this is expressed by the following $L_{\infty,\omega}$-sentence: $(\forall x,y,z)\left(\bigwedge_{\theta\in p}(\theta(x)\land \theta(y)\land \theta(z))\rightarrow \psi(x,y,z)\right)$, which will be informally denoted by $(\forall x,y,z\models p)\ \psi(x,y,z)$. More generally, we will consider $L_{\infty,\omega}$-sentences denoted informally by $(\forall x_1\models p_1)\dots(\forall x_n\models p_n)\ \psi(x_1,\dots,x_n)$, where $p_1(x_1),\dots,p_n(x_n)$ are partial types and $\psi(x_1,\dots,x_n)$ an $L_{\Mon}$-formula, and call them {\em $\tp$-universal sentences}; 
the properties of relations (and their defining formulae) expressed by these sentences are called {\em $\tp$-universal properties}. For example, ``$\phi(x,y)$ relatively defines a pre-order on $p(\Mon)$'' and ``$\leqslant$ is a relatively definable pre-order on $p(\Mon)$'' are $\tp$-universal properties. The following is a version of the compactness that will be applied further in the text when dealing with $\tp$-universal properties. 

\begin{Fact}\label{Fact_L_P_sentence}
Suppose that $p_1(x_1),\dots,p_n(x_n)$ are partial types and $\phi(x_1,\dots,x_n)$ is an $L_{\Mon}$-formula such that $\Mon\models (\forall x_1\models p_1)\dots(\forall x_n\models p_n)\ \phi(x_1,\dots,x_n)$. Then there are   $\theta_i(x_i)\in p_i$ ($1\leqslant i\leqslant n$) such that: 
$$\Mon\models (\forall x_1\dots  x_n)\left(\bigwedge_{1\leqslant i\leqslant n}\theta_i'(x_i)\rightarrow \phi(x_1,\dots,x_n)\right)$$
for all formulae $\theta_i'(x_i)\in p_i$ such that $\theta_i'(\Mon)\subseteq \theta_i(\Mon)$ ($1\leqslant i\leqslant n$).  
\end{Fact}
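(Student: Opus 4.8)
The plan is to derive the statement from ordinary compactness after rewriting the hypothesis and desired conclusion as infinitary conjunctions. Write $\Phi(x_1,\dots,x_n)$ for the $L_{\infty,\omega}$-sentence $(\forall x_1\dots x_n)\bigl(\bigwedge_{i}\bigwedge_{\theta\in p_i}\theta(x_i)\rightarrow \phi(x_1,\dots,x_n)\bigr)$, which is exactly what the informal notation $(\forall x_1\models p_1)\dots(\forall x_n\models p_n)\,\phi$ abbreviates. So the hypothesis says $\Mon\models\Phi$, i.e.\ every tuple $(a_1,\dots,a_n)$ with $a_i\models p_i$ for each $i$ satisfies $\phi(a_1,\dots,a_n)$. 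Negating the conclusion, suppose that for \emph{every} choice of $\theta_i(x_i)\in p_i$ there is a witness: formulae $\theta_i'(x_i)\in p_i$ with $\theta_i'(\Mon)\subseteq\theta_i(\Mon)$ and a tuple $(a_1,\dots,a_n)$ with $\Mon\models\bigwedge_i\theta_i'(a_i)\wedge\neg\phi(a_1,\dots,a_n)$.

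First I would reduce to the case where the $p_i$ are closed under finite conjunction (they are, being partial types by the paper's convention) so that the family $\{\theta(\Mon):\theta\in p_i\}$ is downward directed under $\subseteq$. Under that reduction, the condition ``there exists $\theta_i'\in p_i$ with $\theta_i'(\Mon)\subseteq\theta_i(\Mon)$ and $\Mon\models\theta_i'(a_i)$'' for a \emph{fixed} tuple is equivalent to ``$\Mon\models\theta_i(a_i)$'' for that tuple: one direction is immediate since $\theta_i'$ implies $\theta_i$ on $\Mon$, and conversely one can take $\theta_i'=\theta_i$ itself. Hence the negation of the conclusion simplifies to: for every choice of $\theta_i(x_i)\in p_i$ ($1\le i\le n$) there is a tuple $(a_1,\dots,a_n)$ with $\Mon\models\bigwedge_i\theta_i(a_i)\wedge\neg\phi(a_1,\dots,a_n)$.

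Now consider the type $\Sigma(x_1,\dots,x_n)$ in the variables $x_1,\dots,x_n$ consisting of all formulae $\theta_i(x_i)$ for $\theta_i\in p_i$ and $1\le i\le n$, together with $\neg\phi(x_1,\dots,x_n)$. The previous paragraph says precisely that every finite subset of $\Sigma$ is realised in $\Mon$ (a finite subset involves finitely many $\theta_i$'s from each $p_i$; replace them by a single conjunction using closure under conjunctions, and apply the simplified negation). By saturation of $\Mon$ (or just by compactness, since $\Sigma$ is a small type), $\Sigma$ is realised by some tuple $(a_1,\dots,a_n)$. Then $a_i\models p_i$ for each $i$, yet $\Mon\models\neg\phi(a_1,\dots,a_n)$, contradicting $\Mon\models\Phi$. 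This contradiction gives the existence of the required $\theta_i$'s.

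I expect the only real subtlety to be the bookkeeping around the ``$\theta_i'$ with $\theta_i'(\Mon)\subseteq\theta_i(\Mon)$'' clause: one must check that the stated conclusion with the universally quantified $\theta_i'$ is genuinely equivalent, under the closure-under-conjunctions convention, to the cleaner statement ``$\Mon\models(\forall x_1\dots x_n)\bigl(\bigwedge_i\theta_i(x_i)\rightarrow\phi\bigr)$'', so that proving the latter suffices. This is a routine but necessary observation; everything else is a direct compactness argument and I would present it in a few lines.
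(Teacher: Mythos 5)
Your proof is correct and is exactly the routine compactness/saturation argument the paper intends: after observing that the clause about the $\theta_i'$ is redundant (it follows automatically once $\Mon\models(\forall x_1\dots x_n)(\bigwedge_i\theta_i(x_i)\rightarrow\phi)$ holds), you negate, use closure of the $p_i$ under conjunctions to see that $\bigcup_i p_i(x_i)\cup\{\lnot\phi(x_1,\dots,x_n)\}$ is finitely satisfiable, and realize it by saturation to contradict the hypothesis. The paper states this Fact without proof, describing it as ``a version of the compactness,'' so your write-up supplies precisely the argument that was left implicit.
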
 

\begin{Remark}\phantomsection\label{Remk_conjunct_tpuniversal}
A finite conjunction of tp-universal sentences is equivalent to a tp-universal sentence. For example, $(\forall x\models p)(\forall y\models q)\phi(x,y)\land (\forall x\models r)(\forall y\models p)\psi(x,y)$ is equivalent to $(\forall x,t\models p)(\forall y\models q)(\forall z\models r)(\phi(x,y)\land \psi(z,t))$.
\end{Remark}

One typical application of Fact \ref{Fact_L_P_sentence} is the following. Let $\mathcal P= (p(\Mon);R_1,\dots,R_n)$, where $p(x)$ is a partial type over $A$ and where each $R_i$ is a relatively $A$-definable finitary relation on $p(\Mon)$; let $\phi_i$ relatively define $R_i$. 
Suppose that some interesting property of $\mathcal P$ can be expressed by a tp-universal sentence
$(\forall x_1,\dots,x_m\models p)\ \psi(x_1,\dots,x_m)$, where the formula $\psi\in L_A$ is built from $\phi_1,\dots,\phi_n$ (viewed as atomic);
then Fact \ref{Fact_L_P_sentence} produces $A$-definable
superstructures $(\theta(\Mon);\phi_1(\theta(\Mon)),\dots,\phi_n(\theta(\Mon)))$ of $\mathcal P$ with the same property; we will call them {\em definable extensions} of $\mathcal P$. 
In all future applications, we will fix the sequence of all relevant formulas before applying Fact \ref{Fact_L_P_sentence}.

 \begin{Example}\label{Examples of tp universal}
\begin{enumerate}[(a)]
\item Let $\mathcal P=(p(\Mon);<)$ be a relatively $A$-definable linear order; we will always assume that $<$ is defined by the formula $x<y$. Clearly, that $\mathcal P$ is a linear order is expressible by a $\tp$-universal sentence (built from $x<y$), so by Fact \ref{Fact_L_P_sentence} there is $\theta(x)\in p$ such that $x<y$ defines a linear order, also denoted by $<$, on $\theta(\Mon)$; $(\theta(\Mon);<)$  is a definable extension of $\mathcal P$.

\item Consider $\mathcal P=(p(\Mon);<,E)$, where $<$ is a linear order and $E$ is a convex equivalence relation; we will always implicitly assume that $E$ is relatively defined by $E(x,y)$. Each of the following properties: ``$x<y$ defines a linear order on $p(\Mon)$", ``$E(x,y)$ defines an equivalence relation on $p(\Mon)$" and ``$E$-classes are $<$-convex subsets of $p(\Mon)$" is a tp-universal property (the latter is expressed by $(\forall x,y,z\models p)(E(x,y)\land x<z<y\rightarrow E(x,z))$). By Remark \ref{Remk_conjunct_tpuniversal}, the conjunction of these properties is also tp-universal, so by Fact \ref{Fact_L_P_sentence}, there exists an $A$-definable extension $(\theta(\Mon);<,\hat E)$ of $\mathcal P$ such that $<$ is a linear order and $\hat E$ is a convex equivalence relation on $\theta(\Mon)$. 
 
\item Let $\mathcal P=(p(\Mon);<,E_1,\dots,E_n)$ be a relatively $A$-definable linear order with a $\subseteq$-increasing sequence $(E_1,\dots,E_n)$ of relatively $A$-definable $<$-convex equivalence relations (each $E_i$ defined by an $L_A$-formula $E_i(x,x')$). Again, we can describe this by a $\tp$-universal sentence, so an $A$-definable extension $(\theta(\Mon),<,\hat E_1,\dots,\hat E_n)$ of $\mathcal P$ can be found such that $(\hat E_1,\dots,\hat E_n)$ is a $\subseteq$-increasing sequence of $<$-convex equivalence relations on $\theta(\Mon)$. 
\end{enumerate}
\end{Example}

Whenever $f : p(\Mon) \to q(\Mon)$ is a relatively definable function (graph), it will always be implicitly assumed that (the graph of) $f$ is relatively defined by the formula $f(x,y)$ (or $f(x) = y$).
In general, we cannot express by a tp-universal sentence that $f\subseteq p(\Mon)\times q(\Mon)$ is a function, unless $q(\Mon)$ is definable. However, one part of the conclusion of \ref{Fact_L_P_sentence} always holds: $f$ has a definable extension, which is relatively defined by $f(x,y)$.

\begin{Fact}\label{Fact rel def kernel and inverse image} 
Let $p(x)$ and $q(y)$ be partial types over $A$ and $f:p(\Mon)\to q(\Mon)$ a relatively $A$-definable function. Then:
\begin{enumerate}[(i)]
\item There are $\theta_p(x)\in p$ and $\theta_q(y)\in q$ and a definable extension of $f$, $\hat f:\theta_p(\Mon)\to\theta_q(\Mon)$, relatively defined by $f(x,y)$.
\item The image, $f(p(\Mon))\subseteq q(\Mon)$, is type definable over $A$. The inverse image of a relatively $A$-definable subset of $q(\Mon)$ is a relatively $A$-definable subset of $p(\Mon)$.
\item The kernel relation of $f$, $\Ker f$, defined by $f(x)=f(x')$, is a relatively $A$-definable equivalence relation on $p(\Mon)$.
\end{enumerate}
\end{Fact}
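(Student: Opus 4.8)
The plan is to prove (i) first and then derive (ii) and (iii) almost for free from the definable extension produced in (i).

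For (i), the natural tp-universal property to feed into Fact~\ref{Fact_L_P_sentence} is functionality of the defining formula $f(x,y)$ restricted to $q$ in the second coordinate: $(\forall x\models p)(\forall y\models q)(\forall y'\models q)\,(f(x,y)\land f(x,y')\rightarrow y=y')$. This sentence is built from $f(x,y)$ and $=$, and it is true precisely because the graph of $f$ equals (the $A$-definable set defined by $f(x,y)$) $\cap\,(p(\Mon)\times q(\Mon))$. Fact~\ref{Fact_L_P_sentence} then yields $\theta_p^0(x)\in p$ and $\theta_q(y)\in q$ on which $f(x,y)$ is functional. Functionality gives uniqueness but not totality of the putative function, so I would next replace $\theta_p^0$ by $\theta_p(x):=\theta_p^0(x)\land(\exists y)(\theta_q(y)\land f(x,y))$, which is still in $p$ since $f(a)\models q$ for every $a\models p$. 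On $\theta_p(\Mon)$ each point has a unique $f(x,y)$-partner in $\theta_q(\Mon)$, so the $L_A$-formula $\theta_p(x)\land\theta_q(y)\land f(x,y)$ defines the graph of an $A$-definable function $\hat f:\theta_p(\Mon)\to\theta_q(\Mon)$; uniqueness also shows that $\hat f$ restricts to $f$ on $p(\Mon)$ and that its graph is (the set defined by $f(x,y)$) $\cap\,(\theta_p(\Mon)\times\theta_q(\Mon))$, i.e.\ $\hat f$ is relatively defined by $f(x,y)$.

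Given $\hat f$, part (iii) is immediate: $\Ker\hat f$ is an $A$-definable equivalence relation on $\theta_p(\Mon)$, and $\Ker f$ is its restriction to $p(\Mon)^2$, hence a relatively $A$-definable equivalence relation (a relative definition is $(\exists y)(\theta_q(y)\land f(x,y)\land f(x',y))$). The inverse-image half of (ii) is analogous: if $Y=\psi(\Mon)\cap q(\Mon)$ with $\psi\in L_A$, then $\hat f^{-1}(\psi(\Mon))$ is $A$-definable, and $f^{-1}(Y)=\hat f^{-1}(\psi(\Mon))\cap p(\Mon)$ because $\hat f$ agrees with $f$ on $p(\Mon)$ and $f$ maps into $q(\Mon)$. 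For the image in (ii) I would exhibit the partial type directly: let $r(y)$ consist of $q(y)$ together with all formulas $(\exists x)(\theta(x)\land f(x,y))$ for $\theta\in p$. Any $f(a)$ with $a\models p$ realizes $r$. Conversely, if $b\models r$, then (using closure of $p$ under conjunctions) $p(x)\cup\{f(x,b)\}$ is finitely satisfiable, hence realized by some $a$; then $a\models p$, $b\models q$, $f(a,b)$, so $(a,b)$ lies on the graph of $f$ and $b=f(a)$. Thus $f(p(\Mon))=r(\Mon)$ is type-definable over $A$.

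I expect the only real subtlety to be in step (i): functionality of $f$ cannot be expressed by a tp-universal sentence over $p$ alone, because the uniqueness quantifier ``$\forall y$'' must range only over realizations of $q$, which a tp-universal sentence cannot enforce. The fix is to treat functionality as a joint tp-universal property in the variables of $p$ and $q$, obtain a neighbourhood on which $f(x,y)$ is functional, and only afterwards shrink the $p$-side formula to restore totality; everything after that is routine compactness bookkeeping.
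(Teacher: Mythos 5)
Your proof is correct and follows essentially the same route as the paper: both apply Fact \ref{Fact_L_P_sentence} to the tp-universal statement of partial functionality of $f(x,y)$ jointly in the $p$- and $q$-variables, then secure totality to obtain the $A$-definable extension $\hat f$, from which (ii) and (iii) follow by the same formulas. The only (harmless) variation is that you restore totality by conjoining $(\exists y)(\theta_q(y)\land f(x,y))$ to the $p$-side formula, whereas the paper notes totality is automatic over $p(\Mon)$ and gets $\theta_p$ from a second application of Fact \ref{Fact_L_P_sentence} to the $\exists_1$-statement.
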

\begin{proof}
(i) The sentence $(\forall x,x'\models p)(\forall y\models q)(f(x,y)\land f(x',y)\rightarrow x=x')$ expresses that $f(x,y)$ relatively defines a partial function. By Fact \ref{Fact_L_P_sentence} there is a formula $\theta_q(y)\in q$ such that
$f(x,y)$ relatively defines a partial function from $p(\Mon)$ to $\theta_q(\Mon)$; clearly, this function is total. Now,   ``$f(x,y)$ relatively defines a function $p(\Mon)\to\theta_q(\Mon)$" is expressed by 
$(\forall x\models p)(\exists_1 y)(\theta_q(y)\land f(x,y))$. By Fact \ref{Fact_L_P_sentence} there is a $\theta_p(x)\in p$ such that $f(x,y)$ relatively defines a function $\hat f:\theta_p(\Mon)\to\theta_q(\Mon)$. 
This proves (i).

(ii) Fix $\hat f:\theta_p(\Mon)\to\theta_q(\Mon)$, an $A$-definable extension of $f$, given by (i). It is easy to see that the type $\{(\exists x)(\hat f(x)=y\land \psi(x))\mid \psi(x)\in p(x)\}$ defines $f(p(\Mon))$, and that if $D\subseteq q(\Mon)$ is relatively defined by $\phi(y)$, then the set $f^{-1}(D)$ is relatively defined by $(\exists y)(\hat f(x)=y\land \theta_p(x)\land \theta_q(y)\land \phi(y))$. 

(iii) Clearly, $\hat f(x)=\hat f(x')$ relatively defines $\Ker f$.  
\end{proof}

\begin{Remark}
(a) Relatively definable functions with domain $p(\Mon)$ can be viewed as "germs" of (partial) definable functions at $p(\Mon)$, where two definable functions $f,g$ are said to have the same germ at $p$, $f\sim_p g$, if there is a definable $D\supseteq p(\Mon)$ such that $f(x)=g(x)$ for all $x\in D$. This is an equivalence relation, whose classes naturally correspond to relatively definable functions defined on $p(\Mon)$.

\smallskip
(b) When dealing with tp-universal properties that involve a relatively definable function $f:p(\Mon)\to q(\Mon)$, we will proceed similarly to the proof of Fact \ref{Fact rel def kernel and inverse image}(i). First, we will collect all relevant tp-universal properties of relatively definable relations on $q(\Mon)$ and apply Fact \ref{Fact_L_P_sentence} to find an appropriate $\theta_q (y)\in q$ such that, in addition, the formula $f(x,y)$ relatively defines a function from $p(\Mon)$ to $\theta_q(\Mon)$. Then we proceed with this function and other properties involving $p(x)$. This is illustrated in the next example. 
\end{Remark}
  
\begin{Example}
Suppose that $p(x)$ and $q(y)$ are partial types over $A$, $<_p$ and $<_q$ are relatively $A$-definable orders on $p(\Mon)$ and $q(\Mon)$, respectively, and $f:p(\Mon)\to q(\Mon)$ is a relatively $A$-definable strictly $(<_p,<_q)$-increasing function. We find $\theta_p(x)\in p$ and $\theta_q(y)\in q$, such that the $A$-definable structure determined by $\theta_p(x)$, $\theta_q(y)$, $x<_px'$, $y<_qy'$ and $f(x,y)$ has the properties listed above, as follows. First, apply Fact \ref{Fact_L_P_sentence} to:

-- $y<_q y'$ relatively defines a linear order on $q(\Mon)$, and 

-- $f(x,y)$ relatively defines a partial function from $p(\Mon)$ into $q(\Mon)$. 

\noindent
Let $\theta_q(y)\in q$ be such that $y<_q y'$ defines a linear order on $\theta_q(\Mon)$ and $f(x,y)$ relatively defines a function from $p(\Mon)$ into $\theta_q(\Mon)$. The desired formula $\theta_p(x)\in p$ is obtained by applying Fact \ref{Fact_L_P_sentence} to:

-- $x<_px'$ relatively defines a linear order on $p(\Mon)$, and

-- $f(x,y)$ relatively defines a strictly $(<_p,<_q)$-increasing function from $p(\Mon)$ into $\theta_q(\Mon)$. 
\end{Example}

\subsection{Stationarily ordered types}

 In this subsection, we recall basic facts about stationarily ordered, or so-types, from \cite{MT}.

\begin{Definition} A complete type $p\in S(A)$ is {\em stationarily ordered} (or {\em so-type} for short),  if there exists a relatively $A$-definable linear order $<$ on $p(\Mon)$ such that for every relatively definable set $D\subseteq p(\Mon)$, (exactly) one of the sets $D$ and $p(\Mon)\smallsetminus D$ is left-eventual, and (exactly) one of them is right-eventual in $(p(\Mon),<)$. In that case we say that $(p,<)$ is an {\em so-pair} over $A$. 
\end{Definition}

In general, so-types are not NIP, see Example 3.2 in \cite{MT}. However, as we will see further in the paper,  they share several nice properties with dp-minimal types,  even with weight-one types in stable theories.

\begin{Definition} For an so-pair $\mathbf p=(p,<)$, define the left ($\mathbf p_l$) and the right ($\mathbf p_r$) globalization of $\mathbf p$: 
    
$\mathbf p_l(x):=\{\phi(x)\in L_{\Mon}\mid \phi(\Mon)\cap p(\Mon) \mbox{ is left-eventual in } (p(\Mon),<)\}$ \ and

$\mathbf p_r(x):=\{\phi(x)\in L_{\Mon}\mid \phi(\Mon)\cap p(\Mon) \mbox{ is right-eventual in } (p(\Mon),<)\}$.
\end{Definition}

We collect the basic properties of the defined globalizations. These were proved in {\cite[Remark 3.5, Lemma 3.6]{MT}}. However, to maintain the completeness of the presentation, we provide a proof here.

\begin{Fact}\label{Fact_so_basic}
Let $\mathbf p=(p,<)$ be an so-pair over $A$. 
\begin{enumerate}[(i)] 
    \item Both $\mathbf p_l$ and $\mathbf p_r$ are complete global types that extend $p$.
    \item Both $\mathbf p_l$ and $\mathbf p_r$ are $A$-invariant types; in particular, they are nonforking extensions of $p$. Moreover, $\mathbf p_l$ and $\mathbf p_r$ are the only $A$-invariant globalizations of $p$.
    
    \item For all $B\supseteq A$ the locus $\mathbf (\mathbf p_ r)_{\restriction B}(\Mon)$ is a final part of $(p(\Mon),<)$, and the locus $(\mathbf p_l)_{\restriction B}(\Mon)$ is an initial part of $(p(\Mon),<)$. 
    \item For all $a,b\models p$: $a\models(\mathbf p_ l)_{\restriction Ab}\Leftrightarrow b\models(\mathbf p_r)_{\restriction Aa}$. In other words, $(a,b)$ is a Morley sequence in $\mathbf p_r$ over $A$ iff $(b,a)$ is a Morley sequence in $\mathbf p_l$ over $A$, or $(\mathbf p_r^2)_{\restriction A}(x,y)=(\mathbf p_l^2)_{\restriction A}(y,x)$. 
	\item For any relatively $A$-definable linear order $\triangleleft$ on $p(\Mon)$, $(p,\triangleleft)$ is also an so-pair over $A$. 
\end{enumerate}
\end{Fact}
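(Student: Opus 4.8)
The plan is to prove the five items in this order, treating (i) and the first half of (ii) as routine, doing the real work in the uniqueness clause of (ii), and deducing (iii)--(v) from it with additional, somewhat more delicate arguments. Throughout I may assume $p$ is non-algebraic: if $p$ is algebraic then $p(\Mon)$ is finite, $p$ has a unique global extension, $\mathbf p_l=\mathbf p_r=p$, and all five statements are immediate. For (i): in any linear order the non-empty initial parts are totally ordered by $\subseteq$, so a finite intersection of left-eventual sets again contains a non-empty initial part; hence $\mathbf p_l$ is a finitely consistent set of formulas closed under conjunctions, and the so-property (``exactly one of $D,D^c$ is left-eventual'') makes it complete, while $p(\Mon)$ being trivially left-eventual gives $p\subseteq\mathbf p_l$; dually for $\mathbf p_r$. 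For the first half of (ii): every $\sigma\in\Aut(\Mon/A)$ fixes $p(\Mon)$ setwise and preserves $<$ on it (both are relatively $A$-definable), hence preserves left- and right-eventuality of relatively definable subsets, so $\sigma(\mathbf p_l)=\mathbf p_l$ and $\sigma(\mathbf p_r)=\mathbf p_r$; as $A$-invariant global types they do not fork over $A$. (For non-algebraic $p$, since $x>a\in\mathbf p_r$ and $x<a\in\mathbf p_l$ for every $a\models p$, Morley sequences of $\mathbf p_r$ are $<$-increasing and those of $\mathbf p_l$ are $<$-decreasing, so $\mathbf p_l\neq\mathbf p_r$.)

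For the uniqueness, let $\mathfrak q$ be any $A$-invariant global extension of $p$ and suppose $\mathfrak q\neq\mathbf p_r$. Pick $\phi(x,e)\in\mathfrak q\setminus\mathbf p_r$; then $\lnot\phi(\Mon,e)\cap p(\Mon)$ is right-eventual, so it contains a non-empty $<$-final part $F$. Then $D_e:=\phi(\Mon,e)\cap p(\Mon)$ is disjoint from $F$; choosing any $y_e\in F$ (so $y_e\models p$), we get $D_e< y_e$, i.e.\ $\phi(x,e)\vdash_p x< y_e$, hence $\mathfrak q\vdash x< y_e$. Now fix an arbitrary $c\in p(\Mon)$; since $c\equiv_A y_e$ there is $\sigma\in\Aut(\Mon/A)$ with $\sigma(y_e)=c$, and putting $e':=\sigma(e)$ we have $\phi(x,e')\in\mathfrak q$ by $A$-invariance and $D_{e'}=\sigma(D_e)<\sigma(y_e)=c$, so $\mathfrak q\vdash x< c$. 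Thus $\mathfrak q\vdash x< c$ for every $c\in p(\Mon)$. Finally, if $\psi(x,d)\in\mathbf p_l$ (with $\psi\in L_A$, $d\in\Mon$) then $\psi(\Mon,d)\cap p(\Mon)$ contains a non-empty initial part $I$, so for $i_0\in I$ we get $x\le i_0\,\wedge\,p(x)\vdash\psi(x,d)$, whence $\mathfrak q\vdash\psi(x,d)$; since both types are complete this forces $\mathfrak q=\mathbf p_l$. So the only $A$-invariant globalizations of $p$ are $\mathbf p_l$ and $\mathbf p_r$.

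For (iii): $(\mathbf p_r)_{\restriction B}(\Mon)$ is exactly the intersection of all relatively $B$-definable right-eventual subsets of $p(\Mon)$. The point is that each such set $D$ contains a non-empty, relatively $B$-definable $<$-final part (namely the largest final part of $p(\Mon)$ inside $D$, whose relative definability over $B$ one checks using the definable-extension machinery of Section~\ref{Section1}, Fact~\ref{Fact_L_P_sentence}); since these final parts are then cofinally many among the sets being intersected, $(\mathbf p_r)_{\restriction B}(\Mon)$ is an intersection of $<$-final parts, hence a $<$-final part contained in $\{x>c\mid c\in B,\ c\models p\}$; dually for $\mathbf p_l$. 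For (iv), by (iii) we know $(\mathbf p_l)_{\restriction Ab}(\Mon)$ is an initial part below $b$ and $(\mathbf p_r)_{\restriction Aa}(\Mon)$ is a final part above $a$, so the statement reduces to the coincidence of the $A$-invariant binary relation ``$x\models(\mathbf p_l)_{\restriction Ay}$'' on $p(\Mon)$ with its transpose ``$y\models(\mathbf p_r)_{\restriction Ax}$''; I would prove this by the same eventual-part analysis applied to the formula obtained from a given witnessing formula by interchanging its two variables, again with the bookkeeping of Fact~\ref{Fact_L_P_sentence}. For (v): by (ii) the types $\mathbf p_l,\mathbf p_r$ are the only $A$-invariant globalizations of $p$, hence do not depend on the chosen so-order; Morley sequences in $\mathbf p_l$ and $\mathbf p_r$ are $\triangleleft$-monotone (non-constant, as $p$ is non-algebraic), and by (iv) they cannot point in the same $\triangleleft$-direction, so exactly one of them—say $\mathbf p_r$—is $\triangleleft$-increasing; rerunning the (iii)-argument with $\triangleleft$ in place of $<$ shows its restrictions are non-empty $\triangleleft$-final parts, and since for every relatively definable $D\subseteq p(\Mon)$ one of $D,D^c$ is decided by $\mathbf p_r$ and one by $\mathbf p_l$, one of $D,D^c$ is $\triangleleft$-right-eventual and one is $\triangleleft$-left-eventual, so $(p,\triangleleft)$ is an so-pair.

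The step I expect to be the main obstacle is the uniqueness in (ii)—turning ``$\mathfrak q\neq\mathbf p_r$'' into ``$\mathfrak q$ concentrates to the far left of $p(\Mon)$'', where the decisive and easily overlooked input is that $p(\Mon)$ is a single $\Aut(\Mon/A)$-orbit (so that conjugating a single bounding element $y_e$ covers all of $p(\Mon)$). Underneath (iii)--(v) sits the routine-but-genuinely-necessary verification, via Fact~\ref{Fact_L_P_sentence} and the relative-definability calculus of Section~\ref{Section1}, that ``the largest initial/final part of $p(\Mon)$ contained in a relatively definable set'' is again relatively definable over the same parameters; the symmetry argument for (iv) and the completeness argument implicit in (v) are the other places where care is needed.
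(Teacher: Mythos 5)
Your items (i)--(iii) are essentially sound, but (iv) and (v) are precisely where the real work of this Fact lies, and your sketches there do not contain the needed ideas. For (iv), ``apply the same eventual-part analysis to the formula with its variables interchanged'' is not an argument: unwinding the statement, what has to be shown is that if $\{x\in p(\Mon)\mid \models\psi(x,b')\}$ is $<$-left-eventual for every $b'\models p$, then $\{y\in p(\Mon)\mid \models\psi(a,y)\}$ is $<$-right-eventual for every $a\models p$, and this does not follow from eventual-part bookkeeping applied to the single swapped formula; the two hypotheses are not formally incompatible at that level. The paper's proof needs an extra configuration: given $b\models(\mathbf p_r)_{\restriction Aa}$ it produces (by conjugation) $c\models p$ with $b\models(\mathbf p_l)_{\restriction Ac}$, uses (iii) to get $c\models(\mathbf p_r)_{\restriction Aa}$ and $a\models(\mathbf p_l)_{\restriction Ac}$, hence $ab\equiv ac\equiv bc\ (A)$, and then transports $b\models(\mathbf p_l)_{\restriction Ac}$ to $a\models(\mathbf p_l)_{\restriction Ab}$ by an automorphism plus $A$-invariance. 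For (v), your key step ``rerunning the (iii)-argument with $\triangleleft$ in place of $<$ shows its restrictions are non-empty $\triangleleft$-final parts'' is circular: the (iii)-argument is fed by the fact that every formula of $(\mathbf p_r)_{\restriction B}$ has a $<$-right-eventual locus, which is the definition of $\mathbf p_r$ in terms of $<$; for $\triangleleft$ that input is exactly what (v) asserts and is not yet available. The paper closes this with a separate argument: for $a\models(\mathbf p_l)_{\restriction B}$ and $b\triangleleft a$, it finds $c\models(\mathbf p_l)_{\restriction B}$ with $a\models(\mathbf p_l)_{\restriction Bc}$, applies (iv) to get $c\models(\mathbf p_r)_{\restriction Aa}$, uses $(a\triangleleft x)\in\mathbf p_r$ to see $b\not\models(\mathbf p_r)_{\restriction Aa}$, deduces $b<c$ from (iii), and concludes $b\models(\mathbf p_l)_{\restriction B}$ from (iii) again. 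Some maneuver of this kind is required; your proposal defers it.

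On the rest: your uniqueness argument in (ii) (bounding the locus of an invariant $\mathfrak q\neq\mathbf p_r$ below every realization of $p$ by conjugating one bounding element, then absorbing $\mathbf p_l$) is correct and a legitimate alternative to the paper's shorter route, which observes that by invariance either $(a<x)\in\mathfrak q$ for all $a\models p$ or $(x<a)\in\mathfrak q$ for all $a\models p$ and then invokes the so-property. In (iii) your architecture works, but not with the witness you name: the largest final part of $p(\Mon)$ inside $D$ is in general only a directed union of relatively $B$-definable sets and need not be relatively $B$-definable. What Fact \ref{Fact_L_P_sentence} actually yields is, for a bound $c$ with $\{y\in p(\Mon)\mid c\leqslant y\}\subseteq D$, a formula $\theta\in p$ such that $<$ linearly orders $\theta(\Mon)$ and $\theta(y)\wedge c\leqslant y\rightarrow\phi(y)$ holds; then $\psi(x):=\forall y\,(\theta(y)\wedge x\leqslant y\rightarrow\phi(y))$ relatively defines over $B$ a non-empty final part of $p(\Mon)$ contained in $D$, and with this replacement your ``cofinally many final parts'' argument for (iii) goes through (the paper instead upgrades ``some $a$'' to ``any $a$'' by conjugation over $B$). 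Finally, a small point: in the algebraic case, finiteness of $p(\Mon)$ alone does not give a unique global extension; you need the observation that a relatively $A$-definable linear order forces $|p(\Mon)|=1$.
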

\begin{proof}
(i) is easy. For (ii), note that the property being a left-eventual (right-eventual) subset of $p(\Mon)$ is invariant under automorphisms from $\Aut_A(\Mon)$, so both $\mathbf p_l$ and $\mathbf p_r$ are $A$-invariant. To see that they are the only two $A$-invariant extensions, suppose that $\mathfrak p\supset p$ is $A$-invariant. By $A$-invariance, $(a<x)\in\mathfrak p$ for all $a\models p$ or $(x<a)\in\mathfrak p$ for all $a\models p$. In the first case, it is easy to see that any $\phi(x)\in\mathfrak p$ has an arbitrarily large realization in $p(\Mon)$, so $\phi(\Mon)\cap p(\Mon)$ is a right-eventual subset of $(p(\Mon),<)$, that is, $\phi(x)\in \mathbf p_r$; $\mathfrak p=\mathbf p_r$ follows. Similarly, $(x<a)\in\mathfrak p$ implies $\mathfrak p=\mathbf p_l$. 

(iii) Obviously, the locus $(\mathbf{p}_{r})_{\restriction B}(\Mon)$ is right-eventual in $(p(\Mon),<)$, so $\{x \in p(\Mon) \mid a < x\} \subseteq (\mathbf{p}_{r})_{\restriction B}(\Mon)$ holds for some (any) $a \models (\mathbf{p}_{r})_{\restriction B}$; this implies that $(\mathbf{p}_{r})_{\restriction B}(\Mon)$ is a final part $(p(\Mon),<)$. Likewise, $(\mathbf{p}_{l})_{\restriction B}(\Mon)$ is an initial part of $(p(\Mon),<)$.

(iv) Suppose $b\models (\mathbf p_{r})_{\restriction Aa}$ and let $c\models p$ be such that $b\models(\mathbf p_{l})_{\restriction Ac}$; in particular, we have $a<b<c$. By (iii), $c>b$ and $b\models(\mathbf p_{r})_{\restriction Aa}$ imply $c\models (\mathbf p_{r})_{\restriction Aa}$, while $a<b$ and $b\models(\mathbf p_{l})_{\restriction Ac}$ imply $a\models(\mathbf p_{l})_{\restriction Ac}$. Thus, $ab\equiv ac\equiv bc\ (A)$. Now $b\models(\mathbf p_{l})_{\restriction Ac}$ implies $a\models(\mathbf p_{l})_{\restriction Ab}$ as $\mathbf p_r$ is $A$-invariant.
The other implication is similar.

(v) Let $x\triangleleft y$ be an $L_A$-formula that defines $\triangleleft$. Clearly, for $a\models p$, either $x\triangleleft a$ or $a\triangleleft x$ belongs to $\mathbf p_l$. Without loss of generality, suppose that the former is the case. By the $A$-invariance of $\mathbf p_l$, $(x\triangleleft a)\in\mathbf p_l$ for all $a\models p$. By (iv) and the $A$-invariance of $\mathbf p_r$, $(a\triangleleft x)\in\mathbf p_r$ for all $a\models p$. It suffices to prove that $(\mathbf p_{l})_{\restriction B}(\Mon)$ is an initial part of $(p(\Mon),\triangleleft)$ for all $B\supseteq A$; 
indeed, this implies that every formula from $\mathbf p_l$ is left-eventual in $(p(\Mon),\triangleleft)$, and an analogous argument shows that every formula from $\mathbf p_r$ is right-eventual in $(p(\Mon),\triangleleft)$, so we conclude that $(p,\triangleleft)$ is an so-pair over $A$.

So, let $B\supseteq A$, $a\models (\mathbf p_{l})_{\restriction B}$ and $b\models p$, $b\triangleleft a$. Since $(a\triangleleft x)\in\mathbf p_r$,  $b\not\models(\mathbf p_{r})_{\restriction Aa}$ follows. By saturation find $c\models (\mathbf p_{l})_{\restriction B}$ such that $a\models(\mathbf p_{l})_{\restriction Bc}$; in particular, $a\models(\mathbf p_{l})_{\restriction Ac}$, so $c\models(\mathbf p_{r})_{\restriction Aa}$ by (iv). Since $b\not\models(\mathbf p_{r})_{\restriction Aa}$, $c\models(\mathbf p_{r})_{\restriction Aa}$ and $(\mathbf p_{r})_{\restriction Aa}(\Mon)$ is a final part of $(p(\Mon),<)$ by (iii), we conclude $b<c$. Since $c\models(\mathbf p_{l})_{\restriction B}$ and $(\mathbf p_{l})_{\restriction B}(\Mon)$ is an initial part of $(p(\Mon),<)$ by (iii), $b<c$ implies $b\models(\mathbf p_{l})_{\restriction B}$, and we are done.
\end{proof}

Part (iii) of the previous fact suggests that the type $\mathbf (\mathbf p_ r)_{\restriction B}$ is the right $\mathbf p$-generic extension and $(\mathbf p_l)_{\restriction B}$ the left $\mathbf p$-generic extension of $p$. 

\begin{Definition}\label{Definition same orientation}
Let $p\in S(A)$ be an so-type and let $\triangleleft$ and $<$ be two relatively $A$-definable linear orders on $p(\Mon)$. We say that the order $\triangleleft$ has the same orientation as $<$, denoted by $\triangleleft\approx_p<$, if for some (all) $a\models p$ the formula $a\triangleleft x$ relatively defines a right-eventual part of $(p(\Mon),<)$. 
\end{Definition}

\begin{Lemma}\label{Lemma_same_orient}
Suppose that $p\in S(A)$ is a so-type and let $O_p$ be the set of all relatively $A$-definable linear orders on $p(\Mon)$. The relation $\approx_p$ is an equivalence relation on $O_p$ with exactly two classes, such that each class consists of the reverses of orders from the other class. 
\end{Lemma}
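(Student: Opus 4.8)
The plan is to reduce $\approx_p$ to a statement about the two $A$-invariant globalizations of $p$. Assume $p$ is non-algebraic (if $|p(\Mon)|=1$ then $O_p$ is a singleton and there is nothing of interest). By Fact \ref{Fact_so_basic}(v) every $\triangleleft\in O_p$ makes $(p,\triangleleft)$ an so-pair, hence has left and right globalizations $\mathbf p^{\triangleleft}_l$ and $\mathbf p^{\triangleleft}_r$; by Fact \ref{Fact_so_basic}(ii) the pair $\{\mathbf p^{\triangleleft}_l,\mathbf p^{\triangleleft}_r\}$ is exactly the set of all $A$-invariant global extensions of $p$, hence does not depend on $\triangleleft$ --- write it $\{\mathfrak q_0,\mathfrak q_1\}$. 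First I would note that $\mathfrak q_0\ne\mathfrak q_1$: since $\Aut_A(\Mon)$ acts transitively on $p(\Mon)$ and preserves each order in $O_p$, no order in $O_p$ has endpoints, so for any $a\models p$ the relatively $A$-definable set $\{x\in p(\Mon)\mid x<a\}$ is a non-empty $<$-initial part that is not right-eventual (it contains no final part, as $p(\Mon)$ has elements $>a$); hence $\mathbf p^{<}_l\ne\mathbf p^{<}_r$. The same transitivity shows that the condition in Definition \ref{Definition same orientation} is independent of the choice of $a$. Thus $\varepsilon\colon O_p\to\{\mathfrak q_0,\mathfrak q_1\}$, $\varepsilon(\triangleleft):=\mathbf p^{\triangleleft}_r$, is a well-defined map.

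The core step is the equivalence
\[
\triangleleft\ \approx_p\ \triangleleft'\qquad\Longleftrightarrow\qquad\varepsilon(\triangleleft)=\varepsilon(\triangleleft').
\]
It suffices to prove it with $\triangleleft'$ replaced by a fixed reference order $<$, i.e.\ that $\triangleleft\approx_p<\iff\mathbf p^{\triangleleft}_r=\mathbf p^{<}_r$. Fix $a\models p$. Since $p$ has no $\triangleleft$-greatest element, $\{x\in p(\Mon)\mid a\triangleleft x\}$ is a non-empty final part of $(p(\Mon),\triangleleft)$, so $(a\triangleleft x)\in\mathbf p^{\triangleleft}_r$; dually $(x\triangleleft a)\in\mathbf p^{\triangleleft}_l$. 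If $\mathbf p^{\triangleleft}_r=\mathbf p^{<}_r$ then $(a\triangleleft x)\in\mathbf p^{<}_r$, which says precisely that $\{x\mid a\triangleleft x\}$ is right-eventual in $(p(\Mon),<)$, i.e.\ $\triangleleft\approx_p<$. Conversely, assume $\triangleleft\approx_p<$, i.e.\ $(a\triangleleft x)\in\mathbf p^{<}_r$, and suppose for contradiction that $\mathbf p^{\triangleleft}_r\ne\mathbf p^{<}_r$; then $\mathbf p^{\triangleleft}_r=\mathbf p^{<}_l$ by Fact \ref{Fact_so_basic}(ii), and since $\mathbf p^{\triangleleft}_l\ne\mathbf p^{\triangleleft}_r$ this forces $\mathbf p^{\triangleleft}_l=\mathbf p^{<}_r$; but then the consistent complete type $\mathbf p^{<}_r$ contains both $a\triangleleft x$ and $x\triangleleft a$ --- impossible, as any realization of $\mathbf p^{<}_r$ realizes $p$ and $\triangleleft$ linearly orders $p(\Mon)$.

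Granting the equivalence, $\approx_p$ relates exactly the pairs with $\varepsilon(\triangleleft)=\varepsilon(\triangleleft')$, so it is an equivalence relation with at most two classes. To see there are exactly two --- and simultaneously obtain the statement about reverses --- I would use reversal: for $\triangleleft\in O_p$ its reverse $\triangleleft^{*}$ (where $x\triangleleft^{*}y\Leftrightarrow y\triangleleft x$) again lies in $O_p$, and a subset is right-eventual in $(p(\Mon),\triangleleft^{*})$ exactly when it is left-eventual in $(p(\Mon),\triangleleft)$, so $\varepsilon(\triangleleft^{*})=\mathbf p^{\triangleleft}_l\ne\mathbf p^{\triangleleft}_r=\varepsilon(\triangleleft)$. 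Hence $\varepsilon$ is onto, so there are exactly two classes, and $\triangleleft\mapsto\triangleleft^{*}$ is an involution of $O_p$ swapping the two fibres of $\varepsilon$; this says precisely that each $\approx_p$-class is the set of reverses of the orders in the other class.

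The genuinely delicate point --- the step I would check most carefully --- is the ``$\Rightarrow$'' direction of the displayed equivalence, i.e.\ ruling out $\mathbf p^{\triangleleft}_r\ne\mathbf p^{<}_r$ under the hypothesis $\triangleleft\approx_p<$. That is where the distinctness $\mathfrak q_0\ne\mathfrak q_1$ (equivalently, non-algebraicity of $p$ and the absence of endpoints) and the consistency of the complete global type $\mathbf p^{<}_r$ are really used; everything else is routine bookkeeping with Fact \ref{Fact_so_basic}.
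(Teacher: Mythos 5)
Your proof is correct and follows essentially the same route as the paper: both invoke Fact \ref{Fact_so_basic}(v) and (ii) to identify $\{\mathfrak p_l,\mathfrak p_r\}=\{\mathbf p_l,\mathbf p_r\}$ and then read off that matching globalizations correspond to $\approx_p$ while swapped ones correspond to reversal. The only difference is that you spell out the details the paper dismisses as ``easy to see'' (no endpoints, $\mathbf p_l\neq\mathbf p_r$, and the consistency argument ruling out the swapped case), which is a fair elaboration rather than a new approach.
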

\begin{proof} Suppose that  $\triangleleft,<\in O_p$.  According to Fact \ref{Fact_so_basic}(v), both $\mathfrak p=(p,\triangleleft)$ and $\mathbf p=(p,<)$ are so-pairs over $A$. From Fact \ref{Fact_so_basic}(ii), we get $\{\mathfrak p_l,\mathfrak p_r\}=\{\mathbf p_r,\mathbf p_l\}$; meaning either $\mathfrak p_r=\mathbf p_r \land \mathfrak p_l=\mathbf p_l$ or $\mathfrak p_r=\mathbf p_l \land \mathfrak p_l=\mathbf p_r$ holds. It is easy to see that the first case corresponds to $\triangleleft \approx_p <$, while the second corresponds to $\mathfrak p \approx_p \mathbf p^*$; from this, the desired conclusions follow.
\end{proof}

\section{Weakly o-minimal types}\label{Section2}

The notion of a weakly o-minimal type, as defined below, was first observed by Belegradek, Peterzil and Wagner in \cite[p.1130]{Belegradek}, where they remark that every complete 1-type in a quasi-o-minimal theory is weakly o-minimal. In this section, we introduce weakly o-minimal orders and types and prove a few basic facts. For example,  
we prove that relatively definable equivalence relations on the locus of a weakly o-minimal type are convex and pairwise $\subseteq$-comparable. We also show that the weak o-minimality of a type (over $A$) is preserved under relatively $A$-definable mappings.

\begin{Definition}
Let $P$ be a type-definable set and $<$ a relatively definable linear order on $P$.  We will say that the order $(P,<)$ is {\em weakly o-minimal} if every relatively definable subset of $P$ has finitely many convex components.
 \end{Definition}

\begin{Definition}
A complete type $p(x)\in S(A)$ is {\em weakly o-minimal} if there exists a relatively $A$-definable linear order $<$ such that $(p(\Mon),<)$ is a weakly o-minimal order. In that case, we say that $(p,<)$  is a {\em weakly o-minimal pair} over $A$. 
\end{Definition}

\begin{Remark}\phantomsection\label{Remark_weak_ominimal_firstrmk}
\begin{enumerate}[(a)]
\item  If $(P,<)$ is a weakly o-minimal order and $Q\subseteq P$ is a type-definable subset, then the suborder $(Q,<)$ is also weakly o-minimal.  

\item Every complete type that extends a weakly o-minimal type is also weakly o-minimal. In fact, if $(p,<)$ is a weakly o-minimal pair over $A$, $B\supseteq A$ and the type $q\in S(B)$ extends $p$, then the pair $(q,<)$ is weakly o-minimal over $B$ because $(q(\Mon),<)$ is a suborder of $(p(\Mon),<)$.
\item If the theory $T$ is weakly o-minimal with respect to $<$, then $(P,<)$ is a weakly o-minimal order for every type-definable set $P\subset \Mon$; in particular, the pair $(p,<)$ is weakly o-minimal for every complete 1-type $p$. In fact, the latter holds even if $T$ is weakly quasi-o-minimal with respect to $<$.    
\item The weak o-minimality of a type is preserved by passing from $T$ to $T^{eq}$. More precisely, if $(p,<)$ is a weakly o-minimal pair over $A\subseteq \Mon$, then $p$, viewed as a $T^{eq}$-type of a real sort, is also weakly o-minimal (as witnessed by $<$).

\item If $(p(\Mon),<)$ is a weakly o-minimal order, then so is $(p(\Mon),<_{\vec E})$ for any sequence $\vec E$ of pairwise $\subseteq$-comparable, relatively definable, $<$-convex equivalence relations on $p(\Mon)$; this is a consequence of Remark \ref{Remark_order_<_vecE}(e). 
Similarly, if $(p,<)$ is a weakly o-minimal pair, then so is $(p,<_{\vec E})$ for any sequence $\vec E$ of pairwise $\subseteq$-comparable, relatively $A$-definable $<$-convex equivalence relations. 
\item It is easy to see that every weakly o-minimal type is an so-type. In fact, every weakly o-minimal pair over $A$, say $\mathbf p=(p,<)$, is an so-pair over $A$; therefore, $\mathbf p_r$ and $\mathbf p_l$, the right and left globalizations of $\mathbf p$, are well defined.
\item The main advantage of weakly o-minimal types compared to so-types is that weak o-minimality transfers to complete extensions. 
\end{enumerate}
\end{Remark}

In Corollary \ref{Corollary_wom_is_ind_of_order}, we will prove that a weakly o-minimal type $p\in S(A)$ is weakly o-minimal with respect to {\em any} relatively $A$-definable order on $p(\Mon)$; that is, the order $(p(\Mon),\triangleleft)$ is weakly o-minimal for any relatively $A$-definable order $\triangleleft$ on $p(\Mon)$. In the next example, we show
that this may fail if the relative definition of the order $\triangleleft$ requires parameters outside $A$. 
 
 \begin{Example}
Consider the structure $\mathcal M=(\mathbb R,<,S)$ where $S(x)=x+1$. $\mathcal M$ is o-minimal as a reduct of the ordered group of reals, and the theory $T=\Th(\mathcal{M})$ eliminates quantifiers. Since any translation $x\mapsto x+r$ is an automorphism of $\mathcal M$, there is a unique complete type $p\in S_1(\emptyset)$;  $(p,<)$ is a weakly o-minimal pair over $\emptyset$. The only $\emptyset$-definable linear orders on $\mathbb R$ are: $<$ and its reverse $>$; this follows by elimination of quantifiers. Hence $p$ is weakly o-minimal with respect to all $\emptyset$-definable orders.
Let $\triangleleft$ be defined by:

-- \ for $x\in [0,1)$ and $y\in [1,2)$: \   
 $x\triangleleft y$ \ iff \ $x+1\leqslant y$, \ and \ $y\triangleleft x$ \ iff $x+1>y$;
 
-- \  for all other pairs $(x,y)$ define $x\triangleleft y$ iff $x<y$. 
 
\noindent 
It is not hard to see that $\triangleleft$ is  a $\{0\}$-definable linear order on $\mathbb R$. The order $(\mathbb R,\triangleleft)$ is not weakly o-minimal, since the formula $x<\frac{1}{2}$ alternates on the sequence $$\dots\triangleleft \frac{1}{5}\triangleleft 1+\frac{1}{5}\triangleleft \frac{1}{4}\triangleleft 1+\frac{1}{4}\triangleleft \frac{1}{3}\triangleleft 1+\frac{1}{3}.$$
Therefore, the type $p\in S_1(\emptyset)$ is weakly o-minimal and $(p(\Mon),\prec)$ is a weakly o-minimal order for all $\emptyset$-definable orders on $p(\Mon)$, but the order $(p(\Mon),\triangleleft)$ is not weakly o-minimal. 
\end{Example}

A function $f:X\to Y$ is said to be $(<_X,<_Y)$-increasing, where $<_X$ is a linear order on $X$  and $<_Y$ a linear order on $Y$, if $x<_Xx'$ implies $f(x)\leqslant_Y f(x')$ for all $x,x'\in X$. 

\begin{Lemma}\phantomsection \label{Lemma_wom_orders_hoomomorphism_interdefinability}
\begin{enumerate}[(i)]
\item Suppose that $(P,<)$ is a weakly o-minimal order, $D$ is a definable set, and $f:P\to D$ is a relatively definable function with a convex kernel. 
For $y,y'\in f(P)$ define: $y<_f y'$ iff $f^{-1}(\{y\})<f^{-1}(\{y'\})$. 
Then $(f(P),<_f)$ is a weakly o-minimal order and $f:P\to f(P)$ is $(<,<_f)$-increasing.
\item The weak o-minimality of types is preserved under interdefinability, that is, if $p,q\in S(A)$ are interdefinable, then $p$ is weakly o-minimal iff $q$ is such. 
\end{enumerate} 
\end{Lemma}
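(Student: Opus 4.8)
For part (i), the key observation is that $<_f$ is well-defined precisely because $\Ker f$ is convex: if $\Ker f$ is a convex equivalence relation on $(P,<)$, then the classes $f^{-1}(\{y\})$ are pairwise disjoint convex subsets, so for distinct $y,y'\in f(P)$ exactly one of $f^{-1}(\{y\})<f^{-1}(\{y'\})$ or $f^{-1}(\{y'\})<f^{-1}(\{y\})$ holds, and this is transitive. Hence $<_f$ is a linear order on $f(P)$ and the natural map $[x]_{\Ker f}\mapsto f(x)$ is an order isomorphism from $(P/\Ker f, <)$ onto $(f(P),<_f)$; in particular $f:(P,<)\to(f(P),<_f)$ is $(<,<_f)$-increasing. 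For weak o-minimality of $(f(P),<_f)$: I would take a relatively definable $X\subseteq f(P)$, say $X=D'\cap f(P)$ with $D'$ definable, and pull it back. By Fact \ref{Fact rel def kernel and inverse image}(ii), $f^{-1}(X)$ is a relatively definable subset of $P$, so by weak o-minimality of $(P,<)$ it has finitely many $<$-convex components, say $C_1<C_2<\dots<C_k$. Because $\Ker f$ is convex and $X=f(f^{-1}(X))$, each $f(C_i)$ is a convex subset of $(f(P),<_f)$ (here one uses that $f$ is a monotone surjection onto $f(P)$ with convex fibers, so images of convex sets are convex and the ordering of the $f(C_i)$ matches that of the $C_i$); consequently $X=\bigcup_i f(C_i)$ is a union of finitely many $<_f$-convex sets, hence has finitely many convex components. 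This gives (i).

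For part (ii), recall that $p,q\in S(A)$ interdefinable means there are $A$-definable bijections (or relatively $A$-definable bijections between the loci) $g:p(\Mon)\to q(\Mon)$ and $g^{-1}:q(\Mon)\to p(\Mon)$. Suppose $(p,<)$ is a weakly o-minimal pair over $A$. Transport the order via $g$: define $<'$ on $q(\Mon)$ by $b_1<' b_2$ iff $g^{-1}(b_1)<g^{-1}(b_2)$. Since $g^{-1}$ is relatively $A$-definable, $<'$ is relatively $A$-definable on $q(\Mon)$, and $g:(p(\Mon),<)\to(q(\Mon),<')$ is an order isomorphism. Any relatively $A$-definable (indeed relatively definable) subset $Y\subseteq q(\Mon)$ pulls back, via the relatively definable bijection $g$, to a relatively definable subset of $p(\Mon)$ (again Fact \ref{Fact rel def kernel and inverse image}(ii) applied to $g$), which has finitely many $<$-convex components; applying the isomorphism $g$ shows $Y$ has finitely many $<'$-convex components. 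Hence $(q,<')$ is a weakly o-minimal pair and $q$ is weakly o-minimal. By symmetry the converse holds, proving (ii). (Strictly, part (ii) is just the special case of (i) where $f=g$ is a bijection, so $\Ker f=\id$ is trivially convex and $<_f=<'$; one could present it that way to avoid repetition.)

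\textbf{Main obstacle.} The only real content is the bookkeeping in (i): verifying that $f$ maps $<$-convex components of $f^{-1}(X)$ onto $<_f$-convex components of $X$, i.e. that passing to the quotient by a convex equivalence relation does not create or destroy convex components. This is where the convexity hypothesis on $\Ker f$ is essential — without it $<_f$ is not even well-defined, and images of convex sets need not be convex. I expect no genuine difficulty beyond making this quotient argument precise, together with the routine invocations of Fact \ref{Fact rel def kernel and inverse image} to keep relative definability of preimages under control.
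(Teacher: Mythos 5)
Your order-theoretic and counting argument for (i) is essentially the paper's: pull a relatively definable $X\subseteq f(P)$ back by Fact \ref{Fact rel def kernel and inverse image}(ii), use weak o-minimality of $(P,<)$ to get finitely many $<$-convex components, and push them forward along the $(<,<_f)$-increasing surjection $f$; that part is correct. But there is a genuine gap: you never show that $<_f$ is a \emph{relatively definable} order on the (type-definable) set $f(P)$. In this paper a ``weakly o-minimal order'' is by definition a relatively definable linear order on a type-definable set, so proving the statement requires exhibiting a formula that defines $<_f$ on $f(P)$; this is also exactly what the intended application (Proposition \ref{Prop_pwom_implies_dclq_wom}) needs, since there one must know $(q,<_f)$ is a weakly o-minimal \emph{pair over $A$}. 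The naive definition ``$f^{-1}(\{y\})<f^{-1}(\{y'\})$'' quantifies over the type-definable set $P$, hence is not literally first-order, and replacing $P$ by the monster in the defining formula $f(x,y)$ does not work directly, because that formula need not define a function with convex, pairwise disjoint fibers outside $P$ (so the existential or universal variants can fail to be a linear order, or fail to agree with $<_f$). This is where the paper does its real work: it uses Fact \ref{Fact_L_P_sentence} (tp-universal compactness) to extend $f$ to a definable $\hat f:\theta(\Mon)\to D$ with $\Ker\hat f$ a $<$-convex equivalence relation on $\theta(\Mon)$, so that $\hat f^{-1}(\{y\})<\hat f^{-1}(\{y'\})$ is a definable relation agreeing with $<_f$ on $f(P)$, and invokes Fact \ref{Fact rel def kernel and inverse image}(ii) for the type-definability of $f(P)$.

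The same definability issue recurs, in milder form, in your direct argument for (ii), where you assert without proof that the transported order $<'$ is relatively $A$-definable; again one should pass to a definable extension of the bijection (Fact \ref{Fact rel def kernel and inverse image}(i)) or, as you note at the end, simply deduce (ii) from (i) with $\Ker f=\id$, which is the paper's route — but that only works once the definability step in (i) has been supplied.
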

\begin{proof}
(i) Let $P$ be type-defined by $p(x)$ and let $D=\psi(\Mon)$. Suppose that $x<y$ and $f(x,y)$ relatively define $<$ and $f$ respectively. 
The kernel relation, $\Ker f$, is relatively defined by $(\exists y)(\psi(y)\land f(x,y)\land f(x',y))$ on $P$. We have the following:
 
-- \ $x<x'$ defines a linear order on $p(\Mon)$ and 
$f(x,y)$ defines a function $p(\Mon)\to\psi(\Mon)$;

-- \ $(\exists y)(\psi(y)\land f(x,y)\land f(x',y))$ defines a convex equivalence relation on $(p(\Mon),<)$).
 
\noindent 
This is expressible by a $\tp$-universal sentence, 
so by Fact \ref{Fact_L_P_sentence} there exist a definable extension $(\theta(\Mon),<)$ of $(p(\Mon),<)$ and a definable extension $\hat f:\theta(\Mon)\to \psi(\Mon)$ of $f$ such that $\Ker\hat f$ is a $<$-convex equivalence relation on $\theta(\Mon)$.
By the convexity of $\Ker \hat f$, it is easy to see that $\hat f^{-1}(\{y\})<\hat f^{-1}(\{y'\})$ defines a linear order on $\hat f(\theta(\Mon))$ that agrees with $<_f$ on $f(P)$. By Fact \ref{Fact rel def kernel and inverse image}, the set $f(P)$ is type-definable, so $<_f$ is relatively definable on $f(P)$. It is easy to see that the function $f:P\to f(P)$ is $(<,<_f)$-increasing.

Now we prove that $(f(P),<_f)$ is a weakly o-minimal order. Let $D'$ be a relatively definable subset of $f(P)$; we need to show that $D'$ has finitely many convex components in $(f(P),<_f)$. 
By Fact \ref{Fact rel def kernel and inverse image}, the inverse image $f^{-1}(D')$ is a relatively definable subset of $P$; the weak o-minimality of $(P,<)$ implies that $f^{-1}(D')$ has finitely many $<$-convex components, say $n$. Since $f:P\to f(P)$ is $(<,<_f)$-increasing and surjective, it follows that the set $D'=f(f^{-1}(D'))$ has exactly $n$ convex components in $(f(P),<_f)$. Thus, $(f(P),<_f)$ is weakly o-minimal.

(ii) follows easily from (i).
\end{proof}

\begin{Lemma}\label{Lemma_reldef_of_convex_components}
Let $(p,<)$ be a weakly o-minimal pair over $A$ and let $B\supseteq A$. 
\begin{enumerate}[(i)]
\item  If  $D$ is a relatively $B$-definable subset of $p(\Mon)$, then  every  convex component of $D$, as well as each of the sets  $\{x\in p(\Mon)\mid x<D\}$ and $\{x\in p(\Mon)\mid D<x\}$, is relatively $B$-definable.
\item  If $q\in S(B)$ is an extension of $p$, then $q(x)$ is determined by the subtype of all $L_B$-formulae that relatively define a convex subset. In particular, $q(\Mon)$ is a convex subset of $p(\Mon)$ and $(q,<)$ a weakly o-minimal pair over $B$. 
\end{enumerate}
\end{Lemma}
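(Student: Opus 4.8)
The plan is to prove (i) by a definable–extension argument in the style of Fact~\ref{Fact_L_P_sentence}, and then to read off (ii) from it.

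\emph{Part (i).} Let $\phi(x)\in L_B$ relatively define $D$ within $p(\Mon)$ and let $C_1<\dots<C_n$ be the convex components of $D$ in $(p(\Mon),<)$; these are finitely many by weak o-minimality, and I may assume $n\ge 1$. The crucial step is to pass to a definable neighbourhood on which $\phi$ has \emph{exactly} $n$ convex components. For this, first note that between consecutive components $C_i,C_{i+1}$ there is a point $b_i\in p(\Mon)$ with $C_i<b_i<C_{i+1}$, and any such $b_i$ satisfies $b_i\notin D$ (otherwise $C_i\cup C_{i+1}$ would be a convex subset of $D$ properly containing $C_i$). Choosing $a_i\in C_i$, the chain $a_1<b_1<\dots<b_{n-1}<a_n$ witnesses that $D$ has at least $n$ convex components, a $\tp$-existential fact preserved in every definable extension. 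On the other hand, ``$D$ has at most $n$ convex components'' is a $\tp$-universal property built from $\phi$ and $<$: its failure is witnessed by the existence of $a_1'<b_1'<\dots<b_n'<a_{n+1}'$ realizing $p$ with $\phi(a_i')$ and $\neg\phi(b_i')$ for all $i$ (the nontrivial direction again uses the nonempty-gap observation above). Conjoining this with the linear-order axioms (Example~\ref{Examples of tp universal}(a) and Remark~\ref{Remk_conjunct_tpuniversal}) and applying Fact~\ref{Fact_L_P_sentence}, I obtain $\theta(x)\in p$ such that $(\theta(\Mon),<)$ is a linear order in which $\phi(\theta(\Mon))$ has exactly $n$ convex components $\hat C_1<\dots<\hat C_n$, with $a_i\in\hat C_i$.

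Each $\hat C_i\cap p(\Mon)$ is then a $<$-convex subset of $p(\Mon)$ contained in $\phi(\theta(\Mon))\cap p(\Mon)=D$ and containing $a_i$, hence $\hat C_i\cap p(\Mon)\subseteq C_i$; since both $\{C_i\}_{i\le n}$ and $\{\hat C_i\cap p(\Mon)\}_{i\le n}$ partition $D$, this forces $\hat C_i\cap p(\Mon)=C_i$ for every $i$. The same reasoning gives $\{x\in p(\Mon)\mid x<D\}=\{x\in\theta(\Mon)\mid x<\hat C_1\}\cap p(\Mon)$ and $\{x\in p(\Mon)\mid D<x\}=\{x\in\theta(\Mon)\mid \hat C_n<x\}\cap p(\Mon)$. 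Thus it suffices to invoke the routine fact that, in the $B$-definable linear order $(\theta(\Mon),<)$, a $B$-definable set with exactly $n$ convex components has each of its convex components, and also its set of strict lower bounds and its set of strict upper bounds, $B$-definable; restricting the corresponding $L_B$-formulas to $p(\Mon)$ yields (i).

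\emph{Part (ii).} Let $q\in S(B)$ extend $p$ and let $\phi(x)\in q$. Then $q(\Mon)$ is a nonempty subset of the relatively $B$-definable set $\phi(\Mon)\cap p(\Mon)$, whose convex components $C_1<\dots<C_n$ are relatively $B$-definable by (i). Since $q$ is complete over $B$ and the $C_i$ partition $\phi(\Mon)\cap p(\Mon)$, exactly one of them, say $C^\phi$, lies in $q$; if $\chi^\phi(x)\in L_B$ relatively defines $C^\phi$, then $\chi^\phi\in q$, the set $\chi^\phi(\Mon)\cap p(\Mon)=C^\phi$ is $<$-convex, and $p(x)\cup\{\chi^\phi(x)\}\vdash\phi(x)$. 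Hence the subtype of all $L_B$-formulas in $q$ that relatively define a $<$-convex subset of $p(\Mon)$ — which contains every formula of $p$ (as each relatively defines $p(\Mon)$ itself) together with every $\chi^\phi$ — implies $\phi$ for each $\phi\in q$, so it generates $q$; this is the first assertion. Consequently $q(\Mon)$ is an intersection of relatively $B$-definable subsets of $p(\Mon)$ each of which is $<$-convex, hence is itself $<$-convex; and $(q,<)$ is a weakly o-minimal pair over $B$ by Remark~\ref{Remark_weak_ominimal_firstrmk}(b) (equivalently, because $(q(\Mon),<)$ is a suborder of the weakly o-minimal order $(p(\Mon),<)$).

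\emph{Main obstacle.} The only real difficulty is the first step of (i): arranging that the definable neighbourhood $\theta(\Mon)$ introduces no spurious convex components, so that the convex components of $D$ are exactly the traces on $p(\Mon)$ of those of $\phi(\theta(\Mon))$. This is precisely what the $\tp$-universality of ``at most $n$ convex components'' (together with upward preservation of ``at least $n$'') delivers via Fact~\ref{Fact_L_P_sentence}; everything afterwards is bookkeeping with linear orders and with completeness of $q$.
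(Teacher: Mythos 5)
Your proof is correct and follows essentially the same route as the paper: apply Fact \ref{Fact_L_P_sentence} to the $\tp$-universal statement bounding the number of convex components, use the ($B$-)definability of the finitely many components in the resulting definable extension, identify them with the traces on $p(\Mon)$, and then derive (ii) by selecting, for each $\phi\in q$, the unique component whose relative definition lies in $q$. The only differences are cosmetic (you track components of $D$ alone plus a witness chain for ``at least $n$'', where the paper uses the full alternation partition of $D$ and its complement, and you avoid the paper's choice of pairwise inconsistent defining formulas in (ii)).
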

\begin{proof}
(i) Let $D=\phi(p(\Mon))$. Denote by $\mathcal C_0$ the finite convex partition of $p(\Mon)$ consisting of all the convex components of the sets  $D$ and  $D^c=\lnot\phi(p(\Mon))$. Let $n=|\mathcal C_0|$. Then all the sets mentioned in the conclusion of the lemma are members of $\mathcal C_0$; we will prove that every member of $\mathcal C_0$ is relatively $B$-definable. Note that ``$x<y$ defines a linear order on $p(\Mon)$'' and ``$\phi(x)$ determines a convex partition of $p(\Mon)$ with at most $n$ elements'' are $\tp$-universal properties; the latter is expressed by:
$$ \models (\forall x_0,\ldots, x_n\models p)\left(x_0<x_1<\ldots< x_n\rightarrow\bigvee_{i<n}(\phi(x_i)\leftrightarrow \phi(x_{i+1}))\right).$$
By Fact \ref{Fact_L_P_sentence} there is a formula $\theta(x)\in p$ such that $x<y$ defines a linear order on $\theta(\Mon)$ and $\phi(x)$ induces a convex partition $\mathcal C_1$ of $(\theta(\Mon),<)$ with $\leqslant n$ members. Then each component from $\mathcal C_0$ is the intersection of the corresponding component from $\mathcal C_1$ with $p(\Mon)$. Since the members of $\mathcal C_1$ are $B$-definable, the desired conclusion follows.

(ii) Fix $q\in S(B)$ that extends $p$.  
For each  $\phi(x)\in q$ let  $D_{\phi}=\phi(\Mon)\cap p(\Mon)$, then $q(\Mon)=\bigcap_{\phi(x)\in q}D_{\phi}$. For each $\phi\in q$ the set $D_{\phi}$ has finitely many convex components on $p(\Mon)$ and, by part (i), each of them is relatively $B$-definable. Notice that the $L_B$-formulae that relatively define the components can be chosen pairwise inconsistent, in which case exactly one of them, say $\theta_{\phi}(x)$,  belongs to $q(x)$; denote by $C_{\phi}$ the component relatively defined by $\theta_{\phi}(x)$. Then $q(\Mon)=\bigcap_{\phi(x)\in q}C_{\phi}$ and $\{\theta_{\phi}(x)\mid \phi\in q\}\vdash q(x)$. Finally, since each $C_{\phi}$ is a convex subset of $p(\Mon)$, so is the intersection $\bigcap_{\phi(x)\in q}C_{\phi}=q(\Mon)$. 
\end{proof}

\begin{Corollary}\label{Cor_extesnions_ofwom_totalyordered}
Let $\mathbf p= (p,<)$ be a weakly o-minimal pair over $A$  and let $B\supseteq A$. 
Then the set $S_p(B)=\{q\in S_n(B)\mid p\subseteq q\}$ is linearly ordered by $<$;  $(\mathbf p_{l})_{\restriction B}=\min S_p(B)$ and $(\mathbf p_{r})_{\restriction B}=\max S_p(B)$.
\end{Corollary}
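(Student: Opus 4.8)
The plan is to push everything onto two earlier results — Lemma~\ref{Lemma_reldef_of_convex_components}(ii) and Fact~\ref{Fact_so_basic} — so that all that remains is an elementary observation about linear orders. First I would note that, by Remark~\ref{Remark_weak_ominimal_firstrmk}(f), the weakly o-minimal pair $\mathbf p=(p,<)$ is in particular an so-pair over $A$, so $\mathbf p_l$ and $\mathbf p_r$ are defined; by Fact~\ref{Fact_so_basic}(i)--(ii) both are complete $A$-invariant global types extending $p$, hence their restrictions $(\mathbf p_l)_{\restriction B}$ and $(\mathbf p_r)_{\restriction B}$ are complete types over $B$ extending $p$, i.e.\ members of $S_p(B)$.

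Next I would set up the order on $S_p(B)$. By Lemma~\ref{Lemma_reldef_of_convex_components}(ii), for every $q\in S_p(B)$ the locus $q(\Mon)$ is a convex subset of $(p(\Mon),<)$, and distinct complete types over $B$ have disjoint loci, so $\{q(\Mon)\mid q\in S_p(B)\}$ is a family of pairwise disjoint nonempty convex subsets of $(p(\Mon),<)$. Here I would record the elementary fact that two disjoint nonempty convex subsets $C_1,C_2$ of a linear order always satisfy $C_1<C_2$ or $C_2<C_1$: if neither holds, then after choosing witnesses one gets $a,a'\in C_1$ and $b,b'\in C_2$ with $b<a$ and $a'<b'$, and a short case analysis (on whether $a'<b$ or $b<a'$, then on whether $a<b'$ or $b'<a$) yields in every case an element of one of the sets strictly between two elements of that same set, forced into it by convexity, contradicting disjointness. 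Declaring $q_1<q_2$ iff $q_1(\Mon)<q_2(\Mon)$ then defines a total order on $S_p(B)$: it is irreflexive and transitive for trivial reasons, and totality is exactly the fact just stated.

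Finally I would identify the extreme types. By Fact~\ref{Fact_so_basic}(iii), $(\mathbf p_r)_{\restriction B}(\Mon)$ is a final part and $(\mathbf p_l)_{\restriction B}(\Mon)$ an initial part of $(p(\Mon),<)$. If $q\in S_p(B)$ is distinct from $(\mathbf p_r)_{\restriction B}$, then $q(\Mon)$ is a nonempty convex set disjoint from the final part $(\mathbf p_r)_{\restriction B}(\Mon)$; such a set cannot lie entirely above a final part, since any element strictly above an element of a final part belongs to it, so $q(\Mon)<(\mathbf p_r)_{\restriction B}(\Mon)$, i.e.\ $q<(\mathbf p_r)_{\restriction B}$. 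Hence $(\mathbf p_r)_{\restriction B}=\max S_p(B)$, and the dual argument gives $(\mathbf p_l)_{\restriction B}=\min S_p(B)$. I do not expect a genuine obstacle: the substance is entirely carried by Lemma~\ref{Lemma_reldef_of_convex_components}(ii) and Fact~\ref{Fact_so_basic}, and the only step requiring a moment's attention is the elementary lemma on disjoint convex sets sketched above.
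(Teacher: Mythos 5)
Your proof is correct and follows essentially the same route as the paper: Lemma \ref{Lemma_reldef_of_convex_components}(ii) gives that the loci form a family of pairwise disjoint convex subsets of $(p(\Mon),<)$, hence a linearly ordered convex partition, and Fact \ref{Fact_so_basic}(iii) identifies the restrictions of $\mathbf p_l$ and $\mathbf p_r$ as the minimum and maximum. The only difference is that you spell out the elementary facts (disjoint convex sets are comparable; a convex set disjoint from a final part lies below it) that the paper leaves implicit.
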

\begin{proof}
By Lemma \ref{Lemma_reldef_of_convex_components}(ii),  $\{q(\Mon)\mid q\in S_p(B)\}$ is a convex partition of $(p(\Mon),<)$, so it is naturally linearly ordered by $<$. The second assertion follows from Fact \ref{Fact_so_basic}(iii).
\end{proof}

In Lemma \ref{Lemma_reldef_of_convex_components}(ii), we proved that every complete type extending a weakly o-minimal type $p$ has a convex locus in $(p(\Mon),<)$ (for any $<$ witnessing the weak o-minimality of $p$). Now we show that this property characterizes weakly o-minimal types.

\begin{Lemma}\label{Lemma all extension convex imply wom}
Let $p\in S(A)$ and $<$ be a relatively $A$-definable linear order on $p(\Mon)$. If for all $B\supseteq A$ and all $q\in S(B)$ that extend $p$ the locus $q(\Mon)$ is convex in $(p(\Mon),<)$, then $(p,<)$ is a weakly o-minimal pair.
\end{Lemma}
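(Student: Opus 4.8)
The plan is to prove the contrapositive. So suppose $(p,<)$ is \emph{not} weakly o-minimal; I will produce a set $B\supseteq A$ and a complete type $q\in S(B)$ extending $p$ whose locus is not convex in $(p(\Mon),<)$, contradicting the hypothesis. By assumption there is a formula $\phi(x,\bar b)$ such that $D:=\phi(p(\Mon),\bar b)$ has infinitely many convex components in $(p(\Mon),<)$. The basic combinatorial observation is that between any two distinct convex components of $D$ there must sit a point of $p(\Mon)$ lying outside $D$ (otherwise one could enlarge a component, contradicting its maximality); consequently, for every $n$ one finds elements $c^1_1<c^2_1<\dots<c^1_n<c^2_n$ of $p(\Mon)$ with $\models\phi(c^1_i,\bar b)$ and $\models\lnot\phi(c^2_i,\bar b)$ for all $i\le n$.

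Next I freeze this alternation into an indiscernible sequence. Applying the standard Ehrenfeucht--Mostowski/Ramsey extraction to the pairs $\bar c_i=(c^1_i,c^2_i)$, I obtain a sequence $(\bar c_i\mid i\in\mathbb Q)$, indiscernible over $B_0:=A\bar b$, whose coordinates all realize $p$ and which satisfies $c^1_i<c^2_i$, and $c^2_i<c^1_j$ for $i<j$, and $\models\phi(c^1_i,\bar b)$, $\models\lnot\phi(c^2_i,\bar b)$ for all indices. Now set $B:=B_0\cup\{c^1_i,c^2_i\mid i\le 0\}$, a small set containing $A$. Since $\mathbb Q_{>0}$ is a dense linear order without endpoints, indiscernibility over $B_0$ yields, for any $j,k>0$, an automorphism fixing $B$ pointwise and sending $\bar c_j$ to $\bar c_k$; hence $q:=\tp(c^1_j/B)$ does not depend on the choice of $j>0$, and clearly $p\subseteq q$. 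Finally, fix $0<j<k$. Then $c^1_j<c^2_j<c^1_k$, both $c^1_j$ and $c^1_k$ realize $q$, and since $c^1_j\models\phi(x,\bar b)$ with $\bar b\subseteq B$ we have $\phi(x,\bar b)\in q$, whereas $\models\lnot\phi(c^2_j,\bar b)$; so $c^2_j\not\models q$, and $q(\Mon)$ is not convex in $(p(\Mon),<)$. This is the desired contradiction.

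The step I expect to be the main obstacle is precisely the extraction in the second paragraph, together with the choice of where to cut the index order. One cannot use a sequence of \emph{singletons} indiscernible over a base that contains $\bar b$, because then the truth value of $\phi(\cdot,\bar b)$ would be constant along it and all the alternation would be lost; the alternation must be carried internally by the blocks $\bar c_i$. Moreover the index order must be cut at a point whose ``free'' upper part is order-homogeneous: $\mathbb Q=\mathbb Q_{\le 0}+\mathbb Q_{>0}$ works because $\mathbb Q_{>0}$ admits order-automorphisms taking any point to any other, whereas an at-first-sight natural choice such as $\mathbb Z=\mathbb Z_{<0}+\mathbb Z_{\ge 0}$ fails, since $\mathbb Z_{\ge 0}$ is rigid and would not permit $\bar c_j$ to be mapped to $\bar c_k$.
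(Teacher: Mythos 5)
Your proof is correct, but it takes a genuinely different route from the paper. The paper argues by pure pigeonhole: setting $\lambda=2^{\aleph_0+|T|+|A|}$, it uses compactness to produce a $<$-increasing sequence $(a_\alpha)_{\alpha<\lambda^+}$ in $p(\Mon)$ on which $\phi(x,b)$ alternates at successive indices, and then, since there are at most $\lambda$ types over $Ab$, finds $\alpha<\beta$ with $a_\alpha\equiv a_\beta\,(Ab)$ and $\beta>\alpha+1$; the witnessing set is simply $B=Ab$ and no indiscernibility is needed. You instead package the alternation into blocks $(c^1_i,c^2_i)$ and extract a $B_0$-indiscernible sequence of pairs; this buys you, at the cost of Ramsey/EM extraction, that all $c^1_j$ automatically have the same type, and the conclusion follows. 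Two small observations on your version: freezing the tail $\{\bar c_i\mid i\le 0\}$ into $B$ is unnecessary, since indiscernibility over $B_0=A\bar b$ already gives $\tp(c^1_j/B_0)=\tp(c^1_k/B_0)$ for all $j,k$, so $B=A\bar b$ suffices (matching the paper's choice); and your closing caveat that a cut like $\mathbb Z_{\le -1}+\mathbb Z_{\ge 0}$ would fail because $\mathbb Z_{\ge 0}$ is rigid is not actually an obstruction --- by finite character of types, for any finite $F$ of frozen indices the tuples $(\bar c_{i})_{i\in F}{}^\frown\bar c_j$ and $(\bar c_{i})_{i\in F}{}^\frown\bar c_k$ are order-isomorphic, so $\bar c_j\equiv\bar c_k$ over the whole frozen part regardless of any automorphism of the index order. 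Neither point affects the validity of your argument; your first remark, that the alternation must be carried inside the blocks rather than by singletons indiscernible over $\bar b$, is exactly right.
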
 
\begin{proof}
Denote $\lambda=2^{\aleph_0+|T|+|A|}$.
By way of contradiction, suppose that $\phi(x,b)$ relatively defines a subset of $p(\Mon)$ that has infinitely many convex components in $(p(\Mon),<)$. Let $y=(x_\alpha)_{\alpha<\lambda^+}$. By compactness, the set $\pi(y)=\{x_\alpha<x_\beta\mid \alpha<\beta<\lambda^+\}\cup\{\lnot (\phi(x_\alpha,b)\leftrightarrow\phi(x_{\alpha+1},b))\mid \alpha<\lambda^+\}$ is satisfiable; let $a=(a_\alpha)_{\alpha<\lambda^+}$ realize $\pi(y)$. Since there are at most $\lambda$ extensions of $p$ over $Ab$, there are $\alpha<\beta<\lambda^+$ such that $a_\alpha\equiv a_\beta\ (Ab)$. Note that $a_\alpha\nequiv a_{\alpha+1}\ (Ab)$, so $\beta>\alpha+1$. Since also $a_\alpha<a_{\alpha+1}<a_\beta$, this contradicts the assumption that the locus of $\tp(a_\alpha/Ab)$ is convex within $(p(\Mon),<)$.
\end{proof}

\begin{Definition}
For a complete type $p\in S(A)$, define $\mathcal E_p$ as the set of all relatively $A$-definable equivalence relations on $p(\Mon)$; $\mathbf 1_p\in\mathcal E_p$ is the complete relation $p(\Mon)^2$.    
\end{Definition}   

\begin{Proposition}\label{Prop_E_p_is convex_linear}
Let $(p,<)$ be a weakly o-minimal pair over $A$. Then:
\begin{enumerate}[(i)]
\item  Every relation from $\mathcal E_p$ is $<$-convex;
\item  $(\mathcal E_p,\subseteq)$ is a linear order. 
\end{enumerate}
\end{Proposition}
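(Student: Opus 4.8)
The plan is to prove (i) directly by a compactness argument and then to deduce (ii) from it.

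\emph{Part (i).} I argue by contradiction: if some $E\in\mathcal E_p$ is not $<$-convex, fix $a_0<a_1<a_2$ realizing $p$ with $E(a_0,a_2)$ and $\neg E(a_0,a_1)$, and inflate this single ``alternation'' into an infinite one living inside a single relatively definable set. Consider the partial type $\Sigma((x_i)_{i<\omega})$ over $A$ asserting that $x_i\models p$ and $x_i<x_{i+1}$ for all $i$, that $E(x_0,x_i)$ for even $i$, and that $\neg E(x_0,x_i)$ for odd $i$. It suffices to realize each initial segment $\Sigma_n$ (in the variables $x_0,\dots,x_{2n}$), and I do this by induction on $n$; the base case $n=1$ is the chosen triple. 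For the step, given $(a_0,\dots,a_{2n})\models\Sigma_n$, pick $\sigma\in\Aut_A(\Mon)$ with $\sigma(a_0)=a_{2n}$ and put $a_{2n+1}:=\sigma(a_1)$ and $a_{2n+2}:=\sigma(a_2)$. Then $\tp(a_{2n}a_{2n+1}a_{2n+2}/A)=\tp(a_0a_1a_2/A)$ gives $a_{2n}<a_{2n+1}<a_{2n+2}$, $\neg E(a_{2n},a_{2n+1})$ and $E(a_{2n},a_{2n+2})$; combining these with $E(a_0,a_{2n})$ (which holds since $2n$ is even) via transitivity of $E$ yields $\neg E(a_0,a_{2n+1})$ and $E(a_0,a_{2n+2})$, so $(a_0,\dots,a_{2n+2})\models\Sigma_{n+1}$. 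Let $(a_i)_{i<\omega}$ realize $\Sigma$. The relatively definable set $\{x\in p(\Mon)\mid E(a_0,x)\}$ then contains every $a_{2k}$ but no $a_{2k+1}$, so it has infinitely many convex components in $(p(\Mon),<)$, contradicting that $(p,<)$ is a weakly o-minimal pair. Hence every relation in $\mathcal E_p$ is $<$-convex.

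\emph{Part (ii).} Let $E,F\in\mathcal E_p$; by (i) all their classes are $<$-convex. Set $Z=\{a\in p(\Mon)\mid [a]_E\subseteq[a]_F\}$ and $Z'=\{a\in p(\Mon)\mid [a]_F\subseteq[a]_E\}$. Since $E$ and $F$ are $A$-definable, both $Z$ and $Z'$ are $\Aut_A(\Mon)$-invariant subsets of $p(\Mon)$, and since $p$ is complete over $A$, each of them is either empty or all of $p(\Mon)$. If $Z=p(\Mon)$ then $E\subseteq F$, and if $Z'=p(\Mon)$ then $F\subseteq E$; so it remains to rule out $Z=Z'=\emptyset$. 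Assume it. For each $a\models p$ the convex sets $[a]_E$ and $[a]_F$ both contain $a$ and neither contains the other; comparing their endpoints (cuts) shows they must be ``staggered'', i.e.\ one of them has both strictly smaller infimum and strictly smaller supremum than the other. By $\Aut_A(\Mon)$-invariance the direction of staggering is uniform, say $\inf[a]_E<\inf[a]_F$ and $\sup[a]_E<\sup[a]_F$ for every $a\models p$. Now fix $a\models p$; since $a\notin Z$ choose $a^-\in[a]_E\setminus[a]_F$, and the staggering forces $a^-<a$ (otherwise $a^-$ would be a strict upper bound of $[a]_F$, giving $\sup[a]_F\leqslant\sup[a]_E$, contradicting $\sup[a]_E<\sup[a]_F$). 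Then $[a^-]_E=[a]_E$, whereas $[a^-]_F$ is an $F$-class disjoint from $[a]_F$ and, being convex and containing $a^-<a\in[a]_F$, lying entirely below $[a]_F$; in particular $a$ is an upper bound of $[a^-]_F$. Applying the staggering property to $a^-$ and using $[a^-]_E=[a]_E$ gives $\sup[a]_E<\sup[a^-]_F$, i.e.\ some upper bound $u$ of $[a]_E$ is not an upper bound of $[a^-]_F$; since $a$ \emph{is} such an upper bound, $u<a$, and then $[a]_E<u<a$ contradicts $a\in[a]_E$. Hence $E$ and $F$ are $\subseteq$-comparable, so $(\mathcal E_p,\subseteq)$ is a linear order.

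\emph{Main difficulty.} The delicate point in (i) is the verification that $\Sigma$ is finitely satisfiable: the ``concatenation'' step and the transitivity bookkeeping of the $E$-relations; everything else is routine compactness. In (ii), once the $\Aut_A(\Mon)$-invariance reduction has been set up, the remaining work is the order-theoretic ``staggering'' dichotomy and the careful endpoint/bound arithmetic around the point $a$.
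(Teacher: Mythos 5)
Your proof is correct. Part (i) is essentially the paper's argument: the paper takes a non-convex configuration $a_1<b_1<a_2$ with $E(a_1,a_2)\land\lnot E(a_1,b_1)$ and iterates a single automorphism sending $a_1\mapsto a_2$ to get an infinite increasing sequence on which $E(a_1,x)$ alternates; you produce the same alternating sequence by compactness, verifying finite satisfiability with a fresh automorphism at each step and a little transitivity bookkeeping. Part (ii) is where you genuinely diverge. The paper also reduces (implicitly, by homogeneity) to comparing $[a]_{E_1}$ and $[a]_{E_2}$ for a single $a\models p$, picks witnesses $b\in[a]_{E_1}\smallsetminus[a]_{E_2}$ and $c\in[a]_{E_2}\smallsetminus[a]_{E_1}$, observes they must lie on opposite sides, and gets a contradiction in three lines from one automorphism with $f(b)=a$ (the image $a'=f(a)$ lands between $a$ and $c$, forcing $E_2(a,a')$ and hence $E_2(b,a)$). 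You instead make the invariance reduction explicit via the sets $Z,Z'$, upgrade the "opposite sides" observation to a uniform staggering of $[a]_E$ against $[a]_F$ across all $a\models p$, and then refute it by pure order arithmetic at a second point $a^-\in[a]_E\smallsetminus[a]_F$ (using $[a^-]_E=[a]_E$, $[a^-]_F<[a]_F$, and staggering at $a^-$). Both routes rest on the same two ingredients — convexity from (i) and transitivity of $\Aut_A(\Mon)$ on $p(\Mon)$ — but the paper's contradiction is shorter because it manufactures a new automorphic image at the last step, whereas yours uses automorphisms only through the uniformity of the configuration and then closes the argument order-theoretically; the price is the somewhat delicate endpoint/bound analysis you flag, all of which checks out.
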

\begin{proof}
(i) Let $E\in\mathcal E_p$. By way of contradiction, suppose that $E$ is not convex and choose $a_1<b_1<a_2$ realizing $p$ such that $\models E(a_1,a_2)\land \lnot E(a_1,b_1)$. Let $f\in \Aut_A(\Mon)$ map $a_1$ to $a_2$. Define $f(a_n)=a_{n+1}$ and $f(b_n)=b_{n+1}$ for $n\geq 1$. Then $\models E(a_n,a_{n+1})\land \lnot E(a_n,b_n)$ holds for all $n\geq 1$. Therefore,  members of the sequence $a_1<b_1<a_2<b_2<a_3<\dots$ alternately satisfy the formula  $E(a_1,x)$; that contradicts weak o-minimality of $(p,<)$.

(ii) Let $E_1,E_2\in\mathcal E_p$ and $a\models p$. It suffices to prove $[a]_{E_1}\subseteq [a]_{E_2}$ or $[a]_{E_2}\subseteq[a]_{E_1}$. Suppose, for the sake of contradiction, that $b\in [a]_{E_1}\smallsetminus [a]_{E_2}$ and $c\in [a]_{E_2}\smallsetminus [a]_{E_1}$. By (a), both $[a]_{E_1}$ and $[a]_{E_2}$ are convex, so either $b<[a]_{E_2}$ and $[a]_{E_1}<c$, or $c<[a]_{E_1}$ and $[a]_{E_2}<b$. Without loss, suppose that the former holds. Take $f\in\Aut_A(\Mon)$ such that $f(b)=a$, and let $f(a)=a'$; clearly, $a<a'$ as $b<a$, and $a'\in[a]_{E_1}$ as $\models E_1(b,a)$. Thus, $a'<c$ as $[a]_{E_1}<c$. Since $a<a'<c$, $a,c\in[a]_{E_2}$, and since $[a]_{E_2}$ is convex, we obtain $\models E_2(a,a')$. Thus, $\models E_2(f^{-1}(a),f^{-1}(a'))$ holds, that is, $\models E_2(b,a)$; a contradiction.
\end{proof}

\begin{Remark}
Assume that $p$ is a weakly o-minimal type, $\epsilon_p$ is countable, and that $\delta:(\epsilon_p,\subset)\to ([0,1],<)$ is an embedding such that $\delta(id_{p(\Mon)})=0$, $\delta(\mathbf{1}_p)=1$, and  $\inf \delta(\epsilon_p\smallsetminus \{id_p(\Mon)\})>0$. Then $d(x,y)=\inf\{\delta(E)\mid E\in \epsilon_p\mbox{ and } \models E(x,y)\}$ defines an ultrametric on $p(\Mon)$.
The importance of this ultrametric space was first observed by
Herwig et al. in \cite{HM}, for types $p\in S_1(A)$ ($A$ finite) in a $\aleph_0$-categorical, weakly o-minimal theory. In that case, $d(x,y)=r$ is $A$-definable for all $r\in[0,1]$, and they have shown that every complete 2-type over $A$ extending $p(x)\cup p(y)\cup \{x<y\}$ is isolated by a formula $d(x,y)=r$. In other words, the binary structure on $p(\Mon)$ is completely determined by the ultrametric; in our forthcoming paper, we will prove that this holds for all complete 1-types in a weakly quasi-o-minimal theory with few countable models.  
\end{Remark}

\begin{Corollary}\label{Cor_reldef_functions_have_convex_kernel}
If $(p,<)$ is a weakly o-minimal pair over $A$, then $\Ker f\in \mathcal E_p$ is a convex equivalence relation for any relatively $A$-definable function $f$ from $p(\Mon)$ into an $A$-definable set.  
\end{Corollary}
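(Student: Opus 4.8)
The plan is to derive this corollary immediately from two results already established. First I would invoke Fact~\ref{Fact rel def kernel and inverse image}(iii): for any relatively $A$-definable function $f$ from $p(\Mon)$ into an $A$-definable set, the kernel relation $\Ker f$, defined by $f(x)=f(x')$ (more precisely, by $\hat f(x)=\hat f(x')$ for a definable extension $\hat f$ as in part~(i) of that Fact), is a relatively $A$-definable equivalence relation on $p(\Mon)$. Hence, by definition, $\Ker f\in\mathcal E_p$. Then, since $(p,<)$ is a weakly o-minimal pair over $A$, Proposition~\ref{Prop_E_p_is convex_linear}(i) applies and tells us that \emph{every} relation in $\mathcal E_p$ is $<$-convex. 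Applying this to $E=\Ker f$ yields that $\Ker f$ is a $<$-convex equivalence relation, which is exactly the assertion.

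Since the corollary is a one-line consequence of the preceding two results, there is no real obstacle; the only thing to double-check is that the kernel genuinely lands in $\mathcal E_p$, which is precisely the content of Fact~\ref{Fact rel def kernel and inverse image}(iii), together with the hypothesis that the target of $f$ is $A$-definable (used so that $\hat f(x)=\hat f(x')$ is a bona fide $L_A$-formula and $\Ker f$ is relatively $A$-definable rather than merely relatively definable with extra parameters).

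Alternatively, if a self-contained proof were preferred, one could rerun the alternating-sequence argument from the proof of Proposition~\ref{Prop_E_p_is convex_linear}(i) directly: if $\Ker f$ were not $<$-convex, pick $a_1<b_1<a_2$ realizing $p$ with $\models f(a_1)=f(a_2)\land f(a_1)\neq f(b_1)$, choose $\sigma\in\Aut_A(\Mon)$ sending $a_1\mapsto a_2$, and set $a_{n+1}=\sigma(a_n)$, $b_{n+1}=\sigma(b_n)$. Then the relatively $Aa_1$-definable subset $\{x\in p(\Mon)\mid \hat f(x)=\hat f(a_1)\}$ of $p(\Mon)$ is alternately entered and left along the $<$-increasing sequence $a_1<b_1<a_2<b_2<\cdots$, contradicting the weak o-minimality of $(p,<)$. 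But the two-line derivation from Fact~\ref{Fact rel def kernel and inverse image}(iii) and Proposition~\ref{Prop_E_p_is convex_linear}(i) is cleaner, and that is the route I would take.
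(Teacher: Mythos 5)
Your proposal is correct and follows the paper's own proof exactly: the paper likewise deduces the corollary in one line by citing Fact~\ref{Fact rel def kernel and inverse image} to get that $\Ker f$ is relatively $A$-definable (hence lies in $\mathcal E_p$) and then Proposition~\ref{Prop_E_p_is convex_linear}(i) for convexity. The alternative alternating-sequence argument you sketch is a valid fallback but is not needed.
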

\begin{proof}
By Fact \ref{Fact rel def kernel and inverse image} the kernel $\Ker f$ is relatively $A$-definable, so it is convex by Proposition \ref{Prop_E_p_is convex_linear}(i).    
\end{proof}
 
\begin{Proposition}\label{Prop_pwom_implies_dclq_wom}
Let $(p,<)$ be a weakly o-minimal pair over $A$ and let $q\in S(A)$. Suppose that $f:p(\Mon)\to q(\Mon)$ is a relatively $A$-definable function. Then: 
\begin{enumerate}[(i)]
\item $(q,<_f)$ is a weakly o-minimal pair over $A$, where $<_f$ is defined by \ $y<_fy'$ \ iff \ $f^{-1}(\{y\})<f^{-1}(\{y'\})$;
in particular, $q$ is a weakly o-minimal type.
\item $f:(\mathcal E_p,\subseteq)\to (\mathcal E_q,\subseteq)$ is an order-epimorphism whose restriction $\{E\in\mathcal E_p\mid \Ker f\subseteq E\}\to\mathcal E_q$ is an order-isomorphism; 
\item For all $E\in\mathcal E_p$, $[x]_E\mapsto [f(x)]_{f(E)}$  defines a function $f_E:p(\Mon)/E\to q(\Mon)/f(E)$.
\end{enumerate}
\end{Proposition}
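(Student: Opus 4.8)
The strategy is to obtain (i) as an immediate application of Lemma~\ref{Lemma_wom_orders_hoomomorphism_interdefinability}(i), and to prove (ii) and (iii) by direct manipulation of push-forward and pull-back equivalence relations along $f$; the one genuinely load-bearing input is that $(\mathcal E_p,\subseteq)$ is linearly ordered (Proposition~\ref{Prop_E_p_is convex_linear}(ii)).

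For (i): by Fact~\ref{Fact rel def kernel and inverse image}(i) I fix $\theta_q(y)\in q$ and view $f$ as a relatively $A$-definable function from $p(\Mon)$ into the \emph{definable} set $\theta_q(\Mon)$; its kernel is $<$-convex by Corollary~\ref{Cor_reldef_functions_have_convex_kernel}. Lemma~\ref{Lemma_wom_orders_hoomomorphism_interdefinability}(i) then says that $<_f$ is relatively $A$-definable on $f(p(\Mon))$ and that $(f(p(\Mon)),<_f)$ is a weakly o-minimal order. Since $f(p(\Mon))$ is a nonempty $A$-invariant subset of $q(\Mon)$ and $\Aut_A(\Mon)$ acts transitively on $q(\Mon)$, we have $f(p(\Mon))=q(\Mon)$, so $(q,<_f)$ is a weakly o-minimal pair over $A$.

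For (ii): for $E\in\mathcal E_p$ set $f(E):=\{(f(x),f(x'))\mid (x,x')\in E\}$; via the definable extension $\hat f$ of Fact~\ref{Fact rel def kernel and inverse image}(i) this relation is relatively $A$-definable, and it is clearly reflexive and symmetric. The crux is transitivity: if $(y_1,y_2),(y_2,y_3)\in f(E)$ with witnesses $x_1,x_2$ and $x_2',x_3$, then $x_2$ and $x_2'$ are $\Ker f$-equivalent, and because $\Ker f\in\mathcal E_p$ is $\subseteq$-comparable with $E$, in each of the cases $\Ker f\subseteq E$ and $E\subseteq\Ker f$ one deduces $(y_1,y_3)\in f(E)$. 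Hence $f(E)\in\mathcal E_q$, and $E\mapsto f(E)$ is obviously monotone. For surjectivity, given $F\in\mathcal E_q$ the pull-back $f^{-1}(F):=\{(x,x')\mid F(f(x),f(x'))\}$ is in $\mathcal E_p$, contains $\Ker f$, and satisfies $f(f^{-1}(F))=F$ (using that $f$ is onto). It remains to check that for $E\supseteq\Ker f$ one has $f^{-1}(f(E))=E$ --- the nontrivial inclusion uses $\Ker f\subseteq E$ and transitivity of $E$; this shows the restriction to $\{E\in\mathcal E_p\mid \Ker f\subseteq E\}$ is injective and order-reflecting, hence an order-isomorphism onto $\mathcal E_q$.

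Finally, (iii) is immediate from the definition of $f(E)$: if $E(x,x')$ then $(f(x),f(x'))\in f(E)$, so $[x]_E\mapsto[f(x)]_{f(E)}$ is a well-defined (and, since $f$ is onto, surjective) map $f_E\colon p(\Mon)/E\to q(\Mon)/f(E)$. I expect the only real work to be the transitivity of $f(E)$ and the identity $f^{-1}(f(E))=E$; both fail without the $\subseteq$-linearity of $\mathcal E_p$, which is precisely what forces the set-image $(f\times f)(E)$ to already be transitive. The rest is bookkeeping with Fact~\ref{Fact rel def kernel and inverse image} to keep all relations relatively $A$-definable.
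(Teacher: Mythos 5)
Your overall route is the same as the paper's: (i) via Lemma \ref{Lemma_wom_orders_hoomomorphism_interdefinability}(i) together with Corollary \ref{Cor_reldef_functions_have_convex_kernel} and surjectivity of $f$ onto $q(\Mon)$, and (ii)--(iii) via push-forward/pull-back of equivalence relations, with the $\subseteq$-linearity of $\mathcal E_p$ as the key input. Your transitivity argument for $f(E)$, the identity $f^{-1}(f(E))=E$ when $\Ker f\subseteq E$, and the surjectivity via pull-backs are all fine.

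There is, however, one genuine gap, and it sits exactly where the paper's proof does its real work: the claim that the set-image $f(E)$ is \emph{relatively $A$-definable}, which you dismiss as a consequence of Fact \ref{Fact rel def kernel and inverse image}(i) alone. That fact only provides a definable extension $\hat f:\theta_p(\Mon)\to\theta_q(\Mon)$; the natural candidate formula, $(\exists x,x')\bigl(\theta_p(x)\land\theta_p(x')\land \hat f(x)=y\land\hat f(x')=y'\land E(x,x')\bigr)$, traces out a set on $q(\Mon)^2$ that a priori may be strictly larger than $f(E)$: a pair $(y,y')\in q(\Mon)^2$ could be witnessed by $x,x'\in\theta_p(\Mon)\smallsetminus p(\Mon)$, and on such points nothing relates the formula $E(x,x')$ to $\Ker\hat f$, so you cannot transfer the witnesses into $p(\Mon)$. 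The repair is the paper's: in the case $\Ker f\subseteq E$ (the case $E\subseteq\Ker f$ gives $f(E)=\id_{q(\Mon)}$, which is trivially relatively definable), apply Fact \ref{Fact_L_P_sentence} to the tp-universal conjunction saying that $<$ is a linear order with convex equivalence $E$, that $f(x,y)$ defines a function, that $\Ker f$ is a convex equivalence, \emph{and that $\Ker f\subseteq E$}, so as to get a definable extension $(D_p,<,\hat E,\hat f)$ on which $\hat E$ is an equivalence relation and $\Ker\hat f\subseteq\hat E$. Then, given $(y,y')$ in the trace with witnesses $x,x'\in D_p$, choose $x_1,x_1'\models p$ with $f(x_1)=y$, $f(x_1')=y'$ (surjectivity from (i)); $\Ker\hat f\subseteq\hat E$ and transitivity of $\hat E$ give $\hat E(x_1,x_1')$, hence $E(x_1,x_1')$, so the trace equals $f(E)$ and $f(E)\in\mathcal E_q$. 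With that step inserted, your argument goes through and essentially reproduces the paper's proof.
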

\begin{proof}
(i) By Corollary \ref{Cor_reldef_functions_have_convex_kernel}, the kernel relation, $\Ker f$, is convex on $(p(\Mon),<)$. Hence, the weakly o-minimal order $(p(\Mon),<)$ and the function $f$ satisfy the assumptions of Lemma \ref{Lemma_wom_orders_hoomomorphism_interdefinability}(i); we conclude that $(f(p(\Mon)),<_f)$ is a weakly o-minimal order. As $f(p(\Mon))=q(\Mon)$, the pair $(q,<_f)$ is weakly o-minimal over $A$. 

(ii) Here, the main task is to prove that $f$ determines a function from $\mathcal E_p$ to $\mathcal E_q$. 
Fix $E\in\mathcal E_p$ and we will prove $f(E)\in \mathcal E_q$. As $\Ker f\in\mathcal E_p$, by Proposition \ref{Prop_E_p_is convex_linear}(ii), we have two possibilities: $E\subseteq \Ker f$ and $\Ker f\subseteq E$. 
Clearly, $E\subseteq \Ker f$ implies $f(E)= \id_{q(\Mon)}\in \mathcal E_q$ and we are done, so from now on assume $\Ker f\subseteq E$. 
Suppose that $f$ is relatively defined by the $L_A$-formula $f(x,y)$.
By Fact \ref{Fact_L_P_sentence} there is an $A$-definable set $D\supseteq q(\Mon)$, defined by $\psi(y)$ say, such that $f(x,y)$ relatively defines a function $p(\Mon)\to D$. Then the formula $(\exists y)(\psi(y)\land f(x,y)\land f(x',y))$ relatively defines the kernel $\Ker f$, so we have the following properties:
\begin{enumerate}[\hspace{10pt} (1)]
\item   $(p(\Mon),<,E)$ is a linear order with a convex equivalence relation; 

\item $f(x,y)$ relatively defines a function $p(\Mon)\to D$;

\item $(\exists y)(\psi(y)\land f(x,y)\land f(x',y))$ relatively defines a convex equivalence on $(p(\Mon),<)$;

\item  $\models(\forall x,x'\models p)( (\exists y)(\psi(y)\land f(x,y)\land f(x',y))\rightarrow E(x,x'))$. 
\end{enumerate}
Here, (3) says that $\Ker f$ is convex on $(p(\Mon),<)$ and (4) says $\Ker f\subseteq E$. 
Clearly, (1)--(4) are tp-universal properties, so by Fact \ref{Fact_L_P_sentence} there exists an $A$-definable set $D_p\supseteq p(\Mon)$  such that letting $\hat E$ and $\hat f$ be as usual,
we have: \ $(D_p,<,\hat E)$ is a linear order with a convex equivalence relation; \ 
 $\hat f: D_p\to D$; \ 
 $\Ker \hat f$ is a convex equivalence on $(D_p,<)$ and $\Ker\hat f\subseteq \hat E$. Furthermore, by replacing $D$ with $\hat f(D_p)$ the kernel relation does not change, so we can also assume that $\hat f$ is surjective.  
Note that $\Ker\hat f\subseteq\hat E$ implies that $\hat f(\hat E)$ is an equivalence relation on $D_q$; clearly, $\hat f(\hat E)$ is $A$-definable. Then $\hat f(\hat E)_{\restriction q(\Mon)}=f(E)$ is a relatively $A$-definable equivalence relation on $q(\Mon)$, so $f(E)\in\mathcal E_q$. 
Therefore, $f$ maps $\mathcal E_p$ to $\mathcal E_q$.  

To show that $f$ is surjective, let $F\in\mathcal E_q$ be relatively defined by $F(y,y')$.
Then $E'=\{(x,x')\in p(\Mon)^2\mid \models F(f(x),f(x'))\}$ is an equivalence relation on $p(\Mon)$ that is relatively defined by $(\exists y,y'\in D)(f(x,y)\land f(x',y')\land F(y,y'))$, so $E'\in \mathcal E_p$. 
As $f(E')=F$ is clearly true, the function $f:\mathcal E_p\to \mathcal E_q$ is surjective; it follows that $f$ is an order-epimorphism. 

Finally, note that $\Ker f\subseteq E_1\subset E_2$ implies $f(E_1)\subset f(E_2)$, so $f: \{E\in\mathcal E_p\mid \Ker f\subseteq E\}\to\mathcal E_q$ is an order-isomorphism. 
This completes the proof of (ii). 
(iii) follows from (ii)
\end{proof}

\begin{Remark}
A consequence of the previous proposition is that, roughly speaking, the quotient $(p/E,<)$ of a weakly o-minimal pair $(p,<)$ over $A$ by a relatively $A$-definable (convex) equivalence relation $E$ is also a weakly o-minimal pair. Formally, $p/E$ is not a $\Mon^{eq}$-type, so instead of $p/E$ we will work with $p/\hat E\in S^{eq}(A)$, where $\hat E$ is a definable (convex) extension of $E$.  
The pair $(p/\hat E,<)$ is weakly o-minimal by the previous proposition, and $p/E$ is interdefinable with $p/\hat E$ in the sense that for each $a\models p$, the hyperimaginary $[a]_E$ and the imaginary $[a]_{\hat E}\in \Mon^{eq}$ are interdefinable (meaning $\Aut_{A[a]_E}(\Mon)=\Aut_{A[a]_{\hat E}}(\Mon)$). 
\end{Remark}

The next lemma is technical. There, we state basic properties of the $(<_{\vec E},\triangleleft)$-increasing functions that will be used in the proof of the monotonicity theorems. 

\begin{Lemma}\label{Cor_f_of_convex_is_convex}
Suppose that $(p,<)$ and $(q, \triangleleft)$ are weakly o-minimal  pairs over $A$,  $\vec E=(E_1,\ldots, E_n)\in \mathcal (\mathcal E_p\smallsetminus\{\mathbf 1_p\})^{n}$ an $\subseteq$-increasing sequence, and $f: p(\Mon) \to  q(\Mon)$ a non-constant, relatively $A$-definable function such that $\Ker(f)\subseteq E_1$.   Then:
\begin{enumerate}[(i)]
    \item  $f$ is $(<_{\vec E},\triangleleft)$-increasing if and only if it is $(<,\triangleleft_{f(\vec E)})$-increasing;
    \item  If $f$ is $(<_{\vec E},\triangleleft)$-increasing, then:
    \begin{enumerate}[(a)]
        \item $f_{E_n}: p(\Mon)/E_n \to  q(\Mon)/f(E_n)$ \ is strictly  $(<,\triangleleft)$-increasing;
        \item $f_{E_k}:p(\Mon)/E_k\to q(\Mon)/f(E_k)$ \ is strictly  $(<_{(E_{k+1},\ldots,E_n)},\triangleleft)$-increasing for all $k\leqslant n$.
    \end{enumerate}
 \end{enumerate}
\end{Lemma}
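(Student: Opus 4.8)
The plan is to derive both parts from one combinatorial description of the iterated reversal $<_{\vec E}$ applied to a single pair, together with the structural facts about $f$ from Proposition~\ref{Prop_pwom_implies_dclq_wom}. I would begin with the common setup. By Proposition~\ref{Prop_E_p_is convex_linear}(i) applied to $(q,\triangleleft)$ and by Proposition~\ref{Prop_pwom_implies_dclq_wom}(ii), the relations $f(E_1),\dots,f(E_n)$ form an $\subseteq$-increasing sequence $f(\vec E)$ of $\triangleleft$-convex relations in $\mathcal E_q$, so $\triangleleft_{f(\vec E)}$ is well defined; moreover $\vec E$ is a legitimate sequence for both $<$ and (by Remark~\ref{Remark_weak_ominimal_firstrmk}(e)) $<_{\vec E}$. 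The elementary fact I would isolate is that $\Ker f\subseteq E_j$ forces $f(E_j)(f(a),f(a'))\iff E_j(a,a')$ for all $a,a'\models p$ and all $j$ (including $j=0$ with the convention $E_0:=\Ker f$, where both sides say $f(a)=f(a')$); this is immediate from the description of $f(E_j)$ via a definable extension in the proof of Proposition~\ref{Prop_pwom_implies_dclq_wom}. Now, for $a,a'\models p$ with $f(a)\neq f(a')$, let $\ell=\ell(a,a')$ be the largest $j\in\{0,\dots,n\}$ with $\neg E_j(a,a')$. Unwinding Definition~\ref{Definition <E}, each reversal $(\cdot)_{E_j}$ flips the $<$-order of the pair $a,a'$ exactly when $E_j(a,a')$, i.e.\ exactly for $j>\ell$, so $a<_{\vec E}a'$ holds iff either $a<a'$ and $n-\ell$ is even, or $a'<a$ and $n-\ell$ is odd. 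Carrying out the identical computation inside $(q(\Mon),\triangleleft_{f(\vec E)})$ and invoking the equivalence above shows, \emph{with the same} $\ell$, that $f(a)\triangleleft_{f(\vec E)}f(a')$ holds iff either $f(a)\triangleleft f(a')$ and $n-\ell$ is even, or $f(a')\triangleleft f(a)$ and $n-\ell$ is odd.

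With these two parity identities in hand, part (i) becomes a one-line case split. For the forward direction I would assume $f$ is $(<_{\vec E},\triangleleft)$-increasing, take $a<a'$ (the case $f(a)=f(a')$ being trivial), and note: if $n-\ell$ is even then $a<_{\vec E}a'$, so $f(a)\triangleleft f(a')$ and hence $f(a)\triangleleft_{f(\vec E)}f(a')$; if $n-\ell$ is odd then $a'<_{\vec E}a$, so $f(a')\triangleleft f(a)$, and again $f(a)\triangleleft_{f(\vec E)}f(a')$ by the second identity. This gives that $f$ is $(<,\triangleleft_{f(\vec E)})$-increasing. For the converse I would not rerun the computation: by Remark~\ref{Remark_weak_ominimal_firstrmk}(e), $(p,<_{\vec E})$ and $(q,\triangleleft_{f(\vec E)})$ are weakly o-minimal pairs, $\vec E$ and $f(\vec E)$ are legitimate sequences for them, and $(<_{\vec E})_{\vec E}=<$, $(\triangleleft_{f(\vec E)})_{f(\vec E)}=\triangleleft$ hold by Remark~\ref{Remark_order_<_vecE}(c); applying the forward implication to this pair of pairs turns ``$f$ is $(<,\triangleleft_{f(\vec E)})$-increasing'' into ``$f$ is $(<_{\vec E},\triangleleft)$-increasing''.

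For part (ii) I would assume $f$ is $(<_{\vec E},\triangleleft)$-increasing and observe that (a) is the case $k=n$ of (b), so it is enough to prove (b). Fix $k\leqslant n$ and $a,a'\models p$ with $[a]_{E_k}<_{(E_{k+1},\dots,E_n)}[a']_{E_k}$; then $\neg E_k(a,a')$, hence $f(a)\neq f(a')$ and $\ell=\ell(a,a')\geqslant k$. Since $E_{k+1},\dots,E_n$ all contain $E_k$, the reversals defining $<_{(E_{k+1},\dots,E_n)}$ act on the classes $[a]_{E_k},[a']_{E_k}$ by flipping exactly when $E_j(a,a')$ for $k<j\leqslant n$, i.e.\ exactly for $\ell<j\leqslant n$; comparing with the first parity identity yields $[a]_{E_k}<_{(E_{k+1},\dots,E_n)}[a']_{E_k}\iff a<_{\vec E}a'$. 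Then $f(a)\triangleleft f(a')$ holds (as $f$ is $(<_{\vec E},\triangleleft)$-increasing and $f(a)\neq f(a')$), and $\neg E_k(a,a')$ gives $\neg f(E_k)(f(a),f(a'))$; hence $f_{E_k}([a]_{E_k})=[f(a)]_{f(E_k)}\triangleleft[f(a')]_{f(E_k)}=f_{E_k}([a']_{E_k})$, i.e.\ $f_{E_k}$ is strictly $(<_{(E_{k+1},\dots,E_n)},\triangleleft)$-increasing.

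I expect the only genuinely delicate step to be the iterated-reversal bookkeeping in the first paragraph: setting up the separation level $\ell$, establishing the parity formula for $a<_{\vec E}a'$, and checking its two stabilities — under applying $f$ (which is where $\Ker f\subseteq E_1$ and the equivalence $f(E_j)(f(a),f(a'))\iff E_j(a,a')$ enter) and under passing to the quotient $p(\Mon)/E_k$ by a relation coarser than the tail $(E_{k+1},\dots,E_n)$ (needed for (b)). Once that is in place, both parts reduce to the short case analyses above, and Remark~\ref{Remark_order_<_vecE}(c) is precisely what makes it enough to prove (i) in one direction only.
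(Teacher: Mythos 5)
Your proposal is correct: every step checks out, including the two delicate points (that each reversal $(\cdot)_{E_j}$ flips a fixed pair exactly when $E_j$ relates it, and that the separation level $\ell$ is preserved under $f$ because $\Ker f\subseteq E_1$ forces $E_j(a,a')\Leftrightarrow f(E_j)(f(a),f(a'))$). The skeleton is the same as the paper's: both arguments rest on exactly that equivalence (obtained from Proposition \ref{Prop_pwom_implies_dclq_wom}(ii)) and both get the converse of (i) for free from the involution $(<_{\vec E})_{\vec E}=<$ of Remark \ref{Remark_order_<_vecE}(c). Where you genuinely diverge is the bookkeeping: the paper proves (i) for $n=1$ by a two-case split and then inducts using the commutation rule of Remark \ref{Remark_order_<_vecE}(b), proves (ii)(a) directly from the maximality of $E_n$ (so that $<$ and $<_{\vec E}$ agree on $p(\Mon)/E_n$), and deduces (ii)(b) from (a) by regrouping the reversals, again via Remark \ref{Remark_order_<_vecE}(b); you instead establish one closed-form ``parity at the separation level'' description of $<_{\vec E}$ on a pair and read off (i), (ii)(b), and hence (ii)(a) as the case $k=n$, from it. Your route buys a single unifying computation that makes the transfer along $f$ and the passage to the quotient $p(\Mon)/E_k$ completely transparent, at the cost of carrying the explicit combinatorics of $\ell$ and the flip count $n-\ell$; the paper's route avoids any explicit counting by staying at $n=1$ and outsourcing the rest to the formal identities for $<_{\vec E}$, at the cost of an induction and a less self-contained reduction of (b) to (a). Both are complete proofs of Lemma \ref{Cor_f_of_convex_is_convex}.
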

\begin{proof} Since $\Ker f\subseteq E_1$ holds, we can apply Proposition \ref{Prop_pwom_implies_dclq_wom}(ii) and conclude that $(f(E_1),\dots,f(E_n))$ is an increasing sequence of convex equivalences on $q(\Mon)$. In particular:
\begin{equation}\tag{$*$}
(a,b)\in E_k\ \Leftrightarrow \ (f(a),f(b))\in f(E_k)\ \mbox{ holds for all }k\leqslant n.
\end{equation}

(i) We will prove the case $n=1$; the general case follows from this one and Remark \ref{Remark_order_<_vecE}(b) by easy induction. To prove $(\Rightarrow)$, suppose that $f$ is $(<_{E_1} \triangleleft)$-increasing, and let $a<b$ be realizations of $p$. We consider two cases.
If $(a,b)\in E_1$, then $b<_{E_1}a$, so $f(b)\trianglelefteq f(a)$ as $f$ is $(<_{E_1},\triangleleft)$-increasing. Since also $(f(a),f(b))\in f(E_1)$ by $(*)$, we conclude $f(a)\trianglelefteq_{f(E_1)} f(b)$. 
On the other hand, if $(a,b)\notin E_1$, then $a<_{E_1}b$, so $f(a)\trianglelefteq f(b)$ as $f$ is $(<_{E_1},\triangleleft)$-increasing. Since also $(f(a),f(b))\notin f(E_1)$ by $(*)$, we conclude $f(a)\trianglelefteq_{f(E_1)} f(b)$.    
Therefore,  for all $a,b\models p$, $a<b$  implies  $f(a)\trianglelefteq_{f(E_1)} f(b)$,  so $f$ is $(<,\triangleleft_{f(E_1)})$-increasing. This completes the proof of $(\Rightarrow)$.
To prove $(\Leftarrow)$, note that $f$ being $(<,\triangleleft_{f(E_1)})$-increasing is the same as being $((<_{E_1})_{E_1},\triangleleft_{f(E_1)})$-increasing as $(<_{E_1})_{E_1}=<$. So, by $(\Rightarrow)$, $f$ is $(<_{E_1},(\triangleleft_{f(E_1)})_{f(E_1)})$-increasing, that is, it is $(<_{E_1},\triangleleft)$-increasing as $(\triangleleft_{f(E_1)})_{f(E_1)}=\triangleleft$. 

(ii) Suppose that $f$ is $(<_{\vec E},\triangleleft)$-increasing. To prove part (a), assume that $a,b\models p$ and $[a]_{E_n}<[b]_{E_n}$.  Since $E_n$ is maximal in $\vec E$, the orders $<$ and $<_{\vec E}$ agree on the $E_n$-classes, so $f([a]_{E_n})\trianglelefteq f([b]_{E_n})$ follows as $f$ is $(<_{\vec E},\triangleleft)$-increasing. 
Furthermore, by $(*)$,  $[a]_{E_n}<[b]_{E_n}$ implies  $(f(a),f(b))\notin f(E_n)$, so the classes $[f(a)]_{f(E_n)}$ and $[f(b)]_{f(E_n)}$ are distinct; $[f(a)]_{f(E_n)}\triangleleft [f(b)]_{f(E_n)}$ follows. 
Therefore, $f_{E_n}$ is strictly $(<,\triangleleft)$-increasing, which proves part (a). 
To prove (b), consider $(p(\Mon),<_{(E_{k+1},\ldots,E_n)})$ instead of $(p(\Mon),<)$. Then $f$ is $((<_{(E_{k+1},\ldots,E_n)})_{(E_1,\ldots, E_k)},\triangleleft)$-increasing by Remark \ref{Remark_order_<_vecE}(b), so $f_{E_k}$ is strictly $(<_{(E_{k+1},\ldots,E_n)},\triangleleft)$-increasing by part (a).    
\end{proof}

\section{Relatively definable orders and monotonicity theorems}\label{Section3}

In this section, we characterize relatively definable linear orders on the locus of a weakly o-minimal type and, as corollaries, obtain monotonicity theorems.  The most technically demanding part is Theorem \ref{Theorem_characterize_wom_orders}, where we prove that any pair of relatively definable orders on the locus of a weakly o-minimal type $p$ satisfies $\triangleleft=<_{\vec E}$ for some increasing sequence of relations $\vec E\in(\mathcal E_p)^{<\omega}$. 
This result has already been proved in \cite[Proposition 5.2]{MTwmon} in the context of weakly quasi-o-minimal theories. Although the proof there {\em ad verbum} goes through in the context of weakly o-minimal types, we present a simpler argument here.

\begin{Lemma}\label{Lemma_wom_order_property}
Let $(p,<)$ be a weakly o-minimal pair over $A$ and $\phi(x,y)$ an $L_A$-formula. Let $C(a)=\phi(\Mon,a)$. Suppose that for some (any) $a\models p$, $C(a)$ is an initial part of $\{x\in p(\Mon)\mid a<x\}$. Then there do {\it not} exist $a_1<b_0<a_0$ realizing $p$ with $C(b_0)<C(a_0)$ and $C(b_0)\cup C(a_0)\subseteq C(a_1)$.  
\end{Lemma}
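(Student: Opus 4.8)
The plan is to derive a contradiction from the existence of such a configuration by iterating it with an automorphism to produce a set whose relative definition has infinitely many convex components, contradicting weak o-minimality of $(p,<)$. Assume $a_1<b_0<a_0$ realize $p$, $C(b_0)<C(a_0)$, and $C(b_0)\cup C(a_0)\subseteq C(a_1)$. First I would observe that by the hypothesis on $\phi$, each $C(c)$ is an initial part of $\{x\in p(\Mon)\mid c<x\}$; in particular $C(a_0)$ is a convex subset of $p(\Mon)$ lying entirely above $a_0$, $C(b_0)$ lies entirely above $b_0$, and since $C(b_0)<C(a_0)$ while both are contained in the convex set $C(a_1)$ (which starts just above $a_1$), we get a nested/ordered picture: going left to right we see $a_1$, then $b_0$, then $C(b_0)$, then $C(a_0)$, all inside $C(a_1)$.

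The key step is to run an automorphism argument. Let $f\in\Aut_A(\Mon)$ send $a_1\mapsto b_0$ (possible since $a_1\equiv b_0\ (A)$, both realizing $p$). Set $a_{n+1}=f(a_n)$ and $b_{n+1}=f(b_n)$ for $n\geqslant 0$; note $a_1=f(a_1)$... — more carefully, relabel so that applying $f$ repeatedly to the triple $(a_1,b_0,a_0)$ gives triples $(a_{n+1},b_n,a_n)$ with the same configuration: $a_{n+1}<b_{n-1}<\dots$, and crucially $C(b_n)<C(a_n)$ and $C(b_n)\cup C(a_n)\subseteq C(a_{n+1})$ for all $n$. The point is that $a_{n+1}\mapsto$ the previous ``$b$'', so the sets $C(a_n)$ are strictly nested increasing: $C(a_0)\subsetneq C(a_1)\subsetneq C(a_2)\subsetneq\cdots$, and the ``gaps'' $C(a_{n+1})\smallsetminus C(a_n)$ are nonempty — indeed $C(b_n)\subseteq C(a_{n+1})$ but, because $C(b_n)<C(a_n)$, the set $C(b_n)$ meets $C(a_{n+1})\smallsetminus C(a_n)$. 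Now pick witnesses: $d_n\in C(b_n)$ with $d_n\notin C(a_n)$ (exists, as just argued, assuming $C(b_n)\neq\emptyset$; and $C(b_n)=f^n(C(b_0))\neq\emptyset$). Then $\models\phi(d_n,a_{n+1})\wedge\lnot\phi(d_n,a_n)$, while also $\models\phi(d_n,a_m)$ for all $m\geqslant n+1$ by nestedness, so the formula — viewed appropriately — alternates.

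To make the alternation precise and land the contradiction, I would fix an element $e$ realizing $p$ with $e>C(a_n)$ for all $n$ (such $e$ exists: the union $\bigcup_n C(a_n)$ is a proper initial-above-$a_n$ type-definable-ish set, but more simply the $C(a_n)$ are all contained in, say, $C(a_1)$... — wait, they are all $\subseteq C(a_1)$? No: $C(a_0)\subseteq C(a_1)$ but $C(a_2)\supseteq C(a_1)$. So instead I use the increasing union $\bigcup_n C(a_n)$, which is a convex subset of $p(\Mon)$ with infinitely many elements). Consider the formula $\psi(x,y):=\phi(x,y)$ and the sequence $d_0<d_1<d_2<\cdots$ (these are increasing since $d_n\in C(b_n)\subseteq C(a_{n+1})\smallsetminus C(a_n)$ and the $C(a_n)$ increase, forcing $d_n<d_{n+1}$ after checking convexity). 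Fix one parameter, say $a_{\omega}$... — the cleanest route avoiding a limit element is: the relatively definable set $\phi(\Mon,a_{k})\cap p(\Mon)=C(a_k)$ for large fixed $k$ contains $d_0,d_1,\dots,d_{k-1}$ but the set $\lnot\phi(\Mon,a_1)\cap p(\Mon)$ separates them in an alternating fashion — concretely, $d_n\in C(a_1)\iff$ ... Actually the sharpest statement: the single formula $\lnot\phi(x,a_1)$ (with $a_1$ fixed) is satisfied by $d_0$ but, since $d_n\in C(a_{n+1})\supseteq C(a_1)$ for $n\geqslant 1$, i.e. $d_n\in C(a_1)$ for $n\geqslant 1$... so that single formula does not alternate. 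The genuine alternation must use the formula $\phi(x,y)$ with $y$ ranging, so I instead build a sequence inside $p(\Mon)$ in a single variable by interleaving: take the sequence $b_0 < d_0 < b_1 < d_1 < b_2 < \cdots$ where I choose $d_n\in C(b_n)\smallsetminus C(a_n)$ and note $b_{n+1}\notin C(a_1)$ is false — hmm. The honest fix is to reverse the roles: apply the Lemma-configuration to get that $\phi(x,a_1)$ holds at the $d_n$'s and fails at suitable points $e_n$ with $e_n\in (C(a_n)\text{'s left edge})$, interleaved, all lying in the \emph{fixed} set $C(a_1)$; since $C(b_n)<C(a_n)$ both inside $C(a_1)$, between consecutive $C(b_n),C(a_n)$ there is room, and using one more automorphism-generated family one exhibits an alternating sequence for $\phi(x,a_1)$ inside $p(\Mon)$.

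\textbf{Main obstacle.} The real difficulty is organizing the combinatorics of the nested convex sets $C(a_n)$ so as to extract from them a \emph{single} relatively definable subset of $p(\Mon)$ — defined by $\phi(x,b)$ for one fixed parameter $b$ — that provably has infinitely many convex components; the subtlety is that naive nesting ($C(a_n)\subseteq C(a_{n+1})$) alone is not alternation, so one must exploit the strict inequality $C(b_0)<C(a_0)$ together with $C(b_0)\cup C(a_0)\subseteq C(a_1)$ to manufacture, via the $A$-automorphism $f$, genuinely alternating membership. I expect the clean write-up to fix the parameter $a_1$ and show the points $f^n(b_0)$ and points chosen inside $f^n(C(a_0))$ interleave while lying alternately inside and outside $\phi(\Mon,a_1)$, contradicting Proposition/Definition of weak o-minimality of $(p,<)$.
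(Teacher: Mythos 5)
There is a genuine gap, and it lies exactly where you locate your ``main obstacle'': you never exhibit a relatively definable subset of $p(\Mon)$ with infinitely many convex components, and the route you keep returning to cannot produce one. Throughout (including in your final sentence, where you propose to fix the parameter $a_1$ and look at $\phi(\Mon,a_1)$) you seek alternation for a set of the form $\phi(\Mon,b)\cap p(\Mon)=C(b)$ with the second coordinate fixed. But by the hypothesis of the lemma each $C(b)$, $b\models p$, is an initial part of the final part $\{x\in p(\Mon)\mid b<x\}$, hence a \emph{convex} subset of $(p(\Mon),<)$; so no such set can ever have more than one convex component, and this strategy is dead on arrival. You brush against the correct idea (``the genuine alternation must use the formula $\phi(x,y)$ with $y$ ranging'') but do not carry it out: weak o-minimality applies to \emph{every} relatively definable subset of $p(\Mon)$, in particular to $\{y\in p(\Mon)\mid\ \models\phi(a_0,y)\}$, where a fixed realization $a_0$ of $p$ occupies the first slot and the variable ranges over the parameter slot. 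That coordinate swap is the missing key step.

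This is what the paper does, and note also that its automorphism is oriented the opposite way from yours: take $f\in\Aut_A(\Mon)$ with $f(a_0)=a_1$ and define $a_{n+1}=f(a_n)$, $b_{n+1}=f(b_n)$, so that the hypotheses propagate by induction to every triple $(a_{n+1},b_n,a_n)$ (your choice $f(a_1)=b_0$ together with an unspecified ``relabeling'' does not transport the conditions $C(b_n)<C(a_n)$ and $C(b_n)\cup C(a_n)\subseteq C(a_{n+1})$ to the new triples, since $f$ need not map the old triple onto the next one). One then gets $a_{n+1}<b_n<a_n$, $C(a_n)\subseteq C(a_{n+1})$ and $a_{n+1}<C(b_n)<a_n$, whence $a_0\in C(a_n)$ (using that $C(a_0)\neq\emptyset$ lies in the initial part $C(a_n)$ of $\{x\mid a_n<x\}$) while $a_0\notin C(b_n)$ (since $C(b_n)$ lies below $a_n\leqslant a_0$). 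Thus the single formula $\phi(a_0,x)$ alternates along the $<$-decreasing sequence $a_1>b_1>a_2>b_2>\cdots$ of realizations of $p$, contradicting weak o-minimality of $(p,<)$. Your nested-sets picture is not wrong in spirit, but without the swap of coordinates (and with the automorphism pointed the right way) the proposal cannot be completed as written.
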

\begin{proof}
By way of contradiction, assume that $a_1<b_0<a_0$ are realizations of $p$ such that $C(b_0)<C(a_0)$ and $C(b_0)\cup C(a_0)\subseteq C(a_1)$; in particular, $a_1<C(b_0)<a_0$. Let $f\in \Aut_A(\Mon)$ map $a_0$ to $a_1$, and for $n\geqslant 0$ define $a_{n+1}=f(a_n)$ and $b_{n+1}=f(b_n)$; by induction we have  $a_{n+1}<b_n<a_n$,  $C(a_n)\subseteq C(a_{n+1})$ and $a_{n+1}<C(b_n)<a_n$.  
Since $a_0\in C(a_0)\subseteq C(a_1)\subseteq C(a_n)\subseteq\dots$, we conclude $\models \phi(a_0,a_n)$ as $\phi(x,a_n)$ relatively defines $C(a_n)$. 
Also, $a_{n+1}<C(b_n)<a_n$ implies $a_0>C(b_0)>C(b_1)>\dots$, so $a_0\notin C(b_n)$, that is, $\models\lnot \phi(a_0,b_n)$. Therefore, the members of the sequence $\dots< a_2<b_1<a_1<b_0<a_0$ alternately satisfy the formula $\phi(a_0,x)$, contradicting the fact that $(p,<)$ is a weakly o-minimal pair.  
\end{proof}

\begin{Theorem}\label{Theorem_characterize_wom_orders}
Suppose that $\mathbf p=(p,<)$ is a weakly o-minimal pair over $A$ and $\triangleleft$ a relatively $A$-definable linear order on $p(\Mon)$. Then there exists a unique strictly increasing sequence of equivalence relations $\vec E= (E_0,E_1,...E_{n})\in \mathcal E_p^{n+1}$ such that $E_0=\id_{p(\Mon)}$ and $\triangleleft=<_{\vec E}$.
Moreover, $<$ and $\triangleleft$ have the same orientation if and only if $E_n\neq \mathbf 1_p$.  
\end{Theorem}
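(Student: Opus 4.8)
The plan is to build the sequence $\vec E$ directly from the two orders by defining, for each pair of realizations $a, b \models p$ with $a < b$, whether they are ``flipped'' by $\triangleleft$, and organizing these flips into a nested family of convex equivalence relations. Concretely, I would first note that both $(p,<)$ and $(p,\triangleleft)$ are so-pairs (Remark \ref{Remark_weak_ominimal_firstrmk}(f) together with Fact \ref{Fact_so_basic}(v)), so by Lemma \ref{Lemma_same_orient} either $\triangleleft \approx_p {<}$ or $\triangleleft \approx_p {>}$; replacing $\triangleleft$ by its reverse if necessary, I will treat both cases in parallel and keep track of the orientation at the end. The key object is the relation ``$x$ and $y$ lie in a common $\triangleleft$-convex and $<$-convex block on which the two orders are comparable in a controlled way.'' To make this precise I would consider, for $a \models p$, the set $C_\triangleleft(a) = \{x \in p(\Mon) \mid a \triangleleft x \text{ and } a < x\} \cup \{x \mid x \triangleleft a \text{ and } x < a\}$ — the set of points on which $<$ and $\triangleleft$ ``agree relative to $a$'' — and dually the disagreement set. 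Using weak o-minimality of $(p,<)$, each such relatively $A$-definable set has finitely many $<$-convex components, and Lemma \ref{Lemma_wom_order_property} (applied to the appropriate $L_A$-formula $\phi(x,y)$ coming from $x \triangleleft y$ and $x<y$) forbids the nesting configuration $a_1 < b_0 < a_0$ with the sandwiching of convex blocks; this rigidity is exactly what forces the ``agreement/disagreement'' pattern to be governed by a finite nested chain of convex equivalence relations rather than something wilder.

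Next I would extract the equivalence relations. Define $E(x,y)$ to hold iff $x,y$ realize $p$ and the $<$-interval and $\triangleleft$-interval determined by $x,y$ coincide as sets — equivalently, no third realization of $p$ separates them in one order but not the other; one checks this is relatively $A$-definable (it is a $\tp$-universal-type condition built from $x<y$, $x \triangleleft y$, via Fact \ref{Fact_L_P_sentence}) and, by Proposition \ref{Prop_E_p_is convex_linear}, automatically $<$-convex and $\subseteq$-comparable with everything in $\mathcal E_p$. On an $E$-class the two orders are either equal or exactly reversed — this is where the weak o-minimality (finiteness of convex components) plus Lemma \ref{Lemma_wom_order_property} do the real work, ruling out that $\triangleleft$ restricted to a block is some nontrivial shuffle of $<$. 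Iterating: on the quotient $(p(\Mon)/E, <)$, which is again weakly o-minimal by Proposition \ref{Prop_pwom_implies_dclq_wom}(i), the induced orders $<$ and $\triangleleft$ are ``closer'' (strictly fewer possible flip-patterns, measured by the finite number of convex components of the disagreement set), so I would set up a decreasing complexity measure — the maximum over $a \models p$ of the number of $<$-convex components of the $\triangleleft$-disagreement set of $a$ — and induct on it. The base case is when the two orders agree outright (measure zero, $\triangleleft = {<}$, giving $\vec E = (\mathrm{id})$) or agree after one global flip ($\triangleleft = {>}$, giving $\vec E = (\mathrm{id}, \mathbf 1_p)$). The inductive step peels off the finest nontrivial $E_1 \in \mathcal E_p$ on which a flip occurs, applies the induction hypothesis to the quotient to get $\triangleleft/E = (<_{E}')_{\vec E'}$ for a chain $\vec E'$ of relations above $E$, and then pulls back, using $(<_E)_E = {<}$ from Remark \ref{Remark_order_<_vecE}(c) to reassemble $\triangleleft = <_{(\mathrm{id}, E_1, \dots)}$.

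For uniqueness I would invoke Remark \ref{Remark_order_<_vecE}(d): two distinct strictly increasing sequences of convex equivalence relations not containing $\mathrm{id}$ give distinct orders $<_{\vec E}$; since we are normalizing by requiring $E_0 = \mathrm{id}_{p(\Mon)}$ as the (forced) minimum and the remaining relations strictly increasing, the sequence is pinned down by $\triangleleft$. For the orientation clause: $<_{\vec E}$ has the same orientation as $<$ precisely when the ``top'' equivalence relation in $\vec E$ is \emph{not} $\mathbf 1_p$, because reversing order within a class that is not all of $p(\Mon)$ preserves the global left/right-eventual structure, whereas $\vec E$ ending in $\mathbf 1_p$ means the last operation is a global reversal, flipping orientation; this can be read off directly from the definition of $\approx_p$ (Definition \ref{Definition same orientation}) by testing the formula $a \triangleleft x$ against $(p(\Mon),<)$, noting that $a \triangleleft x$ defines a final part of $(p(\Mon),<)$ iff the cumulative effect of $\vec E$ near the ``$a$-level'' leaves the tail alone, i.e.\ iff $E_n \neq \mathbf 1_p$. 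The main obstacle I anticipate is the \emph{dichotomy on a block} — proving that on a single $E_1$-class the restriction of $\triangleleft$ is either $<$ or its reverse, with no intermediate behavior; this is where one must combine the finiteness of convex components (weak o-minimality) with the homogeneity provided by $\Aut_A(\Mon)$ and the forbidden-configuration Lemma \ref{Lemma_wom_order_property} most carefully, essentially running the same alternating-sequence argument that appears in the proof of Proposition \ref{Prop_E_p_is convex_linear}(ii) but tracking the interaction of the two orders simultaneously.
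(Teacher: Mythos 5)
Your outline has the right overall shape (finite alternation structure, induction, Lemma \ref{Lemma_wom_order_property} as the forbidden configuration, Remark \ref{Remark_order_<_vecE}(d) for uniqueness), but the steps where the actual content of the theorem lives are asserted rather than proved, and the tools you cite do not deliver them. First, your relation $E$ (``the $<$-interval and the $\triangleleft$-interval determined by $x,y$ coincide'') is defined by a quantifier ranging over $p(\Mon)$, so it is a priori only an infinite intersection of relatively $A$-definable relations; Fact \ref{Fact_L_P_sentence} converts tp-universal \emph{sentences} into statements on definable supersets, it does not make such a relation relatively definable, and Proposition \ref{Prop_E_p_is convex_linear} only applies to relations already known to lie in $\mathcal E_p$ (i.e.\ relatively $A$-definable equivalence relations), so invoking it to get convexity and comparability is circular: you have shown neither transitivity of $E$ nor its relative definability. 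This is precisely where the paper's proof does its work: the candidate relation is first defined order-theoretically (via suprema of the pieces $C_{k-1}(x)$) and then, through Claims 1--3 using Lemma \ref{Lemma_wom_order_property}, shown to coincide with the explicitly relatively $A$-definable relation $x\in C(y)\vee y\in C(x)\vee x=y$ and to be a $\triangleleft$-convex equivalence relation. Without an analogue of these claims your $E$ is not known to be in $\mathcal E_p$, is not known to be $\triangleleft$-convex (which you need even to form the quotient order $\triangleleft/E$), and the ``finest relation on which a flip occurs'' is not known to exist or be unique.

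Second, the block dichotomy --- that on each $E$-class $\triangleleft$ is exactly $<$ or exactly its reverse --- is the heart of the theorem, and you defer it (``this is where \dots\ do the real work'', ``the main obstacle I anticipate'') without giving the argument; note that interval-coincidence of the pair $(x,y)$ alone does not yield agreement of the two betweenness relations on the whole class unless you already know $E$ is a convex equivalence relation. Finally, the inductive step needs a proof that your complexity measure (number of convex components of the disagreement set) strictly drops on the quotient, and that the reassembly $\triangleleft=<_{(\id_{p(\Mon)},E_1,\dots)}$ is legitimate; the paper handles the analogous step by passing from $\triangleleft$ to $\triangleleft_{E_n}$ (peeling the coarsest relation, not the finest) and checking directly that the alternation number decreases by one. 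So, as written, the proposal is an outline of a plausible variant of the paper's argument rather than a proof: the definability, equivalence, convexity, and dichotomy claims all need the kind of argument carried out in Claims 1--3 of the paper's proof, and the shortcuts you cite do not substitute for it.
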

\begin{proof}
For $a\models p$, the subsets of $(a,+\infty)_{\mathbf p}=\{x\in p(\Mon)\mid a<x\}$ relatively defined by the formulae $a\triangleleft x$ and $x\triangleleft a$ have finitely many convex components in $p(\Mon,<)$. These components are relatively $Aa$-definable by Lemma \ref{Lemma_reldef_of_convex_components}(i), form a $<$-convex partition $C_0(a)<C_1(a)<\dots<C_n(a)$ of $(a,+\infty)_{\mathbf p}$ and alternately satisfy $x\triangleleft a$ and $a\triangleleft x$; note that we obtain the same partition if we work with the reverse $\triangleleft^*$ instead of $\triangleleft$. We proceed by induction on $n$.

If $n=0$, then for any $a,b\models p$, $a<b$ implies $b\in C_0(a)$, so $a\triangleleft b$ or $b\triangleleft a$ is valid, depending on whether $C_0(a)$ is determined by $a\triangleleft x$ or $x\triangleleft a$. Thus, $<\subseteq \triangleleft$ or $<\subseteq\triangleleft^*$, that is, either $<=\triangleleft$ or $<=\triangleleft^*$ holds by linearity. So taking $\vec E=(\id_{p(\Mon)})$ or $\vec E=(\id_{p(\Mon)},\mathbf 1_p)$ completes the proof.

Assume that $n\geqslant 1$. For $k=1,\dots,n$ let $E_k$ be the equivalence relation in $p(\Mon)$ given by $\sup C_{k-1}(x)=\sup C_{k-1}(y)$ (the sets $C_{k-1}(x)$ and $C_{k-1}(y)$ have the same set of strict upper bounds in $(p(\Mon),<)$). We will prove that $E_k\in\mathcal E_p$ and that they are $\triangleleft$-convex. Moreover, it will turn out that $(\id_{p(\Mon)},E_1,\dots,E_n)$ or $(\id_{p(\Mon)},E_1,\dots,E_n,\mathbf 1_p)$ is the desired sequence.

For a while, we will assume that the elements of $C_n(a)$ satisfy $a\triangleleft x$, that is, that the orders $<$ and $\triangleleft$ have the same orientation, so the elements of $C_{n-1}(a)$ satisfy $x\triangleleft a$. 
Define $C(a)=\bigcup_{i<n}C_i(a)$; then $\inf C(a)=a$, and $x\in C(y)$ is a relatively $A$-definable relation on $p(\Mon)$. 
Also note that $\max C(a)$ does not exist, for if $a'=\max C(a)$ and $a''=\max C(a')$, then $a'\in C_{n-1}(a)$ and $a''\in C_n(a)\cap C_{n-1}(a')$, so $a'\triangleleft a$, $a\triangleleft a''$ and $a''\triangleleft a'$; this is impossible.
Finally, since $C_{n-1}(a)$ is a final part of $C(a)$, we have $E_n(x,y)$ iff $\sup C(x)=\sup C(y)$. 

\begin{Claim}
$b\in C_{n-1}(a)$ implies $C(b)\subseteq C(a)$.
\end{Claim}

\noindent{\em Proof.}  
Assume $b\in C_{n-1}(a)$; then $a<b$ and $b\triangleleft a$. The set $C(a)$ is convex and contains $b=\min C(b)$ so, to prove $C(b)\subseteq C(a)$, it suffices to show that some final part of $C(b)$ is contained in $C(a)$; we will prove $C_{n-1}(b)\subseteq C(a)$.
For any $x\in C_{n-1}(b)$  we have $b<x$ and $x\triangleleft b$. Combining with $a<b$ and $b\triangleleft a$ we derive $a<x$ and $x\triangleleft a$, so $x\in (a,+\infty)_{\mathbf p}$ and $x\notin C_n(a)$, and thus $x\in C(a)$. This proves $C_{n-1}(b)\subseteq C(a)$  and completes the proof of the claim. \hfill$\blacksquare$

\begin{Claim}
If $C(a)\cap C(a_1)\neq 0$, then $E_n(a,a_1)$ holds.
\end{Claim}

\noindent {\it Proof.} 
Suppose not. Without loss, let $\sup C(a)<\sup C(a_1)$. 
Since $C_{n-1}(a_1)$ is a final part of $C(a_1)$, there exists a $a_0\in C_{n-1}(a_1)$ such that $C(a)<a_0$; clearly, $C(a)<C(a_0)$. By Claim 1, $a_0\in C_{n-1}(a_1)$ implies $C(a_0)\subseteq C(a_1)$.  
Additionally, note that $\sup C(a)<\sup C(a_1)$ implies that the non-empty set $C(a)\cap C(a_1)$ is a final part of $C(a)$. Since $C_{n-1}(a)$ is also a final part of $C(a)$, we conclude $C_{n-1}(a)\cap C(a_1)\neq \emptyset$. 
Choose $b_0\in C_{n-1}(a)\cap C(a_1)$; $b_0\in C(a_1)$ in particular implies $a_1<b_0$.
By Claim 1, $b_0\in C_{n-1}(a)$ implies $C(b_0)\subseteq C(a)$, which together with $C(a)<a_0$ implies $C(b_0)<C(a_0)$ and $b_0<a_0$. 
Now, $C(b_0)<C(a_0)$, $b_0\in C(a_1)$ and $C(a_0)\subseteq C(a_1)$ imply $C(b_0)\subseteq C(a_1)$. 
Therefore, we have $a_1<b_0<a_0$  such that $C(b_0)<C(a_0)$ and $C(a_0)\cup C(b_0)\subseteq C(a_1)$; this is impossible by Lemma \ref{Lemma_wom_order_property}. \hfill$\blacksquare$

\begin{Claim}
$E_n(x,y)$ iff $x\in C(y)\vee y\in C(x)\vee x=y$, and $E_n\in\mathcal E_p$ is a $\triangleleft$-convex equivalence relation.
\end{Claim}

\noindent {\it Proof.}  
By Claim 2 we see that $E_n(x,y)$ if and only if $C(x)\cap C(y)\neq 0$. 
Since $\max C(x)$ does not exist, $C(x)\cap C(y)\neq 0$ is easily seen to be equivalent to $x\in C(y)\vee y\in C(x)\vee x=y$, and since $x\in C(y)$ is a relatively $A$-definable relation, the relation $E_n$ is also relatively $A$-definable. 
It remains to show that $E_n$ is $\triangleleft$-convex, so assume that $x\triangleleft z\triangleleft y$ and $E_n(x,y)$ hold. If $C(z)\cap (C(x)\cup C(y))\neq 0$, then by Claim 2 we have $z\in [x]_{E_n}=[y]_{E_n}$ and we are done. So suppose $C(z)\cap (C(x)\cup C(y))= 0$. Since these are convex sets and $C(x)\cap C(y)\neq 0$, we see that $C(z)<C(x)\cup C(y)$ or $C(x)\cup C(y)<C(z)$ hold. 
To rule out the first option, note that $C(z)<C(x)$ implies $x\in C_n(z)$, which contradicts $x\triangleleft z$. The second option is impossible, as $C(y)<C(z)$ implies $z\in C_n(y)$, which contradicts $z\triangleleft y$.  \hfill$\blacksquare$

\smallskip
As a consequence of Claim 3 we see that orders $<$ and $\triangleleft$ agree on $p/E_n$: $[x]_{E_n}<[y]_{E_n}$ iff $[x]_{E_n}\triangleleft [y]_{E_n}$. 
Now, consider the convex decomposition of $(a,+\infty)_{\mathbf p}$ with respect to the formulae $x\triangleleft_{E_n}a$ and $a\triangleleft_{E_n} x$. 
Since the order $\triangleleft_{E_n}$ reverses the order $\triangleleft$ within each $E_n$-class and maintains the order of $E_n$-classes, it follows that the corresponding decomposition is $C_0(a)<\dots<C_{n-2}(a)<C_{n-1}(a)\cup C_n(a)$ and that the elements of the final component $C_{n-1}(a)\cup C_n(a)$ satisfy the formula $a\triangleleft_{E_n} x$. 
By the induction hypothesis, relations $(E_1,\dots,E_{n-1})$ are relatively $A$-definable and $\triangleleft_{E_n}$-convex; note that each of them refines $E_n$, so they are all $\triangleleft$-convex. By the induction hypothesis and Remark \ref{Remark_order_<_vecE}(b,c) we also have $<=(\triangleleft_{E_n})_{(\id_{p(\Mon)},E_1,\dots,E_{n-1})}=\triangleleft_{(\id_{p(\Mon)},E_1,\dots,E_{n-1},E_n)}$, which is equivalent to $\triangleleft=<_{(\id_{p(\Mon)},E_1,\dots,E_n)}$.

So far, assuming that the elements of $C_n(a)$ satisfy $a\triangleleft x$, that is, the orders $<$ and $\triangleleft$ have the same orientation, we proved $\triangleleft=<_{(\id_{p(\Mon)},E_1,\dots,E_n)}$. Now, if the elements of $C_n(a)$ satisfy $x\triangleleft a$, then the orders $<$ and $\triangleleft^*$ have the same orientation, so $<= \triangleleft_{(\id_{p(\Mon)},E_1,\dots,E_n, \mathbf 1_p)}$ easily follows. To finalize the proof of the theorem, it remains to notice that the uniqueness of $\vec E$ follows from Remark \ref{Remark_order_<_vecE}(d).
\end{proof}

\subsection{Monotonicity theorems and other corollaries of Theorem \ref{Theorem_characterize_wom_orders}} \
 
\begin{Corollary}\label{Corollary_wom_is_ind_of_order} 
If $(p,<)$ is a weakly o-minimal pair over $A$, then so is the pair $(p,\triangleleft)$ for every relatively $A$-definable linear order $\triangleleft$ on $p(\Mon)$.
\end{Corollary}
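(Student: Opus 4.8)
The plan is to read this off directly from the structural description of relatively definable orders given by Theorem \ref{Theorem_characterize_wom_orders}, combined with the stability of weak o-minimality under the operation $<\,\mapsto\,<_{\vec E}$ recorded in Remark \ref{Remark_weak_ominimal_firstrmk}(e).

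First I would apply Theorem \ref{Theorem_characterize_wom_orders} to the given relatively $A$-definable linear order $\triangleleft$ on $p(\Mon)$: since $(p,<)$ is a weakly o-minimal pair over $A$, there is a strictly increasing sequence $\vec E=(E_0,E_1,\dots,E_n)\in\mathcal E_p^{\,n+1}$ with $E_0=\id_{p(\Mon)}$ and $\triangleleft=<_{\vec E}$. Next I would check that $\vec E$ is exactly the kind of sequence to which Remark \ref{Remark_weak_ominimal_firstrmk}(e) applies: each $E_i$ belongs to $\mathcal E_p$ and is therefore relatively $A$-definable by definition; each $E_i$ is $<$-convex by Proposition \ref{Prop_E_p_is convex_linear}(i); and the $E_i$ are pairwise $\subseteq$-comparable by Proposition \ref{Prop_E_p_is convex_linear}(ii) (indeed they even form an $\subseteq$-increasing chain). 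Then Remark \ref{Remark_weak_ominimal_firstrmk}(e) gives that $(p,<_{\vec E})$ is a weakly o-minimal pair over $A$, and since $<_{\vec E}=\triangleleft$ this is the desired conclusion.

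There is no genuine obstacle here, as all the substance has been absorbed into Theorem \ref{Theorem_characterize_wom_orders} and into Proposition \ref{Prop_E_p_is convex_linear}; the only thing to be careful about is the purely bookkeeping verification that the sequence produced by the theorem literally satisfies the hypotheses (relative $A$-definability, $<$-convexity, pairwise $\subseteq$-comparability) needed to invoke Remark \ref{Remark_weak_ominimal_firstrmk}(e). One could alternatively avoid citing Remark \ref{Remark_weak_ominimal_firstrmk}(e) and argue directly via Remark \ref{Remark_order_<_vecE}(e) that a subset of $p(\Mon)$ has finitely many $<$-convex components if and only if it has finitely many $<_{\vec E}$-convex components, which re-proves the same fact; but the route through Remark \ref{Remark_weak_ominimal_firstrmk}(e) is the cleanest.
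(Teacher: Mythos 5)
Your proposal is correct and follows the paper's own proof essentially verbatim: the paper likewise applies Theorem \ref{Theorem_characterize_wom_orders} to write $\triangleleft=<_{\vec E}$ for a sequence $\vec E\in\mathcal E_p^{<\omega}$ and then invokes Remark \ref{Remark_weak_ominimal_firstrmk}(e). Your extra check that the $E_i$ are relatively $A$-definable, $<$-convex, and pairwise $\subseteq$-comparable (via Proposition \ref{Prop_E_p_is convex_linear}) is exactly the bookkeeping the paper leaves implicit.
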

\begin{proof}
 Suppose that $(p,<)$ is a weakly o-minimal pair over $A$ and $\triangleleft$ is a relatively $A$-definable linear order on $p(\Mon)$. By Theorem \ref{Theorem_characterize_wom_orders} there is a sequence $\vec E\in \mathcal E_p^{<\omega}$ such that $\triangleleft=<_{\vec E}$. By Remark \ref{Remark_weak_ominimal_firstrmk}(e), $(p,\triangleleft)$ is a weakly o-minimal pair over $A$. 
\end{proof}

\begin{Corollary}\label{Corollary_wom_pre-orders}
Suppose that $\mathbf p=(p,<)$ is a weakly o-minimal pair over $A$ and  $\preccurlyeq$ 
 a relatively $A$-definable total pre-order on $p(\Mon)$. Then there exists a unique strictly increasing sequence of equivalence relations $\vec E= (E_0,\dots,E_{n})\in \mathcal E_p^{n+1}$ such that $E_0$ is defined by $x\preccurlyeq y \land y\preccurlyeq x$ and for all $x,y\in p(\Mon)$: 
$$x\preccurlyeq y \ \ \mbox{ if and only if } \ \ E_0(x,y)\vee x<_{\vec E}y.$$
\end{Corollary}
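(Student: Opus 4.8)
The plan is to reduce to Theorem \ref{Theorem_characterize_wom_orders} by quotienting out the symmetric part of $\preccurlyeq$. First I would set $E_0$ to be the equivalence relation defined by $x\preccurlyeq y\wedge y\preccurlyeq x$; it is relatively $A$-definable, hence $E_0\in\mathcal E_p$, and it is $<$-convex by Proposition \ref{Prop_E_p_is convex_linear}(i). Fixing a definable convex extension $\hat E_0$ of $E_0$, I would pass to $q=p/\hat E_0\in S^{eq}(A)$ together with the relatively $A$-definable quotient map $f\colon p(\Mon)\to q(\Mon)$, which has $\Ker f=E_0$. By Remark \ref{Remark_weak_ominimal_firstrmk}(d) and Proposition \ref{Prop_pwom_implies_dclq_wom}(i), $(q,<)$ is a weakly o-minimal pair over $A$, where $<$ denotes the order on $q(\Mon)$ induced by $<$ on $p(\Mon)$ (equivalently $<_f$, since $E_0$ is $<$-convex); we identify $q(\Mon)$ with $p(\Mon)/E_0$. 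Since $\preccurlyeq$ is a total pre-order whose symmetric part is exactly $E_0$, it descends to the linear order $\triangleleft$ on $q(\Mon)$ given by $[x]_{E_0}\triangleleft[y]_{E_0}\Leftrightarrow(x\preccurlyeq y\wedge\neg E_0(x,y))$; a routine application of Fact \ref{Fact_L_P_sentence}, in the spirit of the discussion following Proposition \ref{Prop_pwom_implies_dclq_wom}, shows that $\triangleleft$ is relatively $A$-definable on $q(\Mon)$.

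Next I would apply Theorem \ref{Theorem_characterize_wom_orders} to the weakly o-minimal pair $(q,<)$ and the relatively $A$-definable order $\triangleleft$, obtaining a unique strictly increasing sequence $\vec F=(F_0,\dots,F_n)\in\mathcal E_q^{\,n+1}$ with $F_0=\id_{q(\Mon)}$ and $\triangleleft={<}_{\vec F}$, and then pull it back along $f$. By Proposition \ref{Prop_pwom_implies_dclq_wom}(ii), the restriction of $f$ to $\{E\in\mathcal E_p\mid E_0\subseteq E\}$ is an order-isomorphism onto $\mathcal E_q$; I would let $E_k\in\mathcal E_p$ be the unique relation with $E_0\subseteq E_k$ and $f(E_k)=F_k$. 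Since $f(E_0)=\id_{q(\Mon)}=F_0$, the relation corresponding to $F_0$ is exactly our $E_0$, and $(E_0,E_1,\dots,E_n)$ is strictly increasing in $\mathcal E_p$. This is the candidate sequence $\vec E$.

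The step I expect to need the most care is verifying that $<_{\vec E}$ induces $\triangleleft$ between $E_0$-classes, i.e.\ that $x<_{\vec E}y\Leftrightarrow[x]_{E_0}\triangleleft[y]_{E_0}$ for all $x,y\models p$ with $\neg E_0(x,y)$. For this I would use Remark \ref{Remark_order_<_vecE}(b) to rewrite $<_{\vec E}={<}_{(E_1,\dots,E_n,E_0)}=\bigl(((<_{E_1})\cdots)_{E_n}\bigr)_{E_0}$. The outermost operation $({}\cdot{})_{E_0}$ only reverses the order inside each $E_0$-class and leaves the order among distinct $E_0$-classes unchanged, so on pairs from distinct $E_0$-classes $<_{\vec E}$ agrees with ${<}_{(E_1,\dots,E_n)}$; and since every $E_i$ contains $E_0$, the order ${<}_{(E_1,\dots,E_n)}$, restricted to comparisons of distinct $E_0$-classes, factors through $p(\Mon)/E_0$ and — under the isomorphism $p(\Mon)/E_0\cong q(\Mon)$, which carries the equivalence induced by $E_k$ to $f(E_k)=F_k$ — corresponds to ${<}_{(F_1,\dots,F_n)}={<}_{(F_0,F_1,\dots,F_n)}={<}_{\vec F}=\triangleleft$ on $q(\Mon)$. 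Conveniently, the within-$E_0$-class behaviour of $<_{\vec E}$ (which is governed by a parity that I would otherwise have to track) never enters, since it is absorbed by the disjunct $E_0(x,y)$. Once this is in place, the main equivalence follows at once: when $\neg E_0(x,y)$ we get $x<_{\vec E}y\Leftrightarrow[x]_{E_0}\triangleleft[y]_{E_0}\Leftrightarrow x\preccurlyeq y$, and when $E_0(x,y)$ both sides of $x\preccurlyeq y\Leftrightarrow E_0(x,y)\vee x<_{\vec E}y$ hold trivially.

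Finally, for uniqueness I would observe that any competing strictly increasing sequence must begin with the symmetric part of $\preccurlyeq$, hence with $E_0$, so all of its members contain $E_0$; applying $f$ and reusing the computation above produces a strictly increasing sequence in $\mathcal E_q$ starting with $\id_{q(\Mon)}$ whose associated order is $\triangleleft$, so by the uniqueness clause of Theorem \ref{Theorem_characterize_wom_orders} it must coincide with $\vec F$, and injectivity of $f$ on $\{E\in\mathcal E_p\mid E_0\subseteq E\}$ then recovers $\vec E$. Apart from the quotient-commutation point above, the only other place requiring attention is the relative definability of $\triangleleft$ on the $\Mon^{eq}$-sort, which is handled by the standard compactness bookkeeping of Section \ref{Section1}.
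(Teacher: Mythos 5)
Your proposal is correct and follows essentially the same route as the paper: define $E_0$ as the symmetric part of $\preccurlyeq$, pass to the quotient $p/\hat E_0$ with the induced linear order $\triangleleft$, apply Theorem \ref{Theorem_characterize_wom_orders} there, and pull the sequence back through the order-isomorphism of Proposition \ref{Prop_pwom_implies_dclq_wom}(ii). Your explicit treatment of the quotient-commutation step (which the paper dismisses as an easy induction) and of uniqueness is a fair elaboration of the same argument, not a different approach.
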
 
\begin{proof} 
Note that $E_0\in\mathcal E_p$, so it is convex by Proposition \ref{Prop_E_p_is convex_linear}(i).
By Fact \ref{Fact_L_P_sentence} there is an $A$-definable set $D\supseteq p(\Mon)$ such that: $x<y$ defines a linear order on $D$, $x\preccurlyeq y$ defines a total pre-order on $D$, and $x\preccurlyeq y \land y\preccurlyeq x$ defines a convex equivalence relation, $\hat E_0$, on $(D,<)$. 
The canonical projection $\pi:p(\Mon)\to p/\hat E_0(\Mon)$ is relatively $A$-definable, so by Proposition \ref{Prop_pwom_implies_dclq_wom}(i) the pair $(p/\hat E_0,<)$ is weakly o-minimal.
Define: $[x]_{\hat E_0}\triangleleft [y]_{\hat E_0}$ iff $\lnot \hat E_0(x,y)\land x\preccurlyeq y$; it is easy to see that $\triangleleft$ is a relatively $A$-definable linear order on $p/\hat E_0(\Mon)$, so we can apply Theorem \ref{Theorem_characterize_wom_orders}. Let $\vec E'=(\id_{p/\hat E_0(\Mon)},E_1',\dots,E_n')\in\mathcal E_{p/\hat E_0}^{n+1}$ be a strictly increasing sequence such that $\triangleleft=<_{\vec E'}$. By Proposition \ref{Prop_pwom_implies_dclq_wom}(ii) there is a strictly increasing sequence $\vec E=(E_0,E_1,\dots,E_n)\in\mathcal E_p^{n+1}$ such that $\pi(\vec E)=\vec E'$. For all $x,y\in p(\Mon)$ satisfying $\lnot \hat E_0(x,y)$ we have: 
$$x\preccurlyeq y\ \Leftrightarrow\ [x]_{\hat E_0}\triangleleft [y]_{\hat E_0}\ \Leftrightarrow\ [x]_{\hat E_0}<_{\vec E'}[y]_{\hat E_0}\ \Leftrightarrow\ x<_{\vec E}y,$$ where the last equivalence easily holds by induction on $n$.
\end{proof}

We will now prove Theorem \ref{Theorem1}.
\setcounter{TheoremI}{0}

\begin{TheoremI}[Weak monotonicity]\label{Theorem Weak Mono}
Suppose that $\mathbf p=(p,<_p)$ is a weakly o-minimal pair over $A$, $(D,<)$ is an $A$-definable linear order, and $f:p(\Mon)\to D$ is a relatively $A$-definable non-constant function. 
\begin{enumerate}[(i)]
\item There exists a unique strictly increasing sequence of equivalence relations $\vec E=(E_0,\dots,E_n)\in \mathcal E_p^{n+1}$ such that $E_0=\Ker f$ and $f$ is $((<_p)_{\vec E}, <)$-increasing. 
\item There exists an increasing sequence of $A$-definable convex equivalence relations $\vec F=(F_0,\dots,F_n)$ on $(D,<)$ such that $f$ is $(<_p,<_{\vec F})$-increasing. 
\end{enumerate}
\end{TheoremI}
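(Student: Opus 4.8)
The plan for part (i) is to read it off Corollary~\ref{Corollary_wom_pre-orders}. First I would define a binary relation on $p(\Mon)$ by $x\preccurlyeq y:\Leftrightarrow f(x)\le f(y)$; since $f$ is relatively $A$-definable and $<$ is $A$-definable on $D$, this is a relatively $A$-definable total pre-order on $p(\Mon)$, and the equivalence relation $x\preccurlyeq y\wedge y\preccurlyeq x$ associated with it is exactly $\Ker f$. Corollary~\ref{Corollary_wom_pre-orders} then produces a unique strictly increasing $\vec E=(E_0,\dots,E_n)\in\mathcal E_p^{n+1}$ with $E_0=\Ker f$ such that $x\preccurlyeq y\Leftrightarrow E_0(x,y)\vee x\,(<_p)_{\vec E}\,y$ for all $x,y\models p$; the right-to-left implication of this equivalence is precisely the assertion that $f$ is $((<_p)_{\vec E},<)$-increasing. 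For uniqueness I would observe that any strictly increasing $\vec E'$ with $E_0'=\Ker f$ for which $f$ is $((<_p)_{\vec E'},<)$-increasing automatically satisfies the characterizing equivalence of Corollary~\ref{Corollary_wom_pre-orders} — the non-trivial direction using that $(<_p)_{\vec E'}$ is a linear order, so $f(x)<f(y)$ forbids $y\,(<_p)_{\vec E'}\,x$ — and hence coincides with $\vec E$.

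For part (ii) I would first dispose of the case $n=0$: there $(<_p)_{\vec E}=(<_p)_{\Ker f}$ agrees with $<_p$ on pairs from distinct $\Ker f$-classes, so $f$ is already $(<_p,<)$-increasing and $\vec F=(\id_D)$ works. So assume $n\ge 1$. Since $\Aut_A(\Mon)$ acts transitively on $p(\Mon)$ and $f$ is relatively $A$-definable, the image $f(p(\Mon))$ is the locus of a single complete type $q\in S(A)$; by Proposition~\ref{Prop_pwom_implies_dclq_wom}(i) $q$ is weakly o-minimal, and since $<$ restricts to a relatively $A$-definable order on $q(\Mon)$, Corollary~\ref{Corollary_wom_is_ind_of_order} makes $(q,<)$ a weakly o-minimal pair over $A$. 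A routine reduction (replacing $<$ on $D$ by its reverse if necessary — harmless, since convexity of equivalence relations on $D$ is orientation-independent and a witness for one order converts to one for the other by adjoining $\mathbf 1_D$ and reordering, using Remark~\ref{Remark_order_<_vecE}(b,c), and recalling from Theorem~\ref{Theorem_characterize_wom_orders} that $E_n=\mathbf 1_p$ is exactly the opposite-orientation case) lets me assume $\vec E^-:=(E_1,\dots,E_n)\in(\mathcal E_p\smallsetminus\{\mathbf 1_p\})^n$. Now $f$ is $((<_p)_{\vec E},<)$-increasing, hence also $((<_p)_{\vec E^-},<)$-increasing (the two orders differ only inside $\Ker f$-classes, where $f$ is constant), and since $\Ker f=E_0\subseteq E_1$, Lemma~\ref{Cor_f_of_convex_is_convex}(i) applied to $f:p(\Mon)\to q(\Mon)$ yields that $f$ is $(<_p,(<)_{\vec G})$-increasing, where $\vec G:=f(\vec E^-)=(f(E_1),\dots,f(E_n))$ is, by Proposition~\ref{Prop_pwom_implies_dclq_wom}(ii), a $\subseteq$-increasing sequence of relatively $A$-definable $<$-convex equivalence relations on $q(\Mon)$.

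The remaining step — the only genuinely new point, and the one I expect to be the main obstacle — is to push $\vec G$ from $q(\Mon)$ up to a $\subseteq$-increasing sequence $\vec F=(\id_D,F_1,\dots,F_n)$ of $A$-definable $<$-convex equivalence relations on $D$ with $F_i\restriction q(\Mon)=f(E_i)$. Once this is done, $(<)_{\vec F}$ restricts on $q(\Mon)$ to $(<)_{\vec G}$ (reversing within classes commutes with restriction), so from $x<_p y\Rightarrow f(x)\le_{\vec G}f(y)$ we get $f(x)\le_{\vec F}f(y)$, i.e.\ $f$ is $(<_p,(<)_{\vec F})$-increasing, as required. To construct $F_i$, I would let $F_i(d,d')$ say that $d$ and $d'$ determine the same cut of the quotient linear order $q(\Mon)/f(E_i)$, i.e.\ $\{[y]_{f(E_i)}\mid y\in q(\Mon),\ y\le d\}=\{[y]_{f(E_i)}\mid y\in q(\Mon),\ y\le d'\}$. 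This is expressible by an $L_A$-formula in $d,d'$ via Fact~\ref{Fact_L_P_sentence}; it is an equivalence relation; it is $<$-convex, since the cuts determined by $d_1<d<d_2$ are nested; it restricts to $f(E_i)$ on $q(\Mon)$, since for $y\in q(\Mon)$ the cut determined by $y$ has largest element $[y]_{f(E_i)}$; and $f(E_i)\subseteq f(E_j)$ gives $F_i\subseteq F_j$ by projecting cuts. The delicate part is verifying that this ``same-cut'' relation behaves correctly at the gaps of the quotient order (each such gap is attached to the class immediately below it) and that the compactness step of Fact~\ref{Fact_L_P_sentence} does not distort this; the bulk of the mathematical content has already been absorbed into Theorem~\ref{Theorem_characterize_wom_orders}, Corollary~\ref{Corollary_wom_pre-orders}, and Lemma~\ref{Cor_f_of_convex_is_convex}, so assembling these pieces with the reductions above completes the proof.
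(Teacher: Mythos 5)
Your part (i) is exactly the paper's argument: $f(x)\leqslant f(y)$ relatively defines a total pre-order on $p(\Mon)$ whose associated equivalence is $\Ker f$, and Corollary \ref{Corollary_wom_pre-orders} gives the unique $\vec E$. Your skeleton for part (ii) is also the paper's: with $q=f(p)$, push $\vec E$ forward by Proposition \ref{Prop_pwom_implies_dclq_wom}(ii) and use Lemma \ref{Cor_f_of_convex_is_convex}(i) to get that $f$ is $(<_p,<_{f(\vec E)})$-increasing on $q(\Mon)$ (your reduction disposing of a possible $\mathbf 1_p$ at the top of $\vec E$ is, if anything, more careful than the paper, since the hypotheses of Lemma \ref{Cor_f_of_convex_is_convex} exclude $\mathbf 1_p$).

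The gap is in your final step, where the paper invokes ``routine compactness''. Your relations $F_i$, defined as ``$d$ and $d'$ determine the same cut of $q(\Mon)/f(E_i)$'', are $A$-invariant, but your justification that they are $A$-definable ``via Fact \ref{Fact_L_P_sentence}'' does not work: the same-cut condition has the form $(\forall y\models q)\,\chi(y,d,d')$ with $d,d'$ \emph{free} variables ranging over $D$, so it is not a tp-universal sentence, and Fact \ref{Fact_L_P_sentence} only transfers tp-universal properties to definable supersets — it never converts an invariant relation into a definable one. Concretely, ``different cut'' is a projection of a type-definable set, hence type-definable by saturation, so your $F_i$ is merely co-type-definable, and there is no reason for it to be definable; yet the theorem requires $A$-definable $\vec F$. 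The repair is precisely the compactness step you bypassed: the statement ``$f(E_1)\subseteq\dots\subseteq f(E_n)$ are $<$-convex equivalence relations on $q(\Mon)$ and $f$ is $(<_p,<_{f(\vec E)})$-increasing'' \emph{is} tp-universal (all variables bound by $p$ or $q$), so Fact \ref{Fact_L_P_sentence} yields $\theta_q\in q$ (and $\theta_p\in p$) on which the chosen relative definitions of the $f(E_i)$ define an increasing sequence of $<$-convex equivalence relations with $\hat f$ still increasing; one then extends each of these from $\theta_q(\Mon)$ to all of $D$ by taking the convex hull in $(D,<)$ of each class and singleton classes elsewhere (hulls of distinct classes are disjoint because distinct convex classes are separated). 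Equivalently, run your cut construction relative to the definable set $\theta_q(\Mon)$ rather than the type-definable $q(\Mon)$. With that replacement your argument closes and coincides with the intended proof.
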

\begin{proof}
(i) The conclusion follows easily by Corollary \ref{Corollary_wom_pre-orders}, after noting that $f(x)\leqslant f(y)$ relatively defines a total pre-order on $p(\Mon)$.

(ii) Let $q=f(p)$ and let $\vec E$ satisfy the conclusion of (i). Then $f:p(\Mon)\to q(\Mon)$ is $((<_p)_{\vec E},<)$-increasing.  By Proposition \ref{Prop_pwom_implies_dclq_wom}(ii),  $f(\vec E)=(f(E_0), \ldots,f(E_n))$ is an increasing sequence of convex, relatively $A$-definable equivalence relations on $q(\Mon)$, and $f$ is $(<_p,<_{f(\bar E)})$-increasing by Lemma \ref{Cor_f_of_convex_is_convex}(i). Now,  the sequence $\vec F$ satisfying the conclusion in (ii) can be found by routine compactness.
\end{proof}

In the following two theorems we deduce Theorem \ref{Theorem2}.

\begin{Theorem}[Local monotonicity]\label{Theorem Local Mono}
Suppose that $\mathbf p=(p,<_p)$ is a weakly o-minimal pair over $A$, $(D,<)$ is a $A$-definable linear order, and $f:p(\Mon)\to D$ is a relatively $A$-definable non-constant function.  Then there exists $E\in\mathcal E_p\smallsetminus\{\id_{p(\Mon)}\}$ such that the restriction of $f$ to each $E$-class is either constant or strictly $(<_p,<)$-monotone.   
\end{Theorem}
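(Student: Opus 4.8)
The plan is to derive this from Theorem \ref{Theorem Weak Mono}(i) together with the structure of orders of the form $<_{\vec E}$. By Theorem \ref{Theorem Weak Mono}(i), there is a unique strictly increasing sequence $\vec E=(E_0,\dots,E_n)\in\mathcal E_p^{n+1}$ with $E_0=\Ker f$ and $f$ being $((<_p)_{\vec E},<)$-increasing. Since $f$ is non-constant, $\Ker f\neq \mathbf 1_p$, and since $\vec E$ is strictly increasing with $E_0=\Ker f$, we must have $n\geqslant 1$ unless $\vec E=(\Ker f)$; in the latter degenerate case $f$ is itself $(<_p,<)$-increasing and we can take $E$ to be any relatively $A$-definable equivalence $\neq\id_{p(\Mon)}$ (e.g.\ $\Ker f$ if it is non-trivial, or $\mathbf 1_p$), noting $f$ restricted to each class is then strictly $(<_p,<)$-monotone or constant. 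So assume $n\geqslant 1$.

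The key observation is how $(<_p)_{\vec E}$ relates to $<_p$ on the classes of $E_1$. Recall that $(<_p)_{\vec E}$ is obtained from $<_p$ by a sequence of class-reversals along $E_0\subseteq E_1\subseteq\dots\subseteq E_n$; within a single $E_1$-class $[a]_{E_1}$, the reversals along $E_2,\dots,E_n$ (all of which contain $E_1$) act trivially, so $(<_p)_{\vec E}$ restricted to $[a]_{E_1}$ equals $((<_p)_{E_0})_{E_1}$ restricted there — that is, either $<_p$ or its reverse within each $E_0$-subclass, but since $E_0=\Ker f$ and $f$ is constant on $E_0$-classes, the behaviour of $f$ on $[a]_{E_1}$ is governed by the induced order on $[a]_{E_1}/E_0$. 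I would set $E:=E_1$. On each $E_1$-class, $f$ factors through $[a]_{E_1}/E_0$, and the induced map $f_{E_0}$ restricted to $[a]_{E_1}/E_0$ is strictly $((<_p)_{\vec E}\text{-induced},<)$-increasing; since on a single $E_1$-class the $\vec E$-twist reduces to a global reversal of $<_p$ (or none), $f$ restricted to $[a]_{E_1}$ is either strictly $(<_p,<)$-increasing (modulo the convex kernel $E_0$, hence weakly increasing, but then strictly monotone on the quotient, and constant on $E_0$-classes inside it — so on $[a]_{E_1}$ itself it is $(<_p,<)$-increasing as a weakly monotone function; to get genuinely strictly monotone I restrict to $E_0$-classes when $E_0\neq\id$, or use $E_1$-classes directly when $E_0=\id$). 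A cleaner route: if $\Ker f=\id_{p(\Mon)}$ take $E=E_1$ and observe $f$ is injective and $(<_p,<)$-monotone on each $E_1$-class (from the local-reversal description), hence strictly monotone; if $\Ker f\neq\id_{p(\Mon)}$ take $E=\Ker f$ itself, on whose classes $f$ is constant, and we are done immediately provided $\Ker f\neq\id_{p(\Mon)}$, which holds precisely in this case.

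So the actual proof splits cleanly: either $\Ker f\neq\id_{p(\Mon)}$, and then $E:=\Ker f\in\mathcal E_p\smallsetminus\{\id_{p(\Mon)}\}$ works because $f$ is constant on each class; or $\Ker f=\id_{p(\Mon)}$, in which case $f$ is injective, so in the sequence $\vec E$ from Theorem \ref{Theorem Weak Mono}(i) we have $E_0=\id_{p(\Mon)}$ and, since $f$ is non-constant hence $p$ non-algebraic and $<_p$ is non-trivial, $n\geqslant 1$; then I put $E:=E_1$ and verify that $(<_p)_{\vec E}$ restricted to any $E_1$-class coincides with either $<_p$ or $>_p$ (because all of $E_2,\dots,E_n$ contain $E_1$ and so their reversals are trivial on an $E_1$-class, while $E_0=\id$ contributes nothing), whence $f\restriction[a]_{E_1}$, being $((<_p)_{\vec E},<)$-increasing, is $(<_p,<)$-monotone, and injective, hence strictly monotone. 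I would phrase the restriction-of-$<_{\vec E}$-to-a-class computation using Remark \ref{Remark_order_<_vecE}(b,c) and the elementary fact that $x<_Fy\Leftrightarrow x<y$ whenever $x,y$ lie in distinct $F$-classes and $x<_Fy\Leftrightarrow y<x$ when they lie in the same $F$-class. The main obstacle is not conceptual difficulty but bookkeeping: making precise the claim that on a single $E_1$-class only the innermost twist (by $E_0$) matters and all coarser twists act as the identity, and handling the two cases $E_0=\id_{p(\Mon)}$ versus $E_0\neq\id_{p(\Mon)}$ uniformly enough that the statement "$E\neq\id_{p(\Mon)}$" is genuinely achieved; I expect to spend most of the write-up on that reduction and to invoke Theorem \ref{Theorem Weak Mono}(i) and Remark \ref{Remark_order_<_vecE} for everything else.
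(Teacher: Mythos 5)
Your proof is correct and essentially the same as the paper's: split off the cases $\Ker f\neq\id_{p(\Mon)}$ (take $E=\Ker f$) and $f$ strictly $(<_p,<)$-monotone (take $E=\mathbf 1_p$), and otherwise take $E=E_1$ from Theorem \ref{Theorem Weak Mono}(i), checking that $(<_p)_{\vec E}$ restricted to any $E_1$-class coincides with $<_p$ or $>_p$, so that $f$ on that class is monotone and, being injective there, strictly monotone. Two harmless slips: on an $E_1$-class the coarser twists by $E_2,\dots,E_n$ are not trivial --- each one reverses the whole class (your own cited fact $x<_Fy\Leftrightarrow y<x$ for $F$-equivalent points), so the restriction is $<_p$ or $>_p$ according to the parity of the number of reversals, which still gives your conclusion; and in your final summary $n\geqslant 1$ is justified not by non-constancy of $f$ but by $f$ failing to be strictly $(<_p,<)$-monotone, the $n=0$ possibility being exactly the degenerate case you already disposed of in your first paragraph.
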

\begin{proof}
If $\Ker f\neq\id_{p(\Mon)}$ then $E=\Ker f$ satisfies the conclusion of the theorem, as $f$ is constant on each $E$-class. If $f$ is strictly $(<_p,<)$-monotone, then $E=\mathbf 1_p$ satisfies the conclusion.  The remaining case is where $\Ker f=\id_{p(\Mon)}$ holds and $f$ is not strictly $(<_p,<)$-monotone. Let $\vec E=(E_0,\dots,E_n)\in \mathcal E_p^{n+1}$ be given by Theorem \ref{Theorem Weak Mono}(i). In particular, $E_0=\Ker f=\id_{p(\Mon)}$, and $f$ not strictly $(<_p,<)$-monotone implies $n\geqslant 1$. Then we easily see that $E=E_1$ satisfies the conclusion of the theorem. 
\end{proof}

\begin{Theorem}[Upper monotonicity]\label{Theorem Upper Mono}
Suppose that $\mathbf p=(p,<_p)$ is a weakly o-minimal pair over $A$, $(D,<)$ is a $A$-definable linear order and $f:p(\Mon)\to D$ is a relatively $A$-definable non-constant function.
\begin{enumerate}[(i)]
\item There exists a $E\in\mathcal E_p\smallsetminus \{\mathbf 1_p\}$ such that one of the following two conditions holds for all $x_1,x_2$ realizing $p$:
\begin{center}$ [x_1]_E<_p[x_2]_E\Rightarrow  f(x_1)< f(x_2)$ \ \ \ or \ \ \  $[x_1]_E<_p[x_2]_E\Rightarrow  f(x_1)> f(x_2)$.
\end{center}
\item If $q=f(p)$, then there exists a $E\in\mathcal E_p\smallsetminus \{\mathbf 1_p\}$ such that the function $f_E:p(\Mon)/E\to q(\Mon)/f(E)$, defined by $f_E([x]_E)=[f(x)]_{f(E)}$, is strictly $(<_p,<)$-monotone.
\end{enumerate}
\end{Theorem}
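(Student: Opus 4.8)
The plan is to prove part~(ii) first, since it is the substantive statement, and then to read off part~(i) from it. For~(ii), I would begin by applying Theorem~\ref{Theorem Weak Mono}(i) to obtain the unique strictly increasing sequence $\vec E=(E_0,\dots,E_n)\in\mathcal E_p^{n+1}$ with $E_0=\Ker f$ such that $f$ is $((<_p)_{\vec E},<)$-increasing. Since $f$ is non-constant, $E_0=\Ker f\neq\mathbf 1_p$. The equivalence relation that will work is the $\subseteq$-largest member of $\vec E$ distinct from $\mathbf 1_p$: set $E:=E_n$ if $E_n\neq\mathbf 1_p$, and $E:=E_{n-1}$ if $E_n=\mathbf 1_p$ (in the latter case necessarily $n\geqslant 1$, as $E_0\neq\mathbf 1_p$); in both cases $E\neq\mathbf 1_p$.

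The heart of the argument is an application of Lemma~\ref{Cor_f_of_convex_is_convex}(ii)(a). To set it up, I would first note that $q:=f(p)$, equipped with the relatively $A$-definable restriction of the order of $D$ (call it again $<$), is a weakly o-minimal pair over $A$: indeed $q$ is weakly o-minimal by Proposition~\ref{Prop_pwom_implies_dclq_wom}(i), and then $(q,<)$ is a weakly o-minimal pair by Corollary~\ref{Corollary_wom_is_ind_of_order}; moreover $f$ is a relatively $A$-definable surjection $p(\Mon)\to q(\Mon)$. Suppose first $E_n\neq\mathbf 1_p$. Then $(E_1,\dots,E_n)\in(\mathcal E_p\smallsetminus\{\mathbf 1_p\})^{n}$ is strictly increasing with $\Ker f=E_0\subseteq E_1$; since $f$ is constant on its $\Ker f$-classes, reversing $<_p$ inside those classes does not affect whether $f$ is increasing, so from $(<_p)_{\vec E}=\big((<_p)_{(E_1,\dots,E_n)}\big)_{E_0}$ (Remark~\ref{Remark_order_<_vecE}(b)) we conclude that $f$ is $((<_p)_{(E_1,\dots,E_n)},<)$-increasing. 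Lemma~\ref{Cor_f_of_convex_is_convex}(ii)(a) then yields that $f_{E_n}\colon p(\Mon)/E_n\to q(\Mon)/f(E_n)$ is strictly $(<_p,<)$-increasing. If instead $E_n=\mathbf 1_p$, then $(<_p)_{\vec E}=\big((<_p)_{(E_0,\dots,E_{n-1})}\big)^{*}$, so $f$ is $((<_p)_{(E_0,\dots,E_{n-1})},>)$-increasing; the same reasoning, applied now to the sequence $(E_1,\dots,E_{n-1})$ and the reversed (still relatively $A$-definable) order on $q(\Mon)$, shows that $f_{E_{n-1}}$ is strictly $(<_p,>)$-increasing, i.e.\ strictly $(<_p,<)$-decreasing. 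The two degenerate cases --- $n=0$ in the first branch and $n=1$ in the second, where the truncated sequence is empty --- are handled directly: there $E=\Ker f$ and a one-line check shows $f_{\Ker f}\colon p(\Mon)/\Ker f\to q(\Mon)$ is strictly $(<_p,<)$-monotone. In all cases $f_E$ is strictly $(<_p,<)$-monotone, which is~(ii).

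Part~(i) then follows by unwinding. Take the $E$ just produced and assume, say, that $f_E$ is strictly $(<_p,<)$-increasing. Given $x_1,x_2\models p$ with $[x_1]_E<_p[x_2]_E$, we get $[f(x_1)]_{f(E)}<[f(x_2)]_{f(E)}$ in $q(\Mon)/f(E)$; since $f(E)\in\mathcal E_q$ is $<$-convex (Proposition~\ref{Prop_E_p_is convex_linear}(i) applied to the weakly o-minimal pair $(q,<)$), distinct $f(E)$-classes are compared elementwise, so $f(x_1)<f(x_2)$ --- the first alternative of~(i). The case where $f_E$ is strictly $(<_p,<)$-decreasing gives the second alternative in the same way.

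I expect the only genuine difficulty to be bookkeeping rather than ideas: one must carefully match the truncated sequences to the hypotheses of Lemma~\ref{Cor_f_of_convex_is_convex} (notably $\Ker f\subseteq E_1$ and the exclusion of $\mathbf 1_p$), dispose of the boundary cases $n=0,1$ by hand, and keep track of the orientation reversal that occurs precisely when $E_n=\mathbf 1_p$ (Theorem~\ref{Theorem_characterize_wom_orders}), since this is exactly what turns ``increasing'' into ``decreasing'' in the conclusion.
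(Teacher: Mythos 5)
Your proposal is correct and follows essentially the same route as the paper: both apply Theorem \ref{Theorem Weak Mono}(i) and take $E=E_n$ if $E_n\neq\mathbf 1_p$, and $E=E_{n-1}$ (with the orientation flip) otherwise. The only cosmetic difference is that the paper proves (i) first, by observing directly that $<_p$ (resp.\ $>_p$) agrees with $(<_p)_{\vec E}$ on the relevant quotient, and then reads off (ii) via Proposition \ref{Prop_pwom_implies_dclq_wom}(iii), whereas you establish (ii) first through Lemma \ref{Cor_f_of_convex_is_convex}(ii)(a) — which encapsulates the same observation — and then deduce (i); both derivations are sound, including your handling of the degenerate cases and of dropping $E_0=\Ker f$ from the sequence.
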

\begin{proof}
(i) Let $\vec E=(E_0,\dots,E_n)\in \mathcal E_p^{n+1}$ be an increasing sequence given by Theorem \ref{Theorem Weak Mono}(i). 
If $E_n\neq \mathbf 1_p$ then $<_p$ agrees with $(<_p)_{(E_0,\ldots,E_{n})}$ on $p(\Mon)/E_n$, so for $E=E_n$ the first option of (i) holds. Otherwise, $>_p$ agrees with $(<_p)_{(E_0,\ldots,E_{n-1})}$ on $p(\Mon)/E_{n-1}$, so for $E=E_{n-1}$ the second option of (i) holds.

(ii) Let $E\in\mathcal E_p\smallsetminus \{\mathbf 1_p\}$ satisfy the conclusion of (i). Then the function $f_E$ is well defined by Proposition \ref{Prop_pwom_implies_dclq_wom}(iii), and strictly $(<_p,<)$-monotone by (i).  
\end{proof}

Now we turn to the context of weakly o-minimal theories and prove Theorem \ref{Theorem3}. \setcounter{TheoremI}{2} 

\begin{TheoremI}\label{Theorem monoto weak omin case}
Suppose that $\Th(\Mon,<,\dots)$ is weakly o-minimal, $(D,\triangleleft)$ is an $A$-definable linear order and $f:\Mon\to D$ is an $A$-definable function. 
Then:
\begin{enumerate}[(i)]
\item There exists a finite convex $A$-definable partition $\mathcal C$ of $\Mon$ and an increasing sequence of $A$-definable convex equivalence relations $\vec E$ on $\Mon$  such that $f$ is $(<_{\vec E},\triangleleft)$-increasing on each member of $\mathcal C$.

\item There exists a finite convex $A$-definable partition $\mathcal C$ of $\Mon$ and a convex $A$-definable equivalence relation $E$ on $\Mon$ with finitely many finite classes, such that $E=\bigcup _{C\in\mathcal C}E_{\restriction C}$ and the restriction $f_{\restriction[a]_E}$ is constant or strictly $(<,\triangleleft)$-monotone uniformly for all $a\in C$.
\end{enumerate}
\end{TheoremI}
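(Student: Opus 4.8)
The plan is to derive the statement from the type-by-type monotonicity results already established---Theorem~\ref{Theorem Weak Mono}(i) for part (i) and Theorem~\ref{Theorem Local Mono} for part (ii)---by a compactness argument over $S_1(A)$, followed by a gluing construction. Since $\Th(\Mon,<,\dots)$ is weakly o-minimal, the pair $(p,<)$ is a weakly o-minimal pair over $A$ for every $p\in S_1(A)$ (Remark~\ref{Remark_weak_ominimal_firstrmk}(c)), and, by weak o-minimality of the theory, every $A$-definable subset of $\Mon$ has finitely many convex components, each of which is $A$-definable. We may assume $f$ is non-constant, since otherwise $\mathcal C=\{\Mon\}$ with $\vec E=(\id_{\Mon})$, resp.\ $E=\mathbf 1_{\Mon}$, works.

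For part (i), fix $p\in S_1(A)$. If $f$ is constant on $p(\Mon)$, set $\vec E^{\,p}=(\id_{p(\Mon)})$; otherwise Theorem~\ref{Theorem Weak Mono}(i), applied to $f_{\restriction p(\Mon)}$ and $(D,\triangleleft)$, gives a strictly increasing $\vec E^{\,p}=(E^p_0,\dots,E^p_{k_p})\in\mathcal E_p^{k_p+1}$ with $f_{\restriction p(\Mon)}$ being $((<)_{\vec E^{\,p}},\triangleleft)$-increasing. As this is a tp-universal property built from $<$, the relative definitions of the $E^p_i$, $f$ and $\triangleleft$, Fact~\ref{Fact_L_P_sentence} produces $\theta_p(x)\in p$ together with $A$-definable $<$-convex extensions $\hat E^p_i$ of the $E^p_i$ on $\theta_p(\Mon)$, forming an increasing sequence $\hat{\vec E}^{\,p}$, such that $f_{\restriction\theta_p(\Mon)}$ is $((<)_{\hat{\vec E}^{\,p}},\triangleleft)$-increasing. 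By compactness of $S_1(A)$ finitely many $\theta_{p_1}(\Mon),\dots,\theta_{p_m}(\Mon)$ cover $\Mon$; refining Boolean combinations into their convex components yields a finite $<$-convex $A$-definable partition $\mathcal C$ with each $C\in\mathcal C$ contained in some $\theta_{p_{j(C)}}(\Mon)$. Put $N+1=\max_{C\in\mathcal C}(k_{p_{j(C)}}+1)$ and pad each $\hat{\vec E}^{\,p_{j(C)}}$ to length $N+1$ by prepending copies of the identity relation; since $<_{\id}=<$, this alters neither the increasing-chain property nor the induced order. Finally define $E_i$ on $\Mon$ ($0\le i\le N$) by: $x\,E_i\,y$ iff $x,y$ lie in a common $C\in\mathcal C$ and $x,y$ are equivalent under the $i$-th term of the padded sequence attached to $p_{j(C)}$. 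Then $E_i$ is an $A$-definable $<$-convex equivalence relation, $(E_0,\dots,E_N)$ is increasing, and for each $C\in\mathcal C$ one has $(<_{\vec E})_{\restriction C}=(<_{\restriction C})_{(\vec E)_{\restriction C}}$, which is exactly the monotonicity order of $f_{\restriction C}$; hence $f$ is $(<_{\vec E},\triangleleft)$-increasing on each member of $\mathcal C$.

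Part (ii) runs in parallel with Theorem~\ref{Theorem Local Mono}. For non-algebraic $p\in S_1(A)$ choose $E^p\in\mathcal E_p\smallsetminus\{\id_{p(\Mon)}\}$ such that $f$ restricted to each $E^p$-class is constant or strictly $(<,\triangleleft)$-monotone; by $A$-invariance the type of this behavior (constant, strictly increasing, or strictly decreasing) is the same for every class $[a]_{E^p}$, $a\models p$. Fact~\ref{Fact_L_P_sentence} gives $\theta_p(x)\in p$ and an $A$-definable $<$-convex extension $\hat E^p\ne\id$ of $E^p$ on $\theta_p(\Mon)$ with the same uniform behavior of $f$ on its classes; shrinking $\theta_p$ once more---legitimate because $[a]_{E^p}$ is non-singleton for $a\models p$---we may assume all $\hat E^p$-classes are non-singletons. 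Taking a finite subcover and refining to convex components produces a finite $<$-convex $A$-definable partition $\mathcal C$ with $C\subseteq\theta_{p_{j(C)}}(\Mon)$, and we set $E=\bigcup_{C\in\mathcal C}(\hat E^{p_{j(C)}})_{\restriction C}$, i.e.\ $x\,E\,y$ iff $x,y$ lie in a common $C$ and $x\,\hat E^{p_{j(C)}}\,y$. This $E$ is $A$-definable with $<$-convex classes, $E=\bigcup_{C\in\mathcal C}E_{\restriction C}$, and for $a\in C$ the class $[a]_E=[a]_{\hat E^{p_{j(C)}}}\cap C$ is a convex subset of a non-singleton $\hat E^{p_{j(C)}}$-class, so $f_{\restriction[a]_E}$ inherits the uniform ``constant or strictly $(<,\triangleleft)$-monotone'' behavior. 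Moreover $[a]_E$ can be a singleton only when $[a]_{\hat E^{p_{j(C)}}}$ meets $C$ in a single point, which forces $a$ to be an endpoint of $C$; hence $E$ has at most $2|\mathcal C|$ finite classes.

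The main obstacle is producing a single sequence $\vec E$ (a single relation $E$) that serves all members of $\mathcal C$ at once, since the type-by-type data vary in length and, a priori, in reversal pattern. The resolution rests on two simple observations: prepending copies of $\id_{p(\Mon)}$ to a monotonicity sequence leaves $<_{\vec E}$ intact, so lengths can be aligned without introducing spurious reversals; and convex equivalence relations supported on the pairwise disjoint convex pieces of $\mathcal C$ amalgamate into a genuine $\subseteq$-increasing chain of $<$-convex equivalence relations on all of $\Mon$. One must also dispatch the degenerate cases separately ($f$ constant on some $p(\Mon)$, or $E^p=\mathbf 1_p$, or algebraic $p$), and in (ii) the additional shrinking of $\theta_p$ is precisely what keeps the number of small classes finite. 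Everything else---that the glued relations are equivalence relations, $A$-definable, convex and increasing; that restriction commutes with the $<_{\vec E}$-construction on convex sets; that restricting a strictly monotone map to a convex subset preserves strict monotonicity---is routine.
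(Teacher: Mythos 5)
Your part (i) is correct and is essentially the paper's own argument: apply Theorem \ref{Theorem Weak Mono}(i) type by type, transfer to a definable convex neighborhood via Fact \ref{Fact_L_P_sentence}, take a finite subcover, pad the sequences with identity relations, and glue the relations piecewise over a convex refinement of the cover.

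In part (ii), however, there is a genuine gap around the clause ``finitely many finite classes''. Your only control over small classes is the endpoint argument: $[a]_E=[a]_{\hat E^{p_{j(C)}}}\cap C$ can be a singleton only when $a$ is an endpoint of $C$ (and even this presupposes that $[a]_{\hat E^{p_{j(C)}}}$ itself is not a singleton). That bounds the number of \emph{singleton} classes, but the statement requires finitely many \emph{finite} classes. Nothing in your construction prevents the definable extension $\hat E^{p_j}$ from having infinitely many finite classes with at least two elements at points of $\theta_{p_j}(\Mon)$ which do not realize $p_j$: your extra shrink only removes points whose class is a singleton, and ``non-singleton'' is strictly weaker than ``infinite''. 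Any such class lying in the interior of a piece $C$ survives as a finite class of the glued relation $E$, so the claimed bound ``at most $2|\mathcal C|$ finite classes'' is not justified. This is exactly the point the paper's proof handles with its preliminary observation: in a weakly o-minimal theory a definable convex equivalence relation has only finitely many classes that are finite with more than one element, hence the set $\{a\in\Mon\mid [a]_F \mbox{ is infinite}\}$ is definable; using this, each $\theta_p$ is shrunk so that \emph{all} classes of the definable extension are infinite before gluing, after which only the finitely many classes cut by the boundaries of the partition can be finite, and these are split into singletons. To repair your argument you must either prove and invoke this observation (then the glued $E$, being a definable convex equivalence relation, has finitely many non-singleton finite classes, and your endpoint argument handles the singletons), or arrange infinite classes on each $\theta_p(\Mon)$ as the paper does. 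A minor additional point: the single shrink ``we may assume all $\hat E^p$-classes are non-singletons'' is not quite accurate as stated, since passing to a smaller convex set can create new singleton classes at its boundary; this only produces finitely many exceptions per piece, but it should be acknowledged rather than assumed away.
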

\begin{proof}
(i) First, note that the pair $(p,<)$ is weakly o-minimal for every $p\in S_1(A)$. For each $p\in S_1(A)$ we will find a formula $\theta_p\in p$ and a sequence $\vec E_p\in\mathcal E_p^{<\omega}$ such that 
\setcounter{equation}{0}
\begin{equation}\mbox{$\theta_p(\Mon)$ is a $<$-convex subset of $\Mon$ \ \ and  \ \  $f_{\restriction \theta_p(\Mon)}$ is $(<_{\vec E_p},\triangleleft)$-increasing,}
\end{equation}
in the following way. 
If $f$ is constant on $p(\Mon)$, then by compactness there is a $\theta_p(x)\in p$ such that $f$ is constant on $\theta_p(\Mon)$; by the weak o-minimality of the theory we may suppose that $\theta_p(\Mon)$ is convex. Set $\vec E_p=\id_{p(\Mon)}$ and note that $f_{\restriction\theta_p(\Mon)}$ is $(<_{\vec E_p},\triangleleft)$-increasing, so condition (1) is satisfied in this case.
The other case is where $f$ is non-constant on $p(\Mon)$. Then by Theorem \ref{Theorem Weak Mono}(i) there is an increasing sequence $\vec E_p'\in\mathcal E_p^{<\omega}$ such that $f_{\restriction p(\Mon)}$ is $(<_{\vec E_p'},\triangleleft)$-increasing. Note that the following are tp-universal properties:

-- \ ${\vec E_p'}$ is an increasing sequence of $<$-convex equivalence relations on $p(\Mon)$;

-- \ $f:p(\Mon)\to D$ is a $(<_{\vec E_p'},\triangleleft)$-increasing function.  

\noindent 
By Fact \ref{Fact_L_P_sentence} there is a $\theta_p(x)\in p$ and an increasing sequence of $A$-definable $<$-convex equivalence relations $\vec E_p$ on $\theta_p(\Mon)$ such that $\vec E_{p\restriction p(\Mon)}=\vec E_p'$ and $f_{\restriction \theta_p(\Mon)}$ is $(<_{\vec E_p},\triangleleft)$-increasing; again, we can assume that $\theta_p(\Mon)$ is a $<$-convex subset of $\Mon$, so condition (1) is satisfied.

Since $\{[\theta_p(x)]\mid p\in S_1(A)\}$ is an open cover of $S_1(A)$, by compactness, we can find a finite subcover $\{[\theta_{p_1}(x)],\dots,[\theta_{p_n}(x)]\}$. By a simple modification, we can assume that $\theta_{p_i}(x)$'s are mutually contradictory. 
It remains to construct the sequence $\vec E$. First, note that we may assume that all $\vec E_{p_i}$'s are of the same length. Indeed, if $m$ is the maximal length, then every shorter sequence $\vec E_{p_i}$ can be expanded by adding an appropriate number of $\id_{\theta_{p_i}(\Mon)}$ at the beginning; this does not change $<_{\vec E_{p_i}}$. So, let $\vec E_{p_i}=(E_{p_i,1},\dots,E_{p_i,m})$. Now, define $\vec E=(E_1,\dots,E_m)$ in the obvious way: set $E_j$ equal to $E_{p_i,j}$ on the part $\theta_{p_i}(\Mon)$, leaving the elements of different parts unrelated. Clearly, $<_{\vec E}$ equals $<_{\vec E_{p_i}}$ on $\theta_{p_i}(\Mon)$, so the conclusion follows.

\smallskip
(ii) We need the following observation: if $F$ is a convex definable equivalence relation on $\Mon$, then the set $\{a\in \Mon\mid [a]_F\mbox{ is infinite}\}$ is definable, which follows from the fact that weak o-minimality of $T$ guarantees that $F$ has only finitely many classes with finitely many but more than one element.  

For each $p\in S_1(A)$ we find a $\theta_p(x)\in p$ and an $A$-definable convex equivalence relation $E_p$ on $\theta_p(\Mon)$, such that $\theta_p(\Mon)$ is convex and:
\begin{enumerate}[\hspace{10pt} (1)]
    \item If $f_{\restriction p(\Mon)}$ is constant, then $f_{\restriction\theta_p(\Mon)}$ is also constant and $E_p=\theta_p(\Mon)^2$;
    \item If $f_{\restriction p(\Mon)}$ is non-constant, then each $E_p$-class is infinite and $f_{\restriction[a]_{E_p}}$ is constant/strictly $(<,\triangleleft)$-increasing/strictly $(<,\triangleleft)$-decreasing uniformly for all $a\in \theta_p(\Mon)$. 
\end{enumerate}
(1) is fulfilled as in the proof of (i). For (2), assume that $f$ is non-constant. By Theorem \ref{Theorem Local Mono}, there exists a $E_p\in \mathcal E_p\smallsetminus\{\id_{p(\Mon)}\}$ such that $f_{\restriction[a]_{E_p}}$ is constant/strictly $(<,\triangleleft)$-increasing/strictly $(<,\triangleleft)$-decreasing on $[a]_{E_p}$ uniformly for all $a\in p(\Mon)$; for simplicity, assume that $f_{\restriction[a]_{E_p}}$ is strictly $(<,\triangleleft)$-increasing. Note that $E_p\neq \id_{p(\Mon)}$ implies that each $E_p$-class is infinite. 
As in the proof of part (i), we find $\theta_p(x)\in p$ and a convex $A$-definable equivalence relation on $\theta_p(\Mon)$, also denoted by $E_p$, such that the restriction of $f$ to each $E_p$-class is strictly $(<,\triangleleft)$-increasing. By the above observation, we can further shrink $\theta_p(\Mon)$ so that condition (2) is satisfied. 

As in the proof of (i), choose a finite subcover $\{[\theta_{p_1}(x)],\dots,[\theta_{p_n}(x)]\}$ of $\{[\theta_{p}(x)]\mid p\in S_1(A)\}$. Let $\mathcal C=\{\theta_{p_i}(\Mon)\mid \ i\leq n\}$. For each $C=\theta_{p_i}(\Mon)\in \mathcal C$ denote $E_C:=E_{p_i}$. 
Thus, we have a finite $A$-definable convex cover $\mathcal C$ of $\Mon$, and for each $C\in \mathcal C$ a convex $A$-definable definable equivalence relation $E_C$ on $C$, such that at least one of  the following two conditions is satisfied:
\begin{enumerate}[\hspace{10pt} (1)]
\item [(3)]  $f_{\restriction C}$ is constant and $E_C=C^2$;
\item [(4)]  Each $E_C$-class is infinite and  $f_{[a]_{E_C}}$ is constant/strictly $(<,\triangleleft)$-increasing/strictly $(<,\triangleleft)$-decreasing uniformly for all $a\in C$. 
\end{enumerate}
Refine $\mathcal C$ in an obvious way to become a convex partition of $\Mon$; attach to each member of the partition the restriction of an appropriately chosen $E_C$. Note that after this modification, we have at most finitely many ``new" finite $E_C$-classes (parts of previously infinite classes); each of those classes is $A$-definable, so we can split each of them into single-element classes and form a new $A$-definable convex partition, each of whose members satisfies at least one of conditions (3) and (4).
It is easy to see that the partition $\mathcal C$ and the relation $E=\bigcup _{C\in\mathcal C}E_C$ satisfy the conclusion of (ii).
\end{proof}

\subsection{Weak quasi-o-minimality}\

The following proposition describes the weak quasi-o-minimality of a theory as a ``local'' property of its complete $1$-types. 
This extends \cite[Theorem 1(ii)]{MTwmon}, in which we showed that the weak quasi-o-minimality of $T$ does not depend on the particular choice of the linear order.

\begin{Proposition}\label{Proposition qwom iff all 1-types are wom}
A complete first-order theory $T$ with infinite models is weakly quasi-o-minimal if and only if every type $p\in S_1(T)$ is weakly o-minimal. 
\end{Proposition}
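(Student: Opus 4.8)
The plan is to prove the two implications separately. The forward direction is essentially already recorded: if $T$ is weakly quasi-o-minimal with respect to an $\emptyset$-definable linear order $<$, then for every $p\in S_1(T)=S_1(\emptyset)$ the pair $(p,<)$ is weakly o-minimal --- this is Remark~\ref{Remark_weak_ominimal_firstrmk}(c). (Concretely: a relatively definable $D\cap p(\Mon)$ is, by weak quasi-o-minimality, a Boolean combination of $\emptyset$-definable sets and $<$-convex sets; since $p$ is complete over $\emptyset$, each $\emptyset$-definable set meets $p(\Mon)$ in $\emptyset$ or in $p(\Mon)$, so $D\cap p(\Mon)$ is a Boolean combination of $<$-convex subsets of $p(\Mon)$, hence a finite union of convex sets.)

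For the converse, assume every $p\in S_1(\emptyset)$ is weakly o-minimal. First I would build a single $\emptyset$-definable linear order. For each $p\in S_1(\emptyset)$ fix a relatively $\emptyset$-definable linear order $<_p$ witnessing weak o-minimality, defined by an $L_\emptyset$-formula $\phi_p(x,y)$, and use Fact~\ref{Fact_L_P_sentence} to find $\theta_p(x)\in p$ on which $\phi_p$ defines a linear order. By compactness the clopen sets $[\theta_p]$ cover $S_1(\emptyset)$, so finitely many $\theta_{p_1},\dots,\theta_{p_n}$ suffice; disjointifying produces an $\emptyset$-definable partition $D_1,\dots,D_n$ of the universe with $D_i\subseteq\theta_{p_i}(\Mon)$. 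Writing $<_i:=\phi_{p_i}$ (a linear order on $D_i$) and ordering the blocks $D_1<^*D_2<^*\dots<^*D_n$ while using $<_i$ inside $D_i$, one gets an $\emptyset$-definable linear order $<^*$ on $\Mon$ in which each $D_i$ is a $<^*$-convex block. I claim $T$ is weakly quasi-o-minimal with respect to $<^*$.

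Fix a definable $D=\psi(\Mon,b)$. Since the $D_i$ partition $\Mon$ and are $\emptyset$-definable, it suffices to show each $D\cap D_i$ is a Boolean combination of $\emptyset$-definable sets and $<^*$-convex sets; I work inside $D_i$, where $<^*=<_i$. For each $q\in S_1(\emptyset)$ with $q(\Mon)\subseteq D_i$ the type $q$ is weakly o-minimal by hypothesis, and since the restriction of $<_i$ to $q(\Mon)$ is a relatively $\emptyset$-definable linear order, Corollary~\ref{Corollary_wom_is_ind_of_order} shows that $(q,<_i)$ is a weakly o-minimal pair. Hence $\psi(\Mon,b')\cap q(\Mon)$ has finitely many $<_i$-convex components for every $b'$; a standard compactness argument --- were there no uniform bound, one could realize an infinite $<_i$-increasing sequence inside $q(\Mon)$ on which $\psi(\cdot,b')$ alternates, contradicting weak o-minimality of $(q,<_i)$ --- produces an integer $N_q$ and, via Fact~\ref{Fact_L_P_sentence}, a formula $\sigma_q(x)\in q$ with $\sigma_q\vdash\delta_i$ (where $\delta_i$ defines $D_i$; note $\sigma_q\vdash\theta_{p_i}$, so $<_i$ is a linear order on $\sigma_q(\Mon)$) such that for every $b'$ the set $\psi(\Mon,b')\cap\sigma_q(\Mon)$ has at most $N_q$ convex components in $(\sigma_q(\Mon),<_i)$. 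Compactness on the closed set $[\delta_i]$ yields finitely many $\sigma_{q_1},\dots,\sigma_{q_k}$ whose clopen sets cover it; disjointifying gives an $\emptyset$-definable partition $E_1,\dots,E_k$ of $D_i$ with $E_l\subseteq\sigma_{q_l}(\Mon)$. Then $\psi(\Mon,b)\cap E_l$ has finitely many convex components in $(E_l,<^*_{\restriction E_l})$ (passing to the suborder $E_l\subseteq\sigma_{q_l}(\Mon)$ cannot increase the number of convex components), with some $<^*$-convex hulls $\tilde C^l_1,\dots,\tilde C^l_{m_l}$ in $(\Mon,<^*)$; since each component $C$ of $\psi(\Mon,b)\cap E_l$ satisfies $C=\tilde C\cap E_l$ by $E_l$-convexity, we get $\psi(\Mon,b)\cap E_l=E_l\cap(\tilde C^l_1\cup\dots\cup\tilde C^l_{m_l})$, a Boolean combination of the $\emptyset$-definable set $E_l$ and $<^*$-convex sets. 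Taking unions over $l$ and then over $i$ expresses $D$ in the required form, so $T$ is weakly quasi-o-minimal with respect to $<^*$.

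The crux is the third paragraph. A definable set can oscillate across infinitely many type-loci inside a single block (the even integers in $(\mathbb Z,<,+)$ are a typical example), so $D\cap D_i$ need not be a finite union of convex sets; it is precisely the $\emptyset$-definable refinement $E_1,\dots,E_k$, obtained from a two-step compactness --- first a \emph{uniform} bound $N_q$ on the number of convex components over all parameters $b'$, then shrinking the type-locus $q(\Mon)$ to a definable neighbourhood $\sigma_q(\Mon)$ --- that absorbs this oscillation, and this is exactly why the conclusion is weakly \emph{quasi}-o-minimal rather than weakly o-minimal. A secondary point requiring care is that the orders $<_q$ witnessing weak o-minimality of different types $q$ concentrated in $D_i$ need not coincide, which is why Corollary~\ref{Corollary_wom_is_ind_of_order} is invoked to transport the weak o-minimality of each such $q$ onto the common order $<_i$.
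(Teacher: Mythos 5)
Your proof is correct and follows essentially the same route as the paper: build one $\emptyset$-definable linear order by gluing finitely many definable pieces via compactness, transfer the weak o-minimality of every $1$-type to this order via Corollary~\ref{Corollary_wom_is_ind_of_order}, and then, for a fixed definable set, use Fact~\ref{Fact_L_P_sentence} together with a finite subcover of the type space and convex hulls to write it as a Boolean combination of $\emptyset$-definable and convex sets. The only differences are cosmetic: your uniform bound $N_q$ over all parameters $b'$ and the second disjointification inside each block are not needed once the parameter $b$ is fixed (the paper works with the fixed $b$ and does not disjointify the final cover), but they are harmless.
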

\begin{proof}
It is easy to see that if $T$ is weakly quasi-o-minimal with respect to $<$, then $(p,<)$ is a weakly o-minimal pair over $\emptyset$ for every $p\in S_1(T)$. For the converse, assume that every $p\in S_1(T)$ is weakly o-minimal. In particular, every $p\in S_1(T)$ is linearly ordered (there is a relatively $\emptyset$-definable linear order on $p(\Mon)$), so by routine compactness we may find an $\emptyset$-definable linear order on whole $\Mon$. We prove that $T$ is weakly quasi-o-minimal with respect to $<$.

Let $D\subseteq\Mon$ be definable. By Corollary \ref{Corollary_wom_is_ind_of_order}, $(p,<)$ is a weakly o-minimal pair for every $p\in S_1(T)$, so $D\cap p(\Mon)$ has finitely many convex components, say $n_p$, in $(p(\Mon),<)$.  By compactness, as in the proof of Lemma \ref{Lemma_reldef_of_convex_components}(i), we find $\theta_p(x)\in p$ such that $D\cap \theta_p(\Mon)$ has $n_p$ convex components in $(\theta_p(\Mon),<)$, say $D_{p,i}$, $1\leqslant i\leqslant n_p$: $D\cap \theta_p(\Mon)=\bigcup_{i=1}^{n_p} D_{p,i}$. Note that each of $D_{p,i}$ is definable.
Setting $D_{p,i}^{conv}$ as the convex hull of $D_{p,i}$ in $(\Mon,<)$, we have $D_{p,i}= D_{p,i}^{conv}\cap\theta_p(\Mon)$, so $D\cap\theta_p(\Mon)= \bigcup_{i=1}^{n_p}D_{p,i}^{conv}\cap\theta_p(\Mon)$. 
Since $\{\theta_p(x)\mid p\in S_1(T)\}$ covers $S_1(T)$, by compactness we find a finite sub-cover $\{\theta_{p_j}(x)\mid 1\leqslant j\leqslant m\}$. We have:
$$D=D\cap\Mon= D\cap \bigcup_{j=1}^m\theta_{p_j}(\Mon)=\bigcup_{j=1}^m D\cap \theta_{p_j}(\Mon)= \bigcup_{j=1}^m\bigcup_{i=1}^{n_{p_j}}D_{p_j,i}^{conv}\cap\theta_{p_j}(\Mon),$$ 
which is a Boolean combination of convex and $\emptyset$-definable sets, and we are done.
\end{proof}

The proposition motivates the following definition. 

\begin{Definition}
A partial type $\pi(x)$ is {\em weakly quasi-o-minimal over $A$} if $\pi(x)$ is over $A$ and every $p\in S_x(A)$ extending $\pi(x)$ is weakly o-minimal; in that case we say that the set $\pi(\Mon)$ is 
{\em weakly quasi-o-minimal over $A$}.
\end{Definition}

Note that if $\pi(x)$ is weakly quasi-o-minimal over $A$, then $\pi(x)$ is weakly quasi-o-minimal over any $B\supseteq A$ as complete extensions of weakly o-minimal types are weakly o-minimal by Lemma \ref{Lemma_reldef_of_convex_components}(ii).

\begin{Proposition}
Let $P$ be type-definable over $A$. Then $P$ is weakly quasi-o-minimal over $A$ if and only if there exists a relatively $A$-definable linear order $<$ on $P$ such that every relatively definable subset of $P$ is a Boolean combination of $<$-convex and relatively $A$-definable sets. In that case, the latter is true for any relatively $A$-definable linear order $<$ on $P$.
\end{Proposition}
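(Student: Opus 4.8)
The plan is to prove the ``if'' direction by a straightforward restriction argument, and to obtain the ``only if'' direction together with the final ``any order'' clause from a single statement: \emph{if $P$ is weakly quasi-o-minimal over $A$, then for every relatively $A$-definable linear order $<$ on $P$, every relatively definable subset of $P$ is a Boolean combination of $<$-convex and relatively $A$-definable subsets of $P$} (plus the separate remark that at least one such $<$ exists). For ``if'': suppose $<$ is a relatively $A$-definable linear order on $P$ with the stated Boolean property, fix a complete $p\in S_x(A)$ extending $\pi$, and let $D_0\subseteq p(\Mon)$ be relatively definable, say relatively defined by $\phi_0$. Put $D:=\phi_0(\Mon)\cap P$, so $D$ is relatively definable within $P$ and $D\cap p(\Mon)=D_0$. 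By hypothesis $D$ is a Boolean combination of $<$-convex sets $C_1,\dots,C_k\subseteq P$ and relatively $A$-definable sets $R_1,\dots,R_l\subseteq P$; intersecting with $p(\Mon)$, each $C_i\cap p(\Mon)$ is $<$-convex in $p(\Mon)$, and each $R_m\cap p(\Mon)$ is $\emptyset$ or $p(\Mon)$ since $p$ is complete over $A$ and the $R_m$ are $A$-definable. Hence $D_0$ is a Boolean combination of finitely many convex subsets of $(p(\Mon),<)$, so it has finitely many convex components; thus $(p,<_{\restriction p(\Mon)})$ is a weakly o-minimal pair, $p$ is weakly o-minimal, and $P$ is weakly quasi-o-minimal over $A$.

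For ``only if'', assume $P$ is weakly quasi-o-minimal over $A$. First I would produce a relatively $A$-definable linear order on $P$. Each complete $p\supseteq\pi$ is weakly o-minimal, hence carries a relatively $A$-definable linear order defined by an $L_A$-formula $\phi_p(x,y)$; by Example \ref{Examples of tp universal}(a) there is $\theta_p(x)\in p$ on which $\phi_p(x,y)$ defines a linear order. Cover the compact set $\{q\in S_x(A)\mid q\supseteq\pi\}$ by finitely many $[\theta_{p_1}],\dots,[\theta_{p_n}]$, replace $\theta_{p_j}$ by $\theta_{p_j}\wedge\bigwedge_{l<j}\lnot\theta_{p_l}$ to make the sets $\theta_{p_j}(\Mon)$ pairwise disjoint (this does not change their union, which still covers $P$, and $\phi_{p_j}$ still orders the smaller set), and glue the $\phi_{p_j}$'s lexicographically into one $L_A$-formula ordering the blocks $\theta_{p_j}(\Mon)$ by index and ordering within the $j$-th block by $\phi_{p_j}$. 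This defines a linear order on $\bigcup_j\theta_{p_j}(\Mon)\supseteq P$, whose restriction to $P$ is a relatively $A$-definable linear order $<$.

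Next, fix \emph{any} relatively $A$-definable linear order $<$ on $P$ and a relatively definable $D=\phi(\Mon)\cap P$. For each complete $p\supseteq\pi$, Corollary \ref{Corollary_wom_is_ind_of_order} gives that $(p,<_{\restriction p(\Mon)})$ is a weakly o-minimal pair, so $D\cap p(\Mon)$ has convex components $C_{p,1}<\dots<C_{p,n_p}$ in $(p(\Mon),<)$, each relatively definable by Lemma \ref{Lemma_reldef_of_convex_components}(i). Let $V_{p,i}\subseteq P$ be the $<$-convex hull of $C_{p,i}$ in $(P,<)$, a relatively definable convex subset of $P$ defined by a formula $v_{p,i}$; since $C_{p,i}$ is $<$-convex already in $p(\Mon)$, one checks $V_{p,i}\cap p(\Mon)=C_{p,i}$, so the formula $\phi(x)\leftrightarrow\bigvee_{i\le n_p}v_{p,i}(x)$ holds on $p(\Mon)$, hence by compactness on $\theta_p(\Mon)$ for some $\theta_p(x)\in p$. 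Covering $\{q\supseteq\pi\}$ by finitely many $[\theta_{p_j}]$ and setting $P_j:=\big(\theta_{p_j}(\Mon)\smallsetminus\bigcup_{l<j}\theta_{p_l}(\Mon)\big)\cap P$ (pairwise disjoint, relatively $A$-definable, covering $P$), I get $D\cap P_j=\bigcup_i\big(V_{p_j,i}\cap P_j\big)$ because $P_j\subseteq\theta_{p_j}(\Mon)\cap P$; therefore $D=\bigcup_j\bigcup_i\big(V_{p_j,i}\cap P_j\big)$ is a Boolean combination of $<$-convex and relatively $A$-definable subsets of $P$. This simultaneously establishes existence of a suitable order, the ``only if'' direction, and the ``in that case'' clause.

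The main obstacle will be the bookkeeping in the ``only if'' direction. Since $P$ is only type-definable, the neighborhoods $\theta_p(\Mon)$ are neither contained in $P$ nor $<$-convex, so one cannot directly imitate the convex-hull-plus-compactness argument of Proposition \ref{Proposition qwom iff all 1-types are wom}; the remedy is to take convex hulls \emph{inside} $(P,<)$ rather than in any definable ambient order, and to intersect with the relatively $A$-definable pieces $P_j$ only at the very end. Verifying $V_{p,i}\cap p(\Mon)=C_{p,i}$ and that the identity for $D$ survives intersection with each $P_j$ is the delicate point; everything else is the ``routine compactness'' already employed in the proof of Proposition \ref{Proposition qwom iff all 1-types are wom}.
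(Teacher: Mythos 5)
Your ``if'' direction and your construction of a relatively $A$-definable linear order on $P$ (gluing finitely many $\phi_{p_j}$'s on disjointified $\theta_{p_j}$'s) are fine and match the intended adaptation of Proposition \ref{Proposition qwom iff all 1-types are wom}. The gap is in the ``only if''/``any order'' argument, at the step ``Let $V_{p,i}\subseteq P$ be the $<$-convex hull of $C_{p,i}$ in $(P,<)$, a relatively definable convex subset of $P$ defined by a formula $v_{p,i}$.'' Since $P$ is only type-definable, that hull is $\{x\in P\mid \exists a,b\,(a,b\models p\wedge \chi(a)\wedge\chi(b)\wedge a\leqslant x\leqslant b)\}$ where $\chi$ relatively defines $C_{p,i}$ within $p(\Mon)$; by saturation this is only the intersection of $P$ with an infinite family of definable sets (one for each formula of $p$), i.e.\ relatively \emph{type}-definable, and its complement in $P$ (the strict upper and lower bounds of $C_{p,i}$ inside $P$) is in general not type-definable at all. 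Lemma \ref{Lemma_reldef_of_convex_components}(i) gives relative definability of the components and of the bound-sets \emph{within $p(\Mon)$}, not within $P$, so nothing in the paper supplies such a formula $v_{p,i}$, and the claim is false in general. This is load-bearing: your next move, ``$\phi(x)\leftrightarrow\bigvee_i v_{p,i}(x)$ holds on $p(\Mon)$, hence by compactness on $\theta_p(\Mon)$'', needs $v_{p,i}$ to be honest first-order formulas. Your own ``obstacle'' paragraph correctly rules out hulls in a definable ambient order, but the proposed remedy (hulls inside $(P,<)$) destroys exactly the definability your compactness step requires.

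The repair is to reorder the argument as in the paper's proof of Proposition \ref{Proposition qwom iff all 1-types are wom}, noting that the statement does \emph{not} ask the convex pieces to be definable. For each completion $p\supseteq\pi$, apply Fact \ref{Fact_L_P_sentence} (exactly as in Lemma \ref{Lemma_reldef_of_convex_components}(i)) to the tp-universal properties ``$x<y$ defines a linear order'' and ``$\phi$ induces a convex partition with at most $2n_p+1$ pieces'' to get $\theta_p(x)\in p$ on which the order formula genuinely defines a linear order and on which the convex components $D_{p,i}$ of $\phi(\Mon)\cap\theta_p(\Mon)$ are definable sets. Only \emph{then} pass to the $<$-convex hull $V_{p,i}$ of $D_{p,i}\cap P$ inside $(P,<)$, accepted as a mere convex subset of $P$; the needed trace identity $V_{p,i}\cap P\cap\theta_{p}(\Mon)=D_{p,i}\cap P$ follows from convexity of $D_{p,i}$ in $(\theta_{p}(\Mon),<)$, since all three points involved lie in $\theta_p(\Mon)$. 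After your finite subcover and disjointification into the relatively $A$-definable pieces $P_j$, one gets $D=\bigcup_j\bigcup_i(V_{p_j,i}\cap P_j)$ just as you wanted, with the $V$'s convex (not necessarily definable) and the $P_j$'s relatively $A$-definable. With that rearrangement your bookkeeping goes through; as written, however, the proof has a genuine hole.
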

\begin{proof}
The proof of Proposition \ref{Proposition qwom iff all 1-types are wom} with obvious modifications goes through. 
\end{proof}

Let $(X,<)$ be a linear order and $\mathcal C=(C_1,\dots,C_n)$ a partition of $X$. By $<_{\mathcal C}$ we denote the order obtained by keeping the original order within each component and defining $C_1<_{\mathcal C}\dots<_{\mathcal C}C_n$. Combining the arguments from previous proofs, the following theorem can be routinely derived. 

\begin{Theorem}\label{Theorem_wqom_WMONO}
Suppose that a type-definable set $P$ is weakly quasi-o-minimal over $A$, $<$ is a relatively $A$-definable linear order on $P$, $(D,\triangleleft)$ is an $A$-definable linear order, and $f:P\to D$ is a relatively $A$-definable function. Then there are $A$-definable extensions $(\hat P,<)$ of $(P,<)$ and $\hat f:\hat P\to D$, an $A$-definable partition
$\mathcal C$ of $\hat P$ and an increasing sequence of $A$-definable $<_{\mathcal C}$-convex equivalence relations $\vec E$ on $\hat P$  such that $\hat f$ is $((<_{\mathcal C})_{\vec E},\triangleleft)$-increasing on each member of $\mathcal C$.
\end{Theorem}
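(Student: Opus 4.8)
The plan is to combine the localization-and-compactness strategy of Proposition~\ref{Proposition qwom iff all 1-types are wom} and Theorem~\ref{Theorem monoto weak omin case}(i) with the single-type weak monotonicity, Theorem~\ref{Theorem Weak Mono}(i). Write $P=\pi(\Mon)$, with $\pi(x)$ over $A$. First I would secure an $A$-definable ambient set on which $<$ is a linear order and $f$ is a total function: applying Fact~\ref{Fact_L_P_sentence} to the $\tp$-universal statement ``$x<y$ defines a linear order on $\pi(\Mon)$'' and, as in Fact~\ref{Fact rel def kernel and inverse image}(i), to ``$f(x,y)$ relatively defines a function from $\pi(\Mon)$ into $D$'', fix $\theta_0(x)\in\pi$ such that $<$ is a linear order on $\theta_0(\Mon)$ and $f$ has a definable extension $\hat f:\theta_0(\Mon)\to D$. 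Let $X=\{p\in S_x(A)\mid\pi\subseteq p\}$; it is closed, hence compact, and, since $P$ is weakly quasi-o-minimal over $A$, every $p\in X$ is a weakly o-minimal type, so by Corollary~\ref{Corollary_wom_is_ind_of_order} the pair $(p,\,<_{\restriction p(\Mon)})$ is weakly o-minimal over $A$.

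For each $p\in X$ I would produce a formula $\theta_p(x)\in p$ with $\theta_p\vdash\theta_0$ and an $\subseteq$-increasing sequence $\vec E_p$ of $A$-definable $<$-convex equivalence relations on $\theta_p(\Mon)$ such that $\hat f_{\restriction \theta_p(\Mon)}$ is $((<)_{\vec E_p},\triangleleft)$-increasing. If $f$ is constant on $p(\Mon)$, compactness yields $\theta_p\in p$ on which $\hat f$ is constant, and $\vec E_p=(\id)$ works. If $f$ is non-constant on $p(\Mon)$, Theorem~\ref{Theorem Weak Mono}(i) gives an $\subseteq$-increasing $\vec E_p'\in\mathcal E_p^{<\omega}$ with $f_{\restriction p(\Mon)}$ being $((<_p)_{\vec E_p'},\triangleleft)$-increasing; then, exactly as in the proof of Theorem~\ref{Theorem monoto weak omin case}(i), one collects the $\tp$-universal properties ``$\vec E_p'$ is an $\subseteq$-increasing sequence of $<$-convex equivalence relations on $p(\Mon)$'' and ``$\hat f$ maps $p(\Mon)$ $((<)_{\vec E_p'},\triangleleft)$-increasingly into $D$'', and applies Fact~\ref{Fact_L_P_sentence} to get $\theta_p(x)\in p$ together with an $A$-definable extension $\vec E_p$ of $\vec E_p'$ on $\theta_p(\Mon)$ with the required properties. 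The one point where this genuinely departs from the weakly o-minimal case is that one \emph{cannot} also shrink $\theta_p$ so that $\theta_p(\Mon)$ becomes $<$-convex; this is precisely what forces $<_{\mathcal C}$ into the statement.

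Finally I would assemble the global data. Since $\{[\theta_p]\mid p\in X\}$ covers the compact set $X$, extract a finite subcover and replace $\theta_{p_i}$ by $\theta_{p_i}\wedge\bigwedge_{j<i}\lnot\theta_{p_j}$ to make the formulae pairwise inconsistent; then $\hat P:=\bigcup_{i\le m}\theta_{p_i}(\Mon)$ is an $A$-definable superset of $P$ contained in $\theta_0(\Mon)$, so $<$ is a linear order on $\hat P$ and $\hat f$ a total function $\hat P\to D$, and $\mathcal C=\{C_i:=\theta_{p_i}(\Mon)\mid i\le m\}$ is an $A$-definable partition of $\hat P$. On the shrunken $C_i$ the relations $\vec E_{p_i}$ still restrict to $\subseteq$-increasing sequences of $<$-convex equivalence relations (a convex class intersected with a subset is convex in the subset) and $\hat f_{\restriction C_i}$ remains $((<)_{\vec E_{p_i}},\triangleleft)$-increasing, the construction of $(\cdot)_{\vec E}$ being local and hence commuting with restriction. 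Padding the shorter sequences by copies of $\id$ at the front (which does not change $(<)_{\vec E_{p_i}}$), assume all $\vec E_{p_i}=(E_{p_i,1},\dots,E_{p_i,k})$ have the same length, and define $\vec E=(E_1,\dots,E_k)$ on $\hat P$ by $E_\ell{\restriction}C_i=E_{p_i,\ell}$, with elements of distinct parts $E_\ell$-inequivalent. Passing to $<_{\mathcal C}$ makes each $C_i$ convex, every $E_\ell$-class lies inside a single $C_i$ where it is $<$-convex, so $\vec E$ is an $\subseteq$-increasing sequence of $<_{\mathcal C}$-convex equivalence relations on $\hat P$, and $(<_{\mathcal C})_{\vec E}$ restricted to $C_i$ coincides with $(<_{\restriction C_i})_{\vec E_{p_i}}$. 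Hence $\hat f$ is $((<_{\mathcal C})_{\vec E},\triangleleft)$-increasing on every member of $\mathcal C$, which is the assertion.

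The obstacle is organizational rather than conceptual: one must track how the passage to the definable extension $\hat P$, the shrinking of the $\theta_{p_i}$ to disjoint pieces, and the amalgamation of the per-type sequences into one $\vec E$ interact, noting in particular that the convexity needed for $(\cdot)_{\vec E}$ to be meaningful is recovered only after replacing $<$ by $<_{\mathcal C}$. Everything else is a routine application of the $\tp$-universal compactness of Section~\ref{Section1} together with Theorem~\ref{Theorem Weak Mono}.
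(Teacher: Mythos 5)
Your proposal is correct and is exactly the argument the paper has in mind: the paper's "proof" is just the remark that the theorem follows by combining the arguments of Proposition \ref{Proposition qwom iff all 1-types are wom} and Theorem \ref{Theorem monoto weak omin case}(i) with Theorem \ref{Theorem Weak Mono}(i), and your write-up carries out precisely that combination (per-type application of weak monotonicity, tp-universal compactness via Fact \ref{Fact_L_P_sentence}, finite subcover, padding and amalgamation of the sequences, and recovery of convexity by passing to $<_{\mathcal C}$). In particular, your observation that the pieces $\theta_p(\Mon)$ cannot be taken $<$-convex, which is what forces $<_{\mathcal C}$ into the statement, correctly identifies the only point where this differs from the weakly o-minimal case.
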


\section{Nonorthogonality of so-types}\phantomsection\label{Section4}
In this section, we study forking independence in the context of so-types. We introduce the notion of $\mathbf p$-genericity, and prove Theorem \ref{Theorem4}.

\begin{Lemma}\label{Lemma_so_forking_equals_bounded}
Let $\mathbf p=(p,<)$ be an so-pair over $A$ and let $\phi(x,b)$ be any formula. Then the following conditions are equivalent:

(1)  $p(x)\cup \{\phi(x,b)\}$ divides over $A$; 

(2) $p(x)\cup \{\phi(x,b)\}$ forks over $A$;

(3) $\phi(x,b)$ relatively defines a bounded subset of $(p(\Mon),<)$. 
\end{Lemma}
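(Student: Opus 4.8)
The plan is to prove the cycle $(1)\Rightarrow(2)\Rightarrow(3)\Rightarrow(1)$. The implication $(1)\Rightarrow(2)$ is trivial. For $(2)\Rightarrow(3)$ I argue the contrapositive: suppose $D:=\phi(\Mon,b)\cap p(\Mon)$ is unbounded in $(p(\Mon),<)$, say not upper bounded (the other case being symmetric). Since $\mathbf p$ is an so-pair, exactly one of $D$, $p(\Mon)\smallsetminus D$ is right-eventual; it cannot be $p(\Mon)\smallsetminus D$, for then a nonempty final part of $p(\Mon)$ lying inside $p(\Mon)\smallsetminus D$ would consist of upper bounds of $D$. Hence $D$ is right-eventual, so $\phi(x,b)\in\mathbf p_r$ and therefore $p(x)\cup\{\phi(x,b)\}\subseteq\mathbf p_r$; as $\mathbf p_r$ is $A$-invariant, hence a nonforking extension of $p$ by Fact~\ref{Fact_so_basic}, it follows that $p(x)\cup\{\phi(x,b)\}$ does not fork over $A$. (If $D$ is not lower bounded, use $\mathbf p_l$ in place of $\mathbf p_r$.)

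The real content is $(3)\Rightarrow(1)$. Assume $D=\phi(\Mon,b)\cap p(\Mon)$ is bounded; if $D=\emptyset$ then $p(x)\cup\{\phi(x,b)\}$ is inconsistent and divides trivially, so assume $D\neq\emptyset$ and fix $a_-,a_+\models p$ with $a_-<D<a_+$. Since $p(x)\vdash\bigl(\phi(x,b)\to a_-<x<a_+\bigr)$, compactness produces $\theta_0(x)\in p$ such that $<$ linearly orders $\theta_0(\Mon)$ and $\theta_0(x)\vdash\bigl(\phi(x,b)\to a_-<x<a_+\bigr)$; thus $\hat D:=\phi(\Mon,b)\cap\theta_0(\Mon)$ is contained in the interval $(a_-,a_+)$ of $(\theta_0(\Mon),<)$, in particular it is bounded there with endpoints in $p(\Mon)$.

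I then build, by recursion on $n$, a sequence $(b_i:i<\omega)$ with $b_i\equiv_A b$ such that $\hat D_i:=\phi(\Mon,b_i)\cap\theta_0(\Mon)$ satisfies $\hat D_0<\hat D_1<\cdots$ in $(\theta_0(\Mon),<)$. Given $b_0,\dots,b_{n-1}$, the set $\hat D_{n-1}$ is an $\Aut_A(\Mon)$-conjugate of $\hat D$, hence sits inside an interval $(\sigma(a_-),\sigma(a_+))$ with $\sigma(a_\pm)\in p(\Mon)$, so $a':=\sigma(a_+)\models p$ is an upper bound of $\hat D_{n-1}$ and thus of every $\hat D_i$ with $i<n$. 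Choose $\tau\in\Aut_A(\Mon)$ with $\tau(a_-)=a'$ (possible as $a_-\equiv_A a'$) and set $b_n:=\tau(b)$. The relation ``$a$ is a lower bound of $\phi(\Mon,c)\cap\theta_0(\Mon)$ in $(\theta_0(\Mon),<)$'' is $\Aut_A(\Mon)$-invariant in $(a,c)$ — the set $\theta_0(\Mon)$, the order on it, and the definable family $c\mapsto\phi(\Mon,c)$ all being $\Aut_A(\Mon)$-invariant — so applying $\tau$ to ``$a_-<\hat D$'' gives ``$a'<\hat D_n$'', whence $\hat D_i<a'<\hat D_n$ for all $i<n$. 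Consequently $\hat D_i\cap\hat D_j=\emptyset$ for $i\neq j$, i.e.\ $\models\chi(b_i,b_j)$ for all $i<j$, where $\chi(y,y'):=\neg\exists x\,(\theta_0(x)\wedge\phi(x,y)\wedge\phi(x,y'))$ is a single $L_A$-formula. Applying the standard extraction of indiscernibles to $(b_i)$ yields an $A$-indiscernible sequence $(b_i':i<\omega)$ with $b_0'\equiv_A b$ and such that any $L_A$-formula true of all increasing pairs from $(b_i)$ is true of $(b_0',b_1')$; hence $\models\chi(b_0',b_1')$, so $\theta_0(x)\cup\{\phi(x,b_0'),\phi(x,b_1')\}$ — a fortiori $p(x)\cup\{\phi(x,b_0'),\phi(x,b_1')\}$ — is inconsistent. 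Therefore $p(x)\cup\{\phi(x,b_i'):i<\omega\}$ is $2$-inconsistent and $(b_i')$ witnesses that $p(x)\cup\{\phi(x,b)\}$ divides over $A$.

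The difficulty is concentrated in $(3)\Rightarrow(1)$, and there in two spots: realizing that one must move the parameter $b$ (by conjugating a lower bound of $\hat D$ onto an upper bound of the previously constructed $\hat D_{n-1}$), using boundedness of $D$ on both sides to make room for pushing successive conjugates strictly to the right; and the opening normalization — passing from $p$ to a single $\theta_0\in p$ that confines $\hat D$ to a bounded interval — which is precisely what turns pairwise disjointness of the loci into a first-order property, so that it survives the extraction of indiscernibles. The rest (compactness, the extraction lemma) is routine.
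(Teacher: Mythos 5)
Your proof is correct and follows essentially the same route as the paper: (2)$\Rightarrow$(3) via the two $A$-invariant globalizations $\mathbf p_l,\mathbf p_r$ being nonforking, and (3)$\Rightarrow$(1) by first confining $\phi(\Mon,b)\cap\theta(\Mon)$ to a bounded interval for some $\theta\in p$ by compactness and then producing pairwise disjoint $A$-conjugate copies pushed successively to the right. The only cosmetic differences are that the paper iterates a single automorphism $f$ with $f(a_0)=a_1$ instead of choosing a fresh automorphism at each step, and it concludes dividing directly from the $2$-inconsistent family of conjugates, so your final extraction-of-indiscernibles step is not needed.
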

\begin{proof}
(1)$\Rightarrow$(2) is immediate. To prove (2)$\Rightarrow$(3)  assume that $p(x)\cup\{\phi(x,b)\}$ forks over $A$. By Fact \ref{Fact_so_basic}(ii),  $\mathbf p_l$ and $\mathbf p_r$ are nonforking extensions of $p$, so $\phi(x,b)\notin  \mathbf p_l(x)\cup\mathbf p_r(x)$. Then $\phi(x,b)\notin  \mathbf p_r(x)$ implies that $p(\Mon)\cap\phi(\Mon,b)$ is upper bounded, while $\phi(x,b)\notin  \mathbf p_l(x)$ implies that $p(\Mon)\cap\phi(\Mon,b)$ is lower bounded. Therefore, $p(\Mon)\cap\phi(\Mon,b)$ is bounded in $(p(\Mon),<)$.

(3)$\Rightarrow$(1) Assume that $p(\Mon)\cap\phi(\Mon,b)$ is bounded. Let $b_0=b$, and let $a_0,a_1\models p$ be such that $a_0<p(\Mon)\cap\phi(\Mon,b_0)<a_1$. By compactness there is $\theta(x)\in p$ such that $a_0<\theta(\Mon)\cap\phi(\Mon,b_0)<a_1$. 
Let $f\in\Aut_A(\Mon)$ be such that $f(a_0)=a_1$. Define $a_{n+1}:=f(a_n)$ and $b_{n+1}:=f(b_n)$; each $b_n\models\tp(b/A)$. By induction, we see that $a_n<\theta(\Mon)\cap\phi(\Mon,b_n)<a_{n+1}$. So $\{\theta(\Mon)\cap \phi(\Mon,b_n)\mid n<\omega\}$ is $2$-inconsistent, which shows that $p(x)\cup\{\phi(x,b)\}$ divides over $A$.
\end{proof}

Immediately from the lemma, we have the following corollary.
\begin{Corollary}\label{Corolary_of_forking_eq_bounded}
Let $\mathbf p=(p,<)$ be an so-pair over $A$ and let $B\supseteq A$. 
\begin{enumerate}[(i)]
\item The type $p$ has exactly two global nonforking extensions: $\mathbf p_r$ and $\mathbf p_l$.

\item The only nonforking extensions of $p$ in $S(B)$ are $(\mathbf p_{r})_{\restriction B}$ and $(\mathbf p_{l})_{\restriction B}$.

\item The following holds for all $q\in S(B)$ that extend $p$: $q$ forks over $A$ if and only if the locus $q(\Mon)$ is bounded in $(p(\Mon),<)$. 
\end{enumerate}
\end{Corollary}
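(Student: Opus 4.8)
The plan is to read off all three statements from Lemma~\ref{Lemma_so_forking_equals_bounded}, together with Fact~\ref{Fact_so_basic} (to identify the two distinguished nonforking extensions) and the standard behaviour of forking: monotonicity, the existence of a global nonforking extension of a type that does not fork over its base, and the fact that an $A$-invariant global type does not fork over $A$. Throughout I write $D_\phi=\phi(\Mon)\cap p(\Mon)$; since $p$ is an so-type, for every relatively definable $D_\phi$ one of $D_\phi$, $p(\Mon)\smallsetminus D_\phi$ is right-eventual and one is left-eventual, so a relatively definable $D_\phi$ fails to be bounded in $(p(\Mon),<)$ precisely when it is right-eventual or left-eventual. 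I assume $p$ non-algebraic, so that $(p(\Mon),<)$ has no endpoints (the algebraic case is degenerate).

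For (i): by Fact~\ref{Fact_so_basic}(i),(ii), $\mathbf p_r$ and $\mathbf p_l$ are global nonforking extensions of $p$, and they are distinct since $p$ is non-algebraic. Conversely, let $\mathfrak q\in S(\Mon)$ be a nonforking extension of $p$; by Lemma~\ref{Lemma_so_forking_equals_bounded}, $D_\phi$ is not bounded in $(p(\Mon),<)$ for every $\phi\in\mathfrak q$. I first show that $\mathfrak q$ contains $a<x$ for all $a\models p$, or else $x<a$ for all $a\models p$. Since $(p(\Mon),<)$ has no endpoints, each singleton $\{a\}$ ($a\models p$) and each interval $\{x\in p(\Mon):a<x<b\}$ ($a,b\models p$) is bounded, so by the Lemma the formulas $x=a$ and $a<x<b$ are not in $\mathfrak q$; hence for each $a\models p$ exactly one of $a<x$, $x<a$ lies in $\mathfrak q$, and $S=\{a\models p:(a<x)\in\mathfrak q\}$ is an initial part of $(p(\Mon),<)$ which cannot be proper nonempty (else $a\in S$, $b\notin S$ would give $a<b$ with $a<x<b$ in $\mathfrak q$). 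So $S=p(\Mon)$ or $S=\emptyset$. If $S=p(\Mon)$, then for every $\phi\in\mathfrak q$ and every $a\models p$ the formula $\phi\wedge a<x$ is consistent, so $D_\phi$ has elements above every $a\models p$, hence is not upper bounded, hence right-eventual; thus $\mathfrak q\subseteq\mathbf p_r$ and, by completeness, $\mathfrak q=\mathbf p_r$. Symmetrically $S=\emptyset$ gives $\mathfrak q=\mathbf p_l$.

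For (ii): if $q\in S(B)$ extends $p$ and does not fork over $A$, extend $q$ to a global type $\mathfrak q$ that does not fork over $A$; then $\mathfrak q$ extends $p$, so $\mathfrak q\in\{\mathbf p_l,\mathbf p_r\}$ by (i), whence $q=\mathfrak q_{\restriction B}\in\{(\mathbf p_l)_{\restriction B},(\mathbf p_r)_{\restriction B}\}$; and these two do not fork over $A$, being restrictions of $\mathbf p_l,\mathbf p_r$. For (iii): if $q(\Mon)$ is bounded in $(p(\Mon),<)$, choose $a_0,a_1\models p$ with $a_0<q(\Mon)<a_1$; then every realization of $q$ satisfies the partial type $p(x)\cup\{a_0<x<a_1\}$, which forks over $A$ by Lemma~\ref{Lemma_so_forking_equals_bounded}, so $q$ forks over $A$ by monotonicity of forking. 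Conversely, if $q$ forks over $A$, then $q(x)\vdash\bigvee_{i<n}\psi_i(x)$ with each $\psi_i(x)$ dividing over $A$; for each $i$ the partial type $p(x)\cup\{\psi_i(x)\}$ divides over $A$ (an $A$-indiscernible sequence witnessing that $\psi_i$ divides witnesses this as well), so $D_{\psi_i}$ is bounded in $(p(\Mon),<)$ by Lemma~\ref{Lemma_so_forking_equals_bounded}; since $q(\Mon)\subseteq\bigcup_{i<n}D_{\psi_i}$ is a finite union of sets bounded in $(p(\Mon),<)$, it too is bounded there.

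The step needing genuine care is the dichotomy for $\mathfrak q$ in part~(i): ruling out that a global nonforking extension of $p$ concentrates on a bounded, ``middle'' piece of $(p(\Mon),<)$. This is precisely where the content of Lemma~\ref{Lemma_so_forking_equals_bounded}, that bounded relatively definable subsets fork, is used; everything else is routine bookkeeping with the so-type property and standard forking facts.
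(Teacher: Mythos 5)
Your proof is correct and follows the route the paper intends: the paper derives this corollary as ``immediate'' from Lemma \ref{Lemma_so_forking_equals_bounded} together with Fact \ref{Fact_so_basic}, and your argument simply supplies the routine details (the dichotomy ``unbounded iff left- or right-eventual'' coming from the so-property, the cut argument for (i), and standard forking facts for (ii)--(iii)). One cosmetic point: in (i) the element of $D_\phi$ above $a$ should be obtained as a realization of the small partial type $p(x)\cup\{\phi(x),\,a<x\}\subseteq\mathfrak q$ rather than of the formula $\phi\wedge a<x$ alone, so that it lies in $p(\Mon)$ --- which is clearly what your argument implicitly uses.
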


Notice that condition ``$p(x)\cup\{\phi(x)\}$ forks over $A$'' from Lemma \ref{Lemma_so_forking_equals_bounded} does not refer to any particular relatively definable order $<$ on the locus of the so-type $p\in S(A)$, so the equivalent condition, $\phi(p(\Mon))$ is bounded in $(p(\Mon),<)$,  holds for all relatively $A$-definable orders on $p(\Mon)$. 

\begin{Definition}
Let $p(x)\in S(A)$ be an so-type. We will say that $\phi(x)$ is a {\em $p$-bounded formula}  if $p(x)\cup\{\phi(x)\}$ forks over $A$. 
\end{Definition} 
 
Therefore, a formula $\phi(x)$ is $p$-bounded if and only if the corresponding relatively defined
subset of $p(\Mon)$ is bounded in $(p(\Mon),<)$ for every relatively $A$-definable linear order $<$.

\begin{Definition}
Let $\mathbf p=(p,<)$ be an so-pair over $A$ and $B$ a small set. Define:
\begin{center}
 $\mathcal L_{\mathbf p}(B):=(\mathbf p_{l})_{\restriction AB}(\Mon)$; \ \ \ $\mathcal R_{\mathbf p}(B):=(\mathbf p_{r})_{\restriction AB}(\Mon)$; \ \ \   $\mathcal D_p(B):=\{a\in p(\Mon)\mid a\dep_A B\}$.
 \end{center}
\end{Definition}

Let $\mathbf p=(p,<)$ be an so-pair. Then the pair $\mathbf p^*=(p,>)$ is also an so-pair over $A$, called the reverse of $\mathbf p$. 
It is easy to see that $\mathcal L_{\mathbf p}(B)=\mathcal R_{\mathbf p^*}(B)$ and $\mathcal R_{\mathbf p}(B)=\mathcal R_{\mathbf p}(B)$ hold for all $B$.

Recall that $p,q\in S(A)$ are {\em forking orthogonal}, $p\fwor q$, if $a\ind_A b$ holds for all $a\models p$ and $b\models q$.

\begin{Lemma}\label{Lemma_R_p_basic} Let $\mathbf p=(p,<)$ be an so-pair over $A$.
\begin{enumerate}[(i)]
\item $\mathcal L_{\mathbf p}(B)$ is an initial and $\mathcal R_{\mathbf p}(B)$ is a final part of $(p(\Mon),<)$.

\item $a\ind_A B$ \ if and only if  \  $a\in \mathcal L_{\mathbf p}(B)\cup\mathcal R_{\mathbf p}(B)$.

\item $\mathcal D_p(B)$ is the union of all $p$-bounded, relatively $AB$-definable subsets of $p(\Mon)$;\\ $\mathcal D_p(B)=p(\Mon)\smallsetminus (\mathcal L_{\mathbf p}(B)\cup \mathcal R_{\mathbf p}(B))$ is a convex, possibly empty,   bounded subset of $(p(\Mon),<)$.

\item  There are three possible cases:
     \begin{enumerate}[1$^\circ$]
     \item   $p\nfor \tp(B/A)$.  Then $\mathcal L_{\mathbf p}(B)< \mathcal D_p(B)< \mathcal R_{\mathbf p}(B)$  is a convex partition of $p(\Mon)$;
     \item   $p\fwor \tp(B/A)$ and $p\nwor \tp(B/A)$. Then $\mathcal D_p(B)=\emptyset$ and $\mathcal L_{\mathbf p}(B)< \mathcal R_{\mathbf p}(B)$  is a convex partition of $p(\Mon)$.
     \item   $p\wor \tp(B/A)$.  Then $\mathcal L_{\mathbf p}(B)=\mathcal R_{\mathbf p}(B)=p(\Mon)$ and $\mathcal D_p(B)=\emptyset$.
     \end{enumerate}
\item If $p\nfor \tp(B/A)$ and if $(p(\Mon),<)$ is a suborder of an $A$-definable order $(D,<)$, then the sets $\{x\in D\mid \mathcal D_p(B)<x\}$ and $\{x\in D\mid x<\mathcal D_p(B)\}$ are type definable over $AB$.
\end{enumerate}
\end{Lemma}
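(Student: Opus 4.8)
The plan is to read parts (i)--(iv) directly off the structural facts about so-pairs already established, and to do the real work only in (v). Part~(i) is Fact~\ref{Fact_so_basic}(iii) applied with the parameter set $AB\supseteq A$. For (ii), Corollary~\ref{Corolary_of_forking_eq_bounded}(ii) says that the only nonforking extensions of $p$ in $S(AB)$ are $(\mathbf p_l)_{\restriction AB}$ and $(\mathbf p_r)_{\restriction AB}$, whose loci are precisely $\mathcal L_{\mathbf p}(B)$ and $\mathcal R_{\mathbf p}(B)$; since both are $A$-invariant (Fact~\ref{Fact_so_basic}(ii)) they are exactly the nonforking extensions, so for $a\models p$ one has $a\ind_A B$ iff $\tp(a/AB)\in\{(\mathbf p_l)_{\restriction AB},(\mathbf p_r)_{\restriction AB}\}$ iff $a\in\mathcal L_{\mathbf p}(B)\cup\mathcal R_{\mathbf p}(B)$. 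For (iii), a complete type forks over $A$ iff it contains a single formula forking over $A$; for $\chi\in\tp(a/AB)$, since $\tp(a/AB)\supseteq p$, Lemma~\ref{Lemma_so_forking_equals_bounded} shows $p(x)\cup\{\chi(x)\}$ forks over $A$ iff $\chi$ is $p$-bounded, so $a\in\mathcal D_p(B)$ iff $a$ lies in some $p$-bounded relatively $AB$-definable subset of $p(\Mon)$; combined with (ii) this gives $\mathcal D_p(B)=p(\Mon)\smallsetminus(\mathcal L_{\mathbf p}(B)\cup\mathcal R_{\mathbf p}(B))$. Convexity is then immediate from (i) ($\mathcal D_p(B)$ is the intersection of a final part and an initial part), and boundedness follows because $\mathcal L_{\mathbf p}(B)$ and $\mathcal R_{\mathbf p}(B)$ are nonempty (loci of complete types over $AB$): an unbounded-above $\mathcal D_p(B)$ would make the initial part $\mathcal L_{\mathbf p}(B)\cup\mathcal D_p(B)=p(\Mon)\smallsetminus\mathcal R_{\mathbf p}(B)$ unbounded above, hence all of $p(\Mon)$, forcing $\mathcal R_{\mathbf p}(B)=\emptyset$; dually for boundedness below.

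\textbf{Part (iv).} Here I would first note $p\wor\tp(B/A)\Rightarrow p\fwor\tp(B/A)$ (if $p(x)\cup q(y)$ is complete over $A$, then for $a\models p$ and $b\models q$ the type $\tp(a/Ab)$ is the \emph{unique} extension of $p$ to $Ab$, hence the nonforking one), so the three listed cases are exhaustive and pairwise exclusive. In case $1^\circ$, taking $a\models p$ and $b\models\tp(B/A)$ with $a\notind_A b$ and applying an $A$-automorphism sending $b$ to $B$ gives $\mathcal D_p(B)\neq\emptyset$; then $\mathcal L_{\mathbf p}(B)\cap\mathcal R_{\mathbf p}(B)=\emptyset$ (a common point of an initial and a final part forces their union to be all of $p(\Mon)$), so by (i)--(iii) the three pairwise disjoint sets with union $p(\Mon)$, the first an initial part and the last a final part, must be ordered $\mathcal L_{\mathbf p}(B)<\mathcal D_p(B)<\mathcal R_{\mathbf p}(B)$. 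In case $2^\circ$, $\mathcal D_p(B)=\emptyset$ by (ii)/(iii), and if $\mathcal L_{\mathbf p}(B)\cap\mathcal R_{\mathbf p}(B)\neq\emptyset$ then $(\mathbf p_l)_{\restriction AB}=(\mathbf p_r)_{\restriction AB}$ would be the unique nonforking, and (since $\mathcal D_p(B)=\emptyset$) the unique, extension of $p$ to $AB$, contradicting $p\nwor\tp(B/A)$; hence $\mathcal L_{\mathbf p}(B)<\mathcal R_{\mathbf p}(B)$ is a convex partition. In case $3^\circ$, $p$ has a unique extension to $AB$, necessarily nonforking, so $\mathcal L_{\mathbf p}(B)=\mathcal R_{\mathbf p}(B)=p(\Mon)$ and $\mathcal D_p(B)=\emptyset$.

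\textbf{Part (v), and the main obstacle.} We are in case $1^\circ$, so by (iv) the set $\mathcal D_p(B)$ is bounded in $(p(\Mon),<)$, and hence in $(D,<)$. By (iii), write $\mathcal D_p(B)=\bigcup\{\phi(\Mon)\cap p(\Mon)\mid\phi\in L_{AB},\ \phi(\Mon)\cap p(\Mon)\text{ bounded in }(p(\Mon),<)\}$. Each of these traces lies below $\mathcal R_{\mathbf p}(B)=(\mathbf p_r)_{\restriction AB}(\Mon)$, which is type-definable over $AB$ and lies directly above $\mathcal D_p(B)$; a compactness argument should then produce, for each such $\phi$, a formula $\psi_\phi\in p$ for which the honestly $AB$-definable set $\phi(\Mon)\cap\psi_\phi(\Mon)$ still lies below $\mathcal R_{\mathbf p}(B)$, and I would aim to show
$$\{x\in D\mid\mathcal D_p(B)<x\}=\bigcap_{\phi}\{x\in D\mid\phi(\Mon)\cap\psi_\phi(\Mon)<x\},$$
an intersection of $AB$-definable sets, hence type-definable over $AB$; the dual argument, with $\mathcal L_{\mathbf p}(B)$ in place of $\mathcal R_{\mathbf p}(B)$, handles $\{x\in D\mid x<\mathcal D_p(B)\}$. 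The step I expect to be genuinely delicate is exactly this last equality: replacing the \emph{relatively} $AB$-definable bounded set $\phi(\Mon)\cap p(\Mon)$ by the honest set $\phi(\Mon)\cap\psi_\phi(\Mon)$ may add elements of $D$ lying in a ``gap'' of $p(\Mon)$ just above $\mathcal D_p(B)$, and showing that $\psi_\phi$ can be chosen so that the cut of $(D,<)$ determined by $\mathcal D_p(B)$ is unchanged is where the type-definability of $\mathcal R_{\mathbf p}(B)$ over $AB$ (and its position immediately above $\mathcal D_p(B)$) must be used carefully.
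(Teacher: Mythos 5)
Your treatment of (i)--(iv) is correct and essentially coincides with the paper's proof, which simply cites Fact \ref{Fact_so_basic}(iii) and Corollary \ref{Corolary_of_forking_eq_bounded} for these parts; the extra details you supply (that a complete type forks over $A$ iff it contains a $p$-bounded formula, via Lemma \ref{Lemma_so_forking_equals_bounded}; that $p\wor\tp(B/A)$ implies $p\fwor\tp(B/A)$; the case analysis in (iv)) are all sound.

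In (v), however, you follow the same strategy as the paper (replace each $p$-bounded trace $C_\phi=\phi(\Mon)\cap p(\Mon)$ by an honest $AB$-definable set and conjoin the formulas asserting that $x$ lies above it) but stop exactly where the work is: you say a compactness argument ``should'' produce $\psi_\phi$ making your displayed equality true, and you yourself observe that bounding $\phi(\Mon)\cap\psi_\phi(\Mon)$ below $\mathcal R_{\mathbf p}(B)$ does not prevent it from containing points of $D$ in the gap strictly above $\mathcal D_p(B)$, which would destroy the left-to-right inclusion. As written this is a genuine gap. The missing idea that closes it is a finite-satisfiability argument carried out per formula: if for every $\theta'\in p$ the set $(\phi\wedge\theta')(\Mon)\cap D$ contained a strict upper bound of $C_\phi$, then the partial type $p(y)\cup\{\phi(y)\}\cup\{d<y\mid d\in C_\phi\}$ would be finitely satisfiable (for a finite fragment, take $\theta'$ to be the conjunction of the finitely many formulas of $p$ involved), hence realized by some $y^*$; but then $y^*\in C_\phi$ while $C_\phi<y^*$, a contradiction. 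So for each $p$-bounded $\phi$ there is $\theta'_\phi\in p$ such that no point of $(\phi\wedge\theta'_\phi)(\Mon)\cap D$ is a strict upper bound of $C_\phi$; since $C_\phi\subseteq(\phi\wedge\theta'_\phi)(\Mon)\cap D$, the set $\{x\in D\mid C_\phi<x\}$ is then $AB$-definable, by the formula $x\in D\wedge\forall y\,(\phi(y)\wedge\theta'_\phi(y)\wedge y\in D\rightarrow y<x)$. As $\mathcal D_p(B)=\bigcup_\phi C_\phi$, the set $\{x\in D\mid \mathcal D_p(B)<x\}$ is the intersection of these (boundedly many) $AB$-definable sets, hence type-definable over $AB$; the dual argument with lower bounds gives $\{x\in D\mid x<\mathcal D_p(B)\}$. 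For comparison, the paper's own proof of (v) only shrinks $\theta$ so that $\theta_C(\Mon)$ is bounded between two realizations of $p$ and then asserts that the resulting type ``clearly'' defines the upper set; your worry about gap points applies verbatim to that choice, and the argument above is what is actually needed to justify the equality.
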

\begin{proof}  
(i) is Fact \ref{Fact_so_basic}(iii), (ii)--(iv) follow by (i) and Corollary \ref{Corolary_of_forking_eq_bounded}.

(v) By (iii), we know that the set $\mathcal D_p(B)$ is the union of all relatively $AB$-definable subsets $C\subseteq p(\Mon)$ that are $p$-bounded; that is, $a_1<C<a_2$ holds for some $a_1,a_2\in p(\Mon)$. Let $\theta(x)$ relatively define $C$ in $p(\Mon)$. Then $\{\theta(x)\}\cup p(x)\vdash a_1<x<a_2$, so by compactness there is a formula $\theta'(x)\in p$ such that 
$a_1<(\theta\land \theta')(\Mon)<a_2$. Note that the $L_{AB}$-formula $\theta_C(x)=\theta(x)\land \theta'(x)$ relatively defines $C$. Let $\Pi(x)$ be the type consisting of $x\in D$ and all formulae $\theta_C(\Mon)<x$. Clearly, $\Pi(x)$ defines $\{x\in D\mid \mathcal D_p(B)<x\}$.
\end{proof}

By part (i) of the previous lemma, the set $\mathcal R_{\mathbf p}(B)$ is a final part of $(p(\Mon),<)$, so its elements can be thought of as being realizations of $p$ which are ``as far to the $<$-right (from the point of view) of $B$ as possible''. This is formalized in the next definition.
 
\begin{Definition}
Let $\mathbf p=(p,<)$ be an so-pair over $A$, $B$ be a small set of parameters,  and $a\in p(\Mon)$. 
\begin{itemize}
    \item[--] {\em $a$ is right $\mathbf p$-generic over $B$}, denoted by $B\triangleleft^{\mathbf p} a$, if $a\in \mathcal R_{\mathbf p}(B)$ ($a\models (\mathbf p_r)_{\restriction AB}$); 
    \item[--]  $a$ is {\em left $\mathbf p$-generic over $B$} if $a\in\mathcal L_{\mathbf p}(B)$ ($a\models(\mathbf p_l)_{\restriction AB}$).
    \end{itemize}
\end{Definition}

\begin{Remark}\label{Remark_R_p_basic} 
(a) For all $B$, the left (right) $\mathbf{p}$-generic element over $B$ exists. 

(b) To avoid possible confusion, we {\em do not} choose a special notation for left $\mathbf p$ -genericity. However, we can express that $a$ is left $\mathbf p$-generic over $B$ by \ $B\triangleleft^{\mathbf p^*}a$, where $\mathbf p^*=(p,>)$ is the reverse of $\mathbf p$.
In general, $a\triangleleft^{\mathbf p}B$ is meaningless, unless $B\models p$. 

(c) When we write \ $B\triangleleft^{\mathbf p}a\triangleleft^{\mathbf q}b$,  where $\mathbf p,\mathbf q$ are so-pairs over $A$, we mean the conjunction: $B\triangleleft^{\mathbf p}a$ and $a\triangleleft^{\mathbf q}b$; here, $a\models p$ and $b\models q$ are implicit. In general, $B\triangleleft^{\mathbf p}a\triangleleft^{\mathbf q}b$ does not imply $B\triangleleft^{\mathbf q}b$ (unless $p\wor q$ or $\mathbf p,\mathbf q$ are directly nonorthogonal so-pairs, as defined and explained later). 

(d) A left $\mathbf{p}$-generic element over $B$ can also be right $\mathbf{p}$-generic; this occurs precisely when $p \wor \tp(B/A)$.  
\end{Remark}

Next, we list some of the basic properties of $\mathbf p$-genericity.  

\begin{Lemma}\label{Lemma_R_p_basic2} Let $\mathbf p=(p,<_p)$ be an so-pair over $A$. Then for all $a,a'\models p$ and all $B$: 
\begin{enumerate}[ (i)]
\item  $B\triangleleft^{\mathbf p}a<_p a'$ implies $B\triangleleft^{\mathbf p} a'$; in particular, $\triangleleft^{\mathbf p}$ is a transitive binary relation on $p(\Mon)$. 
 \item  $a\triangleleft^{\mathbf p} a'\Leftrightarrow a'\triangleleft^{\mathbf p^*}a$; \ that is, $a'$ is right $\mathbf p$-generic over $a$ iff $a$ is left $\mathbf p$-generic over $a'$. 

\item    $a\ind_A B \Leftrightarrow (B\triangleleft^{\mathbf p} a\vee B\triangleleft^{\mathbf p^*}a )$ \ (that is, $a$ is left or right $\mathbf p$-generic over $B$). 

\item $a\ind_A a'\Leftrightarrow (a\triangleleft^{\mathbf p} a'\vee a'\triangleleft^{\mathbf p} a)$;

\item $a\triangleleft^{\mathbf p}a'\Leftrightarrow \mathcal D_p(a)<_p a'\Leftrightarrow a<_p\mathcal D_p(a')\Leftrightarrow (a\ind_Aa'\land a<_pa')$. 
\end{enumerate}
\end{Lemma}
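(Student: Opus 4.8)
The plan is to reduce everything to two facts already in hand: the description of the loci $\mathcal L_{\mathbf p}(B)$, $\mathcal D_p(B)$, $\mathcal R_{\mathbf p}(B)$ given in Lemma~\ref{Lemma_R_p_basic}, and the symmetry ``$a\models(\mathbf p_l)_{\restriction Ab}\iff b\models(\mathbf p_r)_{\restriction Aa}$'' from Fact~\ref{Fact_so_basic}(iv). First I would dispose of the degenerate case: if $p$ is algebraic then $\Aut_A(\Mon)$ acts on the finite linear order $(p(\Mon),<_p)$ by automorphisms, hence trivially, so $p(\Mon)$ is a singleton and every assertion of the lemma is vacuous or immediate. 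So assume $p$ is non-algebraic. Then for each $a\models p$ the formula $x=a$ divides over $A$ (witnessed by distinct $A$-conjugates of $a$), so $\tp(a/Aa)$ forks over $A$, i.e.\ $a\dep_A a$ and hence $a\in\mathcal D_p(a)$. In particular $p\nfor \tp(a/A)$, so case $1^\circ$ of Lemma~\ref{Lemma_R_p_basic}(iv) applies with $B=\{a\}$, giving
$$(\star)\qquad \mathcal L_{\mathbf p}(a)<_p\mathcal D_p(a)<_p\mathcal R_{\mathbf p}(a)\text{ is a convex partition of }p(\Mon),\qquad a\in\mathcal D_p(a).$$
This, applied with various realizations of $p$ as base, will carry most of the argument.

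Next I would record the order-reversal bookkeeping for $\mathbf p^*=(p,>_p)$: a subset of $p(\Mon)$ is right-eventual for $>_p$ iff it is left-eventual for $<_p$, so $(\mathbf p^*)_r=\mathbf p_l$ and $(\mathbf p^*)_l=\mathbf p_r$, whence $\mathcal R_{\mathbf p^*}(B)=\mathcal L_{\mathbf p}(B)$; thus ``$B\triangleleft^{\mathbf p^*}a$'' is precisely ``$a$ is left $\mathbf p$-generic over $B$'', while $\mathcal D_p$ is unchanged. With this in hand, (ii) is immediate: $a'\triangleleft^{\mathbf p^*}a$ says $a\models(\mathbf p_l)_{\restriction Aa'}$, which by Fact~\ref{Fact_so_basic}(iv) is equivalent to $a'\models(\mathbf p_r)_{\restriction Aa}$, i.e.\ $a\triangleleft^{\mathbf p}a'$. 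Part (iii) is just a restatement of Lemma~\ref{Lemma_R_p_basic}(ii): $a\ind_A B\iff a\in\mathcal L_{\mathbf p}(B)\cup\mathcal R_{\mathbf p}(B)\iff (B\triangleleft^{\mathbf p}a\vee B\triangleleft^{\mathbf p^*}a)$. And (iv) follows by applying (iii) with $B=\{a'\}$ and rewriting $a'\triangleleft^{\mathbf p^*}a$ as $a\triangleleft^{\mathbf p}a'$ via (ii).

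For (v): the equivalence $a\triangleleft^{\mathbf p}a'\iff\mathcal D_p(a)<_p a'$ is read off directly from $(\star)$ applied to $a$ (since $\mathcal R_{\mathbf p}(a)$ is the top block of the partition and $\mathcal D_p(a)\neq\emptyset$); dually, Fact~\ref{Fact_so_basic}(iv) gives $a\triangleleft^{\mathbf p}a'\iff a\in\mathcal L_{\mathbf p}(a')$, which by $(\star)$ applied to $a'$ is $a<_p\mathcal D_p(a')$. Since $a\in\mathcal D_p(a)$, each of these forces $a<_p a'$; together with (iv) (to supply $a\ind_A a'$) this yields $a\triangleleft^{\mathbf p}a'\Rightarrow(a\ind_A a'\wedge a<_p a')$, and conversely, if $a\ind_A a'$ and $a<_p a'$ then (iv) gives $a\triangleleft^{\mathbf p}a'$ or $a'\triangleleft^{\mathbf p}a$, the latter being impossible since it would force $a'<_p a$ by the first equivalence applied to $a'$. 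Finally, in (i) the first assertion is immediate because $\mathcal R_{\mathbf p}(B)$ is a final part of $(p(\Mon),<_p)$ by Lemma~\ref{Lemma_R_p_basic}(i); transitivity then follows: if $a\triangleleft^{\mathbf p}a'$ and $a'\triangleleft^{\mathbf p}a''$, then $(\star)$ for $a'$ gives $a'<_p a''$ (as $a'\in\mathcal D_p(a')<_p\mathcal R_{\mathbf p}(a')\ni a''$), so the first assertion with base $a$ yields $a\triangleleft^{\mathbf p}a''$.

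The bulk of this is bookkeeping; the only points requiring any care are $(\star)$ itself --- specifically the observation $a\in\mathcal D_p(a)$ for non-algebraic $p$, which is what places us in case $1^\circ$ of Lemma~\ref{Lemma_R_p_basic}(iv) and makes $\mathcal D_p(a)$ a nonempty block sitting strictly between $\mathcal L_{\mathbf p}(a)$ and $\mathcal R_{\mathbf p}(a)$ --- together with keeping the $\mathbf p^*$-translation straight, namely that reversing the order swaps $\mathbf p_l\leftrightarrow\mathbf p_r$ (hence $\mathcal L\leftrightarrow\mathcal R$) while leaving $\mathcal D_p$ fixed. I do not expect any genuine obstacle beyond these.
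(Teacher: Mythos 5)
Your proof is correct and follows essentially the same route as the paper's: (i)--(iii) are read off from Lemma \ref{Lemma_R_p_basic} and Fact \ref{Fact_so_basic}(iv), (iv) follows from (ii) and (iii), and (v) comes from the convex partition $\mathcal L_{\mathbf p}(a)<_p\mathcal D_p(a)<_p\mathcal R_{\mathbf p}(a)$ together with $a\in\mathcal D_p(a)$. One small caveat: in your degenerate (algebraic, hence singleton) case the last equivalence in (v) is not ``vacuous or immediate'' --- there $a\triangleleft^{\mathbf p}a$ holds while $a<_pa$ fails --- so that case must really be excluded by a non-algebraicity assumption, exactly as the paper's own ``clearly $a\in\mathcal D_p(a)$'' implicitly does.
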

\begin{proof}   
Parts (i)-(iii) are restated earlier proved facts: 

(i) is Lemma \ref{Lemma_R_p_basic}(i): \  $\mathcal R_{\mathbf p}(B)$ is a final part of $(p(\Mon),<_p)$;

(ii) is Fact \ref{Fact_so_basic}(iv): \  $a'\models (\mathbf p_r)_{\restriction aA}\Leftrightarrow a\models (\mathbf p_l)_{\restriction a'A}$; that is, $a'\in \mathcal R_{\mathbf p}(a)\Leftrightarrow a\in\mathcal L_{\mathbf p}(a')$. 

(iii) is Lemma \ref{Lemma_R_p_basic}(ii): \ $a\ind_A B$ \ iff  \  $a\in \mathcal L_{\mathbf p}(B)\cup\mathcal R_{\mathbf p}(B)$. \ 

(iv) follows easily from (ii) and (iii).  

(v) Clearly, $a\in\mathcal D_p(a)$, so the set $\mathcal D_p(a)$ is not empty, and Lemma \ref{Lemma_R_p_basic}(iv) implies that $\mathcal L_{\mathbf p}(a)<_p \mathcal D_p(a)<_p\mathcal R_{\mathbf p}(a)$ is a convex partition of $p(\Mon)$. This implies $a'\in \mathcal R_{\mathbf p}(a)\Leftrightarrow \mathcal D_p(a)< a'$, that is, 
$a\triangleleft^{\mathbf p}a'\Leftrightarrow \mathcal D_p(a)< a'$, proving the first equivalence. 
Similarly, we infer $a\in\mathcal L_{\mathbf p}(a')\Leftrightarrow a<_p\mathcal D_p(a')$, which combined with $a'\in \mathcal R_{\mathbf p}(a)\Leftrightarrow a\in\mathcal L_{\mathbf p}(a')$ from Fact \ref{Fact_so_basic}(iv), implies $a'\in\mathcal R_{\mathbf p}(a)\Leftrightarrow a<_p\mathcal D_p(a')$ and proves the second equivalence. The third one follows from (iv).
\end{proof}

As an immediate corollary of part (iv) of the lemma, we obtain the following.

\begin{Corollary}\label{Cor_pairwise_ind_sequence_is_tr_comparable}
Let $\mathbf p=(p,<)$ be an so-pair over $A$ and let $I$ be a set of realizations of $p$ that are pairwise independent over $A$. Then $I$ is totally ordered by $\triangleleft^{\mathbf p}$. In particular, Morley sequences in $\mathbf p_r$ are $\triangleleft^{\mathbf p}$-increasing and Morley sequences in $\mathbf p_l$ are $\triangleleft^{\mathbf p}$-decreasing.    
\end{Corollary}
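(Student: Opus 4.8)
The plan is to read both assertions off Lemma \ref{Lemma_R_p_basic2} directly; there is no substantial work to do, only bookkeeping, and the one point that needs care is translating ``left $\mathbf p$-generic'' into a statement about $\triangleleft^{\mathbf p}$ via the reverse pair $\mathbf p^*$.

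First I would check that $\triangleleft^{\mathbf p}$ restricts to a strict linear order on any set $I$ of pairwise $A$-independent realizations of $p$. By Lemma \ref{Lemma_R_p_basic2}(v) we have $\triangleleft^{\mathbf p}\subseteq{<_p}$ on all of $p(\Mon)$, so $\triangleleft^{\mathbf p}$ inherits irreflexivity and antisymmetry from the linear order $<_p$; it is transitive by Lemma \ref{Lemma_R_p_basic2}(i). Totality on $I$ is the only place the hypothesis is used: if $a\ne a'$ lie in $I$ then $a\ind_A a'$, so Lemma \ref{Lemma_R_p_basic2}(iv) forces $a\triangleleft^{\mathbf p}a'$ or $a'\triangleleft^{\mathbf p}a$. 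Hence $(I,\triangleleft^{\mathbf p})$ is linearly ordered.

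For the ``in particular'' clause I would unwind the definition of a Morley sequence. If $(a_i\mid i\in I)$ is a Morley sequence in $\mathbf p_r$ over $A$, then for $i<j$ we have $a_j\models(\mathbf p_r)_{\restriction A a_{<j}}$, hence $a_j\models(\mathbf p_r)_{\restriction A a_i}$, which by definition is $a_i\triangleleft^{\mathbf p}a_j$; so the sequence is $\triangleleft^{\mathbf p}$-increasing. Dually, for a Morley sequence in $\mathbf p_l$ and $i<j$ we get $a_j\models(\mathbf p_l)_{\restriction A a_i}$, i.e.\ $a_j$ is left $\mathbf p$-generic over $a_i$, which by Lemma \ref{Lemma_R_p_basic2}(ii) (equivalently Remark \ref{Remark_R_p_basic}(b), using $\mathcal L_{\mathbf p}=\mathcal R_{\mathbf p^*}$) is the same as $a_j\triangleleft^{\mathbf p}a_i$; so the sequence is $\triangleleft^{\mathbf p}$-decreasing. (Alternatively one may note that Morley sequences in an $A$-invariant type are pairwise $A$-independent, hence $\triangleleft^{\mathbf p}$-comparable by the first paragraph, leaving only the direction to be pinned down.)

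I do not expect any genuine obstacle: every step is a citation of Lemma \ref{Lemma_R_p_basic2}. The only thing to be careful about is the $\mathbf p$-versus-$\mathbf p^*$ orientation when passing through $\mathbf p_l$, which is exactly what Lemma \ref{Lemma_R_p_basic2}(ii) and Remark \ref{Remark_R_p_basic}(b) are there to handle.
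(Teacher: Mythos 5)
Your proposal is correct and follows the paper's route exactly: the paper states the corollary as an immediate consequence of Lemma \ref{Lemma_R_p_basic2} (specifically part (iv) for totality, with parts (i), (ii), (v) supplying the remaining bookkeeping), which is precisely what you spell out. Your handling of the $\mathbf p$ versus $\mathbf p^*$ orientation for Morley sequences in $\mathbf p_l$ via Lemma \ref{Lemma_R_p_basic2}(ii) is accurate.
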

 
In the next lemma, we show that between any pair of $\triangleleft^{\mathbf q}$-related tuples, we can $\triangleleft^{\mathbf p}$-insert a realization of $p$; further in the text, we will refer to this as the density property of $\triangleleft^{\mathbf p}$.

\begin{Lemma}\label{Lemma_existence_anddensity_ofrightgenerics} 
Let $\mathbf p=(p,<_p)$ and $\mathbf q=(q,<_q)$ be so-pairs over $A$. Assume $B\triangleleft^{\mathbf p} a$. Then there exists $b\models q$ such that $B\triangleleft^{\mathbf q} b\triangleleft^{\mathbf  p} a$. 
\end{Lemma}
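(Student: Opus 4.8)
The plan is to produce $b$ by choosing a realization of $q$ that is $\mathbf q_r$-generic over $AB$, then a realization of $p$ that is $\mathbf p_r$-generic over that together with $AB$, and finally transporting the configuration by an $AB$-automorphism so that the latter realization becomes the given $a$.

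Concretely, I would first pick $b_0\models(\mathbf q_r)_{\restriction AB}$; this is the restriction of the complete global type $\mathbf q_r$ of Fact \ref{Fact_so_basic}, hence realized in $\Mon$. Next I would pick $a_0\models(\mathbf p_r)_{\restriction ABb_0}$; restricting to $AB$ gives $\tp(a_0/AB)=(\mathbf p_r)_{\restriction AB}$, and the hypothesis $B\triangleleft^{\mathbf p}a$ means exactly $\tp(a/AB)=(\mathbf p_r)_{\restriction AB}$, so $a_0\equiv_{AB}a$. Then I would fix $\sigma\in\Aut_{AB}(\Mon)$ with $\sigma(a_0)=a$ and set $b:=\sigma(b_0)$. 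Since $\sigma$ fixes $AB$ pointwise, $\tp(b/AB)=\tp(b_0/AB)=(\mathbf q_r)_{\restriction AB}$; in particular $b\models q$ and $B\triangleleft^{\mathbf q}b$. Applying $\sigma$ to $\tp(a_0/ABb_0)=(\mathbf p_r)_{\restriction ABb_0}$, and using that $\mathbf p_r$ is $A$-invariant (Fact \ref{Fact_so_basic}(ii)) and hence fixed by $\sigma\in\Aut_A(\Mon)$, so that applying $\sigma$ to $(\mathbf p_r)_{\restriction ABb_0}$ yields $(\mathbf p_r)_{\restriction ABb}$, we obtain $\tp(a/ABb)=(\mathbf p_r)_{\restriction ABb}$; restricting to $Ab$ gives $a\models(\mathbf p_r)_{\restriction Ab}$, i.e. $b\triangleleft^{\mathbf p}a$. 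Hence $B\triangleleft^{\mathbf q}b\triangleleft^{\mathbf p}a$.

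The only step that genuinely needs care — and the reason the argument is phrased through the invariant global types rather than directly through the order $<_p$ — is the transport of the $\mathbf p$-genericity of $a$ over $b$ along $\sigma$, which rests on the $A$-invariance of $\mathbf p_r$. I do not expect any serious obstacle beyond this bookkeeping. It is worth noting the tempting wrong approach of simply taking $b$ to be right $\mathbf q$-generic over $Ba$: by Lemma \ref{Lemma_R_p_basic2}(ii) that would force $a$ to be \emph{left} (not right) $\mathbf p$-generic over $b$ already when $p=q$, so the $\mathbf q_r$-generic realization must be produced ``below'' $a$, which is precisely what passing to $\sigma(b_0)$ accomplishes.
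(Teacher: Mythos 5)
Your proposal is correct and is essentially the paper's own argument: choose $b_0$ right $\mathbf q$-generic over $B$, then $a_0$ right $\mathbf p$-generic over $Bb_0$, note $a_0\equiv a\ (AB)$, and transport by an $AB$-automorphism, with the preservation of genericity resting on the $A$-invariance of $\mathbf p_r$ and $\mathbf q_r$. The only difference is that you spell out the invariance bookkeeping which the paper leaves implicit.
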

\begin{proof} Choose $b'\models q$ satisfying $B\triangleleft^{\mathbf q} b'$, and then $a'\models p$ satisfying $Bb'\triangleleft^{\mathbf p} a'$; in particular, $B\triangleleft^{\mathbf p} a'$ and $b'\triangleleft^{\mathbf p} a'$ hold. Then $B\triangleleft^{\mathbf p} a'$  and $B\triangleleft^{\mathbf p} a$  imply  $\tp(a/AB)=\tp(a'/AB)=(\mathbf p_{r})_{\restriction AB}$. Let $f\in \Aut_{AB}(\Mon)$ map $a'$ to $a$. Put $b=f(b')$. Then $B\triangleleft^{\mathbf q} b'\triangleleft^{\mathbf p} a'$ implies the desired conclusion $B\triangleleft^{\mathbf q} b\triangleleft^{\mathbf p} a$.
\end{proof}

\begin{Lemma}\label{Lemma_forking_symmetry}
$a\ind_A b\Leftrightarrow b\ind_A a$ \  holds for all realizations of so-types over $A$.   
\end{Lemma}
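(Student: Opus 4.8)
The plan is to reduce the statement to a transfer-of-genericity assertion and then derive a contradiction from a long indiscernible configuration. Since the statement is symmetric under interchanging $(a,p)$ with $(b,q)$, it suffices to prove $a\ind_Ab\Rightarrow b\ind_Aa$; we may assume $p:=\tp(a/A)$ and $q:=\tp(b/A)$ are non-algebraic, the algebraic case being immediate. Fix so-pairs $\mathbf p=(p,<_p)$ and $\mathbf q=(q,<_q)$ over $A$. By Corollary \ref{Corolary_of_forking_eq_bounded}(ii), $a\ind_Ab$ means $\tp(a/Ab)\in\{(\mathbf p_l)_{\restriction Ab},(\mathbf p_r)_{\restriction Ab}\}$; replacing $\mathbf p$ by its reverse if necessary (which affects neither the hypothesis $a\ind_Ab$ nor the conclusion $b\ind_Aa$), we may assume $a\models(\mathbf p_r)_{\restriction Ab}$, i.e.\ $b\triangleleft^{\mathbf p}a$.

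Suppose towards a contradiction that $b\not\ind_Aa$. By Lemma \ref{Lemma_R_p_basic}(ii),(iii), $b\in\mathcal D_q(a)$, and $\mathcal D_q(a)$ is the union of the relatively $Aa$-definable subsets of $q(\Mon)$ that are bounded in $(q(\Mon),<_q)$; pick one of them containing $b$ and let $\chi(y,x)\in L_A$ relatively define it through the parameter $a$. Since boundedness of $\chi(\Mon,x)\cap q(\Mon)$ in $(q(\Mon),<_q)$ depends only on $\tp(x/A)$, the set $\chi(\Mon,a')\cap q(\Mon)$ is bounded for every $a'\models p$, so $q(y)\cup\{\chi(y,a)\}$ divides over $A$ by Lemma \ref{Lemma_so_forking_equals_bounded}. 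On the other hand $\models\chi(b,a)$ and $\tp(a/Ab)=(\mathbf p_r)_{\restriction Ab}$, so the $L_{Ab}$-formula $\chi(b,x)$ belongs to $(\mathbf p_r)_{\restriction Ab}$; hence every element of $\mathcal R_{\mathbf p}(b)=(\mathbf p_r)_{\restriction Ab}(\Mon)$ satisfies $\chi(b,\cdot)$.

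Now fix a long Morley sequence $(a_i)_{i\in I}$ in $\mathbf p_r$ over $Ab$. Each $a_i$ realizes $(\mathbf p_r)_{\restriction Ab}$, hence lies in $\mathcal R_{\mathbf p}(b)$, so $\models\chi(b,a_i)$ for all $i$; thus $b$ lies in every set $\chi(\Mon,a_i)\cap q(\Mon)$, and each of these is bounded in $(q(\Mon),<_q)$. Moreover $(a_i)$ is $A$-indiscernible, is pairwise $A$-independent, and (by Corollary \ref{Cor_pairwise_ind_sequence_is_tr_comparable}) is $\triangleleft^{\mathbf p}$-increasing, while $a_i\ind_Ab$ holds for every $i$. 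The remaining and main point is to rule out that $b$ lies in the intersection of all the bounded sets $\chi(\Mon,a_i)\cap q(\Mon)$. For this, note that ``$\chi(\Mon,x_1)\cap q(\Mon)=\chi(\Mon,x_2)\cap q(\Mon)$'' is a relatively $A$-definable equivalence relation $E$ on $p(\Mon)$, so by $A$-indiscernibility either all $a_i$ lie in one $E$-class or no two of them do. In the former case the common bounded set $D_0\ni b$ is in fact relatively $A$-definable (the $E$-class is a relatively definable, hence left- or right-eventual, subset of the so-type $p$ and therefore contains $\mathcal R_{\mathbf p}(C)$ for some large $C$; over such a $C$ one removes the parameter from $\chi$), so $q(y)\cup\{\chi(y,a)\}$ would be a consistent type over $A$ that forks over $A$, which is impossible. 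In the latter case one gets infinitely many pairwise distinct bounded convex subsets of $(q(\Mon),<_q)$ all containing $b$ and attached along the $\triangleleft^{\mathbf p}$-increasing, pairwise-$A$-independent sequence $(a_i)$ with $a_i\ind_Ab$; here one uses the already available forking symmetry for realizations of the \emph{single} so-type $p$ (Lemma \ref{Lemma_R_p_basic2}(iv)) together with a recursion along a long index set to force these convex sets to be eventually pairwise disjoint — a contradiction. Either way, $b\ind_Aa$.

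The hardest part of the argument is precisely this last combinatorial step: establishing that the bounded convex sets $\chi(\Mon,a_i)\cap q(\Mon)$ are positioned ``far apart'' in $(q(\Mon),<_q)$ as $a_i$ runs through the Morley sequence — equivalently, that the dividing of $q(y)\cup\{\chi(y,a)\}$ over $A$ is witnessed by (an indiscernible sequence extracted from) this $\mathbf p_r$-Morley sequence. Everything else is a routine use of the globalizations $\mathbf p_r,\mathbf p_l$ and of the already-established basic properties of $\mathbf p$-genericity.
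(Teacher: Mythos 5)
There is a genuine gap: the entire content of the lemma is concentrated in your final ``combinatorial step,'' and that step is not carried out -- the sketch you give cannot work as stated. Having chosen $(a_i)$ to be a Morley sequence in $\mathbf p_r$ over $Ab$, you have $\chi(b,x)\in(\mathbf p_r)_{\restriction Ab}$, so \emph{by construction} $b$ lies in every set $\chi(\Mon,a_i)\cap q(\Mon)$; hence no recursion can ``force these sets to be eventually pairwise disjoint'' -- that claim is just a restatement of the desired contradiction, not an argument, and it is unclear how Lemma \ref{Lemma_R_p_basic2}(iv) (symmetry/comparability inside the single type $p$) would separate subsets of $q(\Mon)$. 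The configuration you reach (pairwise distinct bounded relatively $Aa_i$-definable subsets of $q(\Mon)$ all containing $b$, with $a_i\ind_A b$) is exactly the situation the lemma forbids, so refuting it \emph{is} the lemma; nothing in the proposal does this. There are also secondary inaccuracies: the relation $E$ given by $\chi(\Mon,x_1)\cap q(\Mon)=\chi(\Mon,x_2)\cap q(\Mon)$ is only $A$-invariant, not relatively $A$-definable (so-types give neither definability nor convexity here -- the sets $\chi(\Mon,a_i)\cap q(\Mon)$ need not be convex either), and in your ``former case'' the phrase ``a consistent type over $A$ that forks over $A$, which is impossible'' is not a valid general principle (the correct route there is that a relatively $A$-definable set containing $b\models q$ must be all of $q(\Mon)$, contradicting boundedness).

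For comparison, the paper's proof avoids your dead end precisely by \emph{not} keeping $b$ fixed: assuming $b\dep_A a$, it picks $d_0\in\mathcal L_{\mathbf q}(a)$ and $d_1\in\mathcal R_{\mathbf q}(a)$, takes $f\in\Aut_A(\Mon)$ with $f(d_0)=d_1$, and translates the \emph{pair} $(a,b)$ twice, getting $a_0,a_1,a_2$ with $\mathcal D_q(a_0)<_q\mathcal D_q(a_1)<_q\mathcal D_q(a_2)$ and $b_i\in\mathcal D_q(a_i)$. Then $a_0\nequiv a_1\ (Ab_1)$ and $a_1\nequiv a_2\ (Ab_1)$ show that the locus of $\tp(a_1/Ab_1)$ is neither an initial nor a final part of $(p(\Mon),<_p)$, so $a_1\dep_A b_1$ by Lemma \ref{Lemma_R_p_basic2}(iii), and applying $f^{-1}$ gives $a\dep_A b$. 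If you want to keep your framework, the missing idea is this simultaneous translation of $a$ and $b$ (producing a shifted indiscernible-style configuration in which the bounded sets really are moved apart), rather than a Morley sequence over $Ab$, which anchors every set at $b$.
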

\begin{proof}
Suppose that $p=\tp(a/A)$ and $q=\tp(b/A)$ are so-types and $b\dep_A a$; in particular, $q\nfor p$. Set $a_0:=a$ and $b_0:=b$ and note $b_0\in\mathcal D_q(a_0)$. Choose orders $<_p$ and $<_q$ such that $\mathbf p=(p,<_p)$ and $\mathbf q=(q,<_q)$ are so-pairs over $A$. 
By Lemma \ref{Lemma_R_p_basic}(iv), $\mathcal D_q(a_0)$ is a non-empty bounded subset of $q(\Mon)$, and by taking $d_0\in\mathcal L_{\mathbf q}(a_0)$ and $d_1\in\mathcal R_{\mathbf q}(a_0)$ we have $d_0<_q\mathcal D_q(a_0)<_q d_1$. Let $f\in \Aut_A(\Mon)$ be such that $f(d_0)=d_1$; set $a_{n+1}:=f(a_n)$ and $b_{n+1}:=f(b_n)$ for $n=0,1$. Then $\mathcal D_q(a_0)<_q\mathcal D_q(a_1)<_q\mathcal D_q(a_2)$ and $b_i\in\mathcal D_q(a_i)$ for $i=0,1,2$. Note that the sequence $(a_0,a_1,a_2)$ is $<_p$-monotone, so by possibly reversing the order $<_p$, we may assume that it is $<_p$-increasing.

Since $\mathcal D_q(a_0)<_qb_1\in\mathcal D_q(a_1)$ we have $a_0\nequiv a_1\ (Ab_1)$, and since $\mathcal D_q(a_1)\ni b_1<_q\mathcal D_q(a_2)$ we have $a_1\nequiv a_2\ (Ab_1)$. Consider the locus $P$ of $\tp(a_1/Ab_1)$. Since $a_0<_pa_1$, $a_1\in P$, and $a_0\notin P$, $P$ is not an initial part of $(p(\Mon),<_p)$, so $a_1$ is not left $\mathbf p$-generic over $b_1$. Similarly, $a_1<_pa_2$, $a_1\in P$ and $a_2\notin P$ imply that $P$ is not a final part of $(p(\Mon),<_p)$, so $a_1$ is also not right $\mathbf p$ generic over $b_1$; by Lemma \ref{Lemma_R_p_basic2}(iii), we derive $a_1\dep_Ab_1$. Applying $f^{-1}$ we get $a\dep_Ab$, and we are done.
\end{proof}

\begin{Lemma}\label{Lema_Dpa_subset_Dqa}
Let $\mathbf p=(p,<)$ be an so-type and let $B\supseteq A$. 
\begin{enumerate}[(i)]
\item If $q=\tp(a/B)$ is a nonforking extension of $p$, then $\mathcal D_p(a)\subseteq q(\Mon)$.  
\item For all $a,a'\models p$: \ $a\ind_A B\land a'\dep_A a$ \ implies \ $a\equiv a'\ (B)$. 
\item For all $a,a'\models p$: \ $B\triangleleft^{\mathbf p} a \land a'\dep_A a$ implies $B\triangleleft^{\mathbf p} a'$.
\item If $p$ is weakly o-minimal and $q=\tp(a/B)$ is a nonforking extension of $p$, then $\mathcal D_p(a)\subseteq \mathcal D_q(a)$;
\end{enumerate}
\end{Lemma}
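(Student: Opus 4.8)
The three statements (i), (ii), (iii) are essentially one and the same, so I would treat them together and obtain (iv) as a consequence. Concretely, each of (i)--(iii) is equivalent to the single assertion
\[(\star)\qquad a,a'\models p,\ \ a\ind_A B,\ \ a'\dep_A a\ \ \Longrightarrow\ \ \tp(a'/B)=\tp(a/B).\]
Indeed: if $a\ind_A B$ then $q:=\tp(a/B)$ is a nonforking extension of $p$ and $\mathcal D_p(a)=\{a'\models p\mid a'\dep_A a\}$, so ``(i) for this $q$'' says exactly that $a'\dep_A a$ forces $a'\in q(\Mon)$, which is $(\star)$; (ii) is a verbatim restatement of $(\star)$; and for (iii), $B\triangleleft^{\mathbf p}a$ means $q=(\mathbf p_r)_{\restriction B}$, so $(\star)$ gives $a'\models(\mathbf p_r)_{\restriction B}$, i.e.\ $B\triangleleft^{\mathbf p}a'$ (the case $a\models(\mathbf p_l)_{\restriction B}$ being covered by applying $(\star)$ to the reverse pair $\mathbf p^*$). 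Thus it suffices to prove $(\star)$.

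For $(\star)$ I would argue by contradiction. Replacing $\mathbf p$ by $\mathbf p^*$ if necessary, assume $a\models(\mathbf p_r)_{\restriction B}$ and suppose $a'$ is not right $\mathbf p$-generic over $B$; since $\mathcal R_{\mathbf p}(B)$ is a final part of $(p(\Mon),<)$ containing $a$, this gives $a'<a$, and by Lemma~\ref{Lemma_forking_symmetry} also $a\dep_A a'$, so $a\in\mathcal D_p(a')$; as $\mathcal D_p(a')$ is convex (Lemma~\ref{Lemma_R_p_basic}(iii)) and contains $a'<a$, we get $[a',a]\subseteq\mathcal D_p(a')$. Now build inductively a sequence $(a_n,a'_n)_{n<\omega}$ with $a_0=a$, $a'_0=a'$: given $a_n\models(\mathbf p_r)_{\restriction AB}$ and $a'_n$ with $(a_n,a'_n)\equiv(a,a')\,(AB)$, choose $a_{n+1}\models(\mathbf p_r)_{\restriction AB\,a_{\leqslant n}a'_{\leqslant n}}$, fix $g\in\Aut_{AB}(\Mon)$ with $g(a_n)=a_{n+1}$, and put $a'_{n+1}:=g(a'_n)$. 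Then $(a_n)_{n<\omega}$ is a Morley sequence in the $A$-invariant type $\mathbf p_r$ over $AB$ -- hence $<$-increasing and $A$-indiscernible -- every $(a_n,a'_n)\equiv(a,a')\,(AB)$ (so $a'_n$ is not right $\mathbf p$-generic over $B$, whence $a'_n<a_0$, and $a'_n\dep_A a_n$), and a short computation using $[a'_m,a_m]\subseteq\mathcal D_p(a'_m)$ together with $a_{n+1}\models(\mathbf p_r)_{\restriction Aa'_m}$ for $m\leqslant n$ yields the ``staircase'' $a_k\dep_A a'_m\Leftrightarrow k\leqslant m$; transporting a single $L_A$-formula witnessing $a\dep_A a'$ along these automorphisms, one also gets, uniformly in $m$, an $L_A$-formula relatively defining a bounded subset of $(p(\Mon),<)$ over $a_m$ that contains $a'_m$. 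The contradiction is then meant to come from confronting this staircase of forking relations over the $A$-indiscernible sequence $(a_n)$ with the boundedness and convexity of the sets $\mathcal D_p(a'_m)$ -- in effect some element of $\mathcal D_p(B)$ is forced into $\mathcal D_p$ of a $B$-generic realization of $p$, which cannot happen. Making this last step fully rigorous -- taking the index set large enough and using that all $a'_m$ share one type over $A$ -- is the main technical obstacle; I expect it to parallel, but be somewhat more delicate than, the three-point argument in Lemma~\ref{Lemma_forking_symmetry}.

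Finally, (iv). Assume $p$ is weakly o-minimal and $q=\tp(a/B)$ is a nonforking extension of $p$; replacing $\mathbf p$ by $\mathbf p^*$ we may take $q=(\mathbf p_r)_{\restriction B}$, so $q(\Mon)=\mathcal R_{\mathbf p}(B)$ is a final part of $(p(\Mon),<)$. By Remark~\ref{Remark_weak_ominimal_firstrmk}(b),(f), $q$ is weakly o-minimal, so $\mathbf q:=(q,<)$ is an so-pair over $B$; moreover, since $q(\Mon)$ is a final part of $(p(\Mon),<)$, a subset of $q(\Mon)$ is right-eventual in $(q(\Mon),<)$ iff it is right-eventual in $(p(\Mon),<)$, whence the right globalizations agree: $\mathbf q_r=\mathbf p_r$. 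Let $x\in\mathcal D_p(a)$; by (i), $x\models q$, and it remains to show $x\dep_B a$. If not, $x\ind_B a$, so by Lemma~\ref{Lemma_R_p_basic2}(iii) applied to $\mathbf q$ over $B$ the element $x$ is right or left $\mathbf q$-generic over $a$. In the first case $x\models(\mathbf q_r)_{\restriction Ba}=(\mathbf p_r)_{\restriction Ba}$, hence $x\models(\mathbf p_r)_{\restriction Aa}$ and $x\ind_A a$, contradicting $x\in\mathcal D_p(a)$. In the second case Fact~\ref{Fact_so_basic}(iv) applied to $\mathbf q$ over $B$ gives $a\models(\mathbf q_r)_{\restriction Bx}=(\mathbf p_r)_{\restriction Bx}$, hence $a\models(\mathbf p_r)_{\restriction Ax}$ and $a\ind_A x$, so $x\ind_A a$ by Lemma~\ref{Lemma_forking_symmetry}, again contradicting $x\in\mathcal D_p(a)$. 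Therefore $x\dep_B a$, i.e.\ $x\in\mathcal D_q(a)$, and $\mathcal D_p(a)\subseteq\mathcal D_q(a)$ follows.
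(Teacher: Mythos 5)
Your reduction of (i)--(iii) to the single statement $(\star)$ is correct, and your argument for (iv) is sound (and a legitimate alternative to the paper's: you rule out both generic options over $Ba$ using $\mathbf q_r=\mathbf p_r$, whereas the paper bounds the locus of $\tp(b/Ba)$ in $(q(\Mon),<)$ and quotes Corollary \ref{Corolary_of_forking_eq_bounded}(iii)). But (iv) leans on (i), and your proof of $(\star)$ -- the actual content of (i)--(iii) -- is not complete: you set up a Morley sequence $(a_n)$ with companions $(a'_n)$ and a ``staircase'' of forking relations, and then explicitly concede that the final contradiction is not made rigorous. As written, nothing in that construction actually collides: the sets $\mathcal D_p(a'_m)$ are bounded and convex, the $a'_m$ all lie below $a_0$, and no clash with indiscernibility or with $\mathcal D_p(B)$ is derived. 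So there is a genuine gap exactly at the crux of the lemma.

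The missing ingredient is much simpler than your staircase and is already in the paper: the density property, Lemma \ref{Lemma_existence_anddensity_ofrightgenerics}. Assuming (after possibly reversing $<$) that $B\triangleleft^{\mathbf p}a$, density gives $a''\models p$ with $B\triangleleft^{\mathbf p}a''\triangleleft^{\mathbf p}a$. By Lemma \ref{Lemma_R_p_basic2}(ii), $a''\triangleleft^{\mathbf p}a$ means $a''\in\mathcal L_{\mathbf p}(a)$, and by Lemma \ref{Lemma_R_p_basic}(iv) we have $\mathcal L_{\mathbf p}(a)<\mathcal D_p(a)$, so $a''<\mathcal D_p(a)$; in particular $a''<a'$ for your $a'\dep_A a$. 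Since $a''$ realizes $q=(\mathbf p_r)_{\restriction B}$ and $q(\Mon)$ is a final part of $(p(\Mon),<)$ (Fact \ref{Fact_so_basic}(iii)), $a''<a'$ forces $a'\models q$, i.e.\ $a'\equiv a\,(B)$. This proves $(\star)$, hence (i)--(iii), in a few lines and with no indiscernible-sequence machinery; your (iv) then goes through as you wrote it.
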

\begin{proof}
(i)  Since $q$ is a nonforking extension of $p$, by Corollary \ref{Corolary_of_forking_eq_bounded}(ii) we have $q=(\mathbf p_{l})_{\restriction B}$ or $q=(\mathbf p_{r})_{\restriction B}$; by reversing the order $<$ if necessary, we can assume $q=(\mathbf p_{r})_{\restriction B}$; hence, $B\triangleleft^{\mathbf p}a$. Next, we show that the set $\mathcal D_p(a)$ is lower bounded in $(q(\Mon),<)$. By the density property (Lemma \ref{Lemma_existence_anddensity_ofrightgenerics}), there exists $a'\models p$ such that $B\triangleleft^{\mathbf p}a'\triangleleft^{\mathbf p}a$. By Lemma \ref{Lemma_R_p_basic2}(ii), $a'\triangleleft^{\mathbf p}a$ implies that $a'$ is left $\mathbf p$-generic over $a$,  $a'\in\mathcal L_{\mathbf p}(a)$, which together with $\mathcal L_{\mathbf p}(a)<\mathcal D_{p}(a)$ (Lemma \ref{Lemma_R_p_basic}(iv)) implies $a'<\mathcal D_p(a)$. Since $B\triangleleft^{\mathbf p}a'$ implies $a'\models q$, we conclude that $a'\in q(\Mon)$ is a lower bound of $\mathcal D_p(a)$ in $(p(\Mon),<)$. As $q(\Mon)$ is a final part of $p(\Mon)$ and $\mathcal D_p(a)\subseteq p(\Mon)$,  $\mathcal D_p(a)\subseteq q(\Mon)$ follows.

(ii) Suppose $a\ind_A B$ and let $q=\tp(a/B)$. By (i), we have $\mathcal D_p(a)\subseteq q(\Mon)$, so every element $a'\in\mathcal D_p(a)$ realizes $q$, that is, $a\equiv a'\ (B)$. This proves (ii), which easily implies (iii). 

(iv)  Since $p$ is weakly o-minimal, $q$ is also weakly o-minimal, so $\mathcal D_q(a)$ is well defined. From the proof of (i), we know that the set $\mathcal D_p(a)$ is lower bounded in $(q(\Mon), <)$. It is easy to see that $\mathcal D_p(a)$ is upper bounded by any $a''$ that satisfies $a\triangleleft^{\mathbf q}a''$. So, for each $b\in\mathcal D_p(a)$, the locus of $\tp(b/Ba)$ is a bounded subset of $(q(\Mon),<)$; by Corollary \ref{Corolary_of_forking_eq_bounded}(iii) $\tp(b/Ba)$ is a forking extension of $q$ and $b\in\mathcal D_q(a)$ follows by the definition of $\mathcal D_q$. Therefore, $\mathcal D_p(a)\subseteq \mathcal D_q(a)$.
\end{proof}

\begin{Proposition}
Let $\mathbf{p} = (p, <_p)$ be an so-pair over $A$. Suppose that $p$ is non-algebraic.
\begin{enumerate}[(i)]
    \item The relation $x \dep_A y$ defines a convex equivalence relation, denoted by $\mathcal{D}_p$, on $(p(\Mon),<_p)$.
    \item The structure $(p(\Mon), \triangleleft^{\mathbf{p}})$ is a strict partial order where $\triangleleft^{\mathbf{p}}$-incomparability corresponds to the relation $\mathcal{D}_p$. Moreover, \ $\mathcal D_p(x)<_p\mathcal D_p(y)\Leftrightarrow x\triangleleft^{\mathbf p}y$.
    \item The orders $\triangleleft^{\mathbf{p}}$ and $<_p$ coincide on $p(\Mon) / \mathcal{D}_p$; the quotient $(p(\Mon) / \mathcal{D}_p, <_p)$ is a dense linear order.
\end{enumerate}
\end{Proposition}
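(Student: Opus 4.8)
The plan is to derive the Proposition entirely from the previously established properties of $\triangleleft^{\mathbf p}$, with no further recourse to the so-type axioms. The first move is to recognise the relation $x\dep_A y$ on $p(\Mon)$ as the incomparability relation of $\triangleleft^{\mathbf p}$: by Lemma~\ref{Lemma_R_p_basic2}(iv), $x\ind_A y$ is equivalent to $x\triangleleft^{\mathbf p}y \vee y\triangleleft^{\mathbf p}x$, so $x\dep_A y$ holds exactly when $x$ and $y$ are $\triangleleft^{\mathbf p}$-incomparable, and $\mathcal D_p(a)$ is the set of elements incomparable with $a$. The relation $\triangleleft^{\mathbf p}$ is transitive by Lemma~\ref{Lemma_R_p_basic2}(i) and irreflexive since $a\in\mathcal D_p(a)$ when $p$ is non-algebraic (an instance of $a\triangleleft^{\mathbf p}a$ would force $\mathcal D_p(a)<_p a$ by Lemma~\ref{Lemma_R_p_basic2}(v)), so $(p(\Mon),\triangleleft^{\mathbf p})$ is already a strict partial order. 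Hence part (i) and the first two assertions of (ii) reduce to a single point: that $\triangleleft^{\mathbf p}$-incomparability is transitive, i.e.\ that $\triangleleft^{\mathbf p}$ is a strict weak order.

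The crux is the following claim: $\triangleleft^{\mathbf p}$ is invariant under replacing either argument by a $\dep_A$-equivalent one --- if $x\dep_A x'$ then, for every $y\models p$, $x\triangleleft^{\mathbf p}y \Leftrightarrow x'\triangleleft^{\mathbf p}y$ and $y\triangleleft^{\mathbf p}x \Leftrightarrow y\triangleleft^{\mathbf p}x'$. The right-hand statement is Lemma~\ref{Lema_Dpa_subset_Dqa}(iii) applied with parameter set $\{y\}$ (and its symmetric version). The left-hand one is obtained by rewriting $x\triangleleft^{\mathbf p}y$ as $y\triangleleft^{\mathbf p^*}x$ via Lemma~\ref{Lemma_R_p_basic2}(ii), applying Lemma~\ref{Lema_Dpa_subset_Dqa}(iii) to the so-pair $\mathbf p^*$ (again with parameter set $\{y\}$), and translating back. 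Granting the claim, transitivity of incomparability follows at once: if $x\dep_A y$, $y\dep_A z$ but $x\ind_A z$, then by the partial-order property $x\triangleleft^{\mathbf p}z$ or $z\triangleleft^{\mathbf p}x$, say the former; since $y\dep_A x$ the claim yields $y\triangleleft^{\mathbf p}z$, hence $y\ind_A z$, contradicting $y\dep_A z$. Together with symmetry of $\dep_A$ (Lemma~\ref{Lemma_forking_symmetry}) and the reflexivity noted above, $\mathcal D_p$ is an equivalence relation; $<_p$-convexity of each class $\mathcal D_p(a)$ is Lemma~\ref{Lemma_R_p_basic}(iii). For the ``moreover'' of (ii): if $x\triangleleft^{\mathbf p}y$, the claim gives $x'\triangleleft^{\mathbf p}y'$ for all $x'\in\mathcal D_p(x)$ and $y'\in\mathcal D_p(y)$, hence $\mathcal D_p(x)<_p\mathcal D_p(y)$; conversely $\mathcal D_p(x)<_p\mathcal D_p(y)$ forces $x\ind_A y$ (a nonempty class cannot be strictly below itself) and excludes $y\triangleleft^{\mathbf p}x$ (which would give $\mathcal D_p(y)<_p x\in\mathcal D_p(x)$ by Lemma~\ref{Lemma_R_p_basic2}(v)), leaving $x\triangleleft^{\mathbf p}y$.

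For (iii): the invariance claim shows $\triangleleft^{\mathbf p}$ descends to $p(\Mon)/\mathcal D_p$, and $<_p$ descends by convexity of $\mathcal D_p$; by the ``moreover'' of (ii) the two induced relations on classes coincide, and this common relation is a strict linear order --- irreflexivity and transitivity come from those of $\triangleleft^{\mathbf p}$, and totality from the partial-order property applied to $\triangleleft^{\mathbf p}$-incomparable, hence distinct, classes. Density is the density property of $\triangleleft^{\mathbf p}$: given classes with $[x]<_p[y]$, i.e.\ $x\triangleleft^{\mathbf p}y$, Lemma~\ref{Lemma_existence_anddensity_ofrightgenerics} with $\mathbf q=\mathbf p$ and parameter set $\{x\}$ produces $z\models p$ with $x\triangleleft^{\mathbf p}z\triangleleft^{\mathbf p}y$, so $[x]<_p[z]<_p[y]$.

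The only step requiring any thought is the invariance claim of the second paragraph; everything else is bookkeeping over Lemmas~\ref{Lemma_R_p_basic}--\ref{Lema_Dpa_subset_Dqa}. The point there is simply that Lemma~\ref{Lema_Dpa_subset_Dqa}(iii) is stated for an arbitrary so-pair, so it may be applied to $\mathbf p^*$ as well, and $\mathbf p^*$ is tied to $\mathbf p$ by the reversal identity of Lemma~\ref{Lemma_R_p_basic2}(ii).
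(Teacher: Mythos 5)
Your proof is correct and follows essentially the same route as the paper's: the same facts drive each part (Lemma \ref{Lemma_R_p_basic2} for the incomparability characterization and for transitivity and irreflexivity of $\triangleleft^{\mathbf p}$, Lemma \ref{Lemma_forking_symmetry} for symmetry, Lemma \ref{Lema_Dpa_subset_Dqa} for transitivity of $\dep_A$, and Lemma \ref{Lemma_existence_anddensity_ofrightgenerics} for density of the quotient). The differences are only repackaging: your invariance claim is in effect Proposition \ref{Prop_triangle_mathbf_pq_properties}(vii)--(viii) specialized to $\mathbf q=\mathbf p$, you derive transitivity of $\dep_A$ from Lemma \ref{Lema_Dpa_subset_Dqa}(iii) together with the $\mathbf p^*$ reversal of Lemma \ref{Lemma_R_p_basic2}(ii) where the paper uses part (ii) of the same lemma, and you obtain convexity of the classes by quoting Lemma \ref{Lemma_R_p_basic}(iii) with $B=Aa$ where the paper reargues it directly via genericity --- all legitimate variants.
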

\begin{proof}
(i) The reflexivity is clear and the symmetry follows from Lemma \ref{Lemma_forking_symmetry}. For transitivity, assume $a,b,c\models p$, $a\dep_A b$, and $b\dep_A c$.  If $a\ind_A c$ were true, then, by Lemma \ref{Lema_Dpa_subset_Dqa}(ii), $a\ind_A c$ and $a\dep_A b$ would imply  $a\equiv b\ (Ac)$,  which contradicts $b\dep_A c$. Therefore, $\mathcal D_p$ is an equivalence relation. To prove that it is convex, assume $a\dep_A b$ and $a<_pd<_pb$, and we prove $a\dep_A D$. Note that $a\triangleleft^{\mathbf{p}}d$ is impossible since, by Lemma \ref{Lemma_R_p_basic2}, $a\triangleleft^{\mathbf{p}}d<_pb$ implies $a\triangleleft^{\mathbf{p}}b$, which contradicts $a\dep_Ab$. Therefore, $d$ is not right $\mathbf p$-generic over $a$ and, since $a<_pd$ implies that it is not left $\mathbf p$-generic over $a$, we conclude $a\dep_A d$; $\mathcal D_p$ is convex.          

(ii) Clearly, $\triangleleft^{\mathbf p}$ is antireflexive and the transitivity follows by Lemma \ref{Lemma_R_p_basic2}(i).
Now, let $a,b\models p$. We apply Lemma \ref{Lemma_R_p_basic2} twice: by part (iii) we have  $a\dep_A b\Leftrightarrow\lnot(a\triangleleft^{\mathbf p}b\vee a\triangleleft^{\mathbf p^*}b)$, and by part (ii), $a\triangleleft^{\mathbf p^*}b\Leftrightarrow b\triangleleft^{\mathbf p}a$.   Therefore, $a\dep_A b\Leftrightarrow \lnot(a\triangleleft^{\mathbf p}b\vee b\triangleleft^{\mathbf p}a)$, that is, if $a$ and $b$ are $\triangleleft^{\mathbf p}$-incomparable. Therefore, $\triangleleft^{\mathbf p}$-incomparability agrees with $\mathcal D_p$ on $p(\Mon)$. 
To prove the third claim, assume first that
$\mathcal D_p(x)<_p\mathcal D_p(y)$. Then $x\ind_A y$ and $x<_p y$ imply $x\triangleleft^{\mathbf p}y$.  
Conversely, if $x\triangleleft^{\mathbf p} y$ then $x\notin\mathcal D_p(y)$, so $x<_py$ and the convexity of $\mathcal D_p$ imply $x<_p\mathcal D_p(y)$; $x\triangleleft^{\mathbf p} y$ follows.

(iii) The coincidence of the orders follows from (ii), and the density from Lemma \ref{Lemma_existence_anddensity_ofrightgenerics}. 
\end{proof}

Now, Theorem \ref{Theorem4} follows easily.

\begin{TheoremI}\label{Theorem4_1} The structure 
$(p(\Mon), \triangleleft^{\mathbf p},<_p,\mathcal D_p)$ has the following properties:
\begin{enumerate}[(i)] 
    \item  $(p(\Mon), \triangleleft^{\mathbf{p}})$ is a strict partial order  and $(p(\Mon), <_p)$ its linear extension; 
    \item Relations $\mathcal D_{p}$ and  the $\triangleleft^{\mathbf p}$-incomparability are the same, $<_p$-convex equivalence relation on $p(\Mon)$. The quotient $(p(\Mon) / \mathcal{D}_p, <_p)$ is a dense linear order.
\end{enumerate}   
\end{TheoremI}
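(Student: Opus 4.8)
The plan is to obtain the statement as an immediate repackaging of the Proposition that immediately precedes it: that Proposition already carries all the substantive content, so the ``proof'' amounts to matching its three parts with the two clauses of the theorem and supplying one extra observation, namely that $<_p$ is a \emph{linear} extension of $\triangleleft^{\mathbf p}$ (and not merely that both are orders on $p(\Mon)$).

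For clause (i): $(p(\Mon),<_p)$ is a linear order because $\mathbf p=(p,<_p)$ is an so-pair, and $(p(\Mon),\triangleleft^{\mathbf p})$ is a strict partial order by part (ii) of the Proposition. To see that $<_p$ extends $\triangleleft^{\mathbf p}$, I would take $x\triangleleft^{\mathbf p}y$ and apply the ``Moreover'' clause of part (ii) of the Proposition, obtaining $\mathcal D_p(x)<_p\mathcal D_p(y)$ as subsets of $p(\Mon)$; since $x\in\mathcal D_p(x)$ and $y\in\mathcal D_p(y)$, this yields $x<_py$. Hence $\triangleleft^{\mathbf p}\subseteq{<_p}$, i.e.\ $(p(\Mon),<_p)$ is a linear extension of $(p(\Mon),\triangleleft^{\mathbf p})$.

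For clause (ii): that $\mathcal D_p$ is a $<_p$-convex equivalence relation on $p(\Mon)$ is part (i) of the Proposition, and that it coincides with $\triangleleft^{\mathbf p}$-incomparability is part (ii). Finally, density of $(p(\Mon)/\mathcal D_p,<_p)$ is part (iii) of the Proposition; here non-algebraicity of $p$ is precisely what makes the quotient nondegenerate and allows the density property of $\triangleleft^{\mathbf p}$ (Lemma~\ref{Lemma_existence_anddensity_ofrightgenerics}) to apply, exactly as in the proof of the Proposition.

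I do not expect any genuine obstacle: everything except the ``linear extension'' remark is literally one of the three parts of the preceding Proposition, and that remark follows in a single line from the equivalence $x\triangleleft^{\mathbf p}y\Leftrightarrow\mathcal D_p(x)<_p\mathcal D_p(y)$ together with $x\in\mathcal D_p(x)$, $y\in\mathcal D_p(y)$. The only point demanding any care is bookkeeping — keeping track of which direction of the biconditional is being used and remembering that one is comparing $\mathcal D_p$-classes as \emph{sets} when passing to the quotient.
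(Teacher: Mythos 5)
Your proposal is correct and matches the paper's treatment: the paper simply states that the theorem ``follows easily'' from the immediately preceding Proposition, which is exactly the repackaging you carry out. Your one added step — deducing $\triangleleft^{\mathbf p}\subseteq{<_p}$ from the equivalence $x\triangleleft^{\mathbf p}y\Leftrightarrow\mathcal D_p(x)<_p\mathcal D_p(y)$ together with $x\in\mathcal D_p(x)$, $y\in\mathcal D_p(y)$ — is valid (and could alternatively be read off from Lemma \ref{Lemma_R_p_basic2}(v)).
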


\begin{Remark}  Let $(p,<_p)$ be an so-pair and let $a_1,a_2\models p$. By now, we have found several equivalent ways to express that $(a_1,a_2)$ is a Morley sequence in $\mathbf p_r$ over $A$. Here we list them in groups:
\begin{itemize}
    \item $a_2\models (\mathbf p_r)_{\restriction Aa_1}$; \ $a_2\in\mathcal R_{\mathbf p}(a_1)$; \ $a_1\triangleleft^{\mathbf p} a_2$; \ $(a_1,a_2)\models (\mathbf p_r^2)_{\restriction A}$;  
    \item $a_1\models (\mathbf p_l)_{\restriction Aa_2}$; \ $a_1\in\mathcal L_{\mathbf p}(a_2)$; \ $a_2\triangleleft^{\mathbf p^*} a_1$; \ $(a_2,a_1)\models (\mathbf p_l^2)_{\restriction A}$  
     \ (Lemma \ref{Lemma_R_p_basic2}(ii)); 
    \item $a_1<_p a_2\land a_1\ind_A a_2$  \ (Lemma \ref{Lemma_R_p_basic2}(v)); 
    \item $\mathcal D_p(a_1)<_p  a_2 $; \ $a_1<_p \mathcal D_p(a_2)$; \ $\mathcal D_p(a_1)<_p \mathcal D_p(a_2)$. The first two are from Lemma \ref{Lemma_R_p_basic2}(v); the third follows from either of the first two since, by Theorem \ref{Theorem4_1},  $\mathcal D_p$ is a convex equivalence relation.   
\end{itemize}
\end{Remark}

In the case of a longer sequence, there is no simple way to express that it is Morley. Clearly, Morley sequences in $\mathbf p_r$ are $\triangleleft^{\mathbf p}$-increasing but in general, the opposite is not true. 

\begin{Lemma}
Let $\mathbf p=(p,<_p)$ be an so-pair over $A$ and let $I=(a_i\mid a\in\omega)$ be a sequence of realizations of $p$. Then $I$ is a Morley sequence in $\mathbf p_r$ over $A$ if and only if one of the following mutually equivalent conditions holds:

(1) \ \ $a_0\triangleleft^{\mathbf p} a_1$, \ $a_0a_1\triangleleft^{\mathbf p} a_2$, \dots , \ $a_0\ldots a_n\triangleleft^{\mathbf p} a_{n+1}$,...., that is, $a_{<n}\triangleleft^{\mathbf p} a_n$ holds for all $n$;

(2)  \ \ $I$ is $<_p$-increasing and  \ $a_{n+1}\ind_{Aa_{<n}}a_n$ \ holds for all $n$;  

(3) \ \ $I$ is $<_p$-increasing and $a_n\ind_A a_{<n}$ holds for all $n\geq 1$.
\end{Lemma}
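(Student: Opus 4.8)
The plan is to start from the observation that, by definition, $I$ is a Morley sequence in $\mathbf p_r$ over $A$ precisely when $a_n\models(\mathbf p_r)_{\restriction Aa_{<n}}$ for every $n$, i.e.\ $a_n\in\mathcal R_{\mathbf p}(a_{<n})$, which is exactly condition~(1); so the content of the lemma is the mutual equivalence of (1), (2) and (3). Throughout I would use: that $\mathbf p_r$ is $A$-invariant (Fact~\ref{Fact_so_basic}(ii)), hence $B$-invariant for every $B\supseteq A$ and with nonforking restrictions; that the only nonforking extensions of $p$ over any $B\supseteq A$ are $(\mathbf p_l)_{\restriction B}$ and $(\mathbf p_r)_{\restriction B}$ (Corollary~\ref{Corolary_of_forking_eq_bounded}(ii)); that $\mathcal R_{\mathbf p}(B)$ is a final part of $(p(\Mon),<_p)$ (Lemma~\ref{Lemma_R_p_basic}(i)); the equivalence $a\triangleleft^{\mathbf p}a'\Leftrightarrow a\ind_A a'\wedge a<_p a'$ (and $a\triangleleft^{\mathbf p}a'\Rightarrow a<_p a'$) from Lemma~\ref{Lemma_R_p_basic2}(v); and Fact~\ref{Fact_so_basic}(iv). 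If $p$ is algebraic, no strictly $<_p$-increasing $\omega$-sequence of realizations exists, so all three conditions fail and there is nothing to prove; hence I may assume $p$ non-algebraic.

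The implications (1)$\Rightarrow$(3), (3)$\Rightarrow$(1) and (1)$\Rightarrow$(2) are quick. For (1)$\Rightarrow$(3): from $a_{<n}\triangleleft^{\mathbf p}a_n$ one gets $a_i\triangleleft^{\mathbf p}a_n$ for $i<n$ by passing to a subtype, hence $a_i<_p a_n$, so $I$ is $<_p$-increasing; and $\tp(a_n/Aa_{<n})=(\mathbf p_r)_{\restriction Aa_{<n}}$ is nonforking over $A$ by $A$-invariance, i.e.\ $a_n\ind_A a_{<n}$. For (3)$\Rightarrow$(1): fix $n\geqslant1$; since $a_n\ind_A a_{<n}$, $\tp(a_n/Aa_{<n})$ is $(\mathbf p_l)_{\restriction Aa_{<n}}$ or $(\mathbf p_r)_{\restriction Aa_{<n}}$ by Corollary~\ref{Corolary_of_forking_eq_bounded}(ii); restricting to $Aa_{n-1}$, the first option gives $a_n\models(\mathbf p_l)_{\restriction Aa_{n-1}}$, hence $a_{n-1}\models(\mathbf p_r)_{\restriction Aa_n}$ by Fact~\ref{Fact_so_basic}(iv), i.e.\ $a_n\triangleleft^{\mathbf p}a_{n-1}$ and so $a_n<_p a_{n-1}$, contradicting $<_p$-increasingness; thus the second option holds and $a_{<n}\triangleleft^{\mathbf p}a_n$. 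For (1)$\Rightarrow$(2): $a_{n+1}\models(\mathbf p_r)_{\restriction Aa_{\leqslant n}}$ is nonforking over $Aa_{<n}$ by $Aa_{<n}$-invariance, so $a_{n+1}\ind_{Aa_{<n}}a_n$, and $<_p$-increasingness was already noted.

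The main step is (2)$\Rightarrow$(1), which I would prove by induction on $n$, the inductive statement being $a_{<n}\triangleleft^{\mathbf p}a_n$; the base $n=0$ is immediate. Assume $a_{<n}\triangleleft^{\mathbf p}a_n$, so $a_n\in\mathcal R_{\mathbf p}(a_{<n})$; since $a_n<_p a_{n+1}$ (by $<_p$-increasingness from~(2)) and $\mathcal R_{\mathbf p}(a_{<n})$ is a final part, also $a_{n+1}\in\mathcal R_{\mathbf p}(a_{<n})$, i.e.\ $\tp(a_{n+1}/Aa_{<n})=(\mathbf p_r)_{\restriction Aa_{<n}}$, which is nonforking over $A$. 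Together with $a_{n+1}\ind_{Aa_{<n}}a_n$ from~(2), transitivity of nonforking gives that $\tp(a_{n+1}/Aa_{\leqslant n})$ is nonforking over $A$, hence equals $(\mathbf p_l)_{\restriction Aa_{\leqslant n}}$ or $(\mathbf p_r)_{\restriction Aa_{\leqslant n}}$ by Corollary~\ref{Corolary_of_forking_eq_bounded}(ii). The first, restricted to $Aa_n$, gives $a_{n+1}\models(\mathbf p_l)_{\restriction Aa_n}$, so $a_{n+1}\triangleleft^{\mathbf p}a_n$ by Fact~\ref{Fact_so_basic}(iv) and hence $a_{n+1}<_p a_n$, contradicting $a_n<_p a_{n+1}$; therefore $\tp(a_{n+1}/Aa_{\leqslant n})=(\mathbf p_r)_{\restriction Aa_{\leqslant n}}$, i.e.\ $a_{\leqslant n}\triangleleft^{\mathbf p}a_{n+1}$, completing the induction. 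I expect the only genuine obstacle to be the single appeal to transitivity of nonforking (used to pass from $a_{n+1}\ind_A a_{<n}$ and $a_{n+1}\ind_{Aa_{<n}}a_n$ to $a_{n+1}\ind_A a_{\leqslant n}$); everything else is a direct application of the properties of $\triangleleft^{\mathbf p}$, $\mathcal R_{\mathbf p}$, and the description of the nonforking extensions of an so-type recalled above.
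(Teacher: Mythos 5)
Your reading of the lemma and the easy implications are fine and essentially follow the paper's route ((1) is just a restatement of being Morley in $\mathbf p_r$, (1)$\Rightarrow$(2) via invariance, and (3)$\Rightarrow$(1) via Lemma \ref{Lemma_R_p_basic2}(iii) and increasingness, exactly as in the paper). The genuine gap is precisely the step you flag: the appeal to ``transitivity of nonforking'' in (2)$\Rightarrow$(1). The principle you invoke --- if $\tp(a/B)$ does not fork over $A$ and $\tp(a/Bc)$ does not fork over $B$ (with $A\subseteq B$), then $\tp(a/Bc)$ does not fork over $A$ --- is not a theorem of an arbitrary theory, and it fails already in the setting of this paper. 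Work in $(\mathbb Q,<)$, let $p$ be the unique (o-minimal) $1$-type over $\emptyset$ and $\mathbf p=(p,<)$, and take $b<a<c$ with $A=\emptyset$, $B=\{b\}$. Then $\tp(a/b)=(\mathbf p_r)_{\restriction b}$ does not fork over $\emptyset$; moreover $\tp(a/bc)$ does not fork over $\{b\}$: every formula in it contains an interval $(b,c')$, any family of $\{b\}$-conjugates of such a set has the finite intersection property, and a finite union of sets dividing over $\{b\}$ cannot cover a set accumulating at $b$ from the right; yet $\tp(a/bc)$ forks over $\emptyset$ by Corollary \ref{Corolary_of_forking_eq_bounded}(iii), since its locus $(b,c)$ is bounded. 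Translating, this is exactly the configuration entering your inductive step with $a_{n+1}=a$, $a_{<n}=b$, $a_n=c$: both inputs to your ``transitivity'' hold ($\tp(a_{n+1}/Aa_{<n})=(\mathbf p_r)_{\restriction Aa_{<n}}$ and $a_{n+1}\ind_{Aa_{<n}}a_n$), but the conclusion $a_{n+1}\ind_A a_{\leqslant n}$ fails. What makes the lemma true is the hypothesis $a_n<_p a_{n+1}$, which your transitivity step does not use (you only use it afterwards, to exclude the left option); in the counterexample one has $a_{n+1}<_p a_n$.

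The step can be repaired, but only by feeding the increasingness into the forking argument itself, essentially rerunning the dividing argument of Lemma \ref{Lemma_so_forking_equals_bounded} over the base $Aa_{<n}$: suppose $\tp(a_{n+1}/Aa_{\leqslant n})$ forks over $A$, so by Corollary \ref{Corolary_of_forking_eq_bounded}(iii) its locus $L$ is bounded in $(p(\Mon),<_p)$. Since $a_n<_p x$ belongs to the type, $a_n$ is a lower bound of $L$; and any upper bound of $L$ in $p(\Mon)$ lies, like $a_n$ and $a_{n+1}$, in the final part $\mathcal R_{\mathbf p}(a_{<n})$, hence realizes the same complete type over $Aa_{<n}$ as $a_n$. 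Choosing (by compactness) a formula of $\tp(a_{n+1}/Aa_{\leqslant n})$ trapped strictly between these two $Aa_{<n}$-conjugate points and iterating an $Aa_{<n}$-automorphism moving the lower bound to the upper bound yields a $2$-inconsistent family of $Aa_{<n}$-conjugates, so $\tp(a_{n+1}/Aa_{\leqslant n})$ divides over $Aa_{<n}$, contradicting $a_{n+1}\ind_{Aa_{<n}}a_n$. Then Corollary \ref{Corolary_of_forking_eq_bounded}(ii) together with $a_n<_p a_{n+1}$ gives $a_{\leqslant n}\triangleleft^{\mathbf p}a_{n+1}$, as you intended; this (or something equivalent) is the content hidden in the paper's terse ``(2)$\Rightarrow$(3) follows by induction'', and it cannot be replaced by a general transitivity of forking.
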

\begin{proof}
Clearly, (1) expresses that $a_n\models (\mathbf p_r)_{\restriction Aa_{<n}}$ for all $n\geq 1$, that is, $I$ is Morley in $\mathbf p_r$ over $A$. 
To prove (1)$\Rightarrow$(2) note that if $I$ is Morley, then $(a_{n},a_{n+1})$ is Morley in $\mathbf p_r$ over $Aa_{<n}$, so $a_{n+1}\ind_{Aa_{<n}}a_n$ follows by Lemma \ref{Lemma_R_p_basic2}(v).  
(2)$\Rightarrow$(3) follows by induction.  
(3)$\Rightarrow$(1): By Lemma \ref{Lemma_R_p_basic2}(iii), $a_n\ind_A a_{<n}$ implies $a_{<n}\triangleleft^{\mathbf p}a_n\vee a_{<n}\triangleleft^{\mathbf p^*}a_n$; here, $a_{<n}\triangleleft^{\mathbf p^*}a_n$ is impossible as $I$ increases, so $a_{<n}\triangleleft^{\mathbf p}a_n$ holds, proving (3).

\end{proof}

\section{Orientation and nonorthogonality}\phantomsection\label{Section5} 

In this section, we first introduce the direct nonorthogonality of so-pairs and prove that it is an equivalence relation; we also prove that $\nfor,\nwor$ and $x\dep_Ay$ are equivalence relations. Further, we introduce $\mathcal F$-genericity and prove Theorem \ref{Theorem5}. The main technical properties of $\triangleleft^{\mathcal F}$-genericity are proved in Theorem \ref{Thm_triangle_mathcal F}. In the case of weakly o-minimal pairs, several additional properties are proved in subsection 5.1.

Let $\mathbf p=(p,<_p)$ and $\mathbf q=(q,<_q)$ be so-pairs with $p\nwor q$. We have chosen $a\triangleleft^{\mathbf q} b$ to describe that ``$b\in q(\Mon)$ is as far $<_q$-right of $a$ as possible''. The underlying intuition would be justified if ``$b$ is far $<_q$-right of $a$'' and ``$a$ is far $<_p$-left of $b$'' would be equivalent; that is, if $b\models q$ is right $\mathbf q$-generic over $a\models p$ and $a$ is left $\mathbf p$-generic over $b$ would be equivalent.  Note that option (I) in part (i) of the following lemma says just that, which motivates the definition of direct nonorthogonality of so-pairs.

\begin{Lemma}\label{Lemma_delta_options}
Suppose that $\mathbf p=(p,<_p)$ and $\mathbf q=(q,<_q)$ are so-pairs over $A$ and $p\nwor q$. 
\begin{enumerate}[(i)]
\item Exactly one of the following two conditions holds:
\begin{enumerate}[(I)] 
\item For all $a\models p$ and $b\models q$: $a\in \mathcal L_{\mathbf p}(b)\Leftrightarrow b\in \mathcal R_{\mathbf q}(a)$  \ and  \ $a\in \mathcal R_{\mathbf p}(b)\Leftrightarrow b\in \mathcal L_{\mathbf q}(a)$. 
\item For all $a\models p$ and $b\models q$: $a\in \mathcal L_{\mathbf p}(b)\Leftrightarrow b\in \mathcal L_{\mathbf q}(a)$  \ and  \ $a\in \mathcal R_{\mathbf p}(b)\Leftrightarrow b\in \mathcal R_{\mathbf q}(a)$.
\end{enumerate}
\item  (II) is equivalent to: $a\triangleleft^{\mathbf q}b \triangleleft^{\mathbf p}a$ for some $a\models p$ and $b\models q$. 
\item   (I) is equivalent to each of the following conditions:

-- for some $a\models p$ and $b\models q$ \  $a\in \mathcal R_{\mathbf p}(b)$ and  $b\in\mathcal L_{\mathbf q}(a)$; 

-- for some $a\models p$ and $b\models q$ \  $a\in \mathcal L_{\mathbf p}(b)$ and  $b\in\mathcal R_{\mathbf q}(a)$;

-- for all $a\models p$ and $b\models q$,  $a\in \mathcal R_{\mathbf p}(b)$ implies  $b\in\mathcal L_{\mathbf q}(a)$;

-- for all $a\models p$ and $b\models q$,  $a\in \mathcal L_{\mathbf p}(b)$ implies  $b\in\mathcal R_{\mathbf q}(a)$.
\end{enumerate}
\end{Lemma}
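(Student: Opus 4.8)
The plan is to classify each \emph{independent pair} $(a,b)$ with $a\models p$, $b\models q$, $a\ind_A b$ by the ``combination'' it realizes. Since $p\nwor q$ and $q=\tp(b/A)$, the first two cases of Lemma \ref{Lemma_R_p_basic}(iv) apply with $B=b$, so $\mathcal L_{\mathbf p}(b)$ and $\mathcal R_{\mathbf p}(b)$ are disjoint and, being loci of complete types, nonempty; hence for an independent pair exactly one of $a\in\mathcal L_{\mathbf p}(b)$, $a\in\mathcal R_{\mathbf p}(b)$ holds, and symmetrically exactly one of $b\in\mathcal L_{\mathbf q}(a)$, $b\in\mathcal R_{\mathbf q}(a)$ --- four combinations, which I label $(\mathcal L,\mathcal L),(\mathcal L,\mathcal R),(\mathcal R,\mathcal L),(\mathcal R,\mathcal R)$. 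If instead $a\dep_A b$ then, using Lemma \ref{Lemma_R_p_basic}(ii) and Lemma \ref{Lemma_forking_symmetry}, $a\notin\mathcal L_{\mathbf p}(b)\cup\mathcal R_{\mathbf p}(b)$ and $b\notin\mathcal L_{\mathbf q}(a)\cup\mathcal R_{\mathbf q}(a)$, so every biconditional appearing in (I) and (II) holds vacuously; therefore (I) is equivalent to ``no independent pair realizes $(\mathcal L,\mathcal L)$ or $(\mathcal R,\mathcal R)$'' and (II) to ``no independent pair realizes $(\mathcal L,\mathcal R)$ or $(\mathcal R,\mathcal L)$''.

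The crux is a constancy argument. Fix $b\models q$; for $a,a'\in\mathcal L_{\mathbf p}(b)$ one has $a\equiv a'\ (Ab)$ (both realize $(\mathbf p_l)_{\restriction Ab}$), so some $\sigma\in\Aut_{Ab}(\Mon)$ sends $a$ to $a'$, whence $b\in\mathcal L_{\mathbf q}(a)\Leftrightarrow\sigma b\in\mathcal L_{\mathbf q}(\sigma a)\Leftrightarrow b\in\mathcal L_{\mathbf q}(a')$. Thus ``$b\in\mathcal L_{\mathbf q}(a)$ for $a\in\mathcal L_{\mathbf p}(b)$'' is a well-defined value $X_L(b)\in\{\mathcal L,\mathcal R\}$; similarly one obtains $X_R(b)$ (with $a$ ranging over $\mathcal R_{\mathbf p}(b)$) and, exchanging the roles of $p,q$, values $Y_L(a),Y_R(a)$. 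Each of these four functions is $\Aut_A(\Mon)$-invariant (the operators $\mathcal L_{\mathbf p},\mathcal R_{\mathbf p},\mathcal L_{\mathbf q},\mathcal R_{\mathbf q}$ commute with $A$-automorphisms, since $\mathbf p_l,\mathbf p_r,\mathbf q_l,\mathbf q_r$ are $A$-invariant), and $p,q$ are complete, so all four are constant, say $X_L,X_R,Y_L,Y_R$. Now read off combinations two ways: any independent pair with $b\in\mathcal L_{\mathbf q}(a)$ has combination $(Y_L,\mathcal L)$, which must coincide with $(\mathcal L,X_L)$ if $Y_L=\mathcal L$ and with $(\mathcal R,X_R)$ if $Y_L=\mathcal R$; similarly a pair with $b\in\mathcal R_{\mathbf q}(a)$ has combination $(Y_R,\mathcal R)$, equal to $(\mathcal L,X_L)$ or $(\mathcal R,X_R)$ according to $Y_R$. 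Since for every $a$ both $\mathcal L_{\mathbf q}(a)$ and $\mathcal R_{\mathbf q}(a)$ are nonempty, these identifications actually occur, and a short case check forces exactly one of two scenarios: either $Y_L=\mathcal L,X_L=\mathcal L,Y_R=\mathcal R,X_R=\mathcal R$, in which the realized combinations are precisely $(\mathcal L,\mathcal L)$ and $(\mathcal R,\mathcal R)$ (so (II) holds and (I) fails), or $Y_L=\mathcal R,X_R=\mathcal L,Y_R=\mathcal L,X_L=\mathcal R$, in which they are precisely $(\mathcal R,\mathcal L)$ and $(\mathcal L,\mathcal R)$ (so (I) holds and (II) fails). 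This proves (i).

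Part (ii) is then immediate: $a\triangleleft^{\mathbf q}b\triangleleft^{\mathbf p}a$ says exactly that $b\in\mathcal R_{\mathbf q}(a)$ and $a\in\mathcal R_{\mathbf p}(b)$, i.e.\ that the combination $(\mathcal R,\mathcal R)$ is realized, which by the dichotomy happens iff we are in the first scenario, i.e.\ iff (II) holds. For part (iii), the four listed conditions translate, in order, into ``some independent pair realizes $(\mathcal R,\mathcal L)$'', ``some independent pair realizes $(\mathcal L,\mathcal R)$'', ``$X_R=\mathcal L$'', and ``$X_L=\mathcal R$'' (the last two because a pair with first coordinate $\mathcal R$, resp.\ $\mathcal L$, has second coordinate $X_R$, resp.\ $X_L$, and dependent pairs make the implications vacuous); by the dichotomy each of these is equivalent to being in the second scenario, i.e.\ to (I), where for the ``for all'' statements one uses that $\mathcal R_{\mathbf p}(b)$, resp.\ $\mathcal L_{\mathbf p}(b)$, is nonempty so the condition is non-vacuous.

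The step I expect to need the most care is the constancy argument: being precise that $X_L,X_R,Y_L,Y_R$ are well-defined (this is exactly where homogeneity of $\Mon$ and completeness of $p,q$ enter, via $a\equiv a'\ (Ab)$ within a single piece) and genuinely $\Aut_A(\Mon)$-invariant, and that the hypothesis $p\nwor q$ is invoked correctly so that the pieces $\mathcal L_{\mathbf p}(b),\mathcal R_{\mathbf p}(b)$ and their $\mathbf q$-counterparts are nonempty and disjoint. Everything after that --- the $2\times 2$ case analysis and the translations in (ii) and (iii) --- is routine bookkeeping.
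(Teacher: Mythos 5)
Your proposal is correct and follows essentially the same route as the paper: forking symmetry together with Lemma \ref{Lemma_R_p_basic} reduces everything to how independent pairs $(a,b)$ are matched into left/right generics, and the $A$-invariance of $\mathbf p_l,\mathbf p_r,\mathbf q_l,\mathbf q_r$ plus homogeneity shows this matching is uniform, yielding the exclusive dichotomy from which (ii) and (iii) follow by direct translation. Your constancy functions $X_L,X_R,Y_L,Y_R$ are just a more hands-on packaging of the paper's observation that, for fixed $a\models p$, the conditions $a\in\mathcal L_{\mathbf p}(x)$ and $a\in\mathcal R_{\mathbf p}(x)$ determine the two completions $(\mathbf q_l)_{\restriction Aa}$ and $(\mathbf q_r)_{\restriction Aa}$ of $q$ over $Aa$.
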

\begin{proof}
(i) Let $a\models p$ and $b\models q$.
By Lemma \ref{Lemma_R_p_basic}(ii), we know that $a\ind_A b$ is equivalent to $a\in \mathcal L_{\mathbf p}(b)\cup \mathcal R_{\mathbf p}(b)$. By independence symmetry, proved in Lemma \ref{Lemma_forking_symmetry}, another equivalent condition is $b\in \mathcal L_{\mathbf q}(a)\cup \mathcal R_{\mathbf q}(a)$. Therefore, for all $x\models q$, we have:
\setcounter{equation}{0}
\begin{equation}\tag{$*$}
  \mbox{$a\in \mathcal L_{\mathbf p}(x)\cup \mathcal R_{\mathbf p}(x)$ \ \ if and only if \ \ $x\in \mathcal L_{\mathbf q}(a)\cup \mathcal R_{\mathbf q}(a)$. } 
\end{equation}
By $A$-invariance of $\mathbf p_l$ and $\mathbf p_r$, each of $a\in \mathcal L_{\mathbf p}(x)\cup q(x)$ and $a\in\mathcal R_{\mathbf p}(x)\cup q(x)$ determines a completion of $q(x)$ in $S(Aa)$; denote these completions by $q_L$ and $q_R$, respectively, and note that they are different because of $p\nwor q$. Since
$x\in \mathcal L_{\mathbf q}(a)$ determines the type $(\mathbf q_l)_{\restriction Aa}(x)$ and since $x\in\mathcal R_{\mathbf q}(a)$ determines $(\mathbf q_r)_{\restriction Aa}(x)$, the equivalence in ($*$)  can be expressed by $\{q_L,q_R\}=\{(\mathbf q_l)_{\restriction Aa}, (\mathbf q_r)_{\restriction Aa}\}$. Here, we have two possibilities. 
The first is $q_R(x)=(\mathbf q_l)_{\restriction Aa}(x)$ and $q_L(x)=(\mathbf q_r)_{\restriction Aa}(x)$ (for all $a\models p$); this is equivalent to (I). The second, $q_L(x)=(\mathbf q_l)_{\restriction Aa}(x)$ and $q_R(x)=(\mathbf q_r)_{\restriction Aa}(x)$, corresponds to (II).

(ii)  Let $a\models p$ and $b\models q$ satisfy $a\triangleleft^{\mathbf q}b$ ($b\in \mathcal R_{\mathbf p}(a)$).  Then, since $p\nwor q$ implies $\mathcal L_{\mathbf p}(b)\neq \mathcal R_{\mathbf p}(b)$, $b\triangleleft^{\mathbf p}a$ ($a\in \mathcal R_{\mathbf p}(b)$) is inconsistent with (I) and is therefore equivalent to (II).

(iii) Each of the four conditions contradicts (II) and is therefore equivalent to (I).

\end{proof}

\begin{Definition}
Let $\mathbf p=(p,<_p)$ and $\mathbf q=(q,<_q)$ be so-pairs over $A$ and $p\nwor q$. The pairs $\mathbf p$ and $\mathbf q$   are {\em directly nonorthogonal}, denoted by $\delta_A(\mathbf p,\mathbf q)$, if option (I) of Lemma \ref{Lemma_delta_options} holds. That is,  whenever $a\models p$ is right $\mathbf p$-generic over $b\models q$, then $b$ is left $\mathbf q$-generic over $a$, and vice versa, whenever $b$ is right $\mathbf q$-generic over $a$, then $a$ is left $\mathbf p$-generic over $b$. 
\end{Definition}

\begin{Remark}\phantomsection\label{Remark_basic_delta}
\begin{enumerate}[(a)]
\item Immediately from the definition, we find that $\delta$ is symmetric: $\delta_A(\mathbf p,\mathbf q)$ iff $\delta_A(\mathbf q,\mathbf p)$.
    \item By Lemma \ref{Lemma_delta_options}(i), $p\nwor q$ implies that exactly one of $\delta_A(\mathbf p,\mathbf q)$ and $\delta_A(\mathbf p,\mathbf q^*)$ is true. 
\end{enumerate}
\end{Remark} 
 
The direct nonorthogonality of so-pairs extends the idea of orders (on a fixed domain) possesing a specific orientation, as defined in Definition \ref{Definition same orientation}. This is justified by the next lemma.  
\begin{Lemma}\label{Lemma_delta_pp}
Let $\mathbf p=(p,<_p)$ and $\mathfrak p=(p,<)$ be so-pairs over $A$. 
 $\delta_A(\mathfrak p, \mathbf p)$ if and only if $\mathfrak p_r=\mathbf p_r$  if and only if the orders $<_p$ and $<$ have the same orientation (on $p(\Mon)$) if and only if   $B\triangleleft^{\mathbf p} a \Leftrightarrow B\triangleleft^{\mathfrak p} a$ for all $B$ and all $a\models p$.
 
\end{Lemma}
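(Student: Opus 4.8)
The plan is to establish the four-way equivalence by linking everything to the left and right globalizations $\mathfrak p_l,\mathfrak p_r$ and $\mathbf p_l,\mathbf p_r$. First I would recall that, since $\mathfrak p$ and $\mathbf p$ share the same underlying type $p$, both globalization pairs have the same unordered set of $A$-invariant extensions: by Fact \ref{Fact_so_basic}(ii), $\{\mathfrak p_l,\mathfrak p_r\}=\{\mathbf p_l,\mathbf p_r\}$, so either $\mathfrak p_r=\mathbf p_r$ (equivalently $\mathfrak p_l=\mathbf p_l$) or $\mathfrak p_r=\mathbf p_l$ (equivalently $\mathfrak p_l=\mathbf p_r$). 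This dichotomy is exactly the one governing $\approx_p$ (Lemma \ref{Lemma_same_orient} and its proof), so the equivalence ``$\mathfrak p_r=\mathbf p_r$ $\iff$ $<$ and $<_p$ have the same orientation'' is immediate from Definition \ref{Definition same orientation} and Lemma \ref{Lemma_same_orient}: the first class of $\approx_p$ corresponds to $\mathfrak p_r=\mathbf p_r$, the other to $\mathfrak p\approx_p\mathbf p^*$, i.e.\ $\mathfrak p_r=\mathbf p_l$.

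Next I would handle ``$\mathfrak p_r=\mathbf p_r$ $\iff$ for all $B$ and all $a\models p$, $B\triangleleft^{\mathbf p}a\iff B\triangleleft^{\mathfrak p}a$''. By definition $B\triangleleft^{\mathbf p}a$ means $a\models(\mathbf p_r)_{\restriction AB}$ and $B\triangleleft^{\mathfrak p}a$ means $a\models(\mathfrak p_r)_{\restriction AB}$. If $\mathfrak p_r=\mathbf p_r$ then these restrictions coincide for every $B$, giving one direction trivially. For the converse, suppose the $\mathbf p$- and $\mathfrak p$-genericity relations agree; take $B=\emptyset$ (or $B=A$) and a single $a$ realizing $\mathbf p_r{\restriction}A$: then $A\triangleleft^{\mathbf p}a$ holds, hence $A\triangleleft^{\mathfrak p}a$, so $a\models\mathfrak p_r{\restriction}A$ too; but each of $\mathbf p_r$ and $\mathfrak p_r$ is $A$-invariant and determined by its restriction to $A$ together with the direction of the order, and the only way $\mathbf p_r{\restriction}A$ and $\mathfrak p_r{\restriction}A$ can have a common realization $a$ with $A\triangleleft^{\mathbf p}a$ forcing $A\triangleleft^{\mathfrak p}a$ (rather than $a$ being $\mathfrak p$-left-generic) is $\mathfrak p_r=\mathbf p_r$; alternatively, pick $a$ and then $b\models p$ with $a\triangleleft^{\mathbf p}b$, note $a\triangleleft^{\mathfrak p}b$ by hypothesis, and use Lemma \ref{Lemma_R_p_basic2}(v) together with Lemma \ref{Lemma_forking_symmetry} to pin down the common direction. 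I'd phrase this via: if $\mathfrak p_r=\mathbf p_l\neq\mathbf p_r$, then picking $a,b\models p$ with $a\triangleleft^{\mathbf p}b$ we get $b\triangleleft^{\mathfrak p}a$ by Lemma \ref{Lemma_R_p_basic2}(ii) (since $\mathfrak p^*$ has $\mathfrak p^*_r=\mathbf p_r$), contradicting $a\triangleleft^{\mathfrak p}b$ because $\triangleleft^{\mathfrak p}$ is a strict partial order — this is the cleanest way and uses only already-proved facts.

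Finally, the link to $\delta_A$. Applying Definition of direct nonorthogonality with $\mathbf q=\mathfrak p$ (note $p\nwor p$ trivially, as $p(x)\cup p(y)\cup\{x=y\}$ is consistent, so $\delta_A(\mathfrak p,\mathbf p)$ is defined): option (I) of Lemma \ref{Lemma_delta_options} reads, for all $a,a'\models p$, $a\in\mathcal L_{\mathfrak p}(a')\iff a'\in\mathcal R_{\mathbf p}(a)$ and $a\in\mathcal R_{\mathfrak p}(a')\iff a'\in\mathcal L_{\mathbf p}(a)$. Using Fact \ref{Fact_so_basic}(iv) applied to $\mathbf p$ — which says $a'\in\mathcal R_{\mathbf p}(a)\iff a\in\mathcal L_{\mathbf p}(a')$ — option (I) becomes $\mathcal L_{\mathfrak p}(a')=\mathcal L_{\mathbf p}(a')$ for all $a'$, equivalently $(\mathfrak p_l)_{\restriction Aa'}=(\mathbf p_l)_{\restriction Aa'}$ for all $a'$, equivalently $\mathfrak p_l=\mathbf p_l$ by $A$-invariance, i.e.\ $\mathfrak p_r=\mathbf p_r$. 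Conversely $\mathfrak p_r=\mathbf p_r$ makes all four biconditionals in (I) hold. Thus $\delta_A(\mathfrak p,\mathbf p)\iff\mathfrak p_r=\mathbf p_r$, closing the cycle. The main obstacle — such as it is — is being careful that in the ``generic elements agree'' clause one really can derive $\mathfrak p_r=\mathbf p_r$ from agreement over a single small $B$; the argument above via the strict-partial-order property of $\triangleleft^{\mathfrak p}$ (equivalently, via the fact that $\mathbf p_l\neq\mathbf p_r$ when $p$ is taken non-algebraic, or directly that $\mathcal L$ and $\mathcal R$ of $\mathbf p$ over $a$ are distinct loci) resolves it, and I would make sure to state the reduction so that it covers the algebraic case vacuously (when $p(\Mon)$ is a single point all four conditions are trivially equivalent).
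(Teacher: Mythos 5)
Your proposal is correct and follows essentially the same route as the paper: both arguments rest on the dichotomy $\{\mathfrak p_l,\mathfrak p_r\}=\{\mathbf p_l,\mathbf p_r\}$ from Fact \ref{Fact_so_basic}(ii), identify the case $\mathfrak p_r=\mathbf p_r$ with sameness of orientation via Lemma \ref{Lemma_same_orient}, read off the agreement of generics directly in that case, and rule out $\delta_A$ in the opposite case by producing $a\triangleleft^{\mathbf p}b\triangleleft^{\mathfrak p}a$ (the paper cites Lemma \ref{Lemma_delta_options}(ii) where you invoke antisymmetry of $\triangleleft^{\mathfrak p}$). Your organization as a chain of pairwise equivalences rather than the paper's two-case analysis, and the slightly loose ``by $A$-invariance'' justification (which is really the dichotomy plus $p\nwor p$ forcing $(\mathbf p_l)_{\restriction Aa'}\neq(\mathbf p_r)_{\restriction Aa'}$), are only cosmetic differences.
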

\begin{proof} As $\mathbf p_r$ and $\mathfrak p_r$ ($\mathfrak p_r$ and $\mathfrak p_l$) are the only nonforking global extensions of $\mathbf p$ ($\mathfrak p$), we have $\{\mathbf p_l,\mathbf p_r\}=\{\mathfrak p_l,\mathfrak p_r\}$. There are two possible cases: 

(I) \ $\mathbf p_l=\mathfrak p_l \land \mathbf p_r=\mathfrak p_r$;  \ In this case, the left (right) $\mathbf p$- and $\mathfrak p$-genericity are of $a$ over $B$ are equivalent. In particular, $a$ is left (right) $\mathbf p$-generic over $b\models p$ if and only if $b$ is right (left) $\mathbf p$-generic over $a$;  so $\delta_A(\mathbf p,\mathfrak p)$. Also, from the proof of Lemma \ref{Lemma_same_orient}, we have that orders $<_p$ and $<$ have the same orientation.      

(II) \ $\mathbf p_l=\mathfrak p_r \land \mathbf p_r=\mathfrak p_l$; \ $<_p$ and $<$ have opposite orientations. If $a\models p$ is left $\mathbf p$-generic over $b\models p$, $a\triangleleft^{\mathbf p} b$, then $\mathbf p_l=\mathfrak p_r$ implies that $a$ is right $\mathfrak p$-generic over $b$; then $b$ is left $\mathfrak p$-generic over $a$. By 
Lemma \ref{Lemma_delta_options}(ii), $a\triangleleft^{\mathbf p}b\triangleleft^{\mathfrak p} a$ implies $\lnot\delta_A(\mathbf p,\mathfrak p)$.  

Therefore, case (I) corresponds to $\delta_A(\mathbf p,\mathfrak p)$  and case (II) to $\lnot\delta_A(\mathbf p,\mathfrak p)$.
\end{proof}
        
\begin{Lemma}\label{Lemma_delta_equivalents}
Let $\mathbf p=(p,<_p)$ and $\mathbf q=(q,<_q)$ be so-pairs over $A$ and $p\nwor q$. Then $\delta_A(\mathbf p,\mathbf q)$ is equivalent to each of the following conditions:
\begin{enumerate}[\hspace{10pt}(1)]
\item for all $a_1<_p a_2\models p$ \   $\mathcal R_{\mathbf q}(a_1)\supseteq  \mathcal R_{\mathbf q}(a_2)$; \ that is \  $a_1<_p a_2\triangleleft^{\mathbf q}x$ implies $a_1\triangleleft^{\mathbf q}x$;

\item for all $a_1,a_2\models p$: \  $a_1\triangleleft^{\mathbf p} a_2 \Leftrightarrow \mathcal R_{\mathbf q}(a_1)\supsetneq  \mathcal R_{\mathbf q}(a_2)$;

\item There do not exist $a\models p$ and $b\models q$ such that $a\triangleleft^{\mathbf q}b \triangleleft^{\mathbf p} a$.  
\end{enumerate}
\end{Lemma}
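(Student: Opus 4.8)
The plan is to prove the cycle $\delta_A(\mathbf p,\mathbf q)\Leftrightarrow(3)$, $\delta_A(\mathbf p,\mathbf q)\Rightarrow(1)$, $(1)\Rightarrow(3)$, $\delta_A(\mathbf p,\mathbf q)\Rightarrow(2)$ and $(2)\Rightarrow(1)$, which together yield all the equivalences. The equivalence $\delta_A(\mathbf p,\mathbf q)\Leftrightarrow(3)$ is immediate: by Lemma~\ref{Lemma_delta_options}(ii), condition (II) there holds exactly when some $a\models p$, $b\models q$ satisfy $a\triangleleft^{\mathbf q}b\triangleleft^{\mathbf p}a$, and $\delta_A(\mathbf p,\mathbf q)$ is by definition the alternative (I); so (3) simply negates (II). Throughout I will use the two reformulations of (I) and (II) from Lemma~\ref{Lemma_delta_options}(i): under $\delta_A(\mathbf p,\mathbf q)$, $b\in\mathcal R_{\mathbf q}(a)\Leftrightarrow a\in\mathcal L_{\mathbf p}(b)$ and $b\in\mathcal L_{\mathbf q}(a)\Leftrightarrow a\in\mathcal R_{\mathbf p}(b)$, while under its negation $b\in\mathcal R_{\mathbf q}(a)\Leftrightarrow a\in\mathcal R_{\mathbf p}(b)$ and $b\in\mathcal L_{\mathbf q}(a)\Leftrightarrow a\in\mathcal L_{\mathbf p}(b)$. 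I also use freely that $\mathcal L_{\mathbf p}(b)$ is an initial and $\mathcal R_{\mathbf p}(b)$ a final part of $(p(\Mon),<_p)$ (Lemma~\ref{Lemma_R_p_basic}(i)) --- so any two sets $\mathcal R_{\mathbf q}(a_1),\mathcal R_{\mathbf q}(a_2)$ are $\subseteq$-comparable --- that $p\nwor q$ (hence $q\nwor p$, by the obvious symmetry) forces $\mathcal R_{\mathbf p}(b)$ to be a nonempty proper subset of $p(\Mon)$ and $\mathcal L_{\mathbf q}(a)\cap\mathcal R_{\mathbf q}(a)=\emptyset$ (Lemma~\ref{Lemma_R_p_basic}(iv)), the transitivity $B\triangleleft^{\mathbf p}a<_pa'\Rightarrow B\triangleleft^{\mathbf p}a'$ and the formula $a_1\triangleleft^{\mathbf p}a_2\Leftrightarrow(a_1\ind_Aa_2\land a_1<_pa_2)$ (Lemma~\ref{Lemma_R_p_basic2}(i),(v)), and the density property (Lemma~\ref{Lemma_existence_anddensity_ofrightgenerics}), invoked with $\mathbf p$ and $\mathbf q$ in whichever order is convenient.

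For $\delta_A(\mathbf p,\mathbf q)\Rightarrow(1)$: let $a_1<_pa_2\models p$ and $b\models q$ with $a_2\triangleleft^{\mathbf q}b$, i.e. $b\in\mathcal R_{\mathbf q}(a_2)$; by (I), $a_2\in\mathcal L_{\mathbf p}(b)$, and since this is an initial part and $a_1<_pa_2$ we get $a_1\in\mathcal L_{\mathbf p}(b)$, whence $b\in\mathcal R_{\mathbf q}(a_1)$ by (I) again, i.e. $a_1\triangleleft^{\mathbf q}b$. For $(1)\Rightarrow(3)$ I argue contrapositively: assume $\neg\delta_A(\mathbf p,\mathbf q)$, so (II) holds; fix $b\models q$ and, using that $\mathcal R_{\mathbf p}(b)$ is a nonempty proper final part of $(p(\Mon),<_p)$, pick $a_1\in p(\Mon)\smallsetminus\mathcal R_{\mathbf p}(b)$ and $a_2\in\mathcal R_{\mathbf p}(b)$, so $a_1<_pa_2$; by (II), $a_2\in\mathcal R_{\mathbf p}(b)$ gives $b\in\mathcal R_{\mathbf q}(a_2)$ and $a_1\notin\mathcal R_{\mathbf p}(b)$ gives $b\notin\mathcal R_{\mathbf q}(a_1)$, so $\mathcal R_{\mathbf q}(a_1)\not\supseteq\mathcal R_{\mathbf q}(a_2)$ and (1) fails.

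For $\delta_A(\mathbf p,\mathbf q)\Rightarrow(2)$: if $a_1\triangleleft^{\mathbf p}a_2$ then $a_1<_pa_2$, so $\mathcal R_{\mathbf q}(a_1)\supseteq\mathcal R_{\mathbf q}(a_2)$ by the already-established (1); interpolating by density a $b\models q$ with $a_1\triangleleft^{\mathbf q}b\triangleleft^{\mathbf p}a_2$, (I) turns $b\triangleleft^{\mathbf p}a_2$ into $b\in\mathcal L_{\mathbf q}(a_2)$, so $b\notin\mathcal R_{\mathbf q}(a_2)$ while $b\in\mathcal R_{\mathbf q}(a_1)$, giving the strict inclusion. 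Conversely, if $\mathcal R_{\mathbf q}(a_1)\supsetneq\mathcal R_{\mathbf q}(a_2)$, pick $b\in\mathcal R_{\mathbf q}(a_1)\smallsetminus\mathcal R_{\mathbf q}(a_2)$; density (with the roles of $\mathbf p$ and $\mathbf q$ swapped) gives $a'\models p$ with $a_1\triangleleft^{\mathbf p}a'\triangleleft^{\mathbf q}b$, and (I) turns $a'\triangleleft^{\mathbf q}b$ into $a'\in\mathcal L_{\mathbf p}(b)$ and $b\notin\mathcal R_{\mathbf q}(a_2)$ into $a_2\notin\mathcal L_{\mathbf p}(b)$, so $a'<_pa_2$ (initial part), whence $a_1\triangleleft^{\mathbf p}a'<_pa_2$ gives $a_1\triangleleft^{\mathbf p}a_2$ by transitivity. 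Finally $(2)\Rightarrow(1)$: for $a_1<_pa_2$ the comparable final parts $\mathcal R_{\mathbf q}(a_1),\mathcal R_{\mathbf q}(a_2)$ satisfy either $\mathcal R_{\mathbf q}(a_1)\supseteq\mathcal R_{\mathbf q}(a_2)$, which is (1), or $\mathcal R_{\mathbf q}(a_2)\supsetneq\mathcal R_{\mathbf q}(a_1)$, which by (2) forces $a_2\triangleleft^{\mathbf p}a_1$ and hence $a_2<_pa_1$, contradicting $a_1<_pa_2$. The only place where real care is needed is the bookkeeping: keeping track of which of the two equivalences from Lemma~\ref{Lemma_delta_options}(i) is in force, and which argument of the density lemma plays the role of ``$\mathbf p$''; beyond that, every step is a short deduction from the initial/final part structure of the sets $\mathcal R_{\mathbf q}(a)$ and $\mathcal L_{\mathbf p}(b)$.
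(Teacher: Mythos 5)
Your proof is correct, but its decomposition is genuinely different from the paper's. The paper runs the chain $\delta_A(\mathbf p,\mathbf q)\Rightarrow(1)\Rightarrow(2)\Rightarrow(3)\Leftrightarrow\delta_A(\mathbf p,\mathbf q)$, and its technical core is the implication $(1)\Rightarrow(2)$, proved from (1) alone: the forward direction needs an automorphism/realization-transfer argument (producing $a$ with $\mathcal R_{\mathbf q}(a_1)\supsetneq\mathcal R_{\mathbf q}(a)$ and $aa_1\triangleleft^{\mathbf p}a_2$), and the backward direction excludes $a_2<_pa_1$ and $a_2\dep_Aa_1$ via Lemma \ref{Lema_Dpa_subset_Dqa}(ii). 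You instead make $\delta_A(\mathbf p,\mathbf q)$ the hub: $(3)\Leftrightarrow\delta_A$ comes straight from the exclusive dichotomy (I)/(II) of Lemma \ref{Lemma_delta_options}, $(1)\Rightarrow\delta_A$ is a short contrapositive exploiting (II), and you only prove $(2)$ \emph{from} $\delta_A$, where condition (I) is available, so the strict inclusion and its converse reduce to two applications of the density lemma (Lemma \ref{Lemma_existence_anddensity_ofrightgenerics}) plus the initial/final-part bookkeeping and transitivity (Lemma \ref{Lemma_R_p_basic2}); $(2)\Rightarrow(1)$ is then a one-line comparability-of-final-parts argument. The payoff of your route is that the automorphism argument and Lemma \ref{Lema_Dpa_subset_Dqa}(ii) are avoided entirely; the (harmless) price is that you do not obtain $(1)\Rightarrow(2)$ directly but only through $\delta_A$, which is all the lemma asserts anyway. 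Your side conditions are used correctly: $p\nwor q$ guarantees that $\mathcal R_{\mathbf p}(b)$ is a nonempty proper final part and that $\mathcal L_{\mathbf q}(a)\cap\mathcal R_{\mathbf q}(a)=\emptyset$ (Lemma \ref{Lemma_R_p_basic}(iv)), which is exactly what your $(1)\Rightarrow(3)$ and strictness arguments need.
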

\begin{proof}
 $\delta_A(\mathbf p,\mathbf q)\Rightarrow$(1) Assume $\delta(\mathbf p,\mathbf q)$. Let $a_1<_p a_2$ realize $p$. Suppose $b\in \mathcal R_{\mathbf q}(a_2)$. Then $a_2\in \mathcal L_{\mathbf p}(b)$ holds by condition (I) of Lemma \ref{Lemma_delta_options}(i).
Combining with $a_1<_p a_2$ and the fact that $\mathcal L_{\mathbf p}(b)$ is an initial part of $(p(\Mon),<)$, we derive $a_1\in \mathcal L_{\mathbf p}(b)$; $b\in \mathcal R_{\mathbf q}(a_1)$ follows by condition (I). Therefore,  $b\in \mathcal R_{\mathbf q}(a_2)$ implies $b\in \mathcal R_{\mathbf q}(a_1)$, proving (1).

(1)$\Rightarrow$(2) Assume (1) and let $a_1,a_2\models p$. For the left-right direction of $a_1\triangleleft^{\mathbf p} a_2 \Leftrightarrow \mathcal R_{\mathbf q}(a_1)\supsetneq  \mathcal R_{\mathbf q}(a_2)$, assume $a_1\triangleleft^{\mathbf p}a_2$. 
Choose $a'\models p$ satisfying $\mathcal R_{\mathbf q}(a_1
)\supsetneq \mathcal R_{\mathbf q}(a')$; this is possible since $p\nwor q$ implies that $\mathcal R_{\mathbf q}(a_1)$ is a proper final part of $(p(\Mon),<_p)$. Next, choose $a_2'$ satisfying \ $a'a_1\triangleleft^{\mathbf p} a_2'$. In particular, $a_1\triangleleft^{\mathbf p}a_2'$ holds and hence $a_1a_2\equiv a_1a_2'\ (A)$. Thus, we can find $a\models p$ such that $a_1a_2a\equiv a_1a_2'a'\ (A)$; then $\mathcal R_{\mathbf q}(a_1)\supsetneq \mathcal R_{\mathbf q}(a)$ and $aa_1\triangleleft^{\mathbf p} a_2$ hold. Then $a\triangleleft^{\mathbf p}a_2$, by (1), implies $\mathcal R_{\mathbf q}(a)\supseteq \mathcal R_{\mathbf q}(a_2)$, which together with $\mathcal R_{\mathbf q}(a_1)\supsetneq \mathcal R_{\mathbf q}(a)$ implies $\mathcal R_{\mathbf q}(a_1)\supsetneq \mathcal R_{\mathbf q}(a_2)$, as desired.

To prove the right-to-left direction, assume (1) and $\mathcal R_{\mathbf q}(a_1)\supsetneq \mathcal R_{\mathbf q}(a_2)$; we need to prove $a_1\triangleleft^{\mathbf p}a_2$. Let $b\in \mathcal R_{\mathbf q}(a_1)\smallsetminus \mathcal R_{\mathbf q}(a_2)$; then $a_1\triangleleft^{\mathbf q} b$ and $a_1\nequiv a_2\ (Ab)$.
To prove $a_1\triangleleft^{\mathbf p}a_2$, it suffices to rule out $a_2<_p a_1$ and $a_2\dep_A a_1$. We do this by showing that each of them leads to $a_1\equiv a_2\,(Ab)$: for $a_2<_p a_1$, note that $a_2<_pa_1\triangleleft^{\mathbf  q}b$, by (1), implies $a_2\triangleleft^{\mathbf p}b$, which together with $a_1\triangleleft^{\mathbf  q}b$ implies $a_1\equiv a_2\,(Ab)$ implies; for $a_2\dep_Aa_1$, note that $a_1\ind_A b$ and $a_1\dep_A a_2$, by Lemma \ref{Lema_Dpa_subset_Dqa}(ii), imply $a_1\equiv a_2\,(Ab)$.    

\smallskip
(2)$\Rightarrow$(3) Assume (2). Toward a contradiction, suppose $a\triangleleft^{\mathbf q}b\triangleleft^{\mathbf p}a$; in particular, $b\in \mathcal R_{\mathbf q}(a)$. Choose $a'\models p$ such that $b\notin\mathcal R_{\mathbf q}(a')$; this is possible as $p\nwor q$ is valid. Then $b\in\mathcal R_{\mathbf q}(a)\smallsetminus \mathcal R_{\mathbf q}(a')$. Since $\mathcal R_{\mathbf q}(a)$ and $\mathcal R_{\mathbf q}(a')$ are $\subseteq$-comparable (as the final parts of $(p(\Mon),<_p)$), we conclude $\mathcal R_{\mathbf q}(a)\supsetneq \mathcal R_{\mathbf q}(a')$, so $a\triangleleft^{\mathbf p}a'$ follows by (2); in particular, $a<_pa'$. As $\mathcal R_{\mathbf p}(b)$ is a final part of $(\Mon)$, $b\triangleleft^{\mathbf p}a <_pa'$ implies $b\triangleleft^{\mathbf p}b$ and thus $a\equiv a'\ (Ab)$; this is in contradiction with $b\in\mathcal R_{\mathbf q}(a)\smallsetminus\mathcal R_{\mathbf q}(a')$.

(3)$\Leftrightarrow \delta_A(\mathbf p,\mathbf q)$ follows from Lemma \ref{Lemma_delta_options}.
\end{proof}

In part (i) of the lemma below, we prove a general form of transitivity of $\triangleleft$; further in the text, we will refer to it as the transitivity property.

\begin{Lemma}\label{Lema_basic_delta_transitivity}
Let $\mathbf p=(p,<_p)$ and $\mathbf q=(q,<_q)$ be so-pairs over $A$ with $\delta_A(\mathbf p,\mathbf q)$ 
\begin{enumerate}[(i)]
\item For all $a,b$ and $B$:   $B\triangleleft^{\mathbf p}a\land a\triangleleft^{\mathbf q}b$  implies $B\triangleleft^{\mathbf q} b$.
\item For all $a\models p,b\models q$ and all $B$:\   $B\triangleleft^{\mathbf p}a\land a\dep_Ab$  implies $B\triangleleft^{\mathbf q} b$.
\end{enumerate}
\end{Lemma}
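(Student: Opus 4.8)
The plan is to prove both parts at once by a single construction: under the hypotheses I will manufacture a realization $b_0'\models q$ which is simultaneously \emph{right $\mathbf q$-generic over $B$} and \emph{left $\mathbf q$-generic over $a$}, and then show that $b$ is forced to sit $<_q$-strictly above $b_0'$. Since $\mathcal R_{\mathbf q}(B)$ is a final part of $(q(\Mon),<_q)$ by Lemma \ref{Lemma_R_p_basic}(i) and $b_0'\in\mathcal R_{\mathbf q}(B)$, this immediately yields $b\in\mathcal R_{\mathbf q}(B)$, i.e.\ $B\triangleleft^{\mathbf q}b$, which is the desired conclusion in both (i) and (ii).

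To build $b_0'$ I would first pick, using Remark \ref{Remark_R_p_basic}(a), some $b_0\models q$ with $B\triangleleft^{\mathbf q}b_0$ and then some $a_0\models p$ with $Bb_0\triangleleft^{\mathbf p}a_0$; since $\mathbf p_r$ is a fixed global type, restricting $(\mathbf p_r)_{\restriction ABb_0}$ gives both $B\triangleleft^{\mathbf p}a_0$ and $b_0\triangleleft^{\mathbf p}a_0$, i.e.\ $a_0\in\mathcal R_{\mathbf p}(b_0)$. Now $\delta_A(\mathbf p,\mathbf q)$ is precisely condition (I) of Lemma \ref{Lemma_delta_options}(i), so applied with $a_0\models p$ and $b_0\models q$ it converts $a_0\in\mathcal R_{\mathbf p}(b_0)$ into $b_0\in\mathcal L_{\mathbf q}(a_0)$. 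Finally, $B\triangleleft^{\mathbf p}a_0$ and $B\triangleleft^{\mathbf p}a$ both say $\tp(\cdot/AB)=(\mathbf p_r)_{\restriction AB}$, so there is $f\in\Aut_{AB}(\Mon)$ with $f(a_0)=a$; setting $b_0':=f(b_0)$, the fact that $f$ fixes $AB$ gives $B\triangleleft^{\mathbf q}b_0'$, and $\Aut_A(\Mon)$-invariance of $\mathbf q_l$ together with $f(a_0)=a$ gives $b_0'\in\mathcal L_{\mathbf q}(a)$.

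It then remains to locate $b$ relative to $b_0'$. Because $p\nwor q$ is part of the hypothesis $\delta_A(\mathbf p,\mathbf q)$, case $3^\circ$ of Lemma \ref{Lemma_R_p_basic}(iv) for the so-pair $\mathbf q$ and the parameter $a$ is excluded, so $\mathcal L_{\mathbf q}(a)<\mathcal D_q(a)<\mathcal R_{\mathbf q}(a)$ is a convex partition of $q(\Mon)$ in which $\mathcal L_{\mathbf q}(a)$ is a proper initial part. In case (i) the hypothesis $a\triangleleft^{\mathbf q}b$ says $b\in\mathcal R_{\mathbf q}(a)$, while in case (ii) the hypothesis $a\dep_A b$ says $b\in\mathcal D_q(a)$ (using independence symmetry, Lemma \ref{Lemma_forking_symmetry}); either way $b\notin\mathcal L_{\mathbf q}(a)$ whereas $b_0'\in\mathcal L_{\mathbf q}(a)$, so $b_0'<_q b$, and the final-part property of $\mathcal R_{\mathbf q}(B)$ finishes the argument. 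The step needing the most care is the automorphism transport: one must check that an $f$ chosen only to move $a_0$ to $a$ over $AB$ automatically carries \emph{both} ``right $\mathbf q$-generic over $B$'' (because $f$ fixes $AB$) and ``left $\mathbf q$-generic over $a_0$'' (because $\mathbf q_l$ is $A$-invariant) — everything else is a bookkeeping of initial/final parts, and the genuine use of $p\nwor q$ is only to guarantee $\mathcal L_{\mathbf q}(a)$ is properly separated from the block containing $b$.
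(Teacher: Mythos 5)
Your argument is correct. For part (ii) it is essentially the paper's proof: the paper invokes the density property (Lemma \ref{Lemma_existence_anddensity_ofrightgenerics}) to get $b'$ with $B\triangleleft^{\mathbf q}b'\triangleleft^{\mathbf p}a$, converts $b'\triangleleft^{\mathbf p}a$ into $b'\in\mathcal L_{\mathbf q}(a)$ via condition (I) of Lemma \ref{Lemma_delta_options}, and then uses $\mathcal L_{\mathbf q}(a)<_q\mathcal D_q(a)$ and the finality of $\mathcal R_{\mathbf q}(B)$ — exactly your chain, except that you re-derive the density lemma inline with the automorphism transport (which is precisely how that lemma is proved, so nothing is lost). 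For part (i), however, your route genuinely differs: the paper argues by contradiction, comparing the nested final parts $\mathcal R_{\mathbf q}(a')\subsetneq\mathcal R_{\mathbf q}(B)\subsetneq\mathcal R_{\mathbf q}(a)$ and invoking the equivalence $a\triangleleft^{\mathbf p}a'\Leftrightarrow\mathcal R_{\mathbf q}(a)\supsetneq\mathcal R_{\mathbf q}(a')$ from Lemma \ref{Lemma_delta_equivalents}(2), whereas you run the same sandwich argument as in (ii), replacing ``$b\in\mathcal D_q(a)$'' by ``$b\in\mathcal R_{\mathbf q}(a)$'' and using $p\nwor q$ to see that $\mathcal R_{\mathbf q}(a)$ is disjoint from $\mathcal L_{\mathbf q}(a)$ (the two loci are distinct complete types over $Aa$, hence disjoint, since case $3^\circ$ of Lemma \ref{Lemma_R_p_basic}(iv) is excluded). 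This buys a uniform, direct proof of both parts that needs only Lemma \ref{Lemma_delta_options}(I), existence of generics, homogeneity, and the initial/final-part facts, bypassing Lemma \ref{Lemma_delta_equivalents} altogether; the paper's version of (i), in exchange, showcases the $\supsetneq$-characterization of $\delta_A$. One cosmetic caveat: when $q\fwor p$ the set $\mathcal D_q(a)$ is empty, so ``$\mathcal L_{\mathbf q}(a)<\mathcal D_q(a)<\mathcal R_{\mathbf q}(a)$ is a convex partition'' should be read with a possibly empty middle block — but all you actually use is that $\mathcal L_{\mathbf q}(a)$ is an initial part disjoint from the block containing $b$, which holds in both cases $1^\circ$ and $2^\circ$.
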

\begin{proof}  
(i)  Suppose not. Then $b\in\mathcal R_{\mathbf q}(a)\smallsetminus \mathcal R_{\mathbf q}(B)$. Since $\mathcal R_{\mathbf q}(B)$ and $\mathcal R_{\mathbf q}(a)$ are the final parts of $(q(\Mon),<_q)$ we get $\mathcal R_{\mathbf q}(B)\subsetneq \mathcal R_{\mathbf q}(a)$. Let $b'\in\mathcal R_{\mathbf q}(B)$. Choose $a'\models p$ such that $b'\in\mathcal L_{\mathbf q}(a')$. Then $\mathcal L_{\mathbf q}(a')<_q\mathcal R_{\mathbf q}(a')$ and
$b'\in\mathcal L_{\mathbf q}(a')$ together imply
$b'<_q\mathcal R_{\mathbf q}(a')$, which combined with $b'\in \mathcal R_{\mathbf q}(B)$ gives
$\mathcal R_{\mathbf q}(a')\subsetneq\mathcal R_{\mathbf q}(B)$. Therefore,  $\mathcal R_{\mathbf q}(a')\subsetneq\mathcal R_{\mathbf q}(B)\subsetneq \mathcal R_{\mathbf q}(a)$ holds and, in particular, $a\nequiv a'\ (AB)$. By Lemma \ref{Lemma_delta_equivalents}(2),   $\mathcal R_{\mathbf q}(a)\supsetneq \mathcal R_{\mathbf q}(a')$ implies $a\triangleleft^{\mathbf p} a'$, which together with
$B\triangleleft^{\mathbf p} a$ implies $B\triangleleft^{\mathbf p} a'$. Consequently, $a,a'\models (\mathbf p_{r})_{\restriction AB}$, which is in contradiction with $a\nequiv a'\,(AB)$.

(ii) Assume that $B\triangleleft^{\mathbf p}a$, $b\models q$, and $a\dep_A b$. Then $b\in\mathcal D_q(a)$. By the density property of $\triangleleft^{\mathbf q}$, there exists $b'$ such that $B\triangleleft^{\mathbf q}b'\triangleleft^{\mathbf p} a$. Now, $\delta_A(\mathbf p,\mathbf q)$ implies that option (I) of Lemma \ref{Lemma_delta_options}(i) holds, so $b'\triangleleft^{\mathbf p}a$ implies $b'\in\mathcal L_{\mathbf q}(a)$ which combined with $b\in \mathcal D_q(a)$ yields $b'<_qb$. Since $\mathcal R_{\mathbf q}(B)$ is a final part of $q(\Mon)$, $B\triangleleft^{\mathbf q}b'<_q b$ implies $B\triangleleft^{\mathbf q}b$, completing the proof of (ii).  
\end{proof}

As we have already remarked, a so-type need not be NIP.   However, so-types share the following property of stationary, weight-one types in stable theories:

\begin{Proposition}\label{Prop_pqr_nwor}
Assume that $p,q\in S(A)$ are so-types.
\begin{enumerate}[(i)]
    \item If \ $p\nwor q$,  then  \  $p\wor r \Leftrightarrow q\wor r$ \  for all $r\in S(A)$.
    \item If \ $p\nfor q$,  then   \ $p\fwor r \Leftrightarrow q\fwor r$  \  for all $r\in S(A)$.
\end{enumerate} 
\end{Proposition}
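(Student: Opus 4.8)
The plan is to prove part (i) first and then derive part (ii) by an analogous (in fact simpler) argument, using the machinery of $\mathbf p$-genericity and direct nonorthogonality developed above. For part (i), by symmetry of $\nwor$ (which, being a symmetric hypothesis, lets us swap $p$ and $q$) it suffices to prove one implication, say $q\wor r\Rightarrow p\wor r$; equivalently, in contrapositive form, $p\nwor r\Rightarrow q\nwor r$. So assume $p\nwor q$ and $p\nwor r$, and fix relatively $A$-definable linear orders making $\mathbf p=(p,<_p)$, $\mathbf q=(q,<_q)$, $\mathbf r=(r,<_r)$ into so-pairs over $A$; by replacing $<_q$ and $<_r$ with their reverses if necessary (Remark \ref{Remark_basic_delta}(b)), we may assume $\delta_A(\mathbf p,\mathbf q)$ and $\delta_A(\mathbf p,\mathbf r)$. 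The goal is to exhibit $b\models q$ and $c\models r$ with $b\dep_A c$, which witnesses $q\nfor r$ and in particular $q\nwor r$.

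The key construction uses the density property (Lemma \ref{Lemma_existence_anddensity_ofrightgenerics}) and the transitivity property (Lemma \ref{Lema_basic_delta_transitivity}). Start with any $a\models p$ realizing, say, $\mathbf p_r$ over $\emptyset$, i.e.\ pick $a$ freely. Using $p\nwor q$ together with $\delta_A(\mathbf p,\mathbf q)$: first choose $b\models q$ with $\emptyset\triangleleft^{\mathbf q}b$, then choose $a\models p$ with $b\triangleleft^{\mathbf p}a$ but with $b\in\mathcal D_q$ relative to a suitable right-generic extension — concretely, the point is that since $p\nwor q$, $\mathcal D_q(a)\ne\emptyset$ for $a$ sitting appropriately, so one can arrange $b\in\mathcal D_q(a)$, i.e.\ $a\dep_A b$. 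Then, using $p\nwor r$ and $\delta_A(\mathbf p,\mathbf r)$ in the same way, insert $c\models r$ with $a\dep_A c$, i.e.\ $c\in\mathcal D_r(a)$, while keeping control of the position of $b$. The heart of the matter is then to show $b\dep_A c$: if we had $b\ind_A c$, then one of $b\triangleleft^{\mathbf q'}c$ or $c\triangleleft^{\mathbf q'}b$ would hold for the induced comparison, and combined with $a\dep_A b$ and $a\dep_A c$ and the transitivity property of Lemma \ref{Lema_basic_delta_transitivity}(ii) (applied in the form: $B\triangleleft^{\mathbf p}a\wedge a\dep_A b\Rightarrow B\triangleleft^{\mathbf q}b$), one derives a contradiction — e.g.\ one shows $b$ and $c$ would both be forced into the same $1$-point locus over a common parameter set, contradicting that they lie in a nonempty $\mathcal D$-class together with $a$. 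The precise bookkeeping is: pick a right-$\mathbf p$-generic $a$ over a large enough base $B$, use density to slip $b$ and $c$ strictly between $B$ and $a$ in the respective orders, conclude via Lemma \ref{Lema_basic_delta_transitivity} that $B\triangleleft^{\mathbf q}b$ and $B\triangleleft^{\mathbf r}c$ would follow from $b\ind_A c$, and then observe this contradicts $a\dep_A b,\ a\dep_A c$ via Lemma \ref{Lema_Dpa_subset_Dqa}(ii).

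For part (ii), replace $\wor$ by $\fwor$, $\nwor$ by $\nfor$, and weak (non)orthogonality by forking (non)orthogonality throughout; the argument is actually cleaner because $\nfor$ is literally "$a\dep_A b$ for some realizations", so we directly produce $b\models q$, $c\models r$ with $b\dep_A c$ from $a\dep_A b$ and $a\dep_A c$ via the transitivity property, with no need to track which direction the order goes. I expect the main obstacle to be the final step — deriving $b\dep_A c$ from $b\ind_A c$ leading to a contradiction — which requires carefully choosing the base set $B$ over which $a$ is generic so that the two applications of Lemma \ref{Lema_basic_delta_transitivity} "collide"; everything else is a routine assembly of density, transitivity, and the convexity of $\mathcal D$-classes (Theorem \ref{Theorem4_1}).
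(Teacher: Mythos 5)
Your plan for part (i) has a genuine gap: you set out to ``exhibit $b\models q$ and $c\models r$ with $b\dep_A c$'', i.e.\ to prove $q\nfor r$, and you justify the existence of such dependent elements by the claim that ``since $p\nwor q$, $\mathcal D_q(a)\ne\emptyset$ for $a$ sitting appropriately''. That claim is false, and with it the whole strategy collapses. Weak nonorthogonality of so-types does not give nonempty dependence classes: by Lemma \ref{Lemma_R_p_basic}(iv) the case $p\fwor \tp(B/A)\land p\nwor\tp(B/A)$ (case 2$^\circ$, where $\mathcal D_p(B)=\emptyset$ and $\mathcal L_{\mathbf p}(B)<\mathcal R_{\mathbf p}(B)$ partitions $p(\Mon)$) is perfectly possible. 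A concrete instance is DLO with $p=q=r$ the unique $1$-type over $\emptyset$: here $p\nwor p$, yet every pair of realizations is independent over $\emptyset$, so there is no dependent pair $(b,c)$ to be found, while the conclusion $q\nwor r$ of part (i) still has to be proved. In other words, under the hypotheses $p\nwor q\land p\nwor r$ the statement $q\nfor r$ you aim for is in general false, so no amount of bookkeeping with density, transitivity and Lemma \ref{Lema_Dpa_subset_Dqa}(ii) can close the argument. The paper's proof of (i) works entirely at the level of genericity, with no forking dependence: fix $C\models r$, use $p\nwor r$ to find $a\models p$ with $a<_p\mathcal R_{\mathbf p}(C)$, choose $b\models q$ with $b\triangleleft^{\mathbf p}a$ and, separately, $C\triangleleft^{\mathbf p}a'\triangleleft^{\mathbf q}b'$; then $b$ satisfies the $AC$-invariant condition ``there is $x\models p$ with $x<_p\mathcal R_{\mathbf p}(C)$ and $b\triangleleft^{\mathbf p}x$'', while $b'$ cannot (an automorphism argument plus the transitivity property $C\triangleleft^{\mathbf p}a'\triangleleft^{\mathbf q}b'\triangleleft^{\mathbf p}a''\Rightarrow C\triangleleft^{\mathbf p}a''$ would force $a''\in\mathcal R_{\mathbf p}(C)$), so $b\nequiv b'\ (AC)$ and $q\nwor r$ follows.

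Your sketch of part (ii) is essentially the paper's argument and is fine in outline: there the hypotheses really do give $a\models p$, $b\models q$, $C\models r$ with $a\dep_A b$ and $a\dep_A C$ (after moving one witness by an $A$-automorphism), and if $b\ind_A C$ then, after reversing orders, one of the genericities $C\triangleleft^{\mathbf q}b$ or $b\triangleleft^{\mathbf r}C$ combined with Lemma \ref{Lema_basic_delta_transitivity}(ii) contradicts one of the two dependences. So the repair needed is only in (i): drop the attempt to reduce it to forking nonorthogonality and argue directly that two suitably chosen realizations of $q$ have distinct types over $AC$.
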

\begin{proof} Let $\mathbf p=(p,<_p)$ and $\mathbf q=(q,<_q)$ be so-pairs over $A$ such that $\delta_A(\mathbf p,\mathbf q)$. 

(i) Assuming $p\nwor q$ and $p\nwor r$ we will prove $q\nwor r$. Let $C\models r$. As $p\nwor r$ implies that $\mathcal R_{\mathbf p}(C)$ is a proper final part of $p(\Mon)$, there is an $a\models p$ with $a<_p\mathcal R_p(C)$. 
Choose $a'\models p$ and $b,b'\models q$ such that: \begin{center} $b\triangleleft^{\mathbf p} a <_p\mathcal R_p(C)$ \ \ and \ \ $C\triangleleft^{\mathbf p} a'\triangleleft^{\mathbf q}b'$;
\end{center}
This is possible because of the existence of generic elements.
We will prove $b\nequiv b' \,(AC)$; clearly, this implies $q\nwor r$. 
Note that $b$ satisfies the following (infinitary) formula: $(\exists x\models p)(x<\mathcal R_p(C) \land b\triangleleft^{\mathbf  p}x)$. Hence, if $b\equiv b'\,(C)$ were valid, there would be a $AC$-automorphism moving $b$ to $b'$ and there would be an $a''\models p$ with $a''<_p\mathcal R_p(C)$ and $b'\triangleleft^{\mathbf p} a''$. Then $C\triangleleft^{\mathbf p} a'\triangleleft^{\mathbf q}b'\triangleleft^{\mathbf p}a''$ would imply $C\triangleleft^{\mathbf p}a''$ and hence $a''\in\mathcal R_p(C)$; contradiction. Therefore, $b\nequiv b' \,(AC)$, completing the proof.

\smallskip
(ii) Assuming $p\nfor q$ and $p\nfor r$ we will prove $q\nfor r$. Let $C\models r$. Choose $a\models p$ and  $b\models q$  such that $a\dep_Ab$ and $b\dep_AC$. We will prove $a\dep_AC$, which implies the desired conclusion. If $a\ind_AC$, then after possibly reversing the orders $<_p$ and $<_q$,  we may assume $C\triangleleft^{\mathbf p}a$. Then $C\triangleleft^{\mathbf p}a\land a\dep_Ab$ holds, so Lemma \ref{Lema_basic_delta_transitivity}(ii) implies   $C\triangleleft^{\mathbf q}b$, which contradicts the initial assumption $b\dep_AC$. Therefore, $a\dep_AC$.        
\end{proof}

\begin{Proposition}\label{Prop_triangle_mathbf_pq_properties}
Let $\mathbf p=(p,<_p)$ and $\mathbf q=(q,<_q)$ be so-pairs over $A$ with $\delta_A(\mathbf p,\mathbf q)$. Then for all $B$, $a\models p$ and $b\models q$:
\begin{enumerate}[ (i)]
\item (Existence) \ There exists a $a'\models p$ such that $B\triangleleft^{\mathbf p}a'$;
\item (Density) \ If $B\triangleleft^{\mathbf q} b$, then there is a $a'\models p$ such that $B\triangleleft^{\mathbf p} a'\triangleleft^{\mathbf q} b$; 
\item (Symmetry) \ $a\ind_A b$ if and only if $b\ind_A a$; 
\item   \   
$a \triangleleft^{\mathbf q} b\Leftrightarrow b\triangleleft^{\mathbf p^*} a$ \ ($b$ is right $\mathbf q$-generic over $a$ if and only if $a$ is left $\mathbf p$-generic over $b$.) 
\item  ($\triangleleft^{\mathbf p}$-comparability) \ $a\ind_A B\Leftrightarrow (B \triangleleft^{\mathbf p} a\vee B\triangleleft^{\mathbf p^*} a)$
\ and \ 
$a\ind_A b\Leftrightarrow (a \triangleleft^{\mathbf q} b\vee b\triangleleft^{\mathbf p} a)$;
\item  (Transitivity) \ $B\triangleleft^{\mathbf p} a \triangleleft^{\mathbf q} b$ \ implies \ $B\triangleleft^{\mathbf q} b$. 
\item   \  $B\triangleleft^{\mathbf p}a\land a\dep_A b$ implies $B\triangleleft^{\mathbf q}b$; that is, $B\triangleleft^{\mathbf p}a$ implies $B\triangleleft^{\mathbf q} x$ \  for all $x\in \mathcal D_{\mathbf q}(a)$.
\item  \ $a\triangleleft^{\mathbf q} b$ \  if and only if  $x \triangleleft^{\mathbf q}y$ for all $x\in \mathcal D_{\mathbf p}(a)$ and $y\in \mathcal D_{\mathbf q}(b)$. 
\end{enumerate}
\end{Proposition}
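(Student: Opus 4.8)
The plan is to obtain parts (i)--(vii) almost directly from the results of Sections~\ref{Section4} and~\ref{Section5}, and then to give a short argument for (viii). The proposition is essentially organizational: it collects into one place the facts about $\triangleleft^{\mathbf p}$, $\triangleleft^{\mathbf q}$ and forking that were established piecemeal, now under the blanket hypothesis $\delta_A(\mathbf p,\mathbf q)$.

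First I would dispose of the parts that are quotations. Part (i) is the existence of generics (Remark~\ref{Remark_R_p_basic}(a)): $(\mathbf p_r)_{\restriction AB}$ is consistent, so by saturation there is $a'\models(\mathbf p_r)_{\restriction AB}$, i.e. $B\triangleleft^{\mathbf p}a'$. Part (ii) is Lemma~\ref{Lemma_existence_anddensity_ofrightgenerics} with the roles of $\mathbf p$ and $\mathbf q$ interchanged (and it does not even use $\delta_A(\mathbf p,\mathbf q)$). Part (iii) is Lemma~\ref{Lemma_forking_symmetry}. Part (iv) is just the definition of $\delta_A(\mathbf p,\mathbf q)$: it \emph{is} option~(I) of Lemma~\ref{Lemma_delta_options}(i), whose first clause reads $a\in\mathcal L_{\mathbf p}(b)\Leftrightarrow b\in\mathcal R_{\mathbf q}(a)$, and since $\mathcal L_{\mathbf p}=\mathcal R_{\mathbf p^*}$ this says precisely $b\triangleleft^{\mathbf p^*}a\Leftrightarrow a\triangleleft^{\mathbf q}b$. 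For part (v): the first equivalence is Lemma~\ref{Lemma_R_p_basic2}(iii); for the second, Lemma~\ref{Lemma_R_p_basic}(ii) applied to the set $Ab$ and the pair $\mathbf p$ gives $a\ind_Ab\Leftrightarrow a\in\mathcal L_{\mathbf p}(b)\cup\mathcal R_{\mathbf p}(b)$, and one rewrites $a\in\mathcal R_{\mathbf p}(b)$ as $b\triangleleft^{\mathbf p}a$ and, via (iv), $a\in\mathcal L_{\mathbf p}(b)$ as $a\triangleleft^{\mathbf q}b$. Parts (vi) and (vii) are Lemma~\ref{Lema_basic_delta_transitivity}(i) and (ii) respectively.

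The one part requiring a real (if brief) argument is (viii). For $\Leftarrow$: since $p$ and $q$ are non-algebraic, $a\notin\acl(A)$ forces $\tp(a/Aa)$ to fork over $A$, so $a\in\mathcal D_p(a)$, and likewise $b\in\mathcal D_q(b)$; applying the hypothesis at $x=a$, $y=b$ gives $a\triangleleft^{\mathbf q}b$. For $\Rightarrow$, assume $a\triangleleft^{\mathbf q}b$, which by (iv) reads $b\triangleleft^{\mathbf p^*}a$, i.e. $a$ is left $\mathbf p$-generic over $b$. If $x\in\mathcal D_p(a)$ then $x\dep_A a$, so Lemma~\ref{Lema_Dpa_subset_Dqa}(iii) applied to the \emph{reversed} pair $\mathbf p^*$ (with parameter set $Ab$) yields $b\triangleleft^{\mathbf p^*}x$, hence $x\triangleleft^{\mathbf q}b$ again by (iv). If moreover $y\in\mathcal D_q(b)$ then $y\dep_A b$, and Lemma~\ref{Lema_Dpa_subset_Dqa}(iii) applied to $\mathbf q$ (with parameter set $Ax$) turns $x\triangleleft^{\mathbf q}b$ into $x\triangleleft^{\mathbf q}y$, as required.

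I do not foresee a genuine obstacle: every ingredient is already in place and most of the work is translating between the $\mathcal L_{\mathbf p}/\mathcal R_{\mathbf p}$-formulation and the $\triangleleft^{\mathbf p}$-formulation. The two points deserving attention are, in (iv) and throughout (v), (viii), keeping straight that $\triangleleft^{\mathbf p^*}$ codes left $\mathbf p$-genericity and that $\mathcal L_{\mathbf p}=\mathcal R_{\mathbf p^*}$; and, in (viii), invoking Lemma~\ref{Lema_Dpa_subset_Dqa}(iii) for the reversed pair $\mathbf p^*$ rather than $\mathbf p$, together with the standing non-algebraicity of $p$ and $q$ which is what makes $\mathcal D_p(a)$ and $\mathcal D_q(b)$ nonempty (and hence $\Leftarrow$ of (viii) meaningful).
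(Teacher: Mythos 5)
Your proposal is correct and follows essentially the same route as the paper: parts (i)--(vii) are read off from the earlier results (existence of generics, Lemma \ref{Lemma_existence_anddensity_ofrightgenerics}, Lemma \ref{Lemma_forking_symmetry}, the definition of $\delta_A$ via Lemma \ref{Lemma_delta_options}, Lemmas \ref{Lemma_R_p_basic}/\ref{Lemma_R_p_basic2}, and Lemma \ref{Lema_basic_delta_transitivity}), and (viii) is a short argument combining (iv) with the dependence-transfer fact. Your use of Lemma \ref{Lema_Dpa_subset_Dqa}(iii) (for $\mathbf p^*$ and for $\mathbf q$) in (viii) is just the diagonal case of the paper's appeal to (vii) in its chain of equivalences, so the two arguments coincide in substance.
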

\begin{proof}
(i)-(vii) follow easily from previous results; for example, (vi)-(vii) follow from Lemma \ref{Lema_basic_delta_transitivity}. To prove (viii), assume $a'\in \mathcal D_{p}(a)$, and $b'\in\mathcal D_{q}(b)$. Then:
$$(a\triangleleft^{\mathbf q} b) \Leftrightarrow (a\triangleleft^{\mathbf q}b')\Leftrightarrow (b'\triangleleft^{\mathbf p^*}a)\Leftrightarrow (b'\triangleleft^{\mathbf p^*}a') \Leftrightarrow (a'\triangleleft^{\mathbf q}b')\ ;$$
where the first and third equivalences are consequences of (vii), and the second and fourth follow by (iv). This proves (viii).
\end{proof}

Recall that the pairs $(p,<_p)$ and $(q,<_q)$ are weakly nonorthogonal if  $p\nwor q$.  
\begin{Theorem}\label{Theorem_nonorthogonality}
Let $\mathcal{S}_A$ denote the set of all so-types over $A$ and  $\mathcal P_A$ the set of all so-pairs over $A$. Let $\mathcal{S}_A(\Mon)$  be the set of all realizations of types from $\mathcal{S}_A$.
\begin{enumerate}[(i)]
    \item $\nwor$ is an equivalence relation on both $\mathcal{S}_A$ and $\mathcal P_A$. \
    \item  $\delta_A$ is an equivalence relation on $\mathcal P_A$; $\delta_A$ refines  $\nwor$  by splitting each class into two classes, each of them consisting of the reverses of pairs from the other class.
    \item $x\dep_A y$ is an equivalence relation on $\mathcal{S}_A(\Mon)$. 
    \item $\nfor$ is an equivalence relation on $\mathcal{S}_A$.
\end{enumerate}
\end{Theorem}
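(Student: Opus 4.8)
The plan is to treat the four items in sequence, bootstrapping each from the previous ones and from the structural lemmas about $\triangleleft^{\mathbf p}$, $\delta_A$, and direct nonorthogonality established earlier in the section. For (i), the only nontrivial point is transitivity of $\nwor$: reflexivity is clear and symmetry is built into the definition of weak orthogonality. So suppose $p\nwor q$ and $q\nwor r$. Fix so-pairs $\mathbf p,\mathbf q$ over $A$; by Remark \ref{Remark_basic_delta}(b) we may, after possibly reversing $<_q$, assume $\delta_A(\mathbf p,\mathbf q)$, and similarly arrange an $r$-pair $\mathbf r$ with $\delta_A(\mathbf q,\mathbf r)$. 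Then $p\nwor r$ follows from Proposition \ref{Prop_pqr_nwor}(i) applied with the roles arranged appropriately (using $q\nwor p$ and $q\nwor r$ to get $p\nwor r$). This already gives (i) on $\mathcal S_A$, and the version on $\mathcal P_A$ is immediate since $\nwor$ of pairs is defined to mean $\nwor$ of the underlying types.

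For (ii) I would first record that $\delta_A$ is reflexive (apply Lemma \ref{Lemma_delta_pp} with $\mathbf p=\mathfrak p$) and symmetric (Remark \ref{Remark_basic_delta}(a)). Transitivity is the heart of the matter: given $\delta_A(\mathbf p,\mathbf q)$ and $\delta_A(\mathbf q,\mathbf r)$, one must show $\delta_A(\mathbf p,\mathbf r)$. Note $p\nwor r$ by part (i), so exactly one of $\delta_A(\mathbf p,\mathbf r)$, $\delta_A(\mathbf p,\mathbf r^*)$ holds by Remark \ref{Remark_basic_delta}(b); it suffices to exclude the latter, i.e.\ to rule out the existence of $a\models p$, $c\models r$ with $a\triangleleft^{\mathbf r}c\triangleleft^{\mathbf p}a$ (Lemma \ref{Lemma_delta_options}(ii), or the reformulation in Lemma \ref{Lemma_delta_equivalents}(3)). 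Suppose such $a,c$ exist. Using the density property (Proposition \ref{Prop_triangle_mathbf_pq_properties}(ii) for the pair $(\mathbf q,\mathbf r)$, noting $q\nwor r$ and $\delta_A(\mathbf q,\mathbf r)$), insert $b\models q$ with $a\triangleleft^{\mathbf q}b\triangleleft^{\mathbf r}c$; then transitivity from $\delta_A(\mathbf q,\mathbf r)$ (Lemma \ref{Lema_basic_delta_transitivity}(i)) gives $a\triangleleft^{\mathbf r}c$ in a way compatible with $b$. Now feed $a\triangleleft^{\mathbf q}b$ and $b\triangleleft^{\mathbf r}c$ through $\delta_A(\mathbf q,\mathbf r)$ to get information placing $c$ relative to $a$, and combine with $c\triangleleft^{\mathbf p}a$ and $\delta_A(\mathbf p,\mathbf q)$ (transitivity, Lemma \ref{Lema_basic_delta_transitivity}(i), applied to $c\triangleleft^{\mathbf p}a\triangleleft^{\mathbf q}b$, giving $c\triangleleft^{\mathbf q}b$) to obtain both $b\triangleleft^{\mathbf r}c$ and $c\triangleleft^{\mathbf q}b$ forcing $a\equiv a'$-type collisions — concretely, one derives $c\triangleleft^{\mathbf q}b\triangleleft^{\mathbf r}c$, contradicting $\delta_A(\mathbf q,\mathbf r)$ via Lemma \ref{Lemma_delta_equivalents}(3). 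This contradiction yields transitivity. The refinement statement — $\delta_A$ splits each $\nwor$-class into exactly two $\delta_A$-classes interchanged by reversal — follows from Remark \ref{Remark_basic_delta}(b): within a fixed $\nwor$-class, $\mathbf p\mapsto\mathbf p^*$ is a fixed-point-free involution, and by (b) any two pairs in the class are $\delta_A$-related exactly when they are "on the same side", so there are precisely two classes.

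For (iii), reflexivity of $x\dep_A y$ holds since $p$ non-algebraic implies $a\dep_A a$ (equivalently $a\notin\acl(A)$; realizations of so-types of interest are non-algebraic), symmetry is Lemma \ref{Lemma_forking_symmetry}, and transitivity is essentially the argument in the proof of the preceding Proposition's part (i) — if $a\dep_A b$ and $b\dep_A c$ but $a\ind_A c$, then Lemma \ref{Lema_Dpa_subset_Dqa}(ii) gives $a\equiv b\ (Ac)$, contradicting $b\dep_A c$ — now carried out with the three types possibly distinct, which is exactly the generality in which Lemma \ref{Lema_Dpa_subset_Dqa}(ii) and Lemma \ref{Lemma_forking_symmetry} were stated. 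Finally (iv): $\nfor$ is reflexive on non-algebraic so-types and symmetric by Lemma \ref{Lemma_forking_symmetry}; transitivity is precisely Proposition \ref{Prop_pqr_nwor}(ii) — from $p\nfor q$ and $q\nfor r$ one obtains witnesses $a\dep_A b$, $b\dep_A c$ with $a\models p$, $b\models q$, $c\models r$, and by (iii) just proved (or directly by that Proposition's argument) $a\dep_A c$, hence $p\nfor r$. The main obstacle is the transitivity of $\delta_A$ in (ii): it is the one place where one genuinely has to juggle three so-pairs and chase $\triangleleft$-relations through two separate instances of $\delta_A$, and care is needed to invoke the density and transitivity properties (Proposition \ref{Prop_triangle_mathbf_pq_properties}, Lemma \ref{Lema_basic_delta_transitivity}) only under the hypotheses $p\nwor q$ etc.\ that have already been verified at that point.
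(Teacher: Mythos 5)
Parts (i), (ii) and (iv) of your plan are sound. For (ii), your contrapositive route — assuming $a\triangleleft^{\mathbf r}c\triangleleft^{\mathbf p}a$, inserting $b\models q$ with $a\triangleleft^{\mathbf q}b\triangleleft^{\mathbf r}c$ by density, then deriving $c\triangleleft^{\mathbf q}b$ from Lemma \ref{Lema_basic_delta_transitivity}(i) and contradicting $\delta_A(\mathbf q,\mathbf r)$ via Lemma \ref{Lemma_delta_equivalents}(3) — is correct and uses exactly the same two ingredients as the paper, which instead verifies condition (1) of Lemma \ref{Lemma_delta_equivalents} directly; the difference is cosmetic.

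The genuine gap is in (iii). You invoke Lemma \ref{Lema_Dpa_subset_Dqa}(ii) "with the three types possibly distinct", but that lemma is stated only for $a,a'\models p$, i.e.\ for realizations of one and the same so-type, and its proof (via $\mathcal D_p(a)\subseteq q(\Mon)$ for $q=\tp(a/B)$, where $\mathcal D_p(a)\subseteq p(\Mon)$ by definition) gives nothing about elements outside $p(\Mon)$. In your transitivity argument $a\models p$ and $b\models q$ may realize different types over $A$, and then the conclusion $a\equiv b\,(Ac)$ is not merely unavailable but false outright whenever $p\neq q$, since it would force $\tp(a/A)=\tp(b/A)$. The cross-type transfer you actually need is Lemma \ref{Lema_basic_delta_transitivity}(ii) ($B\triangleleft^{\mathbf p}a\land a\dep_A b\Rightarrow B\triangleleft^{\mathbf q}b$), and that lemma requires $\delta_A(\mathbf p,\mathbf q)$. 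This is why the paper's proof of (iii) first splits off the case $p\wor r$ using (i), then uses (i) and (ii) to place $p$, $q$, $r$ in one $\nwor$-class and to choose orders so that the three pairs lie in a single $\delta_A$-class, reduces (reversing all three orders if necessary) to $c\triangleleft^{\mathbf p}a$, and only then applies Lemma \ref{Lema_basic_delta_transitivity}(ii) to get $c\triangleleft^{\mathbf q}b$ and hence $b\ind_A c$. That reduction to directly nonorthogonal pairs is exactly the step your proposal omits. Once (iii) is repaired in this way, your (iv) goes through either from (iii), as in the paper, or directly from Proposition \ref{Prop_pqr_nwor}(ii), as you suggest.
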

\begin{proof} 
Clearly, both $\delta_A$ and $\nwor$ are reflexive and symmetric.

(i) The transitivity of $\nwor$ is a consequence of Proposition \ref{Prop_pqr_nwor}.   

\smallskip 
(ii) To prove the transitivity of $\delta_A$, assume that $\mathbf p=(p,<_p)$, $\mathbf q=(q,<_q)$ and $\mathbf r=(r,<_r)$ are so-pairs such that $\delta_A(\mathbf p,\mathbf q)$ and $\delta_A(\mathbf q,\mathbf r)$. By part (i), $p\nwor q$ and $q\nwor r$ 
 imply $p\nwor r$. To prove $\delta_A(\mathbf p,\mathbf r)$, we will verify the equivalent condition (1) of Lemma \ref{Lemma_delta_equivalents}: Assuming $a_1<_pa_2\triangleleft^{\mathbf r}c$, we need to prove $a_1\triangleleft^{\mathbf r}c$. By the density property, there exists a $b\models q$ that satisfies $a_2\triangleleft^{\mathbf q}b\triangleleft^{\mathbf r}c$. Then $a_1<_pa_2\triangleleft^{\mathbf q}b$,  by condition (1) from Lemma \ref{Lemma_delta_equivalents}, implies $a_1\triangleleft^{\mathbf q} b$. By Lemma \ref{Lema_basic_delta_transitivity}(i), $a_1\triangleleft^{\mathbf q}b\triangleleft^{\mathbf r}c$ implies $a_1\triangleleft^{\mathbf r}c$, as desired.  Therefore, $\delta_A$ is an equivalence relation. To show that there are exactly two $\delta_A$ classes, recall Remark \ref{Remark_basic_delta}(b): for all $\mathbf p, \mathbf q$ over $A$, $p\nwor q$ implies $\delta_A(\mathbf p,\mathbf q)\vee \delta_A(\mathbf p,\mathbf q^*)$. Hence, if $p$ is non-algebraic, then the $\nwor$-class of $\mathbf p$ is split into two $\delta_A$-classes.   

\smallskip
(iii) By Lemma \ref{Lemma_forking_symmetry}, $\dep_A$ is symmetric and the reflexivity is clear. 
To prove transitivity, assume $a\dep_A b$ and $a\ind_A  c$, and we prove $b\ind_A c$.  Let $p=\tp(a/A),q=\tp(b/A)$ and $r=\tp(c/A)$; these are so-types, and note that $a\dep_A b$ implies $p\nwor q$. Thus, if $p\wor r$, then $q\wor r$ holds by (i); $b\ind_A c$ follows, and we are done.
So, assume $p\nwor r$. By (i), $p,q$, and $r$ are in the same $\nwor$-class, so we can choose 
orders $<_p,<_q$ and $<_r$ such that the corresponding so-pairs $\mathbf p,\mathbf q$ and $\mathbf r$ are in the same $\delta_A$-class. 
Then $a\ind_A c$ implies that $a$ is left- or right $\mathbf p$-generic over $c$; without loss (by reversing all three orders if necessary) assume $c\triangleleft^{\mathbf p}a$. 
Thus, we have: \ $c\triangleleft^{\mathbf p}a\land a\dep_A b$ which, by Lemma \ref{Lema_basic_delta_transitivity}(ii) implies $c\triangleleft^{\mathbf q}b$ and therefore $b\ind_Ac$, as needed.  

\smallskip
(iv) Follows from (iii). 
\end{proof}

 Let $\mathcal F$ be a set of pairwise directly non-orthogonal so-pairs over $A$. By Theorem \ref{Theorem_nonorthogonality}, $\mathcal F$ is part of a $\delta_A$-class of so-pairs over $A$.  We will use the following notation: 
$$\mathcal{F}(\Mon)=\bigcup\{p(\Mon)\mid (p,<_p)\in\mathcal F\mbox{ for some $<_p$}\}  \ \  \ \ \mbox{and} \ \ \ \ \mathcal D_{\mathcal F}=\{(a,b)\in\mathcal{F}(\Mon)\times\mathcal{F}(\Mon)\mid a\dep_A b\} ;$$    
Here, $\mathcal{F}(\Mon)$ is the set of realizations of all types (appearing in some pair) from $\mathcal F$ \ and \ $\mathcal D_{\mathcal F}$ \  is the equivalence relation of forking on $\mathcal F(\Mon)$. Also, define $\mathcal F^*$, called the reverse of $\mathcal F$, by $\mathcal F^*=\{\mathbf p^*\mid \mathbf p\in\mathcal F\}$.  Note that, by Theorem \ref{Theorem_nonorthogonality}(ii), $\mathcal F^*$ is also a part of a $\delta_A$-class.     
In particular, if $\mathcal F$ is the whole $\delta_A$-class, then the union $\mathcal F\cup\mathcal F^*$ is a $\nwor$-class of so-pairs from $\mathcal P_A$.

\begin{Definition} Let $\mathcal F$ be a part of a $\delta_A$-class of so-pairs over $A$ and let $a\in \mathcal{F}(\Mon)$. We say that {\em $a$ is right $\mathcal F$-generic over $B$}, denoted by  $B\triangleleft^{\mathcal F}a$,  if   $B\triangleleft^{\mathbf p}a$ holds for some (equivalently, any\footnote{By Lemma \ref{Lemma_delta_pp}}) pair $\mathbf p=(p,<_p)\in \mathcal F$ such that $a\models p$. 
\end{Definition}

In the next theorem, we collect the basic properties of the relation $\triangleleft^{\mathcal F}$ for so-types.  

\begin{Theorem}\label{Thm_triangle_mathcal F}
Let $\mathcal F$ be a part  of a $\delta_A$-class of so-pairs over $A$. Then for all small $B$, all $\mathbf p\in\mathcal F$, and all $a,b\in \mathcal{F}(\Mon)$ the following hold true: 
\begin{enumerate}[ (i)]
\item (Existence) \ There exists a $c\models p$ such that $B\triangleleft^{\mathcal F}c$;
\item (Density) \ If $B\triangleleft^{\mathcal F} b$, then there is a $c$ such that $B\triangleleft^{\mathbf p} c\triangleleft^{\mathcal F} b$; 
\item (Symmetry) \ $a\ind_A b$ if and only if $b\ind_A a$; 
\item   \   
$a \triangleleft^{\mathcal F} b\Leftrightarrow b\triangleleft^{\mathcal F^*} a$ \ ($b$ is right $\mathcal F$-generic over $a$ if and only if $a$ is left $\mathcal F$-generic over $b$.) 
\item  ($\triangleleft^{\mathcal F}$-comparability) \ $a\ind_A B\Leftrightarrow (B \triangleleft^{\mathcal F} a\vee B\triangleleft^{\mathcal F^*} a)$
\ and \ 
$a\ind_A b\Leftrightarrow (a \triangleleft^{\mathcal F} b\vee b\triangleleft^{\mathcal F} a)$;
\item  (Transitivity) \ $B\triangleleft^{\mathcal F} a \triangleleft^{\mathcal F} b$ \ implies \ $B\triangleleft^{\mathcal F} b$. 
\item   \  $B\triangleleft^{\mathcal F}a\land a\dep_A b$ implies $B\triangleleft^{\mathcal F}b$; that is, $B\triangleleft^{\mathcal F}a$ implies $B\triangleleft^{\mathcal F} x$ \  for all $x\in \mathcal D_{\mathcal F}(a)$.
\item  \ $a\triangleleft^{\mathcal F} b$ \  if and only if  $x \triangleleft^{\mathcal F}y$ for all $x\in \mathcal D_{\mathcal F}(a)$ and $y\in \mathcal D_{\mathcal F}(b)$. 
\end{enumerate}
\end{Theorem}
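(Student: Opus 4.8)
The plan is to lift each clause to its two-pair counterpart in Proposition~\ref{Prop_triangle_mathbf_pq_properties}, together with Lemmas~\ref{Lemma_forking_symmetry}, \ref{Lema_basic_delta_transitivity} and \ref{Lemma_R_p_basic2}. The key reduction is the following bookkeeping observation: if $a\in\mathcal F(\Mon)$ realizes $p$ and $\mathbf p=(p,<_p)\in\mathcal F$, then by Lemma~\ref{Lemma_delta_pp} the condition $B\triangleleft^{\mathbf p}a$ does not depend on which pair of $\mathcal F$ with first coordinate $p$ is chosen, so $B\triangleleft^{\mathcal F}a$ is well defined and equals $B\triangleleft^{\mathbf p}a$; similarly, if $b\in\mathcal F(\Mon)$ realizes $q$ and $\mathbf q=(q,<_q)\in\mathcal F$, then $a\triangleleft^{\mathcal F}b$ means precisely $a\triangleleft^{\mathbf q}b$. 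Since $\mathcal F$ is contained in a single $\delta_A$-class, any two pairs occurring in $\mathcal F$ are directly nonorthogonal, and by Theorem~\ref{Theorem_nonorthogonality}(ii) the reverse family $\mathcal F^*$ is again a part of a $\delta_A$-class, so every statement proved for $\mathcal F$ applies verbatim to $\mathcal F^*$.

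With this in hand, (i) is immediate: pick $\mathbf p\in\mathcal F$ and, by saturation (Remark~\ref{Remark_R_p_basic}(a)), a realization $c$ of $(\mathbf p_r)_{\restriction AB}$, so that $B\triangleleft^{\mathbf p}c$, i.e.\ $B\triangleleft^{\mathcal F}c$. Item (ii) is Proposition~\ref{Prop_triangle_mathbf_pq_properties}(ii) applied to $\mathbf p$ and to the pair $\mathbf q\in\mathcal F$ with $b\models q$. Item (iii) is Lemma~\ref{Lemma_forking_symmetry}, since $a,b$ realize so-types over $A$. Item (iv) follows from Proposition~\ref{Prop_triangle_mathbf_pq_properties}(iv) applied to the pairs of $\mathcal F$ realized by $a$ and $b$. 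In (v), the first equivalence is Lemma~\ref{Lemma_R_p_basic2}(iii) applied to any $\mathbf p\in\mathcal F$ with $a\models p$, and the second is Proposition~\ref{Prop_triangle_mathbf_pq_properties}(v). Item (vi) is the transitivity of Lemma~\ref{Lema_basic_delta_transitivity}(i) (equivalently Proposition~\ref{Prop_triangle_mathbf_pq_properties}(vi)). For (vii), note that $a\dep_A b$ forces $p\nwor q$ and that $\delta_A(\mathbf p,\mathbf q)$ holds because $\mathbf p,\mathbf q\in\mathcal F$; hence Lemma~\ref{Lema_basic_delta_transitivity}(ii) yields $B\triangleleft^{\mathbf q}b$, i.e.\ $B\triangleleft^{\mathcal F}b$, and the restatement in terms of $\mathcal D_{\mathcal F}(a)$ is just the definition of $\mathcal D_{\mathcal F}$.

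The one clause that is not a direct citation is (viii), because $\mathcal D_{\mathcal F}(a)$ may contain realizations of types different from $\tp(a/A)$, so Proposition~\ref{Prop_triangle_mathbf_pq_properties}(viii) does not apply literally; instead I would chain (iv) and (vii), using that they hold for $\mathcal F^*$ as well. For the forward implication, assume $a\triangleleft^{\mathcal F}b$ and let $x\in\mathcal D_{\mathcal F}(a)$, $y\in\mathcal D_{\mathcal F}(b)$. By symmetry $b\dep_A y$, so (vii) gives $a\triangleleft^{\mathcal F}y$; by (iv) this is $y\triangleleft^{\mathcal F^*}a$, and since $a\dep_A x$, applying (vii) inside $\mathcal F^*$ gives $y\triangleleft^{\mathcal F^*}x$, which by (iv) is $x\triangleleft^{\mathcal F}y$. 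For the converse, take $x=a$, $y=b$: when the types involved are non-algebraic one has $a\in\mathcal D_{\mathcal F}(a)$ and $b\in\mathcal D_{\mathcal F}(b)$ (as $\tp(a/Aa)$ is algebraic, hence forks over $A$), so the hypothesis yields $a\triangleleft^{\mathcal F}b$ at once; the algebraic case is degenerate. The main thing to be careful about throughout is precisely this bookkeeping — that $\triangleleft^{\mathcal F}$ restricted to the two relevant types coincides with the appropriate $\triangleleft^{\mathbf q}$ — once that is set up, (viii) is the only step needing an argument, and that argument is merely the chaining just described.
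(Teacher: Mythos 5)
Your proof is correct and follows essentially the same route as the paper, whose proof of this theorem consists of deriving it from Proposition \ref{Prop_triangle_mathbf_pq_properties} (together with the lemmas that proposition packages), exactly the reduction you perform. Your explicit chaining for item (viii) at the $\mathcal F$-level mirrors the chain used in the paper's proof of Proposition \ref{Prop_triangle_mathbf_pq_properties}(viii), so it is the same idea, merely spelled out where the paper leaves the transfer from $\mathcal D_p,\mathcal D_q$ to $\mathcal D_{\mathcal F}$ implicit.
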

\begin{proof}
Follows from Proposition \ref{Prop_triangle_mathbf_pq_properties}.
\end{proof}

Let $\mathcal F$ be a $\delta_A$-class and let $\mathbf p=(p,<_p)\in\mathcal F$. By part (v) of the above theorem, it follows that $(\mathcal{F}(\Mon), \triangleleft^{\mathcal F})$ is a strict partial order. By the definition of $\triangleleft^{\mathcal F}$, the restriction of this order to $p(\Mon)$ is the order $(p(\Mon),\triangleleft^{\mathbf p})$, which has a linear extension $(p(\Mon),<_p)$. Therefore, it is natural to ask whether the order $\triangleleft^{\mathcal F}$ has an $A$-invariant linear extension, $<^{\mathcal F}$, on $\mathcal{F}(\Mon)$ that agrees with $<$ on $q(\Mon)$ for as many $\mathbf q=(q,<)\in\mathcal F$ as possible.
Clearly, $<^{\mathcal F}$ cannot agree with distinct orderings of $q(\Mon)$, and we will see that this is the only restrictive condition. 
\begin{itemize}
    \item Let $(p_i\mid i<\alpha)$ be an enumeration of all (pairwise distinct) types $p\in S(A)$ appearing in $\mathcal F$;
    \item For each $i<\alpha$ choose $<_i$ such that $(p_i,<_i)\in\mathcal F$. 
\end{itemize}
In order to linearize $\triangleleft^{\mathcal F}$, by Theorem  \ref{Thm_triangle_mathcal F}(viii), it suffices to linearize each class $\mathcal D_{\mathcal F}(a)$ ($a\in\mathcal F(\Mon)$). This is easy to do in an $A$-invariant way. For example:
\begin{center}
$ x<^{\mathcal F}y$ \ \ if and only if \ \ $x\triangleleft^{\mathcal F}y $ \ or \ ($x\dep_A y$,  $x\models p_i, y\models p_j$ and $i<j$).
\end{center}
It is straightforward to verify that the order $<^{\mathcal F}$ satisfies the conclusion of Theorem \ref{Theorem5}.

\begin{TheoremI}
Let $\mathcal F$ be a part of a $\delta_A$-class of non-algebraic so-pairs over $A$. Then there is an $A$-invariant order $<^{\mathcal F}$ on $\mathcal F(\Mon)$ such that the structure \  
$(\mathcal F(\Mon),\triangleleft^{\mathcal F}, <^{\mathcal F},\mathcal D_{\mathcal F})$ satisfies the following conditions:  
\begin{enumerate}[(i)]
\item $(\mathcal{F}(\Mon), \triangleleft^{\mathcal F})$ is a strict partial order and $(\mathcal{F}(\Mon), <^{\mathcal F})$ its linear extension; 

\item Relations $\mathcal D_{\mathcal F}$ and  the $\triangleleft^{\mathcal F}$-incomparability are the same, $<^{\mathcal F}$-convex equivalence relation on $\mathcal{F}(\Mon)$.  The quotient $(\mathcal{F}(\Mon)/\mathcal D_{\mathcal F},<^{\mathcal F})$ is a dense linear order.
\item For each $p\in S(A)$ represented in $\mathcal F$, the order $<_p=(<^{\mathcal F})_{\restriction p(\Mon)}$ is relatively $A$-definable and the pair $(p,<_p)$ is directly non-orthogonal to pairs from $\mathcal F$.   $(p(\Mon), \triangleleft^{\mathbf p},<_p,\mathcal D_p)$ is a substructure of $(\mathcal F(\Mon),\triangleleft^{\mathcal F}, <^{\mathcal F},\mathcal D_{\mathcal F})$.  
\end{enumerate}
Moreover, if $(\mathbf p_i=(p_i,<_i)\mid i\in I)$ is a family of pairs from $\mathcal F$, with the types $(p_i\mid i\in I)$ mutually distinct, then the order $<_{\mathcal F}$ can be chosen to extend each $<_i$ for $i\in I$. 
\end{TheoremI}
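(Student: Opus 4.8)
The plan is to take as $<^{\mathcal F}$ the $A$-invariant order described just before the statement and verify, clause by clause, that it works; every ingredient is already available in Theorems~\ref{Theorem_nonorthogonality} and \ref{Thm_triangle_mathcal F}. Fix an enumeration $(p_i\mid i<\alpha)$ of the pairwise distinct types occurring in $\mathcal F$. Since $\mathcal F$ lies in a single $\delta_A$-class and such a class cannot contain both a pair and its reverse (Theorem~\ref{Theorem_nonorthogonality}(ii)), each $p_i$ carries a \emph{unique} companion order $<_i$ with $(p_i,<_i)\in\mathcal F$, so there is nothing to choose; for the ``moreover'' clause we simply let the prescribed subfamily head the enumeration. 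Writing $i(a)$ for the index with $a\models p_{i(a)}$, the order reads
$$x<^{\mathcal F}y \iff x\triangleleft^{\mathcal F}y\ \vee\ \bigl(x\dep_A y\ \wedge\ (i(x)<i(y)\ \vee\ (i(x)=i(y)\ \wedge\ x<_{i(x)}y))\bigr),$$
i.e.\ on each $\mathcal D_{\mathcal F}$-class we order lexicographically, first by the type index and then by the companion order. $A$-invariance of $<^{\mathcal F}$ is clear, as $\triangleleft^{\mathcal F}$, $\dep_A$, the predicates ``$x\models p_i$'' and each $<_i$ are $A$-invariant.

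First I would check (i). Irreflexivity is immediate. For totality, given $x\ne y$: if $x\ind_A y$ then $x,y$ are $\triangleleft^{\mathcal F}$-comparable by Theorem~\ref{Thm_triangle_mathcal F}(v); if $x\dep_A y$ with $i(x)=i(y)$ then $x,y$ lie in one $\mathcal D_{p_{i(x)}}$-class, which $<_{i(x)}$ orders linearly (Theorem~\ref{Theorem4_1}(ii)); otherwise the indices decide. Antisymmetry holds because $x\triangleleft^{\mathcal F}y$ forces $x\ind_A y$ (Theorem~\ref{Thm_triangle_mathcal F}(v)), which excludes the second disjunct for the reversed pair. Transitivity is the only step needing genuine, if mechanical, work: given $x<^{\mathcal F}y<^{\mathcal F}z$, split on which disjunct realizes each relation. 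Both via $\triangleleft^{\mathcal F}$: Theorem~\ref{Thm_triangle_mathcal F}(vi). The case $x\triangleleft^{\mathcal F}y$, $y\dep_A z$: Theorem~\ref{Thm_triangle_mathcal F}(vii). The case $x\dep_A y$, $y\triangleleft^{\mathcal F}z$: rewrite $y\triangleleft^{\mathcal F}z$ as $z\triangleleft^{\mathcal F^*}y$ (Theorem~\ref{Thm_triangle_mathcal F}(iv)), note $\mathcal F^*$ is again part of a $\delta_A$-class (Theorem~\ref{Theorem_nonorthogonality}(ii)), apply (vii) for $\mathcal F^*$ to $z\triangleleft^{\mathcal F^*}y\dep_A x$, and rewrite back. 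Both via the lexicographic disjunct: $x\dep_A z$ by transitivity of $\dep_A$ (Theorem~\ref{Theorem_nonorthogonality}(iii)), and the lexicographic comparison is transitive since each $<_i$ is. That $<^{\mathcal F}$ extends $\triangleleft^{\mathcal F}$ is built into the definition.

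Next, (ii). By Theorem~\ref{Thm_triangle_mathcal F}(v), $x\dep_A y$ coincides with $\triangleleft^{\mathcal F}$-incomparability, and this relation, $\mathcal D_{\mathcal F}$, is an equivalence relation by Theorem~\ref{Theorem_nonorthogonality}(iii). For $<^{\mathcal F}$-convexity of its classes, suppose $x<^{\mathcal F}y<^{\mathcal F}z$ with $x\dep_A z$ but, say, $x\ind_A y$; then $x\triangleleft^{\mathcal F}y$ (the alternative $y\triangleleft^{\mathcal F}x$ contradicts antisymmetry), and whichever of $y\ind_A z$, $y\dep_A z$ holds, Theorem~\ref{Thm_triangle_mathcal F}(vi) resp.\ (vii) yields $x\triangleleft^{\mathcal F}z$, contradicting $x\dep_A z$; hence $x\dep_A y$, and $y\dep_A z$ by transitivity, so the classes are convex. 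For density of $(\mathcal F(\Mon)/\mathcal D_{\mathcal F},<^{\mathcal F})$: the induced order on the quotient is exactly $\triangleleft^{\mathcal F}$ (Theorem~\ref{Thm_triangle_mathcal F}(v),(viii)), so given $[x]<^{\mathcal F}[y]$, i.e.\ $x\triangleleft^{\mathcal F}y$, the density property Theorem~\ref{Thm_triangle_mathcal F}(ii) (with $B=\{x\}$ and any fixed $\mathbf p\in\mathcal F$) provides $c$ with $x\triangleleft^{\mathcal F}c\triangleleft^{\mathcal F}y$, whence $[x]<^{\mathcal F}[c]<^{\mathcal F}[y]$ with all three classes distinct.

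Finally (iii): fix $p=p_i\in\mathcal F$ and $x,y\models p_i$. Directly from the definition, $x<^{\mathcal F}y$ iff $(x\ind_A y\wedge x<_iy)\vee(x\dep_A y\wedge x<_iy)$, which by Lemma~\ref{Lemma_R_p_basic2}(v) (identifying $\triangleleft^{\mathbf p_i}$ with $\{(x,y):x\ind_A y\wedge x<_iy\}$) is just $x<_iy$; thus $(<^{\mathcal F})_{\restriction p(\Mon)}=<_i$, which is relatively $A$-definable, $(p_i,<_i)\in\mathcal F$ is directly non-orthogonal to every pair of $\mathcal F$ by hypothesis, and since $\triangleleft^{\mathbf p_i}$ and $\mathcal D_{p_i}$ are by definition the restrictions of $\triangleleft^{\mathcal F}$ and $\mathcal D_{\mathcal F}$, the structure $(p(\Mon),\triangleleft^{\mathbf p},<_p,\mathcal D_p)$ of Theorem~\ref{Theorem4_1} is a substructure of $(\mathcal F(\Mon),\triangleleft^{\mathcal F},<^{\mathcal F},\mathcal D_{\mathcal F})$. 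The ``moreover'' follows because this computation gives $(<^{\mathcal F})_{\restriction p_i(\Mon)}=<_i$ for every index, in particular for those of a prescribed family. The one mildly delicate point in the whole argument is the transitivity case analysis for $<^{\mathcal F}$ and the parallel one for convexity of $\mathcal D_{\mathcal F}$-classes; everything else is bookkeeping on top of Theorem~\ref{Thm_triangle_mathcal F}.
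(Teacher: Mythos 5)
Your construction and verification follow essentially the same route as the paper: the paper defines exactly this $A$-invariant order (linearizing $\triangleleft^{\mathcal F}$ within each $\mathcal D_{\mathcal F}$-class by the type index, with the within-type order necessarily being the chosen $<_i$) and leaves the verification as ``straightforward''; you carry it out correctly, and your case analysis for transitivity of $<^{\mathcal F}$ and for convexity of the $\mathcal D_{\mathcal F}$-classes, resting on Theorem \ref{Thm_triangle_mathcal F}(iv)--(viii) and Theorem \ref{Theorem_nonorthogonality}(iii), is sound, as is the computation $(<^{\mathcal F})_{\restriction p_i(\Mon)}={<_i}$ for clause (iii) and the ``moreover'' clause.

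One side remark of yours is false, though harmlessly so: it is not true that each type occurring in $\mathcal F$ carries a \emph{unique} companion order. Theorem \ref{Theorem_nonorthogonality}(ii) only rules out a pair and its reverse lying in the same $\delta_A$-class; by Lemma \ref{Lemma_same_orient} and Lemma \ref{Lemma_delta_pp}, all relatively $A$-definable orders on $p(\Mon)$ with the same orientation give pairs in the same $\delta_A$-class, and there can be many of these (e.g.\ $<$ and $<_E$ for a nontrivial relatively $A$-definable convex equivalence $E$ on the locus of a weakly o-minimal type). So a genuine choice of $<_i$ per type is needed, exactly as the paper makes it; your argument is unaffected because nothing in the verification uses uniqueness, and for the ``moreover'' clause your device of putting the prescribed pairs at the head of the enumeration is precisely the required choice (it should be stated as ``choose $<_i$ to be the prescribed order for those types'', not as a consequence of uniqueness). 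Two further micro-points you leave implicit but which are covered by the cited results: antisymmetry of $<^{\mathcal F}$ in the case where both $x\triangleleft^{\mathcal F}y$ and $y\triangleleft^{\mathcal F}x$ would hold needs the asymmetry of $\triangleleft^{\mathcal F}$ (irreflexivity, from non-algebraicity and Theorem \ref{Thm_triangle_mathcal F}(v), plus transitivity (vi)), and this same observation is what makes $(\mathcal F(\Mon),\triangleleft^{\mathcal F})$ a strict partial order, which is part of clause (i) and deserves an explicit sentence.
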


\subsection{Orientation of extensions of weakly o-minimal pairs} \

Throughout this subsection, let $\mathcal F$ be a part of a $\delta_A$-class of weakly o-minimal pairs over $A$; as weakly o-minimal types are so-types, the conclusion of Theorem \ref{Theorem5} still holds, so let $<^{\mathcal F}$ be an $A$-invariant order on $\mathcal F(\Mon)$ such that (i)-(iii) of Theorem \ref{Theorem5} are satisfied.  
Since complete extensions of weakly o-minimal types are also weakly o-minimal, it makes sense to consider nonforking extensions of types (pairs) from $\mathcal F$ over a larger domain $B\supseteq A$. For each $\mathbf p=(p,<_p)\in\mathcal F$ define: 
\begin{center} 
$p_B=(\mathbf p_r)_{\restriction B}$, \ $\mathbf p_B=(p_B,<_p)$, \ $\mathcal F_B=\{\mathbf p_B\mid \mathbf p\in\mathcal F\}$, \ and \ $\mathcal F_B(\Mon)= \bigcup_{\mathbf p\in\mathcal F}p_B(\Mon)$.
\end{center}

\begin{Proposition}\label{Prop_delta_B_propoerties}  For all $\mathbf p,\mathbf q\in\mathcal F$ and  all $a,b,c\in\mathcal{F}(\Mon)$: 
\begin{enumerate}[(i)]
\item  For all $C$: \ $C\triangleleft^{\mathbf p_B} a  \Leftrightarrow \ BC\triangleleft^{\mathbf p}a$. \ In particular, $b\triangleleft^{\mathbf p_B}a$ implies $b\triangleleft^{\mathbf p}a$ for all $b\models q_B$;
\item $\delta_B(\mathbf p_B,\mathbf q_B)$ holds;
\item  $B\triangleleft^{\mathcal F} a \land a\dep_A b$ implies $B\triangleleft^{\mathcal F} b\land a\dep_{B}b$ \ \ ($ a\in \mathcal F_B(\Mon) \land a\dep_A b$ implies $b\in\mathcal F_B(\Mon)\land a\dep_{B}b$);
\item   $B\triangleleft^{\mathcal F} a<^{\mathcal F}b$ implies $B\triangleleft^{\mathcal F} b$ \ \ ($a\in\mathcal F_B(\Mon) \land a<^{\mathcal F}b$ implies $b\in\mathcal F_B(\Mon)$);  
\item   $B\triangleleft^{\mathcal F} a<^{\mathcal F}b\land  \, a\ind_B b$ implies $Ba\triangleleft^{\mathcal F} b$; 
\item   $B\triangleleft^{\mathcal F}a<^{\mathcal F}b<^{\mathcal F}c\land a\dep_{B}c$ implies  $b\dep_{B} a \land b\dep_{B}c$; 
\end{enumerate}
\end{Proposition}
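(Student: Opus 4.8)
The plan is to establish the six assertions in the order listed, each time using the earlier ones together with the $\mathbf p$-genericity machinery of Sections \ref{Section4}--\ref{Section5}. For part (i) I would first check that $(\mathbf p_B)_r=\mathbf p_r$ as global types: by Lemma \ref{Lemma_R_p_basic}(i) the set $p_B(\Mon)=\mathcal R_{\mathbf p}(B)$ is a final part of $(p(\Mon),<_p)$, and for any final part $F$ of a linear order and any set $Y$, $Y\cap F$ is right-eventual in $F$ exactly when $Y$ is right-eventual in the whole order, so the defining conditions for $\mathbf p_r$ and $(\mathbf p_B)_r$ coincide; applying Fact \ref{Fact_so_basic}(ii) to the so-pair $\mathbf p_B$ over $B$ then shows as a byproduct that $\mathbf p_r$ is $B$-invariant. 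Given this, $C\triangleleft^{\mathbf p_B}a$ unwinds to $a\models((\mathbf p_B)_r)_{\restriction BC}=(\mathbf p_r)_{\restriction BC}$, i.e.\ $BC\triangleleft^{\mathbf p}a$, and the ``in particular'' clause is just antitonicity of $B'\mapsto\mathcal R_{\mathbf p}(B')$.

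\emph{Part (ii).} I would first prove $p_B\nwor q_B$: pick $b_0\models q_B$ and then $a\models(\mathbf p_r)_{\restriction Bb_0}$, so $a\models p_B$ and $b_0\triangleleft^{\mathbf p}a$, hence $b_0\in\mathcal L_{\mathbf q}(a)$ by condition (I) of Lemma \ref{Lemma_delta_options}; pick $b_1\models(\mathbf q_r)_{\restriction Ba}$, so $b_1\models q_B$ and $b_1\in\mathcal R_{\mathbf q}(a)$. Since $p\nwor q$ forces $\mathcal L_{\mathbf q}(a)\neq\mathcal R_{\mathbf q}(a)$ (Lemma \ref{Lemma_R_p_basic}(iv)), $\tp(b_0/Ba)\neq\tp(b_1/Ba)$, so $q_B$ has two completions over $Ba$ and $p_B\nwor q_B$. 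The orientation then follows from Lemma \ref{Lemma_delta_equivalents}(3): a pair $a\models p_B$, $b\models q_B$ with $a\triangleleft^{\mathbf q_B}b\triangleleft^{\mathbf p_B}a$ would, by (i), satisfy $a\triangleleft^{\mathbf q}b\triangleleft^{\mathbf p}a$, contradicting $\delta_A(\mathbf p,\mathbf q)$. Consequently $\mathcal F_B$ is part of a $\delta_B$-class, and Theorems \ref{Thm_triangle_mathcal F} and \ref{Theorem5} become available over $B$ for $\mathcal F_B$.

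\emph{Part (iii).} The inclusion $B\triangleleft^{\mathcal F}b$ (so $b\in\mathcal F_B(\Mon)$; write $a\models p_B$, $b\models q_B$) is immediate from Theorem \ref{Thm_triangle_mathcal F}(vii). The real work is to upgrade $a\dep_Ab$ to $a\dep_Bb$, which I would do by bounding the locus of $\tp(b/Ba)$ inside $(q_B(\Mon),<_q)$ and then quoting Corollary \ref{Corolary_of_forking_eq_bounded}(iii) over $B$. Since $a\dep_Ax$ for every $x\in\mathcal D_q(a)$, Theorem \ref{Thm_triangle_mathcal F}(vii) gives $B\triangleleft^{\mathbf q}x$, so $\mathcal D_q(a)\subseteq q_B(\Mon)$; an upper bound of $\mathcal D_q(a)$ in $q_B(\Mon)$ is any element of $\mathcal R_{\mathbf q}(a)$ lying $<_q$-above $b$, and for a lower bound density (Theorem \ref{Thm_triangle_mathcal F}(ii)) produces $x$ with $B\triangleleft^{\mathbf q}x\triangleleft^{\mathcal F}b$, whereupon $a\in\mathcal D_{\mathcal F}(b)$ and Theorem \ref{Thm_triangle_mathcal F}(viii) give $x\triangleleft^{\mathcal F}a$, i.e.\ $a\in\mathcal R_{\mathbf p}(x)$, hence $x\in\mathcal L_{\mathbf q}(a)$ by condition (I) of $\delta_A(\mathbf p,\mathbf q)$, so $x\in q_B(\Mon)$ lies $<_q$-below $\mathcal D_q(a)$ (Lemma \ref{Lemma_R_p_basic}(iv)). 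Thus $\mathcal D_q(a)$ is bounded in $(q_B(\Mon),<_q)$; since $b\dep_Aa$, the locus of $\tp(b/Aa)$, hence of $\tp(b/Ba)$, lies inside $\mathcal D_q(a)$ (Lemma \ref{Lemma_R_p_basic}(iii)), so it is bounded in $(q_B(\Mon),<_q)$ and $b\dep_Ba$ follows.

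\emph{Parts (iv)--(vi).} These should be quick consequences. For (iv), $a<^{\mathcal F}b$ means $a\triangleleft^{\mathcal F}b$ or $a\dep_Ab$ (by the definition of $<^{\mathcal F}$ given before Theorem \ref{Theorem5}); the first case gives $B\triangleleft^{\mathcal F}b$ by transitivity (Theorem \ref{Thm_triangle_mathcal F}(vi)), the second by (iii). For (v), (iv) places $b$ in $\mathcal F_B(\Mon)$, and Theorem \ref{Thm_triangle_mathcal F}(v) applied to $\mathcal F_B$ over $B$ gives $a\triangleleft^{\mathcal F_B}b$ or $b\triangleleft^{\mathcal F_B}a$; I would rule out the second, since by (i) it forces $a\in\mathcal R_{\mathbf p}(b)$, which is incompatible with $a<^{\mathcal F}b$ (it contradicts $a\triangleleft^{\mathcal F}b$ via condition (I) plus $p\nwor q$, and contradicts $a\dep_Ab$ via Lemma \ref{Lemma_R_p_basic}(iii)), so $a\triangleleft^{\mathcal F_B}b$, which is $Ba\triangleleft^{\mathcal F}b$ by (i). For (vi), (iv) puts $a,b,c$ in $\mathcal F_B(\Mon)$, and forking over $B$ implies forking over $A$, so $a\dep_Bc$ gives $a\dep_Ac$, whence $a$ and $c$ lie in one $\mathcal D_{\mathcal F}$-class, which is $<^{\mathcal F}$-convex by Theorem \ref{Theorem5}(ii), so $b$ is in it too, i.e.\ $b\dep_Aa$ and $b\dep_Ac$; applying (iii) once with $B\triangleleft^{\mathcal F}a$ and once with $B\triangleleft^{\mathcal F}c$ upgrades these to $b\dep_Ba$ and $b\dep_Bc$. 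The step I expect to be the genuine obstacle is the second half of (iii): non-forking over the smaller set $A$ does not descend to non-forking over $B$ in general, so this cannot be formal, and it is precisely the geometry of so-pairs — the boundedness criterion for forking, together with the density and transitivity of $\triangleleft^{\mathcal F}$ and the directness hypothesis $\delta_A$ — that keeps the locus of $\tp(b/Ba)$ bounded after passing to $B$.
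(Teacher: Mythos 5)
Parts (i)--(v) of your proposal are essentially correct. In (i), (ii), (iv) and (v) you follow what is in effect the paper's own route (the paper proves $p_B\nwor q_B$ and the orientation in (ii) exactly as you do, and proves (v) by comparability over $B$ plus part (i); your way of ruling out $b\triangleleft^{\mathbf p_B}a$ in (v) is a slightly roundabout version of the paper's one-line observation that it would give $b\triangleleft^{\mathcal F}a$, hence $b<^{\mathcal F}a$). In (iii) your second half takes a genuinely different route: you bound the locus of $\tp(b/Ba)$ inside $(q_B(\Mon),<_q)$ and invoke Corollary \ref{Corolary_of_forking_eq_bounded}(iii) over $B$, whereas the paper argues in two lines: if $a\ind_B b$, then by $\triangleleft$-comparability for $\mathbf p_B,\mathbf q_B$ over $B$ (available by (ii)) either $a\triangleleft^{\mathbf q_B}b$ or $b\triangleleft^{\mathbf p_B}a$, and part (i) turns these into $a\triangleleft^{\mathbf q}b$ or $b\triangleleft^{\mathbf p}a$, contradicting $a\dep_A b$. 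Your boundedness argument is correct (the density/condition (I) computation of the lower bound goes through), just longer.

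Part (vi), however, contains a genuine gap. Your first step is ``forking over $B$ implies forking over $A$, so $a\dep_B c$ gives $a\dep_A c$''. This general principle is backwards: enlarging the base creates new forking, it does not destroy it, and the paper only ever proves the opposite inclusion on $\mathcal F_B(\Mon)$ (Lemma \ref{Lema_Dpa_subset_Dqa}(iv) and Corollary \ref{Prop_4.21}(ii): $\mathcal D_{\mathcal F_B}$ is \emph{coarser} than $\mathcal D_{\mathcal F}\restriction\mathcal F_B(\Mon)$). What you actually need is the converse statement that no new forking between right-$\mathcal F$-generic elements appears when passing from $A$ to $B$, i.e.\ that $B\triangleleft^{\mathcal F}a$, $B\triangleleft^{\mathcal F}c$, $a\triangleleft^{\mathcal F}c$ force $a\ind_B c$; this amounts to saying that genericity of $c$ over $B$ and over $Aa$ separately yields genericity over $Ba$, which is nowhere established in the paper and does not follow formally from invariance (restrictions of $\mathbf r_r$ to $B$ and to $Aa$ do not determine its restriction to $Ba$). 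So the remainder of your argument (convexity of the $\mathcal D_{\mathcal F}$-class plus two applications of (iii)) rests on an unproven claim. The paper's proof of (vi) avoids this entirely by arguing through (v): if $a\ind_B b$ held, then (v) would give $Ba\triangleleft^{\mathcal F}b$, and combining with $b<^{\mathcal F}c$ (either $b\triangleleft^{\mathcal F}c$ and transitivity, or $b\dep_A c$ and Theorem \ref{Thm_triangle_mathcal F}(vii) over the set $Ba$) would give $Ba\triangleleft^{\mathcal F}c$, hence $a\ind_B c$, contradicting $a\dep_B c$; thus $a\dep_B b$, and $b\dep_B c$ follows by transitivity of $\dep_B$ (Theorem \ref{Theorem_nonorthogonality}(iii) applied over $B$). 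You should replace your (vi) by an argument of this shape.
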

\begin{proof} (i) Clearly: \   $C\triangleleft^{\mathbf p_B} a \ \Leftrightarrow \  a\models (\mathbf p_r)_{\restriction BC} \ \Leftrightarrow \ BC\triangleleft^{\mathbf p}a$. \ In particular, if $b\models q_B$ and  $b\triangleleft^{\mathbf p_B}a$, then  $Bb\triangleleft^{\mathbf p}a$ and hence $b\triangleleft^{\mathbf p}a$.

(ii) By Proposition \ref{Prop_pqr_nwor}(i) we have $p_B\nwor q_B$.  
To prove $\delta_B(\mathbf p_B,\mathbf q_B)$, by Lemma \ref{Lemma_delta_equivalents} it suffices to show that $a'\triangleleft^{\mathbf q_B}b'\triangleleft^{\mathbf p_B} a'$ cannot hold for some $a'\models p_B$ and $b'\models q_B$. If it did, then $a'\triangleleft^{\mathbf q_B}b'\triangleleft^{\mathbf p_B} a'$, by part (i), would lead to $a'\triangleleft^{\mathbf q}b'\triangleleft^{\mathbf p} a'$, which would contradict $\delta_A(\mathbf p,\mathbf q)$.

(iii)  Suppose that $B\triangleleft^{\mathcal F} a \land a\dep_A b$. Choose $\mathbf p=(p,<_p),\mathbf q=(q,<_q)\in\mathcal F$ such that $p=\tp(a/A)$, $q=\tp(b/A)$. By Theorem \ref{Thm_triangle_mathcal F}(vii), $B\triangleleft^{\mathcal F} a$  and  $a\dep_A b$ imply   $B\triangleleft^{\mathcal F}b$ ($b\models q_B$), proving the first part of the claim.  To prove the second, $a\dep_B b$, first note that $a\models p_B$ and $b\models q_B$, so if $a\ind_B b$ were true, then we would have $a\triangleleft^{\mathbf q_B}b$ or $b\triangleleft^{\mathbf p_B}a$. By (i), this would imply $a\triangleleft^{\mathbf q}b$ or $b\triangleleft^{\mathbf p}a$, which would contradict $a\dep_Ab$. Therefore, $a\dep_Bb$. 

(iv) Suppose that $B\triangleleft^{\mathcal F} a<^{\mathcal F} b$. By Theorem \ref{Theorem5}(i)-(ii) $a<^{\mathcal F} b$ implies $(a,b)\in\mathcal D_{\mathcal F}$ or $a\triangleleft^{\mathcal F} b$. In the first case, $B\triangleleft^{\mathcal F}b$ follows by Theorem \ref{Thm_triangle_mathcal F}(vii), and in the second by the transitivity property of $\triangleleft^{\mathcal F}$. 

(v) Suppose that $B\triangleleft^{\mathcal F}a<^{\mathcal F}b$ and $a\ind_Bb$. Denote $p=\tp(a/A)$, $q=\tp(b/A)$ and choose $\mathbf p=(p,<_p)$,  $\mathbf q=(q,<_q)\in \mathcal F$.  We have: $a\models p_B$, $b\models q_B$ (by (iv)), and  $\delta_B(\mathbf p_B,\mathbf q_B)$ (by (ii)). Now, $a\ind_B b$ implies $a\triangleleft^{\mathbf q_B} b$ or $b\triangleleft^{\mathbf p_B} a$. By (i),  $b\triangleleft^{\mathbf p_B}a$ implies   $b\triangleleft^{\mathcal F} a$, which contradicts  $a<^{\mathcal F} b$. Therefore, $a\triangleleft^{\mathbf q_B} b$ holds, so $b\models \mathbf q_{Ba}$, that is, $Ba\triangleleft^{\mathcal F} b$. 

(vi) Assume that $B\triangleleft^{\mathcal F}a<^{\mathcal F}b<^{\mathcal F}c$ and $a\dep_B c$. We will prove $a\dep_B b$; then $b\dep_Bc$ follows by the transitivity of $\dep_B$.    
If $a\dep_B b$ does not hold, then according  to (v),  $a\ind_B b$ and $B\triangleleft^{\mathcal F}a<^{\mathcal F}b$ would imply $Ba\triangleleft^{\mathcal F}b$. This together with $b\triangleleft^{\mathcal F}c$ would lead to $Ba\triangleleft^{\mathcal F}c$, contradicting $a\dep_Bc$. Thus, we must have $a\dep_B b$.
\end{proof}

\begin{Corollary}\label{Prop_4.21} Let \ $<^{\mathcal F_B}:= <^{\mathcal F}{\restriction \mathcal F_B(\Mon)}$ \ and \ $\mathcal D_{\mathcal F_B}:=\{(x,y)\in\mathcal F_B(\Mon)^2\mid x\dep_B y\}$. 
\begin{enumerate}[(i)] 
\item  $\mathcal F_B$ is a part of some $\delta_B$-class and  $\mathcal F_B(\Mon)$ is a final part of $(\mathcal{F}(\Mon),<^{\mathcal F })$; 
\item $\mathcal D_{\mathcal F_B}$ is a convex equivalence relation on $(\mathcal F_B(\Mon),<^{\mathcal F_B})$ that is coarser than   $\mathcal D_{\mathcal F}\restriction \mathcal F_B(\Mon)$; 
\item The structure $(\mathcal F_B(\Mon),\triangleleft^{\mathcal F_B},<^{\mathcal F_B},\mathcal D_{\mathcal F_B})$ satisfies the conclusion of Theorem \ref{Theorem5}. 
\end{enumerate} 
\end{Corollary}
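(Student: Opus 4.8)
The plan is to read off each clause of Corollary~\ref{Prop_4.21} from the results already established in this subsection, treating $\mathcal F_B$ as a genuine $\delta_B$-class of weakly o-minimal pairs and then invoking the earlier general theory verbatim. First I would record the basic facts: for $\mathbf p=(p,<_p)\in\mathcal F$ the type $p_B=(\mathbf p_r)_{\restriction B}$ extends the weakly o-minimal $p$, hence is itself weakly o-minimal by Lemma~\ref{Lemma_reldef_of_convex_components}(ii), and $<_p$ (which is relatively $A$-definable, hence relatively $B$-definable) witnesses that $\mathbf p_B=(p_B,<_p)$ is a weakly o-minimal pair over $B$ (Remark~\ref{Remark_weak_ominimal_firstrmk}(b)). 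Proposition~\ref{Prop_delta_B_propoerties}(ii) gives $\delta_B(\mathbf p_B,\mathbf q_B)$ for all $\mathbf p,\mathbf q\in\mathcal F$, so $\mathcal F_B$ is a set of pairwise directly non-orthogonal so-pairs over $B$, i.e.\ part of a $\delta_B$-class; this is the first assertion of (i).

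For the remaining assertion of (i), that $\mathcal F_B(\Mon)$ is a final part of $(\mathcal F(\Mon),<^{\mathcal F})$: suppose $a\in\mathcal F_B(\Mon)$ and $a<^{\mathcal F}b$ with $b\in\mathcal F(\Mon)$. Writing $p=\tp(a/A)$, $q=\tp(b/A)$ and picking $\mathbf p,\mathbf q\in\mathcal F$, the condition $a\in\mathcal F_B(\Mon)$ says exactly $B\triangleleft^{\mathcal F}a$ (with respect to the chosen pairs), so Proposition~\ref{Prop_delta_B_propoerties}(iv) yields $B\triangleleft^{\mathcal F}b$, i.e.\ $b\in\mathcal F_B(\Mon)$. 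Hence $\mathcal F_B(\Mon)$ is $<^{\mathcal F}$-upward closed. For (ii), $\mathcal D_{\mathcal F_B}$ is the forking equivalence relation over $B$ on the realizations of $\mathcal F_B$, which by Theorem~\ref{Theorem_nonorthogonality}(iii) applied over $B$ is an equivalence relation; its convexity in $(\mathcal F_B(\Mon),<^{\mathcal F_B})$ and the containment $\mathcal D_{\mathcal F}\restriction\mathcal F_B(\Mon)\subseteq\mathcal D_{\mathcal F_B}$ both follow from Proposition~\ref{Prop_delta_B_propoerties}(iii): that statement gives precisely that if $a,b\in\mathcal F_B(\Mon)$ and $a\dep_A b$ then $a\dep_B b$, so forking over $A$ implies forking over $B$ among elements of $\mathcal F_B(\Mon)$ (giving the refinement), and convexity then follows because $\mathcal D_{\mathcal F}$ is already $<^{\mathcal F}$-convex and a union of $<^{\mathcal F}$-convex $\mathcal D_{\mathcal F}$-classes remains convex — more directly, if $a<^{\mathcal F_B}c<^{\mathcal F_B}e$ with $a\dep_B e$ then Proposition~\ref{Prop_delta_B_propoerties}(vi) (with the roles $a,b,c\leadsto a,c,e$, using $B\triangleleft^{\mathcal F}a$ from $a\in\mathcal F_B(\Mon)$) gives $c\dep_B a$ and $c\dep_B e$.

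Finally (iii) is the assertion that the structure $(\mathcal F_B(\Mon),\triangleleft^{\mathcal F_B},<^{\mathcal F_B},\mathcal D_{\mathcal F_B})$ satisfies the conclusion of Theorem~\ref{Theorem5}; since $\mathcal F_B$ has been shown in (i) to be part of a $\delta_B$-class of non-algebraic so-pairs over $B$ (non-algebraicity is inherited since $p_B$ extends the non-algebraic $p$), Theorem~\ref{Theorem5} applies directly over $B$ — but one must check that the order produced there can be taken to be the restriction $<^{\mathcal F_B}=<^{\mathcal F}\restriction\mathcal F_B(\Mon)$ rather than some other $B$-invariant linearization of $\triangleleft^{\mathcal F_B}$. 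This is the one point requiring a small argument: I would verify that $<^{\mathcal F}$ restricted to $\mathcal F_B(\Mon)$ is a linear extension of $\triangleleft^{\mathcal F_B}$ and is convex over each $\mathcal D_{\mathcal F_B}$-class, which follows from (ii) above together with Theorem~\ref{Thm_triangle_mathcal F}(viii) applied over $B$, after noting $\triangleleft^{\mathcal F_B}\subseteq\triangleleft^{\mathcal F}$ by Proposition~\ref{Prop_delta_B_propoerties}(i); alternatively, invoke the ``moreover'' clause of Theorem~\ref{Theorem5} with the family $(\mathbf p_B\mid \mathbf p\in\mathcal F)$ (types mutually distinct) to get an order extending every $<_p$, then observe that $<^{\mathcal F}\restriction\mathcal F_B(\Mon)$ is such an order and that any two such are forced to agree on the dense quotient and differ only by relabelling inside $\mathcal D_{\mathcal F_B}$-classes, which is immaterial for the validity of (i)--(iii) of Theorem~\ref{Theorem5}. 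The main obstacle, then, is this bookkeeping that $<^{\mathcal F_B}$ is an admissible choice of the order in Theorem~\ref{Theorem5} over $B$; everything else is a direct citation of Proposition~\ref{Prop_delta_B_propoerties} and the $\delta_B$-version of Theorems~\ref{Theorem_nonorthogonality} and~\ref{Theorem5}.
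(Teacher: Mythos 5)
Your proposal is correct and follows essentially the same route as the paper: (i) and (ii) are read off from Proposition \ref{Prop_delta_B_propoerties} (parts (ii), (iv), (iii), (vi), exactly as you cite them), and (iii) is the routine verification that the paper dismisses as straightforward, which you simply carry out in more detail by checking that $<^{\mathcal F}\restriction\mathcal F_B(\Mon)$ is an admissible choice of order in Theorem \ref{Theorem5} over $B$. No gaps; your explicit convexity argument via Proposition \ref{Prop_delta_B_propoerties}(vi) is precisely the intended one.
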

\begin{proof} 
(i) $\mathcal F_B$ is a part of some $\delta_B$-class by Proposition \ref{Prop_delta_B_propoerties}(ii);   $\mathcal F_B(\Mon)$ is a final part of $(\mathcal{F}(\Mon),<^{\mathcal F_B })$ by Proposition \ref{Prop_delta_B_propoerties}(iii).

(ii) $\mathcal D_{\mathcal F_B}$ is convex by Proposition \ref{Prop_delta_B_propoerties}(vii); it is coarser than $\mathcal D_{\mathcal F}\restriction \mathcal F_B(\Mon)$ by Proposition \ref{Prop_delta_B_propoerties}(iii).

(iii) Straightforward.
\end{proof}

Now we drop the initial assumptions and formulate the next corollary in a general context.
\begin{Corollary}
Suppose that $\mathbf p,\mathbf q$ are weakly o-minimal pairs over $A$ with $\delta_A(\mathbf p,\mathbf q)$. Let $B\supseteq A$. Define $p_B=(\mathbf p_r)_{\restriction B}$, $\mathbf p_B=(p_B,<_p)$ and analogously define $q_B$ and $\mathbf q_B$.   
    \begin{enumerate}[(i)]
\item $\delta_B(\mathbf p_B,\mathbf q_B)$ holds;  in particular, $p_B\nwor q_B$ and $\mathbf p_r\nwor \mathbf q_r$;
\item If  $p\nfor q$, then $p_B\nfor q_B$ and, in particular, $\mathbf p_r\nfor\mathbf q_r$.  
\end{enumerate}
\end{Corollary}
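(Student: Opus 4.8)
The plan is to deduce both parts directly from the machinery developed in the subsection, by specializing the general results about $\mathcal F$-genericity to the two-element family $\mathcal F=\{\mathbf p,\mathbf q\}$. First I would observe that since $\delta_A(\mathbf p,\mathbf q)$ holds and both types are weakly o-minimal, the set $\mathcal F=\{\mathbf p,\mathbf q\}$ is (part of) a $\delta_A$-class of weakly o-minimal pairs over $A$, so all the hypotheses under which Proposition~\ref{Prop_delta_B_propoerties} and Corollary~\ref{Prop_4.21} were proved are in force. For part (i), I would simply invoke Proposition~\ref{Prop_delta_B_propoerties}(ii), which is exactly the statement $\delta_B(\mathbf p_B,\mathbf q_B)$; the two ``in particular'' consequences then follow formally: $\delta_B(\mathbf p_B,\mathbf q_B)$ implies $p_B\nwor q_B$ by the definition of direct nonorthogonality, and since $p_B$ and $q_B$ are the restrictions to $B$ of the global types $\mathbf p_r$ and $\mathbf q_r$, weak nonorthogonality of the restrictions passes up to the global types (a type over $B$ and its global nonforking extension determine the same weak-orthogonality relations — more concretely, if $p_B(x)\cup q_B(y)$ has two completions over $B$, then $\mathbf p_r\cup\mathbf q_r$ is not a complete type, since realizations witnessing the two completions over $B$ can be found).

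For part (ii), assuming $p\nfor q$, I would argue that $p_B\nfor q_B$. The cleanest route is via contraposition using the transitivity/symmetry properties of $\triangleleft^{\mathbf p_B}$ and $\triangleleft^{\mathbf q_B}$: suppose $a\models p_B$, $b\models q_B$ with $a\ind_B b$. Since $\delta_B(\mathbf p_B,\mathbf q_B)$ by part (i), the $\triangleleft^{\mathbf p_B}$-comparability (Proposition~\ref{Prop_triangle_mathbf_pq_properties}(v), applied to the pairs $\mathbf p_B,\mathbf q_B$ over $B$) gives $a\triangleleft^{\mathbf q_B}b$ or $b\triangleleft^{\mathbf p_B}a$. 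By Proposition~\ref{Prop_delta_B_propoerties}(i) this yields $a\triangleleft^{\mathbf q}b$ or $b\triangleleft^{\mathbf p}a$, in either case $a\ind_A b$, contradicting $p\nfor q$ (which says $a'\dep_A b'$ for \emph{all} $a'\models p$, $b'\models q$). Hence no such pair exists, i.e. $p_B\nfor q_B$. The ``in particular'' for $\mathbf p_r\nfor\mathbf q_r$ follows because forking nonorthogonality of the restrictions to $B$ transfers to the global nonforking extensions: given $a\models\mathbf p_r$, $b\models\mathbf q_r$, restricting to a common small model containing $B$ over which both are realized and using $p_B\nfor q_B$ together with base monotonicity of forking gives $a\dep_A b$.

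The only mild subtlety — and the step I would be most careful about — is the transfer of $\nwor$ and $\nfor$ from the restricted types $p_B,q_B$ to the global types $\mathbf p_r,\mathbf q_r$, since the corollary's ``in particular'' clauses are phrased for the global types. But this is routine: a global $A$-invariant type is determined by its restriction to any small model, weak nonorthogonality is about the number of completions of $p_B(x)\cup q_B(y)$, and one checks that $\mathbf p_r(x)\cup\mathbf q_r(y)$ fails to be complete exactly when $p_B(x)\cup q_B(y)$ does (pick $B'\supseteq B$ small and saturated enough, and use invariance); for $\nfor$ one uses that $\ind_A$ between realizations of $\mathbf p_r$ and $\mathbf q_r$ is witnessed already after restricting to a model, so $p_B\nfor q_B$ forces $\mathbf p_r\nfor\mathbf q_r$. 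Everything else is a direct citation of the subsection's results, so the proof is short.

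\begin{proof}
Since $\delta_A(\mathbf p,\mathbf q)$ holds and $p,q$ are weakly o-minimal (hence so-types), $\mathcal F=\{\mathbf p,\mathbf q\}$ is a part of a $\delta_A$-class of weakly o-minimal pairs over $A$, so the results of this subsection apply with $B\supseteq A$.

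(i) By Proposition~\ref{Prop_delta_B_propoerties}(ii), $\delta_B(\mathbf p_B,\mathbf q_B)$ holds; in particular $p_B\nwor q_B$. Since $p_B=(\mathbf p_r)_{\restriction B}$ and $q_B=(\mathbf q_r)_{\restriction B}$, and $\mathbf p_r,\mathbf q_r$ are $A$-invariant global types, $p_B(x)\cup q_B(y)$ having a unique completion over $B$ would force $\mathbf p_r(x)\cup\mathbf q_r(y)$ to be a complete type over $\Mon$; hence $p_B\nwor q_B$ yields $\mathbf p_r\nwor\mathbf q_r$.

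(ii) Assume $p\nfor q$ and suppose, for contradiction, that $a\models p_B$, $b\models q_B$ satisfy $a\ind_B b$. By part (i), $\delta_B(\mathbf p_B,\mathbf q_B)$ holds, so the $\triangleleft$-comparability of Proposition~\ref{Prop_triangle_mathbf_pq_properties}(v) (applied to $\mathbf p_B,\mathbf q_B$ over $B$) gives $a\triangleleft^{\mathbf q_B}b$ or $b\triangleleft^{\mathbf p_B}a$. By Proposition~\ref{Prop_delta_B_propoerties}(i) this implies $a\triangleleft^{\mathbf q}b$ or $b\triangleleft^{\mathbf p}a$, and in either case $a\ind_A b$ with $a\models p$, $b\models q$, contradicting $p\nfor q$. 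Hence $p_B\nfor q_B$. Finally, if $a\models\mathbf p_r$ and $b\models\mathbf q_r$, then choosing a small model $M\supseteq B$ over which $a$ and $b$ realize the corresponding restrictions, $p_M\nfor q_M$ (by the same argument, or by applying the already-proved case to $M$ in place of $B$) gives $a\dep_M b$, hence $a\dep_A b$ by base monotonicity; thus $\mathbf p_r\nfor\mathbf q_r$.
\end{proof}
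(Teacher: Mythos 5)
Part (i) of your proposal is correct and coincides with the paper's proof: $\delta_B(\mathbf p_B,\mathbf q_B)$ is exactly Proposition \ref{Prop_delta_B_propoerties}(ii). (One caveat: your justification of the passage from $p_B\nwor q_B$ to $\mathbf p_r\nwor\mathbf q_r$ argues the wrong contrapositive — from ``$p_B\wor q_B$ forces $\mathbf p_r\wor\mathbf q_r$'' one cannot conclude ``$p_B\nwor q_B$ yields $\mathbf p_r\nwor\mathbf q_r$''; the intended reading of the global clause is simply that the statement holds for arbitrary $B\supseteq A$, including $B=\Mon$ viewed as a small set inside a larger monster.)

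Part (ii), however, contains a genuine gap. You read $p\nfor q$ as the universal statement ``$a'\dep_A b'$ for all $a'\models p$, $b'\models q$'', but $\nfor$ is the negation of forking orthogonality $\fwor$, hence existential: it asserts only that \emph{some} pair of realizations is $A$-dependent. This breaks your argument twice. First, to prove $p_B\nfor q_B$ you must exhibit a $B$-dependent pair; assuming for contradiction that \emph{some} pair $a\models p_B$, $b\models q_B$ satisfies $a\ind_B b$ is not the negation of the goal. Second, the ``contradiction'' you reach, namely $a\ind_A b$ for that pair, does not contradict $p\nfor q$ at all; independent pairs always exist (take $b\models(\mathbf q_r)_{\restriction Ba}$, which is independent from $a$ over $B$ and hence over $A$), so no contradiction can follow from your hypothesis. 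The same misreading reappears in your last step, where you claim that arbitrary realizations of $\mathbf p_r$ and $\mathbf q_r$ are dependent. The paper's argument is direct and short: fix $a\models p_B$; since $p\nfor q$, homogeneity of $\Mon$ gives some $b\models q$ with $a\dep_A b$; since $B\triangleleft^{\mathbf p}a$, Proposition \ref{Prop_delta_B_propoerties}(iii) yields $B\triangleleft^{\mathbf q}b$ and $a\dep_B b$, so $b\models q_B$ and $a\dep_B b$, i.e.\ $p_B\nfor q_B$; the clause for $\mathbf p_r$ and $\mathbf q_r$ is again the same statement read over a larger base.
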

\begin{proof}
(i) follows from Proposition \ref{Prop_delta_B_propoerties}(ii). To prove (ii), assume that $p\nfor q$ and let $a\models p_B$. Choose $b\models q$ with $a\dep_Ab$. Then $B\triangleleft^{\mathbf p} a$ and $a\dep_A b$, by Proposition \ref{Prop_delta_B_propoerties}(ii), imply $B\triangleleft^{\mathbf q}b$ and $a\dep_B b$. Hence $a\models p_B,b\models q_B$ and $a\dep_B b$; $p_B\nfor q_B$ follows.   
\end{proof}

\section{dp-minimality}\phantomsection\label{Section6}

In this section, we prove that weakly o-minimal types are dp-minimal and that their indiscernible sequences enjoy some nice properties. For example, in Proposition \ref{Proposition_replace_a_i_by_ai'} below, we prove that every Morley sequence $I$ of realizations of a weakly o-minimal type $p$ remains Morley after replacing every element $a\in I$ with an element $a'\in\mathcal D_p(a)$.

We will very briefly recall the notions of NIP and dp-minimality for (partial) types, without recalling the original definition of dp-minimal types, as we will not use it in this paper; a detailed exposition of NIP can be found in Simon's book \cite{Simon}. Rather, we recall two characterizations of NIP types (see \cite[Claim 2.1]{KS}), since we will use them later. We also recall an equivalent characterization of dp-minimality due to Kaplan, Onshuus, and Usvyatsov (see \cite[Proposition 2.8]{KOU}).

\begin{Definition} Let $p( x)$ be a (partial) type over $A$. The type $p(x)$ is:
\begin{enumerate}[(a)]
\item {\em NIP} if there does {\em not} exist a formula $\varphi( x, y)$, an $A$-indiscernible sequence $( b_n\mid n<\omega)$ of tuples of length $|y|$ and $ a\models p$ such that $\models\varphi( a, b_n)$ iff $n$ is even.
Equivalently,  there is no formula $\varphi( x, y)$, an $A$-indiscernible sequence $( a_n\mid n<\omega)$ in $p$ and a tuple $ b$, such that $\models\varphi( a_n, b)$ iff $n$ is even;

\item {\em dp-minimal} if there does {\em not} exist a formula $\varphi( x, y)$, an $A$-indiscernible sequence $I$ of tuples of length $|y|$ and $ a\models p$ such that $\varphi( a, y)$ has at least four alternations on $I$.
\end{enumerate}
\end{Definition}

\begin{Fact}\phantomsection\label{fact nip dp-min wom}
\begin{enumerate}[(i)]
\item Every dp-minimal type is NIP.
\item Every weakly o-minimal type is NIP.
\item If $p( x)$ over $A$ is NIP and $ a_i\models p$ for $i<n$, then $\tp( a_0,\dots, a_{n-1}/A)$ is NIP.
\end{enumerate}  
\end{Fact}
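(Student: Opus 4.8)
The plan is to prove the three parts in order, each reducing to a standard fact about indiscernible sequences. For part (i), I would argue contrapositively: if $p(x)$ is not dp-minimal, then there are a formula $\varphi(x,y)$, an $A$-indiscernible sequence $I=(b_i\mid i\in \mathbb{Q})$ of tuples of length $|y|$, and $a\models p$ such that $\varphi(a,y)$ alternates at least four times on $I$. Since $I$ is indiscernible and $\varphi(a,y)$ has finitely many alternations (a sequence with four or more alternations in particular exhibits a definite alternation pattern on any subsequence), one can extract, by the pigeonhole/Ramsey argument used to analyse alternation on indiscernible sequences, a subsequence $(b_i\mid i\in\mathbb{Z})$ which is $A$-indiscernible and on which $\varphi(a,y)$ alternates between consecutive terms — i.e., $\models\varphi(a,b_n)$ iff $n$ is even. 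This exhibits the configuration forbidden in the definition of NIP (first formulation), so $p(x)$ is not NIP. Hence dp-minimal $\Rightarrow$ NIP.

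For part (ii), let $(p,<)$ be a weakly o-minimal pair over $A$ (which exists by hypothesis), and suppose toward a contradiction that $p(x)$ is not NIP: by the second characterization of NIP there is a formula $\varphi(x,y)$, an $A$-indiscernible sequence $(a_n\mid n<\omega)$ in $p$, and a tuple $b$ with $\models\varphi(a_n,b)$ iff $n$ is even. Since $(a_n\mid n<\omega)$ is $A$-indiscernible in $(p(\Mon),<)$, it is either $<$-increasing, $<$-decreasing, or constant; the constant case is impossible since $\varphi(a_n,b)$ alternates, so without loss of generality $a_0<a_1<a_2<\cdots$. Then the relatively definable set $\varphi(\Mon,b)\cap p(\Mon)$ contains $a_0,a_2,a_4,\dots$ but omits $a_1,a_3,a_5,\dots$, with $a_0<a_1<a_2<\dots$; hence it has infinitely many convex components in $(p(\Mon),<)$, contradicting weak o-minimality of $(p,<)$. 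So $p(x)$ is NIP.

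For part (iii), the claim is that a finite product of realizations of a NIP type over $A$ has NIP type over $A$. I would use the first characterization of NIP: suppose $\tp(a_0,\dots,a_{n-1}/A)$ is not NIP, witnessed by a formula $\varphi(x_0,\dots,x_{n-1},y)$, an $A$-indiscernible sequence $(b_k\mid k<\omega)$, and tuples $a_0,\dots,a_{n-1}$ with each $a_i\models p$ and $\models\varphi(a_0,\dots,a_{n-1},b_k)$ iff $k$ is even. Incorporating $a_1,\dots,a_{n-1}$ into the parameter set one at a time, and at each stage passing to an $A a_{j+1}\cdots a_{n-1}$-indiscernible subsequence of the $b_k$'s (which preserves the even/odd alternation pattern after relabelling, since the alternation count only drops under passing to a subsequence but can be kept arbitrarily large by choosing the subsequence to be alternating), one arrives at a single tuple $a_0\models p$, a $B$-indiscernible sequence (for $B=A a_1\cdots a_{n-1}\supseteq A$), and a formula $\psi(x_0,y)=\varphi(x_0,a_1,\dots,a_{n-1},y)$ over $B$ alternating on it — but this contradicts NIP of $p$ over $A$, since NIP of a type over $A$ entails NIP over any $B\supseteq A$ (the locus only shrinks, and the witnessing sequence may be taken $B$-indiscernible). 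Hence $\tp(a_0,\dots,a_{n-1}/A)$ is NIP.

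The main obstacle is bookkeeping in the subsequence-extraction arguments: one must be careful that passing from an $A$-indiscernible sequence to an $A'$-indiscernible subsequence (with $A'\supseteq A$) can be done while retaining an alternating pattern of $\varphi$ with infinitely many alternations, so that the reindexed subsequence still witnesses a failure of NIP. This is routine — it is the standard trick that on an indiscernible sequence a formula has either finitely many alternations or can be thinned to alternate at every step — but it is where all the care lies. The order-theoretic trichotomy for indiscernible sequences in part (ii), and the remark that a type NIP over $A$ is NIP over every $B\supseteq A$ used in (iii), are both standard and may be invoked without further comment.
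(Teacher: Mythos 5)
Part (ii) of your proposal is correct and is exactly the paper's argument: an $A$-indiscernible sequence in $p$ is $<$-monotone, and an alternating formula would cut out a relatively definable subset of $p(\Mon)$ with infinitely many convex components. Part (i), however, is argued in the wrong direction. The contrapositive of ``every dp-minimal type is NIP'' is ``not NIP implies not dp-minimal'', but you assume ``not dp-minimal'' and try to deduce ``not NIP'' --- that is the contrapositive of the (false) converse ``NIP implies dp-minimal'' (there are NIP types of dp-rank $2$). The step you invoke to carry it out is also false: from at least four alternations of $\varphi(a,y)$ on an indiscernible sequence you cannot extract a subsequence on which $\varphi(a,y)$ alternates at every step; four is a finite bound and yields no infinite alternation. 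The intended argument is the trivial one in the opposite direction: a witness to failure of NIP alternates at every step of an $A$-indiscernible sequence, hence has at least four alternations, so it is also a witness to failure of dp-minimality. This is the paper's one-line proof of (i).

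Part (iii) has a genuine gap, and it sits exactly where the real difficulty of the statement lies; the paper does not prove (iii) but cites \cite[Theorem 4.11]{KOU}, because closure of NIP types under finite products is a nontrivial theorem. Your reduction relies on ``passing to an $Aa_{j+1}\cdots a_{n-1}$-indiscernible subsequence of the $b_k$'s which preserves the even/odd alternation pattern''. This cannot be done in general: Ramsey-type arguments only give $\Delta$-indiscernible subsequences for finite sets of formulae, and full indiscernibility over the enlarged base is obtained by extracting a \emph{new} sequence realizing the EM-type over that base --- but then the truth values $\varphi(a_0,a_1,\dots,a_{n-1},b'_k)$ at the \emph{fixed} tuple $a_0,\dots,a_{n-1}$ are no longer controlled, so the alternation witnessing IP of the product type is lost. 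Conversely, a subsequence chosen to retain the alternation need not be indiscernible over the new parameters. (Your auxiliary claim that NIP over $A$ passes to any $B\supseteq A$ is fine: append the extra parameters to each term of a $B$-indiscernible witness to obtain an $A$-indiscernible one; but it does not resolve the extraction problem.) As written, your argument for (iii) would make the Kaplan--Onshuus--Usvyatsov theorem trivial, which it is not; to match the paper you should simply invoke that result.
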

\begin{proof}
(i) is obvious by the first characterization from the definition of NIP types. (ii) If $(p,<)$ is a weakly o-minimal pair over $A$, then every $A$-indiscernible sequence of realizations of $p$ is monotone with respect to $<$, so weak o-minimality implies that the second characterization of NIP types above is satisfied. (iii) follows from \cite[Theorem 4.11]{KOU}.
\end{proof}

We prove that a weakly o-minimal type is dp-minimal. In the proof, we will use the following simple, Helly-style fact.

\begin{Fact}\label{fact combinatorics for dp-min}
Let $(X,<)$ be a linear order and $\{S_n\mid n<\omega\}$ a family of non-empty subsets of $X$ such that there is $N<\omega$ such that each $S_n$ has at most $N$ convex components. Suppose that $\{S_n\mid n<\omega\}$ has the $2$-intersection property. Then there is an infinite $I\subseteq\omega$ such that $\{S_n\mid n\in I\}$ has the finite intersection property.  
\end{Fact}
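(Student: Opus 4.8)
The plan is to prove this by a pigeonhole argument on ``types of convex components'', reducing the infinite intersection problem to finite combinatorics. First I would set up the bookkeeping: for each $n<\omega$ write $S_n = \bigcup_{j=1}^{k_n} C_{n,j}$ as the disjoint union of its convex components, listed in increasing order, where $k_n \leqslant N$. By passing to a subsequence I may assume that $k_n$ is constant, equal to some $k\leqslant N$; so each $S_n$ has exactly $k$ convex components $C_{n,1} < C_{n,2} < \dots < C_{n,k}$.

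The key step is to encode, for each pair $m<n$, which component of $S_m$ meets which component of $S_n$. Since $\{S_m, S_n\}$ has the $2$-intersection property, $S_m \cap S_n \neq \emptyset$, so there is at least one pair $(i,j)$ with $C_{m,i}\cap C_{n,j}\neq\emptyset$; record the lexicographically least such pair as a ``colour'' $c(m,n) \in \{1,\dots,k\}^2$. By Ramsey's theorem applied to this finite colouring of pairs from $\omega$, there is an infinite $I\subseteq\omega$ and a fixed pair $(i_0,j_0)$ such that $c(m,n) = (i_0,j_0)$ for all $m<n$ in $I$; in particular $C_{m,i_0}\cap C_{n,j_0}\neq\emptyset$ whenever $m<n$ in $I$. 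I would then want to argue that along $I$ the relevant component index stabilizes, i.e. that one can further thin $I$ so that a single family of convex components $\{C_{n, \ell(n)} \mid n\in I\}$ has the finite intersection property. Here I would use a second Ramsey/pigeonhole pass combined with the convexity: for $m<n<p$ in $I$, the three convex sets $C_{m,i_0}$, $C_{n,j_0}$, $C_{p,j_0}$ pairwise intersect the ``correct'' neighbours, and convex subsets of a linear order satisfy the Helly property (three pairwise-intersecting convex sets have a common point), which forces the stabilization. Concretely, pairwise intersection data plus convexity lets me conclude that after thinning, $C_{m,i_0}\cap C_{n,i_0}\neq\emptyset$ for all $m<n$ in the thinned set, and then Helly for convex sets in a linear order (any finitely many pairwise-intersecting convex subsets of $(X,<)$ have nonempty intersection) gives that $\{C_{n,i_0}\mid n\in I\}$, hence $\{S_n\mid n\in I\}$, has the finite intersection property.

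The main obstacle I anticipate is the stabilization step: making precise how a single common component index can be extracted, because a priori the pair $(i_0,j_0)$ from $c(m,n)$ could have $i_0\neq j_0$, and the ``incoming'' and ``outgoing'' component indices at a fixed $n$ need not agree. The fix is to iterate the colouring idea: after the first Ramsey pass I have the comparison data between any two indices stabilized; I then observe that the map $n\mapsto$ (the component of $S_n$ that meets $S_m$ for $m<n$ in $I$) eventually lands in a fixed index by a finiteness argument on $\{1,\dots,k\}$, possibly using that convexity forces these choices to be monotone in a suitable sense. Once that single-index family is isolated, the Helly property for convex sets in a linear order — which is elementary: if convex $D_1,\dots,D_r$ pairwise intersect, then $[\max_i \inf D_i, \min_i \sup D_i]$ is the required common point, using pairwise intersection to see $\inf D_s \leqslant \sup D_t$ for all $s,t$ — finishes the proof immediately. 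I would present the Helly observation as a one-line sublemma and keep the Ramsey bookkeeping terse.
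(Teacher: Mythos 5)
Your setup (normalize the number of components, colour each pair $m<n$ by the lexicographically least $(i,j)$ with $C_{m,i}\cap C_{n,j}\neq\emptyset$, apply Ramsey, then finish with Helly for convex sets in a linear order) is exactly the paper's scaffolding, and your diagonal case $i_0=j_0$ is complete: there the $i_0$-components pairwise intersect and Helly finishes. The genuine gap is the off-diagonal case $i_0\neq j_0$, which is precisely where the work lies, and your proposed stabilization there is not just unproven but false as stated. Say $i_0<j_0$ and take $m<n$ in $I$. Then $C_{n,i_0}$ does \emph{not} meet $C_{m,i_0}$: by minimality of $j_0$ we have $C_{m,i_0}\cap C_{n,i_0}=\emptyset$ (wait, more precisely: minimality of the chosen pair forces $C_{m,i_0}$ to miss $C_{n,i_0}$), and since $C_{n,i_0}<C_{n,j_0}$ while $C_{m,i_0}$ meets $C_{n,j_0}$, convexity forces $C_{n,i_0}<C_{m,i_0}$. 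So along $I$ the $i_0$-components are strictly decreasing, hence pairwise disjoint, and no thinning to an infinite subset can produce your claimed ``$C_{m,i_0}\cap C_{n,i_0}\neq\emptyset$ after thinning''. Likewise your appeal to Helly for the triple $C_{m,i_0},C_{n,j_0},C_{p,j_0}$ is unjustified: the colouring gives you $C_{m,i_0}\cap C_{n,j_0}\neq\emptyset$ and $C_{m,i_0}\cap C_{p,j_0}\neq\emptyset$ and $C_{n,i_0}\cap C_{p,j_0}\neq\emptyset$, but says nothing about $C_{n,j_0}\cap C_{p,j_0}$, so the three sets are not known to pairwise intersect and Helly cannot be applied to them.

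The correct way to close the off-diagonal case (this is what the paper does) uses the monotonicity you half-anticipated, but to produce containment rather than pairwise intersection of a single index family: list $I=\{n_0<n_1<n_2<\dots\}$; by the strict decrease just noted, $C_{n_2,i_0}<C_{n_1,i_0}<C_{n_0,i_0}$. For every $l\geqslant 3$ the convex set $C_{n_l,j_0}$ meets both $C_{n_0,i_0}$ and $C_{n_2,i_0}$ (the colour of the pairs $(n_0,n_l)$ and $(n_2,n_l)$ is $(i_0,j_0)$), so by convexity it contains the whole middle component $C_{n_1,i_0}$. Hence $\bigcap_{l\geqslant 3}S_{n_l}\supseteq C_{n_1,i_0}\neq\emptyset$, which gives the finite intersection property on a tail of $I$ outright (incidentally this also shows, a posteriori, that the $j_0$-components on that tail do pairwise intersect --- but you need this sandwich argument to see it; it does not come from a second Ramsey pass). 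One further small point: your one-line Helly via $\left[\max_i \inf D_i,\ \min_i \sup D_i\right]$ presumes suprema and infima exist, which an arbitrary linear order does not provide; replace it by the elementary median-of-witnesses induction (for pairwise intersecting convex $D_s,D_{r-1},D_r$, the median of points $a\in D_s\cap D_{r-1}$, $b\in D_s\cap D_r$, $c\in D_{r-1}\cap D_r$ lies in all three), which needs no completeness.
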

\begin{proof} 
Without loss we may assume that each $S_n$ has $N$ convex components, and write $S_n=C_n^1\cupdot \dots \cupdot C_n^N$ with $C_n^1<\dots<C_n^N$ being convex. For $n<m$ choose minimal $i$ and minimal $j$ such that $C_n^{i}\cap C_m^{j}\neq\emptyset$. By Ramsey's theorem, there is an infinite $I\subseteq\omega$  such that the chosen pairs $(i,j)$ are the same for all $n<m$ in $I$. If $i=j$ then $\{C_n^i\mid n\in I\}$ is a $2$-consistent family of convex sets, so it is $k$-consistent for all $k\geqslant 2$ by Helly's theorem, and hence $\{S_n\mid n\in I\}$ is $k$-consistent for all $k\geqslant 2$, too. 

Consider the case $i<j$. If $n<m$ in $I$ then $C_m^i<C_n^i$ as $C_m^i<C_m^j$ and $j$ is minimal such that $C_n^i\cap C_m^j\neq\emptyset$. Write $I=\{n_0,n_1,n_2,\dots\}$ in increasing order. Then $C_{n_2}^i<C_{n_1}^i<C_{n_0}^i$. For each $l\geqslant 3$, the set $C_{n_l}^j$ meets both $C_{n_2}^i$ and $C_{n_0}^i$, so by convexity it completely contains $C_{n_1}^i$. Therefore, $\{S_{n_l}\mid l\geqslant 3\}$ contains $C_{n_1}^i$ at its intersection, so it is $k$-consistent for all $k\geqslant 2$. The proof in the case $i>j$ is similar.
\end{proof}

\begin{Proposition}\label{proposition wom is dp minimal}
Suppose that $p(x)$ is a partial type over $A$ whose completions over $A$ are all weakly o-minimal. Then $p(x)$ is dp-minimal. 
\end{Proposition}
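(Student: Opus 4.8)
The goal is to rule out a formula $\varphi(x,y)$, an $A$-indiscernible sequence $I=(b_i\mid i<\omega)$ of $|y|$-tuples, and a realization $a\models p$ such that $\varphi(a,y)$ alternates at least four times on $I$. Since $\tp(a/A)$ is a completion of $p$, it is a weakly o-minimal type, so we may fix a relatively $A$-definable linear order $<$ with $(\tp(a/A),<)$ weakly o-minimal; write $q=\tp(a/A)$. The first step is to replace the original witness by a more convenient one: by Ramsey and compactness (the standard ``extract an indiscernible'' argument), from $I$ together with $a$ we may extract an $A$-indiscernible sequence so that, after renaming, $\varphi(a,y)$ has exactly the alternation pattern we need, and $I$ can be taken indexed by $\omega$ (or by $\mathbb{Q}$, whichever is more convenient for splitting it into blocks). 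The plan is then to split $I$ into two mutually indiscernible (over $A$, and over each other) infinite subsequences $I_0$ and $I_1$; this is possible by the usual mutual-indiscernibility extraction, using that $q$ is NIP (Fact \ref{fact nip dp-min wom}(ii)) so that $\tp(a/A)$ together with the relevant parameters stays well-behaved.

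The heart of the argument is to derive a contradiction from the assumption that both $\varphi(x,y)$-instances coming from $I_0$ and from $I_1$ ``cut'' $q(\Mon)$ nontrivially. For each $i$, let $S_i = \{x\in q(\Mon)\mid \models\varphi(x,b_i)\}$ relatively defined within $q(\Mon)$; by weak o-minimality of $(q,<)$ each $S_i$ has at most some fixed number $N$ of convex components (the bound is uniform in $i$ since all $b_i$ have the same type over $A$ and $\varphi$ is fixed — this uses the compactness/$\mathrm{tp}$-universal machinery of Section 1, e.g. Lemma \ref{Lemma_reldef_of_convex_components}(i) applied uniformly). Now suppose $\varphi(x,y)$ has $\geq 4$ alternations on $I$ witnessed by $a$; then we can find indices along which the family $\{S_i\}$ (or a reindexed subfamily, alternating between a ``$\varphi$'' side and a ``$\neg\varphi$'' side) is $2$-consistent but we want to show it must be $k$-consistent for all $k$. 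This is exactly where Fact \ref{fact combinatorics for dp-min} enters: from the at-most-$N$-convex-components bound plus $2$-consistency, we pass to an infinite subset $J\subseteq\omega$ on which $\{S_i\mid i\in J\}$ has the finite intersection property, hence (by compactness/saturation) a common realization $a'\in q(\Mon)$. But by indiscernibility, having a single $a'$ inside all the $S_i$ for $i\in J$ while $\neg\varphi(a', b_j)$ would be forced for the complementary $\neg\varphi$-blocks produces the contradiction with the assumption that the alternation was genuinely four-fold — i.e., $a'$ together with the sub-indiscernible sequence would witness $\varphi(x,y)$ is \emph{non}-alternating there, contradicting how the blocks were chosen. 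One carries this out on both ``halves'' $I_0$, $I_1$: if one of the two defined sets were eventually constant in truth value we'd already be below four alternations, so both are genuinely cutting, and applying Fact \ref{fact combinatorics for dp-min} to the relevant alternating subfamily in each half yields the contradiction.

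\textbf{Main obstacle.} The delicate point is the bookkeeping of indices and alternations: one must arrange the extracted indiscernible sequence and the choice of which convex-component data to feed into Fact \ref{fact combinatorics for dp-min} so that ``$2$-consistent'' genuinely holds (this comes from consecutive same-side blocks, exploiting indiscernibility to line up a witness) while ``finite intersection'' yields a contradiction with a \emph{different}, interleaved block. In other words the real work is choosing the right subfamily $\{S_i\}$ — typically one takes the sets $S_i$ for $i$ ranging over the indices where $\varphi(a,b_i)$ holds, shows by indiscernibility plus the alternation hypothesis that any pair $S_i\cap S_j$ is nonempty (both contain $a$-like elements, in fact $a$ itself after moving by an automorphism), then upgrades via Fact \ref{fact combinatorics for dp-min} to a common point $a'$ which, being in $q(\Mon)$ and satisfying $\varphi$ on an infinite index set, forces (again by indiscernibility) $\varphi(a',b_i)$ for \emph{all} $i$, killing the alternation. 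I expect the uniform bound on the number of convex components, and the precise extraction making ``$2$-consistency'' free, to be the two places requiring the most care; everything else is routine NIP/indiscernibility manipulation combined with the already-established structure theory of weakly o-minimal types.
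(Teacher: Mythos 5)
Your overall strategy --- bound the number of convex components uniformly, feed a $2$-consistent family into the Helly-style Fact \ref{fact combinatorics for dp-min}, and derive a contradiction with NIP --- is the right one, but there is a genuine gap at the decisive step, namely the choice of the family $\{S_i\}$. You take $S_i=\varphi(q(\Mon),b_i)$ for indices $i$ where $\varphi(a,b_i)$ holds. All of these sets contain $a$ itself, so $2$-consistency (indeed the full finite intersection property) is trivial and Fact \ref{fact combinatorics for dp-min} yields nothing; and your concluding claim that a common point $a'$ of infinitely many positive instances ``forces $\varphi(a',b_i)$ for all $i$ by indiscernibility'' is false: $I$ is indiscernible over $A$, not over $Aa'$, so nothing propagates the truth value of $\varphi(a',\cdot)$ to the complementary indices. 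Moreover, even if you did produce a realization that does not alternate on $I$, that contradicts nothing --- failure of dp-minimality only requires \emph{some} realization of $p$ with four alternations, so exhibiting another realization with none is harmless. The contradiction must instead be with NIP of the type, i.e.\ you must manufacture a single element that alternates infinitely often on $I$.

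The missing idea is to pair a positive with a negative index: let $S_n$ be relatively defined in $q(\Mon)$ by $\varphi(x,b_{2n})\land\lnot\varphi(x,b_{2n+1})$. Then (a) the four-alternation pattern $\varphi(c,b_{i_0})\land\lnot\varphi(c,b_{i_1})\land\varphi(c,b_{i_2})\land\lnot\varphi(c,b_{i_3})$ plus $A$-indiscernibility gives genuine $2$-consistency: for $n<m$ the quadruple $(b_{2n},b_{2n+1},b_{2m},b_{2m+1})$ has the same type over $A$ as $(b_{i_0},b_{i_1},b_{i_2},b_{i_3})$, so an $A$-conjugate of $c$ lies in $S_n\cap S_m$; (b) the $S_n$ are $A$-conjugate to each other by automorphisms preserving $q(\Mon)$ and $<_q$, so they all have the same finite number of convex components and Fact \ref{fact combinatorics for dp-min} applies, giving (after one more use of indiscernibility and saturation) some $a'\in\bigcap_{n<\omega}S_n$; and (c) this single $a'$ satisfies $\varphi(a',b_n)$ iff $n$ is even, contradicting the NIP of weakly o-minimal types (Fact \ref{fact nip dp-min wom}(ii)). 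With this replacement your argument becomes the paper's proof; the detour through two mutually indiscernible subsequences $I_0,I_1$ is unnecessary for the alternation characterization of dp-minimality you quote, and your uniform bound on components is justified simply by $A$-conjugacy of the parameter tuples, not by Lemma \ref{Lemma_reldef_of_convex_components}(i).
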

\begin{proof}Suppose not. Let $\varphi(x,y)$ be a formula, $I=(b_n\mid n<\omega)$ a $A$-indiscernible sequence, and $c$ realizing $p$ such that $\varphi(c,y)$ has at least four alternations on $I$. Let $i_0<i_1<i_2<i_3$ be such that:
\begin{equation}\tag{$*$}
\models\varphi(c,b_{i_0})\land\lnot\varphi(c,b_{i_1}) \land \varphi(c,b_{i_2})\land\lnot\varphi(c,b_{i_3}).
\end{equation}
Put $q=\tp(c/A)$. By assumption, $q$ is a weakly o-minimal type, so $(q,<_q)$ is a weakly o-minimal pair over $A$ for some (any) relatively $A$-definable order $<_q$ on $q(\Mon)$.
Consider the relatively definable subsets $S_n$ of $q(\Mon)$ relatively defined by $\varphi(x,b_{2n})\land\lnot\varphi(x, b_{2n+1})$. By weak o-minimality of $q$ and indiscernibility of $I$, for some $N<\omega$ each $S_n$ has $N$ $<_q$-convex components. Furthermore, by the indiscernibility of $I$ and ($*$) we see that $\{S_n\mid n<\omega\}$ is 2-consistent. Then by Fact \ref{fact combinatorics for dp-min} there is an infinite $J\subseteq \omega$ such that $\{S_n\mid n\in J\}$ has the finite intersection property; in fact, by the indiscernibility of $I$, we may conclude that $\{S_n\mid n<\omega\}$ has the finite intersection property. By saturation we find $a\in\bigcap_{n<\omega}S_n$, but this contradicts the fact that $p$ is NIP (Fact \ref{fact nip dp-min wom}(ii)).
\end{proof}

\begin{Corollary}
Every weakly o-minimal type is dp-minimal, and every weakly quasi-o-minimal theory is dp-minimal. 
\end{Corollary}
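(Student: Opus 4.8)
The plan is to simply read off both claims from Proposition~\ref{proposition wom is dp minimal}, so the ``proof'' is essentially a two-sentence unpacking of the already-proved statement; the only thing to say is which instances of the Proposition to invoke.

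For the first claim, note that a weakly o-minimal type $p\in S(A)$ is, in particular, a partial type over $A$ whose (unique) completion over $A$ is $p$ itself, which is weakly o-minimal; hence Proposition~\ref{proposition wom is dp minimal} applies directly and $p$ is dp-minimal. For the second claim, suppose $T$ is weakly quasi-o-minimal with respect to some $\emptyset$-definable linear order $<$. By Proposition~\ref{Proposition qwom iff all 1-types are wom} (or directly: by Remark~\ref{Remark_weak_ominimal_firstrmk}(c), every complete $1$-type is weakly o-minimal), every $p\in S_1(T)$ is weakly o-minimal. Apply Proposition~\ref{proposition wom is dp minimal} to the partial type $x=x$ over $\emptyset$ in a single variable of each sort: its completions are exactly the complete $1$-types of $T$, all of which are weakly o-minimal, so $x=x$ is dp-minimal, which is precisely the statement that $T$ is dp-minimal.

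I expect no real obstacle here — the corollary is a direct specialization of the Proposition. The only minor point worth being careful about is the multi-sorted setting: dp-minimality of a theory means dp-minimality of the type $x=x$ for $x$ ranging over each sort (equivalently, each complete $1$-type is dp-minimal), and Proposition~\ref{proposition wom is dp minimal} is exactly stated for an arbitrary partial type $p(x)$, so applying it sort by sort causes no trouble.

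\begin{proof}
A weakly o-minimal type $p\in S(A)$ is a partial type over $A$ all of whose completions over $A$ (namely $p$ itself) are weakly o-minimal, so $p$ is dp-minimal by Proposition~\ref{proposition wom is dp minimal}.

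Now suppose $T$ is weakly quasi-o-minimal with respect to some $\emptyset$-definable linear order $<$. By Remark~\ref{Remark_weak_ominimal_firstrmk}(c), every complete $1$-type $p\in S_1(T)$ is weakly o-minimal. For a variable $x$ of any given sort, the partial type ``$x=x$'' over $\emptyset$ has all of its completions among the complete $1$-types of $T$, hence all of them weakly o-minimal; by Proposition~\ref{proposition wom is dp minimal}, ``$x=x$'' is dp-minimal. Since this holds for every sort, $T$ is dp-minimal.
\end{proof}
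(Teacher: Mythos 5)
Your proof is correct and is exactly the intended (indeed the only natural) argument: the paper leaves the Corollary without proof because it is a direct specialization of Proposition \ref{proposition wom is dp minimal}, applied to the complete type itself in the first case, and to the partial type $x=x$ (whose completions are the weakly o-minimal $1$-types, by Remark \ref{Remark_weak_ominimal_firstrmk}(c) or Proposition \ref{Proposition qwom iff all 1-types are wom}) in the second. Your remark about handling sorts separately is harmless and does not change the argument.
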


In the rest of this section, we focus on indiscernible sequences of realizations of a weakly o-minimal type. In part (ii) of the following lemma, we show that they have a bit stronger property than the distality as defined by Simon in \cite{Simon2} (see \cite[Lemma 2.7, Corollary 2.9]{Simon2} and \cite[Definition 4.21]{EK}). 

\begin{Lemma}\label{Lemma_indiscernible_wom}
Let $(p,<)$ be a weakly o-minimal pair over $A$, let $a_0,a_1\models p$ and $B\supseteq A$. Suppose that $I$ and $J$ are sequences of realizations of $p$ such that $I+J$ is infinite and $<$-increasing. Then:
\begin{enumerate}[(i)]
    \item  If $I$ has no maximum, $I<a_0<a_1$ and $I+a_1$ is $B$-indiscernible, then $I+a_0$ is $B$-indiscernible. 
    \item If $I+J$ is $B$-indiscernible, $I$ has no maximum, $J$ has no minimum, and $I <a_0<  J$, then $I +a_0 + J$ is $B$-indiscernible.
    \item If $I+a_0+a_1+J$ is  $B$-indiscernible, $a\models p$  and $a_0<a< a_1$, then at least one of the sequences $I+a_0+a+J$ and $I+a+a_1+J$ is $B$-indiscernible. 
\end{enumerate} 
\end{Lemma}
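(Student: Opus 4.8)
The three parts are closely linked, and the natural strategy is to prove (i) first, then deduce (ii) and (iii) from it. The underlying principle is that, since $(p,<)$ is weakly o-minimal, every relatively definable (with parameters) subset of $p(\Mon)$ has finitely many convex components; so if a formula $\varphi(x,\bar z)$ with parameters from $B$ and from the indiscernible sequence alternates too often along a $<$-increasing sequence, we obtain a contradiction with weak o-minimality, exactly as in Proposition \ref{proposition wom is dp minimal}. The key quantitative input is that a $B$-indiscernible $<$-increasing sequence, together with one extra point on the correct side, controls all types: the only way indiscernibility can fail is through a formula that ``sees'' the position of the new point, and such a formula, by weak o-minimality applied type by type, can only change its truth value boundedly often.

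\emph{Part (i).} Assume $I$ has no maximum, $I<a_0<a_1$, and $I+a_1$ is $B$-indiscernible. I want $I+a_0$ to be $B$-indiscernible. Suppose not: then there is an $L_B$-formula $\varphi(x_1,\dots,x_k)$ and indices $i_1<\dots<i_{k-1}$ from $I$ such that $\varphi(a_{i_1},\dots,a_{i_{k-1}},a_0)$ fails while $\varphi$ holds of the corresponding tuple with $a_1$ in place of $a_0$ (and of any other increasing tuple from $I+a_1$, by indiscernibility of $I+a_1$). Fix $a_{i_1},\dots,a_{i_{k-1}}\in I$ and set $C=B\cup\{a_{i_1},\dots,a_{i_{k-1}}\}$; let $\psi(x):=\varphi(a_{i_1},\dots,a_{i_{k-1}},x)$, a relative definition over $C$ of a subset of $p(\Mon)$. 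By weak o-minimality of $(p,<)$ (which, via Remark \ref{Remark_weak_ominimal_firstrmk}(a), transfers to $C$-relative definability), $\psi(p(\Mon))$ has finitely many $<$-convex components, say $n$. Now use that $I$ has no maximum: pick $a_{i_1}<j_0<j_1<\dots<j_n$ inside $I$ with $j_0>i_{k-1}$, and an automorphism argument / indiscernibility to produce alternations. More precisely, for each index $m$ with $i_{k-1}<m$ in $I$, indiscernibility of $I+a_1$ gives $\models\psi(a_m)$ whenever $a_m>a_{i_{k-1}}$ sits ``like $a_1$'', while $\models\lnot\psi(a_0)$ and $a_0<a_1$; applying an $A$-automorphism moving the block $a_{i_1}\dots a_{i_{k-1}}a_0a_1$ up along $I$ repeatedly, we manufacture a $<$-increasing sequence of points alternately in and out of a single relatively $Aa_{i_1}\dots a_{i_{k-1}}$-definable set, contradicting the bound $n$. (This is the same ``walking the automorphism'' mechanism as in the proofs of Proposition \ref{Prop_E_p_is convex_linear} and Lemma \ref{Lemma_wom_order_property}.) Hence $I+a_0$ is $B$-indiscernible.

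\emph{Parts (ii) and (iii).} For (ii), $I+J$ is $B$-indiscernible, $I$ has no max, $J$ has no min, $I<a_0<J$. Applying (i) to the reverse order and to the sequence $J$ with the extra point $a_0$ from below, we get that $a_0+J$ is $B$-indiscernible (over $B\cup$ nothing extra), and symmetrically $I+a_0$ is $B$-indiscernible by (i) itself; the only thing left is to glue these: one checks that indiscernibility of $I+a_0$ and of $a_0+J$, plus indiscernibility of $I+J$, forces indiscernibility of $I+a_0+J$, because any would-be bad formula $\varphi$ spanning both sides of $a_0$ can, by weak o-minimality (bounded convex components of the relatively definable set it cuts out, with parameters the fixed entries from $I$ and $J$), be pushed to a contradiction exactly as in (i) — here one uses that both $I$ and $J$ are ``dense enough'' (no max, no min) to slide points across $a_0$. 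For (iii): given $I+a_0+a_1+J$ indiscernible and $a_0<a<a_1$, consider the relatively $B\cup I\cup J$-definable condition on $x$ ``$I+a_0+x+J$ is indiscernible'' versus ``$I+x+a_1+J$ is indiscernible''; for each finite fragment these are relatively definable with finitely many convex components, and a counting argument (a point failing both would create a forbidden alternation pattern of a single formula along the increasing sequence $I,a_0,x,a_1,J$) shows one of the two must hold for $a$.

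\textbf{Main obstacle.} The delicate point is the gluing in (ii) and the dichotomy in (iii): one must verify that a formula involving coordinates on \emph{both} sides of the inserted point really does produce a genuine violation of weak o-minimality, i.e.\ that fixing all other coordinates yields a relatively definable subset of $p(\Mon)$ in the inserted variable, and that the alternations forced by the hypotheses cannot all be absorbed into the bounded number of convex components. This requires carefully choosing which coordinate to let vary (the inserted one) and invoking the ``no maximum / no minimum'' hypotheses to generate arbitrarily long alternating $<$-increasing sequences via automorphisms fixing $B$; handling formulas of arbitrary arity uniformly (rather than just binary ones) is where the bookkeeping is heaviest, though conceptually it is the same walk-the-automorphism argument used repeatedly earlier in the paper.
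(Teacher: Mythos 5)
There is a genuine gap, and it already occurs in part (i). Your contradiction is supposed to come from a single relatively definable subset of $p(\Mon)$ acquiring unboundedly many convex components, manufactured by ``applying an $A$-automorphism moving the block $a_{i_1}\dots a_{i_{k-1}}a_0a_1$ up along $I$''. But the set you claim to alternate in and out of is defined by $\psi(x)=\varphi(a_{i_1},\dots,a_{i_{k-1}},x)$, and the automorphism moves exactly those parameters: what the iteration produces are alternations for the images $\varphi(f^n(a_{i_1}),\dots,f^n(a_{i_{k-1}}),x)$, a different relatively definable set at each stage. In the arguments you cite as models (Proposition \ref{Prop_E_p_is convex_linear}, Lemma \ref{Lemma_wom_order_property}) there is an extra structural feature (transitivity of $E$, resp.\ the nesting $C(a_n)\subseteq C(a_{n+1})$) that pulls the alternation back to a formula with a \emph{fixed} parameter; here no such mechanism is available, and with the parameters $a_{i_1},\dots,a_{i_{k-1}}$ held fixed you only ever exhibit the pattern in--out--in (on $I$, at $a_0$, at $a_1$), which is perfectly compatible with weak o-minimality. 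The same unproved ``push to a contradiction exactly as in (i)'' is what carries the gluing step of (ii) and the dichotomy of (iii), so those parts are not established either; note that indiscernibility of $I+a_0$, of $a_0+J$ and of $I+J$ does not by itself give indiscernibility of $I+a_0+J$, and in (iii) the point $a$ could a priori defeat the two candidate sequences through two different finite fragments, so one needs a device that handles both failures simultaneously rather than a fragment-by-fragment count.

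The short route, which is the one the paper takes, is via Lemma \ref{Lemma_reldef_of_convex_components}(ii): every complete extension of $p$ over any $B'\supseteq A$ has convex locus in $(p(\Mon),<)$. For (i): given finite $I_0\subseteq I$, pick $a\in I$ with $I_0<a$; indiscernibility of $I+a_1$ gives $a\equiv a_1\,(BI_0)$, and convexity of the locus of $\tp(a_1/BI_0)$ together with $a<a_0<a_1$ gives $a_0\equiv a_1\,(BI_0)$, which suffices. Part (ii) is the same squeeze from both sides, using $a_i\in I$ above $I_0$ and $b_j\in J$ below $J_0$ with $\tp(a_i/BI_0J_0)=\tp(b_j/BI_0J_0)$. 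For (iii), first insert by compactness an auxiliary $a_{1/2}$ with $I+a_0+a_{1/2}+a_1+J$ $B$-indiscernible; then $a\in(a_0,a_{1/2}]$ or $a\in[a_{1/2},a_1)$, and convexity of the locus of $\tp(a_{1/2}/a_1BIJ)$ (resp.\ of $\tp(a_{1/2}/a_0BIJ)$) yields $a\equiv a_{1/2}$ over the relevant set, hence one of the two required indiscernible sequences. If you insist on an alternation-style argument, you would in effect first have to reprove the convexity of loci of complete extensions, at which point the direct argument above is already available.
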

\begin{proof}
 (i) Let $I_0$ be a finite subset of $I$. Since $I$ has no maximum, there exists $a \in I$ such that $I_0<a$. Then $a< a_0< a_1$ and the sequence $I_0+a+a_1$ is $B$-indiscernible, so $a\equiv a_1\ (BI_0)$ holds. Since $p$ is weakly o-minimal, the locus of type $\tp(a_1/BI_0)$ is a convex subset of $(p(\Mon),<)$ by Lemma \ref{Lemma_reldef_of_convex_components}(ii), so $a<a_0<a_1$ implies $a_0\equiv a_1\ (B I_0)$. In particular, the sequence $I_0+a_0$ is $B$-indiscernible.  Since this holds for all finite $I_0\subseteq I$,   $I+a_0$ is $B$-indiscernible.

(ii) Let $I_0\subseteq I$ and $J_0\subseteq J$ be finite. Choose $a_i\in I$ and $b_j\in J$ that satisfy $I_0< a_i$ and $b_j< J_0$. Then the sequence $I_0+a_i+b_j+J_0$ is $B$-indiscernible, and $a_i<a_0<b_j$.  By Lemma \ref{Lemma_reldef_of_convex_components}(ii) the locus of $\tp(a_i/BI_0J_0)=\tp(b_j/BI_0J_0)$ is convex in $(p(\Mon),<)$, so $a_0\equiv a_i\ (BI_0J_0)$, therefore the sequence $I_0+a_0+J_0$ is $B$-indiscernible. Since this holds for all finite $I_0\subseteq I$ and $J_0\subseteq J$, the sequence $I+a_0+J$ is $B$-indiscernible. 

(iii) Choose $a_{\frac{1}{2}}\models p$ such that $I+a_0+a_{\frac{1}{2}}+a_1+J$ is $B$-indiscernible; this is possible by compactness as $I+J$ is infinite. Then $a_0< a_{\frac{1}{2}}<a_{1}$, so $a$ is in one of the intervals $(a_0,a_{\frac{1}{2}}]$ and $[a_{\frac{1}{2}},a_{1})$. First, assume $a\in (a_{0},a_{\frac{1}{2}}]$. Notice that $a_{0}$ and $a_{\frac{1}{2}}$ realize the same type,  say $q$,  over $a_1BIJ$. By Lemma \ref{Lemma_reldef_of_convex_components}(ii), $q(\Mon)$ is a convex subset of $p(\Mon)$, so $a_{0}< a\leqslant a_{\frac{1}{2}}$  implies $a\models q$. In particular, $a\equiv a_{\frac{1}{2}}\,(a_1BIJ)$ implies that $I+a+a_1+J$ is $B$-indiscernible.  Similarly, $a\in[a_{\frac{1}{2}},a_1)$ implies the indiscernibility of $I+a_0+a+J$. 
\end{proof}

\begin{Corollary}\label{Cor_indiscernible_wom}
 Let $(p,<)$ be a weakly o-minimal pair over $A$ and $B\supseteq A$. Suppose that $I$ and $J$ are increasing sequences of realizations of $p$ such that $I+J$ is infinite and $B$-indiscernible. Let $a\models p$ satisfy $I<a<J$.  Then by removing at most one element except $a$ from the sequence $I+a+J$ the sequence remains $B$-indiscernible. More precisely, at least one of the following conditions holds:
\begin{enumerate}[(1)]
     \item $I+a+J$ is $B$-indiscernible;
     \item $a_0=\max I$ exists and the sequence $(I-a_0)+a+J$ is $B$-indiscernible;
     \item $a_1=\min J$ exists and the sequence $I+a+(J-a_1)$  is $B$-indiscernible.
\end{enumerate}
\end{Corollary}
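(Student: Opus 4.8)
The plan is to derive this corollary directly from Lemma \ref{Lemma_indiscernible_wom}, splitting into cases according to whether $I$ has a maximum and whether $J$ has a minimum. In the generic case, where $I$ has no maximum and $J$ has no minimum, part (ii) of the lemma applies verbatim and gives that $I+a+J$ is $B$-indiscernible, so condition (1) holds and nothing needs to be removed. This handles the ``main'' situation, and the remaining work is just bookkeeping to reduce the boundary cases to the lemma.

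Next I would treat the case where $a_0=\max I$ exists but $J$ has no minimum. Write $I=I'+a_0$ where $I'$ has no maximum (if $I$ were a singleton, the relevant statements are trivial or follow directly from indiscernibility of $I+J$; I would dispose of such finiteness degeneracies at the outset). Since $I'+a_0+a+J$ might fail indiscernibility precisely because of the jump at $a_0$, the idea is to apply Lemma \ref{Lemma_indiscernible_wom}(iii): the sequence $I'+a_0+a_0'+a+J$ — wait, more carefully: I would instead look at the two consecutive elements straddling $a$. The cleanest route: consider $I'+a_0+a+J$ where $a_0<a<$ everything in $J$; since $J$ has no minimum, pick $b\in J$ with $a<b$, so within $I'+a_0+a+b+\dots$ we have three consecutive relevant points $a_0<a<b$ from an indiscernible context $I'+a_0+b+(J\text{-tail})$, and Lemma \ref{Lemma_indiscernible_wom}(iii) gives that $I'+a_0+a+(J\text{-tail})$ or $I'+a+b+(J\text{-tail})$ is $B$-indiscernible. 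The first alternative is exactly condition (1); the second, after using part (ii) to fill $J$ back in, yields condition (2) (we dropped $a_0=\max I$). The symmetric argument handles $\min J$ existing while $I$ has no maximum, giving condition (1) or condition (3).

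The last case is where both $a_0=\max I$ and $a_1=\min J$ exist. Then $I+a_0+a+a_1+J'$ sits inside the $B$-indiscernible sequence with $a_0<a<a_1$, so Lemma \ref{Lemma_indiscernible_wom}(iii) directly gives that $I+a+a_1+J$ is $B$-indiscernible (condition (3), having removed $a_0$) or $I+a_0+a+J$ is $B$-indiscernible (condition (2), having removed $a_1$); actually one must be slightly careful since $I$ might have no further elements below $a_0$, but again the finiteness degeneracies are trivial. Throughout I would use Lemma \ref{Lemma_reldef_of_convex_components}(ii) implicitly only through the already-established Lemma \ref{Lemma_indiscernible_wom}, so no new convexity argument is needed.

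The main obstacle is purely organizational: making sure the boundary cases are partitioned exhaustively and that in each one the three (or four) consecutive points fed into Lemma \ref{Lemma_indiscernible_wom}(iii) really do come from a $B$-indiscernible ambient sequence, and that after applying part (iii) one can re-thicken the truncated tails of $I$ or $J$ back to the full sequences using part (ii) without losing indiscernibility. I expect each such re-thickening step to be routine given part (ii), but it is the place where one could slip. I would also note at the start that if $I$ or $J$ is finite the statement reduces to an immediate consequence of the indiscernibility of $I+J$ together with convexity of loci (Lemma \ref{Lemma_reldef_of_convex_components}(ii)), so that the interesting content is confined to the infinite-tail situations where Lemma \ref{Lemma_indiscernible_wom} is available.
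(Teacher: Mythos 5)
Your first case ($I$ with no maximum, $J$ with no minimum) and your final case (both $a_0=\max I$ and $a_1=\min J$ exist) coincide with the paper's proof, via Lemma \ref{Lemma_indiscernible_wom}(ii) and (iii) respectively (in the latter you swap the labels: deleting $a_0$ gives condition (2) and deleting $a_1$ gives condition (3), but that is cosmetic). The genuine gap is in the mixed cases. When $a_0=\max I$ exists and $J$ has no minimum, you truncate $J$ to $b+J_{>b}$, apply (iii) to $a_0<a<b$ inside $(I-a_0)+a_0+b+J_{>b}$, and then promise to ``fill $J$ back in using part (ii)''. That re-thickening is not routine, and part (ii) cannot do it: the infinitely many discarded elements of $J$ below $b$ must be re-inserted into a cut whose left side ends with a maximum (namely $a$) and whose right side begins with a minimum (namely $b$), so the hypotheses of (ii) fail at precisely the relevant cut. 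In particular your claim that the first alternative of the (iii)-application ``is exactly condition (1)'' is false: it only yields $B$-indiscernibility of $I+a+J_{>b}$, which is missing an infinite initial piece of $J$. There is also a prior problem: since $J$ has no minimum, all of the infiniteness of $J$ may lie below $b$ (e.g.\ $I$ finite and $J$ of order type $\omega^*$), in which case $(I-a_0)+J_{>b}$ is finite and Lemma \ref{Lemma_indiscernible_wom}(iii) is not applicable to the truncated sequence at all; and your parenthetical assumption that $I-a_0$ has no maximum is unjustified (though it is not actually needed for the (iii) step).

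The paper handles the mixed cases without any truncation, and this is the idea your proposal is missing: if $a_1=\min J$ exists and $I$ has no maximum, then $I+a_1$, being an initial segment of the $B$-indiscernible sequence $I+J$, is indiscernible over $B\cup(J-a_1)$; applying Lemma \ref{Lemma_indiscernible_wom}(i) over this enlarged base (its proof in fact gives $a\equiv a_1$ over $B\cup I\cup(J-a_1)$) one may replace $a_1$ by $a$ in the whole sequence, obtaining condition (3) outright, and symmetrically (working with the reversed order, which again makes $(p,>)$ a weakly o-minimal pair) one gets condition (2) when $a_0=\max I$ exists and $J$ has no minimum. To repair your argument, treat the mixed cases this way rather than via (iii) followed by re-thickening.
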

\begin{proof} If $I$ has no maximum and $J$ has no minimum, then $I+a+J$ is indiscernible by Lemma \ref{Lemma_indiscernible_wom}(ii). If both $a_0=\max I$ and $a_1=\min J$ exist, then the sequence $(I-a_0)+a_0+a_1+(J-a_1)$ is $B$-indiscernible and $a_0<a<a_1$, so by Lemma \ref{Lemma_indiscernible_wom}(iii) at least one of conditions (2) and (3) holds. If $I$ has no maximum and $a_1=\min J$ exists, then $I<a<a_1$ and $I+a_1$ is $B(J-a_1)$-indiscernible, so condition (3) holds by Lemma \ref{Lemma_indiscernible_wom}(i); similarly,  if $a_0=\max I$ and $J$ has no minimum, then condition (2) holds. 
\end{proof}

\begin{Proposition}\label{Proposition_replace_a_i_by_ai'}  Let $\mathbf p=(p,<)$ be a weakly-o-minimal pair over $A$. Suppose that  $I=(a_j\mid j\in J)$ is a (possibly finite)  Morley sequence in $\mathbf p_r$ over $A$.
\begin{enumerate}[(i)]
\item Suppose that $a\models p$ is in the $<$-convex hull of $I$.  Then we can remove at most one element from $I$ and insert $a$ so that the sequence remains Morley over $A$. 

\item If $I'=(a_j'\mid j\in J)$ is a sequence of realizations of $p$  such that $a_j'\dep_A a_j$ for all $j\in J$, then $I'$ is a Morley sequence in $\mathbf p_r$ over $A$.
\end{enumerate}
\end{Proposition}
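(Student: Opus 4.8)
The plan is to reduce both parts to \emph{finite} index sets and, ultimately, to the statement that a single element of a Morley sequence in $\mathbf p_r$ over $A$ can be swapped for a forking‑equivalent one. Two preliminary observations set this up. First, a \emph{finite‑character} remark: since $\mathbf p_r$ is $A$‑invariant, $a\models(\mathbf p_r)_{\restriction A a_{<j}}$ iff $a\models(\mathbf p_r)_{\restriction A\bar a}$ for every finite $\bar a\subseteq a_{<j}$, so $I=(a_j\mid j\in J)$ is Morley in $\mathbf p_r$ over $A$ iff every finite subsequence is; thus we may assume $J=\{0,\dots,n\}$ is finite (for part (i), the modified sequence is then Morley iff all its finite subsequences are, which again follows from the finite case). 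Second: \emph{in any Morley sequence $(c_0,\dots,c_m)$ in $\mathbf p_r$ over $A$ each $c_j$ is forking‑independent over $A$ from $\{c_i\mid i\ne j\}$.} Indeed $c_j\ind_A c_{<j}$ by definition of a Morley sequence; adding the successors one at a time, $c_{j+l}\ind_A c_{<j+l}$ gives by monotonicity $c_{j+l}\ind_{A c_{<j}c_{j+1}\cdots c_{j+l-1}}c_j$, and since every type in play extends the weakly o‑minimal $p$ and is therefore an so‑type (Lemma~\ref{Lemma_reldef_of_convex_components}, Remark~\ref{Remark_weak_ominimal_firstrmk}(b)), forking symmetry (Lemma~\ref{Lemma_forking_symmetry}) yields $c_j\ind_{A c_{<j}c_{j+1}\cdots c_{j+l-1}}c_{j+l}$, whence $c_j\ind_A c_{<j}c_{j+1}\cdots c_{j+l}$ by transitivity of nonforking; induct on $l$.

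For part (ii) I would argue by induction on $k$, putting $I^{(-1)}=I$ and $I^{(k)}=(a_0',\dots,a_k',a_{k+1},\dots,a_n)$, so that $I^{(n)}=I'$. Suppose $I^{(k-1)}$ is Morley in $\mathbf p_r$ over $A$. By the second preliminary observation applied to $I^{(k-1)}$, the element $a_k$ is independent over $A$ from $\{a_0',\dots,a_{k-1}',a_{k+1},\dots,a_n\}$; since $a_k'\dep_A a_k$, Lemma~\ref{Lema_Dpa_subset_Dqa}(ii) gives $a_k\equiv a_k'\ (A a_0'\cdots a_{k-1}'a_{k+1}\cdots a_n)$. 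Any $\sigma\in\Aut_A(\Mon)$ fixing $A a_0'\cdots a_{k-1}'a_{k+1}\cdots a_n$ with $\sigma(a_k)=a_k'$ then carries $I^{(k-1)}$ to $I^{(k)}$, so $\tp(I^{(k)}/A)=\tp(I^{(k-1)}/A)$, and in particular $I^{(k)}$ is again Morley in $\mathbf p_r$ over $A$ (being Morley is a property of the type of the sequence over $A$). After $n+1$ steps, $I'=I^{(n)}$ has the same type over $A$ as $I$, hence is Morley; note this also shows $I'$ is $<$‑increasing without a separate argument.

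For part (i) I would split into two cases according to how $a$ sits relative to the $<$‑convex equivalence $\mathcal D_p$ on $p(\Mon)$ (Theorem~\ref{Theorem4_1}). If $a\dep_A a_{j_0}$ for some $j_0$, the inductive step of (ii) applies verbatim: $a_{j_0}\ind_A\{a_i\mid i\ne j_0\}$ and $a\dep_A a_{j_0}$ give $a\equiv a_{j_0}$ over $A\cup\{a_i\mid i\ne j_0\}$, so deleting $a_{j_0}$ and inserting $a$ (which then lands exactly in the position vacated by $a_{j_0}$, as $\mathcal D_p(a_{j_0-1})<\mathcal D_p(a_{j_0})<\mathcal D_p(a_{j_0+1})$) yields a sequence of the same type over $A$ as $I$, hence Morley. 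If instead $a\ind_A a_j$ for every $j$, then writing $a_{j_0}$ for the largest element of $I$ below $a$ (if it exists; otherwise the argument only simplifies), one first notes $a\models(\mathbf p_r)_{\restriction A a_{<j_0}}=\tp(a_{j_0}/A a_{<j_0})$ since $a>a_{j_0}$ and this locus is a final part of $(p(\Mon),<)$; then, using indiscernibility of $I$ together with Corollary~\ref{Cor_indiscernible_wom} and the bookkeeping of Lemma~\ref{Lema_Dpa_subset_Dqa}, Lemma~\ref{Lemma_R_p_basic2} and Lemma~\ref{Lema_basic_delta_transitivity} (and, if helpful, Proposition~\ref{Prop_delta_B_propoerties}), one shows that inserting $a$ — deleting $a_{j_0}$ when needed — keeps every element $\triangleleft^{\mathbf p}$‑generic over its predecessors. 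The main obstacle is precisely this last point: upgrading ``$a$ is $\triangleleft^{\mathbf p}$‑generic over $A a_{<j_0}$ and over its immediate neighbour'' to ``$a$ is $\triangleleft^{\mathbf p}$‑generic over all of $A a_{\le j_0}$ jointly'' — a weight‑one‑type style assertion, and the step where weak o‑minimality (through Corollary~\ref{Cor_indiscernible_wom}), rather than mere so‑type behaviour, is actually used; everything else is soft forking calculus valid for all so‑types.
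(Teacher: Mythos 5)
Your reduction hinges on the ``second preliminary observation'' --- that in a Morley sequence $(c_0,\dots,c_m)$ in $\mathbf p_r$ over $A$ each $c_j$ satisfies $c_j\ind_A\{c_i\mid i\ne j\}$ --- and this claim is false. Already for $m=2$: the locus of $\tp(c_1/Ac_0c_2)$ is contained in the relatively definable set $\{x\in p(\Mon)\mid c_0<x<c_2\}$, which is bounded in $(p(\Mon),<)$, so by Lemma \ref{Lemma_so_forking_equals_bounded} (equivalently Corollary \ref{Corolary_of_forking_eq_bounded}(iii)) we get $c_1\dep_A c_0c_2$; put differently, the only nonforking extensions of $p$ over $Ac_0c_2$ are $(\mathbf p_l)_{\restriction Ac_0c_2}$ and $(\mathbf p_r)_{\restriction Ac_0c_2}$, whose loci lie entirely below $c_0$, respectively above $c_2$, while $c_0<c_1<c_2$. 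The flaw in your derivation of the observation is the final appeal to ``transitivity of nonforking'': $a\ind_B C$ together with $a\ind_{BC} D$ does not yield $a\ind_B CD$ here (it is not a general property of forking), and the example above is a concrete failure: $c_1\ind_A c_0$ and $c_1\ind_{Ac_0}c_2$ both hold, yet $c_1\dep_A c_0c_2$. (The symmetry and base-monotonicity steps you use are fine; it is only this last step that breaks.)

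With that observation gone, your induction for (ii) and the first case of (i) collapse: Lemma \ref{Lema_Dpa_subset_Dqa}(ii) cannot be applied with $B$ equal to the whole remaining sequence, because its hypothesis $a_k\ind_A B$ fails whenever $a_k$ has neighbours on both sides; and the second case of (i) you explicitly leave unproven (your ``main obstacle''), which is precisely where an argument is required. The paper proceeds differently and avoids the issue: for (i) it extends $I$ to an infinite Morley sequence, splits it as $I^-<a<I^+$, applies Corollary \ref{Cor_indiscernible_wom} to remove at most one element and obtain an $A$-indiscernible sequence, and then notes that an $A$-indiscernible sequence containing an infinite Morley subsequence in the $A$-invariant type $\mathbf p_r$ is itself Morley; (ii) is then deduced from (i) by padding $I$ with endpoints $a_{-1},a_{n+1}$ and observing that $a_{k-1}<\mathcal D_p(a_k)<a_{k+1}$ forces the element removed when inserting $a_k'$ to be $a_k$ itself, so the type of the sequence over $A$ is unchanged. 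If you wish to keep your one-swap-at-a-time scheme for (ii), you must obtain $a_k\equiv a_k'$ over the rest of the sequence by such a positional/convexity argument, not from forking independence of $a_k$ from the rest of the sequence over $A$.
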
 
\begin{proof} (i) By extending $I$ if necessary, we may assume that $I$ is infinite; also, we may assume $a\notin I$.  Let $ I^{-}=\{x\in I\mid x< a\}$ and $ I^{+}=\{x\in I\mid a< x\}$. Then $\{I^-,I^+\}$ is a partition of $I$ and $I^-<a<I^+$ also holds; the assumptions of Corollary \ref{Cor_indiscernible_wom} are satisfied, so by removing at most one element from $I^-+a+I^+$ except $a$ we get an $A$-indiscernible sequence. The obtained sequence is Morley over $A$ as it is $A$-indiscernible and contains an infinite subsequence which is Morley over $A$.

(ii) It suffices to prove the statement for a finite $I$. So, assume that $ I=(a_0,\dots, a_n)$ is a Morley sequence in $\mathbf p_r$ over $A$. 
Extend $ I$ to a longer Morley sequence $I_0=(a_{-1},a_0,\dots a_n,a_{n+1})$; we will show that the sequence $I_0'=(a_{-1},a_0',\dots a_n',a_{n+1})$ has the same type over $A$ as $I_0$. 
Note that $a_{k-1}<\mathcal D_p(a_k)< a_{k+1}$ holds for all $k=0,1,\dots,n$, so $a_k\dep_A  a_k'$ implies $a_{k-1}<a_k'< a_{k+1}$. By part (i), we can replace some element of $I_0$, say $a_i$, by $a_k'$ so that the sequence remains Morley. In particular, $a_{i-1}<a_k'< a_{i+1}$ is satisfied, so $i=k$. Therefore, replacing $a_k$ by $a_k'$ in $I_0$ does not change the type $\tp(I_0/A)$. Successively, we can replace all $a_k$'s and conclude $\tp(I_0/A)=\tp(I_0'/A)$. 
\end{proof}


\begin{thebibliography}{99}
\bibitem{Belegradek} Belegradek, Oleg, Ya'acov Peterzil, and Frank Wagner. "Quasi-o-minimal structures." The Journal of Symbolic Logic 65.3 (2000): 1115-1132.

\bibitem{BST} Oleg V. Belegradek, A. P. Stolboushkin, and M.A.Taitslin. Generic queries over quasi-a-minimal domains, Proceedings of the Fourth International Symposium LFCS (S. Adian and A. Nerode, eds.). Lecture Notes in Computer Science, vol. 1234,
Springer-Verlag, (1997): pp. 21-32.

\bibitem{Dick} Dickmann, Max A. "Elimination of quantifiers for ordered valuation rings." The Journal of Symbolic Logic 52.1 (1987): 116-128.

\bibitem{EK} Estevan, Pedro Andrés, and Itay Kaplan. "Non-forking and preservation of NIP and dp-rank." Annals of Pure and Applied Logic 172.6 (2021): 102946.


\bibitem{Fujita} Fujita, Masato. Uniform local weak o-minimality and *-local weak o-minimality. https://arxiv.org/pdf/2405.06140
 
 
 
\bibitem{Goodrick2} Goodrick, John. "Definable sets in dp-minimal ordered abelian groups" (Model theoretic aspects of the notion of independence and dimension).  (2022) http://hdl.handle.net/2433/277116  

\bibitem{HM} Herwig, B., Macpherson, H. D., Martin, G., Nurtazin, A., and Truss, J. K. (1999). On $\aleph_0$-categorical weakly o-minimal structures. Annals of Pure and Applied Logic, 101(1), 65-93.

\bibitem{KOU} Kaplan  Itay, Alf Onshuus, and Alexander Usvyatsov. "Additivity of the dp-rank." Transactions of the American Mathematical Society 365.11 (2013): 5783-5804.

\bibitem{KS} Kaplan, Itay, and Pierre Simon. "Witnessing dp-rank." Notre Dame Journal of Formal Logic 55.3 (2014): 419-429.

\bibitem{KPS} Knight, Julia F., Anand Pillay, and Charles Steinhorn. "Definable sets in ordered structures. II." Transactions of the American Mathematical Society 295.2 (1986): 593-605.

\bibitem{Kud} Kuda\u\i bergenov, K. Zh. "Weakly quasi-o-minimal models." Siberian Advances in Mathematics 20 (2010): 285-292.

\bibitem{Mac} Macpherson, Dugald, David Marker, and Charles Steinhorn. "Weakly o-minimal structures and real closed fields." Transactions of the American Mathematical Society 352.12 (2000): 5435-5483.

\bibitem{MT} Moconja, Slavko, and Predrag Tanovi\'c. "Stationarily ordered types and the number of countable models." Annals of Pure and Applied Logic 171.3 (2020): 102765.

\bibitem{MTwmon} Moconja, Slavko, and Predrag Tanovi\'c. "Does weak quasi-o-minimality behave better than weak o-minimality?." Archive for Mathematical Logic 61.1 (2022): 81-103.

\bibitem{PS0} Pillay, Anand, and Charles Steinhorn. "Definable sets in ordered structures." Bulletin of the American Mathematical
Society 11 (1984): 159-162.

\bibitem{PS} Pillay, Anand, and Charles Steinhorn. "Definable sets in ordered structures. I." Transactions of the American Mathematical Society 295.2 (1986): 565-592.

\bibitem{PS3}Pillay, Anand, and Charles Steinhorn. “Definable Sets in Ordered Structures. III.” Transactions of the American Mathematical Society, 309.2 (1988): 469–476. 

\bibitem{Simon} Simon, Pierre. A guide to NIP theories. Cambridge University Press, 2015.

\bibitem{Simon2} Simon, Pierre. "Distal and non-distal NIP theories." Annals of Pure and Applied Logic 164.3 (2013): 294-318.
\end{thebibliography}
\end{document}